\numberwithin{equation}{section}
\theoremstyle{definition}
\newtheorem{exm} {Example}[section]
\newtheorem{dfn}[exm] {Definition}
\newtheorem{rem}[exm] {Remark}
\theoremstyle{plane}
\newtheorem{lem}[exm]{Lemma}
\newtheorem{prop}[exm]{Proposition}
\newtheorem{thm}[exm]{Theorem}
\newtheorem{cor}[exm]{Corollary}
\newtheorem{assum}[exm]{Assumption}
\newcommand{\cX}{\mathcal{X}}
\newcommand{\cY}{\mathcal{Y}}
\newcommand{\cE}{\mathcal{E}}
\newcommand{\cF}{\mathcal{F}}
\newcommand{\cD}{\mathcal{D}}
\newcommand{\cL}{\mathcal{L}}
\newcommand{\cB}{\mathcal{B}}
\newcommand{\mbM}{\mathbb{M}}
\newcommand{\RNp}{\mathbb{R}_{\geq 0}}
\newcommand{\RN}{\mathbb{R}}
\newcommand{\ZN}{\mathbb{Z}}
\DeclareMathOperator\supp{supp}
\DeclareMathOperator\idty{id}
\DeclareMathOperator\dom{dom}
\DeclareMathOperator\cl{cl}
\DeclareMathOperator\tr{tr}
\title{Scaling limits of discrete-time Markov chains and their local times on electrical networks} 
\date{}
\author{Ryoichiro Noda\thanks{Research Institute for Mathematical Sciences, Kyoto University, Kyoto, 606-8502,
JAPAN. E-mail: sgrndr@kurims.kyoto-u.ac.jp}}
\begin{document}

\maketitle
\begin{abstract}
  We establish that 
  if a sequence of electrical networks equipped with conductance measures converges in the local Gromov--Hausdorff-vague topology
  and satisfies certain non-explosion and metric-entropy conditions,
  then the sequence of associated discrete-time Markov chains and their local times also converges.
  This result applies to many examples, 
  such as 
  critical Galton--Watson trees conditioned on size, 
  uniform spanning trees,
  random recursive fractals,
  the critical Erd\H{o}s--R\'{e}nyi random graph,
  the configuration model,
  and the random conductance model on fractals.
  To obtain the convergence result,
  we characterize and study extended Dirichlet spaces associated with resistance forms,
  and we study traces of electrical networks.
\end{abstract}

\tableofcontents


\section{Introduction}  \label{sec: introduction}

Markov chains on graphs are very simple stochastic processes but serve in many applications,
and so the class of such processes has been an important research focus. 
In particular, 
various properties of 
reversible Markov chains on graphs are known to be equivalent to those of electrical networks, 
and the analysis of them has seen remarkable progress 
in recent decades \cite{Barlow_17_Random,Levin_Peres_17_Markov,Lyons_Peres_16_Probability}. 
The theory of resistance forms established by Kigami \cite{Kigami_01_Analysis,Kigami_12_Resistance}
is a generalization of this research direction. 
In \cite{Kigami_01_Analysis},
Kigami introduced a class of metrics called resistance metrics, 
which is a generalization of effective resistance on electrical networks, 
and in \cite{Kigami_12_Resistance},
he showed that, on a resistance metric space equipped with a Radon measure of full support,
there exists a naturally associated Markov process,
which corresponds to a continuous-time Markov chain on an electrical network. 
Originally, the theory of resistance forms was developed 
for the analysis on fractals, 
but recent research \cite{Croydon_18_Scaling,Croydon_Hambly_Kumagai_17_Time-changes,Noda_pre_Convergence} 
has confirmed that the theory is also useful for the study of scaling limits of Markov chains and their local times 
on electrical networks.
The results in \cite{Noda_pre_Convergence} 
are generalizations of those in \cite{Croydon_18_Scaling,Croydon_Hambly_Kumagai_17_Time-changes},
and in that paper,
it is proven that 
if a sequence of electrical networks equipped with measures converges in the local Gromov--Hausdorff-vague topology 
and satisfies the non-explosion condition and the metric-entropy condition, 
then the associated continuous-time Markov chains and their local times also converge. 
In this paper,
we establish a similar result for discrete-time Markov chains,
which are of interest in their own right,
naturally appearing in studies of algorithms, for example.

It may appear that the convergence of discrete-time Markov chains can be obtained
directly from the corresponding result for continuous-time Markov chains in
\cite{Noda_pre_Convergence}, and indeed this is true at the level of sample paths of Markov chains.
To see this, consider constant-speed random walks, namely continuous-time
Markov chains whose holding time at each site is exponential with rate $1$.
Given such processes $X_n$ on electrical networks $G_n$, the associated
discrete-time chains $Y_n$ are obtained by applying the random time-change
determined by the holding times.
Under suitable convergence assumptions on the networks $G_n$, the law of large
numbers implies that the (rescaled) time-change processes converge uniformly to
the identity map. Consequently, the scaling limit of the discrete-time chains
$Y_n$ follows from that of the continuous-time chains $X_n$.
(See Proposition~\ref{5. prop: convergence of processes} for details.)

In contrast, for local times, it is not possible to deduce the convergence of
the local times of the discrete-time Markov chains from their continuous-time counterparts simply as a
consequence of a law of large numbers. 
The fundamental reason is that local times
are highly sensitive to time-changes, and there is no simple time-change relation
between the discrete-time and continuous-time local times. 
Consequently,
unlike the situation for sample paths, the convergence of discrete-time local
times cannot be obtained by a straightforward transfer from the continuous-time
setting.

Thus, following the strategy of \cite{Noda_pre_Convergence}, we derive the
convergence of discrete-time local times from the convergence of the underlying
discrete-time Markov chains. Two ingredients play a key role:
\begin{enumerate}[label=\textup{(\roman*)}]
\item modulus-of-continuity estimates for discrete-time local times;
\item a trace approximation technique that allows one to approximate a
      discrete-time Markov chain on a non-compact electrical network by the chain
      restricted to a compact subnetwork (with respect to the resistance metric).
\end{enumerate}

The first ingredient is essentially established by Croydon
\cite{Croydon_15_Moduli} when the underlying networks are compact, and it provides
the basis for proving tightness of the discrete-time local times. The second
ingredient allows us to extend this tightness to the non-compact case. While trace
approximations are well understood for continuous-time Markov chains through the
theory of resistance forms and Dirichlet forms (cf.\ 
\cite[Lemma~2.6]{Croydon_Hambly_Kumagai_17_Time-changes}), no corresponding result
appears to be available for discrete-time chains. In this paper, we develop a
discrete-time version of the trace approximation method by analyzing extended
Dirichlet spaces associated with resistance forms,
see Sections~\ref{sec: resistance forms} and \ref{sec: electrical networks}. 
This technique is of independent interest and is likely to be useful in a broader range of applications beyond the present work, 
and it represents another important contribution of this paper.

To present our main results,
we begin by introducing several pieces of notation.
We write $\RNp \coloneqq [0, \infty)$, equipped with the usual Euclidean topology.
Given metric spaces $S$ and $T$,
we write $C(S, T)$ for the space of continuous functions from $S$ to $T$,
equipped with the compact-convergence topology,
that is, a sequence $(f_n)_{n \geq 1}$ in $C(S, T)$ converges to $f$
if and only if $f_n$ converges to $f$ uniformly on every compact subset of $S$.
We write $D(\RNp, S)$ for the space of cadlag functions from $\RNp$ to $S$,
equipped with the usual $J_1$-Skorohod topology (cf.\ \cite[Section~16]{Billingsley_99_Convergence}).

Given a metric space $(S, d)$,
we set, for $x \in S$ and $r>0$,
\begin{equation}
  B_{d}(x, r) 
  \coloneqq 
  \{
    y \in S \mid d(x, y) < r
  \}, 
  \quad 
  D_{d}(x, r) 
  \coloneqq 
  \{
    y \in S \mid d(x, y) \leq r
  \}.
\end{equation}
We say that $(S, d)$ is \textit{boundedly compact} 
if and only if $D_{d}(x, r)$ is compact for all $x \in S$ and $r>0$.
Note that a boundedly-compact metric space is complete, separable and locally compact.
A tuple $(S, d, \rho, \mu)$ is said to be a \textit{rooted-and-measured boundedly-compact metric space} 
if and only if $(S, d)$ is a boundedly-compact metric space,
$\rho$ is a distinguished element of $S$ called the \textit{root},
and $\mu$ is a Radon measure on $S$, 
that is,
$\mu$ is a Borel measure on $S$ 
such that $\mu(K) < \infty$ for every compact subset $K$.
Given a rooted-and-measured boundedly-compact metric space $G = (S, d, \rho, \mu)$ and $r > 0$,
we define a rooted-and-measured compact metric space $G^{(r)} = (S^{(r)}, d^{(r)}, \rho^{(r)}, \mu^{(r)})$
by setting 
\begin{equation}  \label{1. eq: dfn of restriction operator}
  S^{(r)} \coloneqq \cl(B_{d}(\rho, r)), \quad 
  d^{(r)} \coloneqq d|_{S^{(r)} \times S^{(r)}}, \quad 
  \rho^{(r)} \coloneqq \rho, \quad 
  \mu^{(r)}(\cdot) \coloneqq \mu( \cdot \cap B_d(\rho, r)),
\end{equation}
where $\cl(\cdot)$ denotes the closure of a set.
We write $\mathbb{G}$ for the collection of rooted-and-measured isometric equivalence classes 
of rooted-and-measured boundedly-compact metric spaces
and equip $\mathbb{G}$ with the local Gromov--Hausdorff-vague topology.
(See Section~\ref{sec: the local GHV} for details).

\begin{dfn} [{The space $\mathbb{F}$ and $\mathbb{F}_{c}$}]
  \label{1. dfn: the space F}
  We define the subspace $\mathbb{F}$ of $\mathbb{G}$ 
  to be the collection of $(F,R,\rho,\mu) \in \mathbb{G}$
  such that $\mu$ is of full support and $R$ is a resistance metric 
  which is associated with a recurrent resistance form,
  i.e., it holds that 
  \begin{equation} \label{1. eq: recurrence of R in the space F}
    \lim_{r \to \infty} R(\rho, B_{R}(\rho, r)^{c}) = \infty. 
  \end{equation}
  We write $\mathbb{F}_{c}$ for the subspace of $\mathbb{F}$ consisting of the tuples $(F, R, \rho, \mu) \in \mathbb{F}$ 
  such that $(F, R)$ is compact.
  (For the definitions of resistance metrics and resistance forms, 
  see Definitions~\ref{3. dfn: resistance forms} and~\ref{3. dfn: resistance metrics}.
  For the reason why \eqref{1. eq: recurrence of R in the space F} means recurrence,
  see the discussion above Definition~\ref{3. dfn: recurrent resistance forms}.)
\end{dfn}

Let $G=(F, R, \rho, \mu)$ be an element of $\mathbb{F}$
and $(\mathcal{E}, \mathcal{F})$ be a resistance form corresponding to $R$.
In Corollary~\ref{3. cor: recurrence implies regularity},
it is shown that $(\cE, \cF)$ is regular
(see Definition~\ref{3. dfn: regular resistance forms} for regular resistance forms).
The regularity of $(\mathcal{E}, \mathcal{F})$ ensures 
the existence of a related regular Dirichlet form $(\mathcal{E},\mathcal{D})$ on $L^{2}(F,\mu)$ 
and also an associated Hunt process $((X_{G}(t))_{t \geq 0}, (P_{x}^{G})_{x \in F})$,
which is recurrent by the condition \eqref{1. eq: recurrence of R in the space F}.
In the study of continuity of local times of $X_{G}$,
the metric entropy defined below plays an important role.

\begin{dfn} [{$\varepsilon$-covering, metric entropy}]  \label{1. dfn: metric entropy}
  Let $(S,d)$ be a compact metric space.
  For $\varepsilon > 0$, a subset $A$ is called an \textit{$\varepsilon$-covering} of $(S,d)$,
  if it holds that $S = \bigcup_{x \in A} D_{d}(x, \varepsilon)$.
  We define 
  \begin{equation}
    N_{d}(S,\varepsilon)
    \coloneqq
    \min \{ |A| \mid A\ \text{is an}\  \varepsilon \text{-covering of}\ (S,d) \},
  \end{equation}
  where $|A|$ denotes the cardinality of $A$.
  An $\varepsilon$-covering $A$ with $|A|=N_{d}(S,\varepsilon)$ is called a \textit{minimal $\varepsilon$-covering} of $(S,d)$.
  We call the family $\{ N_{d}(S,\varepsilon) \mid \varepsilon >0 \}$ the \textit{metric entropy} of $(S,d)$.
\end{dfn}

\begin{dfn} [{The space $\check{\mathbb{F}}$ and $\check{\mathbb{F}}_{c}$}]
  We define the subspace $\check{\mathbb{F}}$ of $\mathbb{F}$ 
  to be the collection of $(F, R, \rho, \mu) \in \mathbb{F}$
  such that, for any $r >0$, there exists $\alpha_{r} \in (0,1/2)$ satisfying
  \begin{equation} \label{1. eq: definition of vF}
    \sum_{k \geq 1} N_{R^{(r)}}(F^{(r)},2^{-k})^{2} \exp(-2^{\alpha_{r} k})  < \infty,
  \end{equation}
  and we also define $\check{\mathbb{F}}_{c} \coloneqq \mathbb{F}_{c} \cap \check{\mathbb{F}}$.
\end{dfn}

Let $G = (F, R, \rho, \mu)$ be an element of $\check{\mathbb{F}}$.
By~\cite{Noda_pre_Convergence},
the Hunt process $X_{G}$ admits a jointly continuous local time 
$L_{G} = (L_{G}(x,t))_{x \in F, t \geq 0}$ satisfying the occupation density formula 
(see \eqref{3. eq: the occupation density formula}).
We define a probability measure $P_{G}$ on $D(\RNp, F) \times C(F \times \RNp, \RNp)$
by setting 
\begin{equation} \label{1. eq: def of P_G}
  P_{G}(\cdot)
  \coloneqq
  P_{\rho} 
  \left(
    (X_{G}, L_{G}) \in \cdot
  \right).
\end{equation}
We set 
\begin{equation}  \label{1. eq: dfn of cX_G}
  \cX_{G} 
  \coloneqq 
  (F, R, \rho, \mu, P_{G}),
\end{equation}
which we regard as an element of $\mbM_{L}$ defined in Section~\ref{sec: the space M_L}.
In particular,
$\mbM_{L}$ is a Polish space containing (equivalence classes of)
tuples $(S, d, \rho, \mu, P)$
such that $(S, d, \rho, \mu)$ is a rooted-and-measured boundedly-compact metric space 
and $P$ is a probability measure on $D(\RNp, F) \times C(F \times \RNp, \RNp)$.

We next introduce electrical networks and associated resistance forms.

\begin{dfn} [{Electrical network}] \label{dfn: electrical network}
  Let $(V, E)$ be a connected, simple, undirected graph with finite or countably many vertices,
  where $V$ denotes the vertex set and $E$ denotes the edge set.
  (NB.\ A graph being simple means that it has no loops and no multiple edges.)
  For $x, y \in V$,
  we write $x \sim y$ if and only if $\{ x, y \} \in E$.
  Let $c \colon V^{2} \to \RNp$ be a function such that 
  $c(x,y) = c(y, x)$ for all $x, y \in V$,
  $c(x, y) > 0$ if and only if $x \sim y$,
  and
  \begin{equation} \label{eq: total conductance at vertex}
    c(x) 
    \coloneqq 
    \sum_{y \in V} c(x, y) < \infty,
    \qquad 
    \forall x \in V.
  \end{equation}
  We call $c(x,y)$ the \textit{conductance} on the edge $\{x,y\}$
  and $(V, E, c)$ an \textit{electrical network}.
  We equip $V$ with the discrete topology 
  and define a Radon measure $\mu$ on $V$ by setting 
  \begin{equation}
    \mu(A)
    \coloneqq 
    \sum_{x \in A} c(x),
    \qquad 
    A \subseteq V,
  \end{equation}
  which we call the \textit{conductance measure associated with the electrical network $(V, E, c)$}.
  Given an electrical network $G$,
  we write $V_{G} = V(G), E_{G} = E(G), (c_{G}(x,y))_{x, y \in V_{G}}$ and $\mu_{G}$
  for the vertex set, the edge set, the conductances,
  and the associated conductance measure, respectively.
  When we say that $G$ is a \textit{rooted electrical network},
  there exists a distinguished vertex,
  which we denote by $\rho_{G} \in V_{G}$.
\end{dfn}

Let $G$ be an electrical network.
The discrete-time Markov chain associated with $G$ 
is the discrete-time Markov chain $Y_{G}=(Y_{G}(k))_{k \geq 0}$ on $V_{G}$ 
with transition probabilities $(P_{G}(x,y))_{x, y \in V_{G}}$ given by 
\begin{equation}
  P_{G}(x,y) 
  \coloneqq
  \frac{c_{G}(x,y)}{c_{G}(x)}.
\end{equation}
We write $P_{\rho}^{G}$ for the underlying probability measure of $Y_{G}$ started at $\rho \in V_{G}$.
By setting 
\begin{equation}
  Y_{G}(t) \coloneqq Y_{G}( \lfloor t \rfloor), \qquad t \geq 0,
\end{equation}
we regard $Y_{G}$ as a random element of $D(\RNp, V_{G})$.
We then define the local time $\ell_{G} = (\ell_{G}(x, t))_{x \in V_{G}, t \geq 0}$
of $Y_{G}$ by setting 
\begin{equation} \label{eq: discrete local time}
  \ell_{G}(x, t)
  \coloneqq
  \frac{1}{c_{G}(x)}
  \int_{0}^{t} 1_{\{x\}}(Y_{G}(s))\, ds,
\end{equation}
which we regard as a random element of $C(V_{G} \times \RNp, \RNp)$.
Note that $\ell_{G}$ satisfies the occupation density formula:
\begin{equation}  \label{1. eq: discrete-time occupation density formula}
  \int_{0}^{t} 
  f(Y_{G}(s))\,
  ds 
  =
  \int_{V_{G}} 
  f(y) \ell_{G}(y, t)\,
  \mu_{G}(dy)
\end{equation}
for all $f \colon V_{G} \to \RNp$ and $t \geq 0$.

In our arguments,
the analysis of Markov chains on electrical networks via associated resistance forms 
is important,
which we now introduce.

\begin{dfn} [{Resistance forms associated with electrical networks}]  
  \label{1. dfn: resistance forms associated with electrical networks}
  Let $G$ be an electrical network.
  For functions $f,g \colon V_{G} \to \RN$,
  we set 
  \begin{equation}
    \cE_{G}(f, g) 
    \coloneqq 
    \frac{1}{2} 
    \sum_{x, y \in V_{G}} 
    c_{G}(x,y) (f(x) - f(y)) (g(x) - g(y))
  \end{equation}
  (if the right-hand side exists).
  We then define a $\RN$-linear subspace of $\RN^{V_{G}}$ by setting  
  \begin{equation}
    \cF_{G} 
    \coloneqq
    \{
      f \colon V_G \to \RN \mid \cE(f, f) < \infty
    \}.
  \end{equation}
\end{dfn}
In Section~\ref{sec: resistance forms on electrical networks},
it is proven that 
the pair $(\cE_{G}, \cF_{G})$ is a regular resistance form,
and if we write $R_{G}$ for the associated resistance metric,
then the topology on $V_{G}$ induced from $R_{G}$ is the discrete topology.

Our first result concerns a sequence of deterministic (scaled) electrical networks,
which corresponds to \cite[Theorem~1.9]{Noda_pre_Convergence}.
To establish the convergence of the associated discrete-time Markov chains and their local times,
we assume the non-explosion condition introduced in \cite{Croydon_18_Scaling} 
and the metric-entropy condition introduced in \cite{Noda_pre_Convergence}
(see Assumption~\ref{1. assum: deterministic version} and Theorem~\ref{1. thm: deterministic, main result} below).
For each $n \in \mathbb{N}$,
let $G_{n}$ be a rooted electrical network.
We simply write 
\begin{equation}
  V_{n} \coloneqq V_{G_{n}}, \quad 
  c_{n} \coloneqq c_{G_{n}}, \quad 
  \mu_{n} \coloneqq \mu_{G_{n}}, \quad 
  \rho_{n} \coloneqq \rho_{G_{n}}, \quad 
  R_{n} \coloneqq R_{G_{n}}, \quad 
  Y_{n} \coloneqq Y_{G_{n}}, \quad 
  \ell_{n} \coloneqq \ell_{G_{n}} 
\end{equation}
and we assume that each $(V_{n}, R_{n}, \rho_{n}, \mu_{n})$ belongs to $\check{\mathbb{F}}$.
Suppose that we have scaling factors $a_{n}, b_{n}$,
that is,
$(a_{n})_{n \geq 1}$ and $(b_{n})_{n \geq 1}$ are sequences of positive numbers with $a_{n} \to \infty$ and $b_{n} \to \infty$.
We then write 
\begin{equation}  \label{1. eq: dfn of scaled spaces}
  \hat{V}_{n} \coloneqq V_{n}, \quad 
  \hat{R}_{n} \coloneqq a_{n}^{-1} R_{n}, \quad
  \hat{\rho}_{n} \coloneqq \rho_{n}, \quad 
  \hat{\mu}_{n} \coloneqq b_{n}^{-1} \mu_{n}.
\end{equation}
We consider the following conditions.

\begin{assum} \label{1. assum: deterministic version} \leavevmode
  \begin{enumerate} [label = \textup{(\roman*)}]
    \item \label{1. assum item: deterministic, convergence of spaces}
      The sequence $(\hat{V}_{n}, \hat{R}_{n}, \hat{\rho}_{n}, \hat{\mu}_{n})_{n \geq 1}$ in $\check{\mathbb{F}}$ satisfies
      \begin{equation}
        (\hat{V}_{n}, \hat{R}_{n}, \hat{\rho}_{n}, \hat{\mu}_{n}) 
        \to 
        (F, R, \rho, \mu)
      \end{equation}
      in the local Gromov--Hausdorff-vague topology 
      for some $G=(F, R, \rho, \mu) \in \mathbb{G}$.
    \item \label{1. assum item: deterministic, the non-explosion condition} 
      It holds that 
      \begin{equation}
        \lim_{r \to \infty} 
        \liminf_{n \to \infty}
        \hat{R}_{n}( \hat{\rho}_{n}, B_{\hat{R}_{n}}( \hat{\rho}_{n}, r)^{c}) 
        = 
        \infty.
      \end{equation}
    \item \label{1. assum item: deterministic, the metric-entropy condition}
      For each $r>0$,
      there exists $\alpha_{r} \in (0, 1/2)$ such that 
      \begin{equation}
        \lim_{m \to \infty}
        \limsup_{n \to \infty} 
        \sum_{k \geq m} 
        N_{\hat{R}_{n}^{(r)}} (\hat{V}_{n}^{(r)}, 2^{-k})^{2} \exp(-2^{\alpha_{r} k}) 
        =
        0,
      \end{equation} 
      where we note that $\hat{V}_{n}^{(r)} = B_{\hat{R}_{n}}(\hat{\rho}_{n}, r) = B_{R_{n}}(\rho_{n}, a_{n}r)$.
  \end{enumerate}
\end{assum}

Since the spaces and measures are scaled,
we consider accordingly scaled Markov chains and their local times defined as follows:
\begin{equation}
  \hat{Y}_{n}(t) \coloneqq Y_{n}(a_{n}b_{n}t), \quad 
  \hat{\ell}_{n}(x,t) \coloneqq a_{n}^{-1} \ell_{n}(x, a_{n}b_{n}t).
\end{equation}
We then set 
\begin{equation}
  \hat{P}_{n} \coloneqq P_{\rho_{n}}^{G_{n}} \bigl( (\hat{Y}_{n}, \hat{\ell}_{n}) \in \cdot \bigr), 
  \quad 
  \hat{\cY}_{n} \coloneqq (\hat{V}_{n}, \hat{R}_{n}, \hat{\rho}_{n}, \hat{\mu}_{n}, \hat{P}_{n}).
\end{equation}
Note that $\hat{P}_{n}$ is a probability measure 
on $D(\RNp, \hat{V}_{n}) \times C(\hat{V}_{n} \times \RNp, \RNp)$
and $\hat{\cY}_{n}$ is an element of $\mbM_{L}$.

\begin{thm} \label{1. thm: deterministic, main result}
  Under Assumption~\ref{1. assum: deterministic version},
  the limiting space $G$ belongs to $\check{\mathbb{F}}$ and 
  $\hat{\cY}_{n}$ converges to $\cX_{G}$ in $\mbM_{L}$,
  where we recall $\cX_{G}$ from \eqref{1. eq: dfn of cX_G}.
\end{thm}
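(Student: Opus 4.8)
The plan is to reduce the statement to the continuous-time result \cite[Theorem 1.7]{Noda_pre_Convergence} and then upgrade from sample paths to local times by a coupling argument. First, note that Assumption \ref{1. assum: deterministic version} is exactly the hypothesis of the continuous-time theorem, so I would immediately deduce from \cite{Noda_pre_Convergence} that $G \in \check{\mathbb{F}}$ and that the scaled continuous-time chains $\hat{X}_{n}(t) \coloneqq X_{G_{n}}(a_{n}b_{n}t)$ together with their local times $\hat{L}_{n}$ converge in distribution, $(\hat{X}_{n}, \hat{L}_{n}) \to (X_{G}, L_{G})$, after embedding all the spaces $\hat{V}_{n}$ and the limit $F$ isometrically into a common boundedly-compact space furnished by the local Gromov--Hausdorff--vague convergence of Assumption \ref{1. assum: deterministic version}\ref{1. assum item: deterministic, convergence of spaces}. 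Since convergence in $\mbM_{L}$ is precisely weak convergence of the pushed-forward laws $\hat{P}_{n}$ in this common space, it remains to prove $(\hat{Y}_{n}, \hat{\ell}_{n}) \to (X_{G}, L_{G})$ in distribution there.

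The structural input is the coupling between the two walks: the constant-speed walk $X_{G_{n}}$ associated with $(\cE_{G_{n}}, L^{2}(\mu_{n}))$ is built from the discrete chain $Y_{G_{n}}$ by attaching independent mean-one exponential holding times $(\tau_{k})_{k \geq 1}$, so that $X_{G_{n}}(s) = Y_{G_{n}}(N_{n}(s))$ for a rate-one Poisson process $N_{n}$, and the occupation-density identities read $c_{n}(x) L_{G_{n}}(x, s) = \sum_{k < N_{n}(s)} \tau_{k+1} 1_{\{Y_{G_{n}}(k) = x\}}$ while $c_{n}(x) \ell_{G_{n}}(x, m) = \#\{k < m : Y_{G_{n}}(k) = x\}$. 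Working on this common space, the path convergence $\hat{Y}_{n} \to X_{G}$ is the easy part (this is Proposition \ref{5. prop: convergence of processes}): the time-change $t \mapsto N_{n}(a_{n}b_{n}t)/(a_{n}b_{n})$ converges uniformly on compacts to the identity by the law of large numbers for the Poisson process, and the $J_{1}$-distance between a c\`adl\`ag path and its reparametrisation is controlled by the uniform distance of the reparametrisation from the identity.

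The heart of the argument, and the main obstacle, is to show that $\hat{\ell}_{n}$ and $\hat{L}_{n}$ are uniformly close on compacts, i.e.\ $\sup_{x \in B,\, t \leq T} |\hat{\ell}_{n}(x, t) - \hat{L}_{n}(x, t)| \to 0$ in probability for every ball $B$ and $T > 0$; combined with the continuous-time convergence of $\hat{L}_{n}$ and the path convergence above, this yields the joint convergence $(\hat{Y}_{n}, \hat{\ell}_{n}) \to (X_{G}, L_{G})$ and hence $\hat{\cY}_{n} \to \cX_{G}$. To establish it, I would write $a_{n}c_{n}(x)(\hat{L}_{n} - \hat{\ell}_{n})(x, t)$ as a centered holding-time fluctuation $\sum_{k < m} (\tau_{k+1} - 1) 1_{\{Y_{G_{n}}(k) = x\}}$, with $m = \lfloor a_{n}b_{n}t \rfloor$, plus a term accounting for the discrepancy between $N_{n}(a_{n}b_{n}t)$ and $m$. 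Conditionally on the discrete path, the first sum is a martingale in $m$ whose predictable bracket is comparable to the visit count $\#\{k < m : Y_{G_{n}}(k) = x\} = a_{n}c_{n}(x)\hat{\ell}_{n}(x, t)$, so a Doob/Bernstein maximal inequality bounds it by order $\sqrt{a_{n}c_{n}(x)\,\hat{\ell}_{n}(x, t)}$ up to logarithmic factors; dividing by $a_{n}c_{n}(x)$ and invoking the tightness of $\hat{\ell}_{n}$ (equivalently $\hat{L}_{n}$) makes each pointwise difference small, while the Poisson discrepancy term is absorbed by the same law of large numbers.

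The delicate point is to promote these pointwise-in-$x$ estimates to a bound uniform over $x \in B$, since vertices of small conductance are visited few times yet must be controlled simultaneously. This is exactly where the metric-entropy condition, Assumption \ref{1. assum: deterministic version}\ref{1. assum item: deterministic, the metric-entropy condition}, enters: it bounds the number of vertices to be resolved at each dyadic scale, enabling a chaining/union-bound argument over the covering numbers $N_{\hat{R}_{n}^{(r)}}(\hat{V}_{n}^{(r)}, 2^{-k})$ --- the very mechanism producing the joint continuity of $L_{G}$ in \cite{Noda_pre_Convergence}. I expect the cleanest implementation to first restrict to the compact balls $G^{(r)}$, using the trace-of-electrical-network results of Section \ref{sec: electrical networks} to control the local times of the restricted chains, prove uniform closeness on each $G^{(r)}$, and then let $r \to \infty$ using the non-explosion condition, Assumption \ref{1. assum: deterministic version}\ref{1. assum item: deterministic, the non-explosion condition}, to rule out escape of mass within a fixed time horizon.
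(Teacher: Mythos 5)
Your reduction of the sample-path convergence to the continuous-time result via the time change $\lambda_{n}(t)=J_{n}^{\lfloor a_{n}b_{n}t\rfloor}/(a_{n}b_{n})$ is exactly the paper's Proposition \ref{5. prop: convergence of processes}, and your identification of the roles of the metric-entropy and non-explosion conditions is broadly correct. However, the central step of your argument --- that $\sup_{x,t}|\hat{L}_{n}(x,t)-\hat{\ell}_{n}(x,t)|\to 0$ in probability --- has a genuine gap. Your conditional Bernstein bound on the centered holding-time sum gives a pointwise fluctuation of order $\sqrt{V_{n}(x,t)}/(a_{n}c_{n}(x))=\sqrt{\hat{\ell}_{n}(x,t)/(a_{n}c_{n}(x))}$ up to logarithmic factors, and for this to vanish you need $a_{n}c_{n}(x)\to\infty$ uniformly over the relevant vertices. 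This is neither assumed nor available a priori: the only a priori information is $a_{n}c_{n}(x)\geq(\mathrm{diam})^{-1}$, coming from $\hat{R}_{n}(x,y)\geq(a_{n}c_{n}(x))^{-1}$. Invoking ``the tightness of $\hat{\ell}_{n}$'' at this point is circular --- tightness of the discrete local times is precisely what is at stake --- and transferring tightness of $\hat{L}_{n}$ into a lower bound on $a_{n}c_{n}(x)$ requires a further argument (each visit to $x$ contributes an $\mathrm{Exp}(1)/(a_{n}c_{n}(x))$ increment to $\hat{L}_{n}(x,\cdot)$ over a window of length $O(1/(a_{n}b_{n}))$, so equicontinuity of $\hat{L}_{n}$ in $t$ forces $a_{n}c_{n}(x)\to\infty$ at visited points) which you would then have to make quantitative enough to beat the logarithms in your union bound. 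Finally, the promotion to uniformity in $x$ cannot be done by chaining over covering numbers as you suggest: chaining controls increments of the quantity being chained, and there is no modulus-of-continuity estimate for the difference $\hat{L}_{n}-\hat{\ell}_{n}$; a union bound over all vertices at the finest scale would instead have to sum pointwise tails over $|\hat{V}_{n}^{(r)}|$ points, which the metric-entropy condition does not control.

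The paper avoids all of this by never comparing $\hat{\ell}_{n}$ with $\hat{L}_{n}$. It proves tightness of the discrete local times directly, applying Croydon's quantitative modulus-of-continuity estimate for discrete-time local times on finite electrical networks (Lemma \ref{5. lem: weak version of equicontinuity estimate by Croydon} and Theorem \ref{5. thm: uniform continuity estimate of discrete local times}) to the traces $\tilde{G}_{n}^{(a_{n}r)}$ of the networks onto compact balls; the metric-entropy condition enters only through that estimate. The passage from the trace local times back to $\hat{\ell}_{n}$ uses the exact identity $\hat{\ell}_{n}(x,t)=\frac{c_{\tilde{G}_{n}^{(a_{n}r')}}(x)}{c_{G_{n}}(x)}\,\tilde{\ell}_{n}^{(r')}(x,t)$ on the event that the walk has not exited the ball, together with the uniform conductance-ratio control of Lemma \ref{5. lem: uniform convergence of conductances}, which in turn rests on the new trace results Theorem \ref{4. thm: expression of trace conductance} and Corollary \ref{4. cor: difference for trace measure at one point}. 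The limit is then identified through the occupation density formula rather than through a coupling of the two local times. To salvage your route you would first need the equicontinuity in $t$ of $\hat{\ell}_{n}$ (to extract the uniform divergence of $a_{n}c_{n}(x)$), at which point you have essentially reproduced the paper's tightness argument and the coupling becomes redundant.
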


In our second theorem,
we consider a sequence of random electrical networks,
which corresponds to \cite[Theorem~1.11]{Noda_pre_Convergence}.
To state the result,
in the previous setting,
we assume that each $G_{n}$ is a random electrical network,
that is,
$(V_{n}, R_{n}, \rho_{n}, \mu_{n})$ is a random element of $\check{\mathbb{F}}$.
We denote by $\mathbf{P}_{n}$ the underlying complete probability measure of $G_{n}$.
Note that the scaling factors $a_{n}, b_{n}$ are assumed to be deterministic.

\begin{assum} \label{1. assum: random version} \leavevmode
  \begin{enumerate} [label = \textup{(\roman*)}]
    \item \label{1. assum item: random, convergence of spaces}
      The sequence $(\hat{V}_{n}, \hat{R}_{n}, \hat{\rho}_{n}, \hat{\mu}_{n})_{n \geq 1}$ in $\check{\mathbb{F}}$ satisfies
      \begin{equation}
        (\hat{V}_{n}, \hat{R}_{n}, \hat{\rho}_{n}, \hat{\mu}_{n}) 
        \xrightarrow{\mathrm{d}}
        (F, R, \rho, \mu)
      \end{equation}
      in the local Gromov--Hausdorff-vague topology 
      for some random element $G=(F, R, \rho, \mu)$ of $\mathbb{G}$.
      We denote by $\mathbf{P}$ 
      the underlying complete probability measure of $G$.
    \item \label{1. assum item: random, the non-explosion condition} 
      It holds that 
      \begin{equation}
        \lim_{r \to \infty} 
        \liminf_{n \to \infty}
        \mathbf{P}_{n}
        \left(
          \hat{R}_{n}( \hat{\rho}_{n}, B_{\hat{R}_{n}}( \hat{\rho}_{n}, r)^{c}) 
          \geq
          \lambda
        \right)
        = 
        1,
        \qquad 
        \forall \lambda > 0.
      \end{equation}
    \item \label{1. assum item: random, the metric-entropy condition}
      For each $r>0$,
      there exists $\alpha_{r} \in (0, 1/2)$ such that 
      \begin{equation}
        \lim_{m \to \infty}
        \limsup_{n \to \infty} 
        \mathbf{P}_{n}
        \left(
          \sum_{k \geq m} 
          N_{\hat{R}_{n}^{(r)}} (\hat{V}_{n}^{(r)}, 2^{-k})^{2} \exp(-2^{\alpha_{r} k}) 
          \geq 
          \varepsilon
        \right)
        =
        0, 
        \qquad 
        \forall \varepsilon > 0.
      \end{equation} 
  \end{enumerate}
\end{assum}

\begin{thm} \label{1. thm: random, main result}
  Under Assumption~\ref{1. assum: random version},
  the limiting space $G$ is a random element of $\check{\mathbb{F}}$,
  i.e., $\mathbf{P}(G \in \check{\mathbb{F}})=1$,
  and $\hat{\cY}_{n} \xrightarrow{\mathrm{d}} \cX_{G}$
  as random elements of $\mbM_{L}$.
\end{thm}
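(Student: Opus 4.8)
The plan is to deduce this distributional statement from the deterministic Theorem \ref{1. thm: deterministic, main result} by a Skorohod-representation-and-subsequence argument. Throughout, write $\hat{G}_{n} \coloneqq (\hat{V}_{n}, \hat{R}_{n}, \hat{\rho}_{n}, \hat{\mu}_{n})$ for the scaled spaces, and for $r, \lambda > 0$ and integers $m \geq 1$ introduce the two functionals
\[
  A_{n}^{r} \coloneqq \hat{R}_{n}(\hat{\rho}_{n}, B_{\hat{R}_{n}}(\hat{\rho}_{n}, r)^{c}),
  \qquad
  B_{n}^{r,m} \coloneqq \sum_{k \geq m} N_{\hat{R}_{n}^{(r)}}(\hat{V}_{n}^{(r)}, 2^{-k})^{2} \exp(-2^{\alpha_{r} k}),
\]
which are measurable (indeed semicontinuous) functions of the isometry class of $\hat{G}_{n}$; note that $A_{n}^{r}$ is nondecreasing in $r$ and $B_{n}^{r,m}$ is nonincreasing in $m$. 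Since convergence in distribution to a fixed limit is equivalent to the property that every subsequence admits a further subsequence converging in distribution to that limit, it suffices to treat an arbitrary subsequence, which I relabel as the full sequence.

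First I would establish joint tightness and pass to a convenient subsequence. The spaces $\hat{G}_{n}$ are tight in $\mathbb{G}$ by Assumption \ref{1. assum: random version}\ref{1. assum item: random, convergence of spaces}, and after composing $A_{n}^{r}$ with a bounded homeomorphism $[0,\infty] \to [0,1]$ the families $(A_{n}^{r})_{n}$ and $(B_{n}^{r,m})_{n}$ are tight for each $r, m$. Diagonalising over rational $r$ and integer $m$, I extract a subsequence along which the space together with all of these functionals converges jointly in distribution. Applying Skorohod's representation theorem on a common probability space, I obtain copies $\tilde{G}_{l}$ of $\hat{G}_{n_{l}}$ and $\tilde{G}$ of $G$ with $\tilde{G}_{l} \to \tilde{G}$ almost surely in the local Gromov-Hausdorff-vague topology, together with almost-sure limits $\tilde{A}_{l}^{r} \to \alpha^{r}$ and $\tilde{B}_{l}^{r,m} \to \beta^{r,m}$; because $A_{n}^{r}, B_{n}^{r,m}$ are deterministic functionals of the space, the joint law is preserved, so almost surely $\tilde{A}_{l}^{r}$ and $\tilde{B}_{l}^{r,m}$ coincide with the corresponding functionals of $\tilde{G}_{l}$, and the $\tilde{G}_{l}$ lie in $\check{\mathbb{F}}$.

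The crux is to check that $\tilde{G}_{l} \to \tilde{G}$ satisfies, almost surely, the deterministic conditions of Assumption \ref{1. assum: deterministic version}. Condition \ref{1. assum item: deterministic, convergence of spaces} is the almost-sure convergence just obtained. For the other two I exploit that, along the chosen subsequence, $\limsup_{l} \tilde{B}_{l}^{r,m} = \beta^{r,m}$ and $\liminf_{l} \tilde{A}_{l}^{r} = \alpha^{r}$ are genuine limits. By the portmanteau theorem together with the monotonicity in $r$ and $m$ and Assumption \ref{1. assum: random version}\ref{1. assum item: random, the metric-entropy condition}, one gets $\mathbf{P}(\beta^{r,m} > \varepsilon) \leq \limsup_{n} \mathbf{P}_{n}(B_{n}^{r,m} \geq \varepsilon) \to 0$ as $m \to \infty$, whence $\lim_{m} \beta^{r,m} = 0$ almost surely, i.e.\ $\lim_{m} \limsup_{l} \tilde{B}_{l}^{r,m} = 0$; similarly, using Assumption \ref{1. assum: random version}\ref{1. assum item: random, the non-explosion condition} one finds $\mathbf{P}(\alpha^{r} \geq \lambda) \geq \liminf_{n} \mathbf{P}_{n}(A_{n}^{r} \geq \lambda) \to 1$ as $r \to \infty$, so $\lim_{r} \alpha^{r} = \infty$ almost surely, i.e.\ $\lim_{r} \liminf_{l} \tilde{A}_{l}^{r} = \infty$. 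Thus, almost surely, $(\tilde{G}_{l})_{l}$ verifies Assumption \ref{1. assum: deterministic version}. This \emph{converge-first, then apply portmanteau} step is what makes the in-probability hypotheses usable: a bare subsequence need not render $\limsup_{l} \tilde{B}_{l}^{r,m}$ controllable, but forcing convergence turns the $\limsup$ into a limit to which portmanteau applies. I expect this to be the main obstacle of the proof, alongside verifying the measurability and semicontinuity of the covering-number and resistance functionals on $\mathbb{G}$ needed for the tightness and coupling above.

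Finally I would invoke Theorem \ref{1. thm: deterministic, main result}. Almost surely it yields $\tilde{G} \in \check{\mathbb{F}}$ and $\tilde{\cY}_{l} \to \cX_{\tilde{G}}$ in $\mbM_{L}$, where $\tilde{\cY}_{l}$ denotes the process-decorated version of $\tilde{G}_{l}$, built by the same deterministic recipe as $\hat{\cY}_{n}$. Since this recipe and the assignment $g \mapsto \cX_{g}$ are measurable, $\tilde{\cY}_{l} \overset{\mathrm{d}}{=} \hat{\cY}_{n_{l}}$ and $\cX_{\tilde{G}} \overset{\mathrm{d}}{=} \cX_{G}$; moreover $\tilde{G} \overset{\mathrm{d}}{=} G$ gives $\mathbf{P}(G \in \check{\mathbb{F}}) = 1$. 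Almost-sure convergence implies convergence in distribution, so $\hat{\cY}_{n_{l}} \xrightarrow{\mathrm{d}} \cX_{G}$ along the extracted subsequence. As the initial subsequence was arbitrary, the subsequence principle gives $\hat{\cY}_{n} \xrightarrow{\mathrm{d}} \cX_{G}$, completing the proof.
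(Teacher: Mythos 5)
Your proposal is correct in outline, but it takes a genuinely different route from the paper. The paper does not reduce to the deterministic theorem pathwise; instead it re-derives every quantitative tightness estimate of Section \ref{sec: proof of main result 1} in an ``in probability'' form (Lemmas \ref{6. lem: uniform non-explosion condition}--\ref{6. lem: precpt of local times}, the random analogues of Lemmas \ref{5. lem: uniform non-explosion condition}--\ref{5. lem: precpt of local times}), feeds these into the tightness criterion of Theorem \ref{2. thm: tightness in M_L} together with Proposition \ref{6. prop: convergence of processes}, and only then identifies the subsequential limit via a Skorohod argument mirroring the end of the deterministic proof; the first assertion $\mathbf{P}(G\in\check{\mathbb{F}})=1$ is handled separately by Proposition \ref{6. prop: random recurrence is preserved by the non-explosion condition}. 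Your scheme instead couples the spaces \emph{jointly with} the scalar functionals $A_n^r$ and $B_n^{r,m}$ along a subsequence forced to converge, so that the pathwise $\liminf_l \tilde{A}_l^r$ and $\limsup_l \tilde{B}_l^{r,m}$ become genuine limits whose laws are accessible to the portmanteau theorem --- this correctly neutralises the standard objection that marginal in-probability bounds do not control a pathwise $\limsup$ under an arbitrary coupling (independent copies would violate it by Borel--Cantelli), and it is the right fix. What your route buys is economy: Lemmas \ref{6. lem: uniform non-explosion condition}--\ref{6. lem: precpt of local times} become unnecessary because the deterministic theorem already packages precompactness and identification. What it costs is a genuine burden you only flag rather than discharge: you must show that $G \mapsto R_G(\rho_G, B_{R_G}(\rho_G,r)^c)$ and $G \mapsto N_{R^{(r)}}(F^{(r)},2^{-k})$ are measurable isometry-invariant functionals on $\mathbb{G}$ (so that the Skorohod copies satisfy $\tilde{A}_l^r = A^r(\tilde{G}_l)$ a.s.), and you must handle the passage from rational to all $r$ in Assumption \ref{1. assum: deterministic version}\ref{1. assum item: deterministic, the metric-entropy condition} via the covering-number monotonicity $N_d(S,2\varepsilon)\leq N_d(S',\varepsilon)$ for $S\subseteq S'$, which shifts $k$ by one but is harmless. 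Neither point is a gap in substance --- the paper itself implicitly treats these quantities as random variables --- but a complete write-up would need them, whereas the paper's more pedestrian route works directly on the given probability spaces and avoids the issue.
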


\begin{rem}
  By \cite[Proposition~6.1]{Noda_pre_Convergence},
  if $G$ is a random element of $\check{\mathbb{F}}$,
  then $\cX_{G}$ is a random element of $\mbM_{L}$.
  Similarly,
  in Corollary~\ref{4. cor: Y_G is measurable},
  it is proven that $\hat{\cY}_{n}$ that appears in Theorem~\ref{1. thm: random, main result} 
  is a random element of $\mbM_{L}$.
\end{rem}

\begin{rem}
  Checking the metric-entropy condition of 
  Assumption~\ref{1. assum: deterministic version}\ref{1. assum item: deterministic, the metric-entropy condition}
  or Assumption~\ref{1. assum: random version}\ref{1. assum item: random, the metric-entropy condition} 
  directly can be challenging.
  However, it can be verified via suitable volume estimates of balls.
  In particular,
  if the sequence $(G_{n})_{n \geq 1}$ satisfies the uniform volume doubling (UVD) condition 
  (see \cite[Definition1.1]{Croydon_Hambly_Kumagai_17_Time-changes}),
  then the metric-entropy condition is satisfied.
  Sufficient conditions for the metric-entropy condition
  are provided in \cite[Section 7]{Noda_pre_Convergence}.
\end{rem}

\begin{rem}
  In \cite{Noda_pre_Convergence},
  where convergence of continuous-time Markov chains and their local times is established,
  the following examples are considered.
  \begin{enumerate} [label = (\alph*)]
    \item Models in the UVD regime (cf.\ \cite{Croydon_Hambly_Kumagai_17_Time-changes}).
    \item \label{1. rem item: random gasket}
      A random recursive Sierpi\'{n}ski gasket (cf.\ \cite{Hambly_97_Brownian}).
    \item \label{1. rem item: GW trees}
      Critical Galton--Watson trees conditioned on size 
      (cf.\ \cite{Aldous_93_The_continuum,Andriopoulos_23_Convergence,Duquesne_03_A_limit}).
    \item \label{1. rem item: UST}
      Uniform spanning trees on $\ZN^d$ with $d=2, 3$ and on high-dimensional tori
      (cf. \cite{Angel_Croydon_Hernandez-Torres_Shiraishi_21_Scaling,Archer_Nachmias_Shalev_24_The_GHP,Barlow_Croydon_Kumagai_17_Subsequential}).
    \item \label{1. rem item: Erdos-renyi random graph}
      The critical Erd\H{o}s--R\'{e}nyi random graph (cf.\ \cite{Berry_Broutin_Goldschmidt_12_The_continuum}).
    \item \label{1. rem item: configuration models} 
      The critical configuration model (cf.\ \cite{Bhamidi_Sen_20_Geometry}).
  \end{enumerate}
  Our main results are also applicable to these examples.
  However, we emphasize that in \cite{Noda_pre_Convergence} counting measures are considered 
  for \ref{1. rem item: GW trees}, \ref{1. rem item: UST}, \ref{1. rem item: Erdos-renyi random graph} and \ref{1. rem item: configuration models},
  and for \ref{1. rem item: random gasket} the existence of deterministic scaling factors for measures is not mentioned.
  Thus, in order to apply our main results, 
  we need to close some small gaps; we do this in Appendix \ref{A. sec: application}.
  Moreover,
  in the appendix,
  we consider another example, the random conductance model on unbounded fractals.
\end{rem}

\begin{rem}
  In Appendix~\ref{A. sec: convergence of traces}, we also study the convergence of
  the traces of the random walks and their local times onto (possibly random)
  compact subsets. This question is of independent interest, 
  since in the main results of the paper traces are used only as a technical tool to approximate Markov chains on non-compact networks, 
  whereas the convergence of the traces themselves involves an additional subtlety
  regarding conductances on boundaries.  
  We therefore treat this topic separately in the appendix, 
  as it may be useful in other contexts as well.
\end{rem}

The remainder of the article is organized as follows.
In Section~\ref{sec: the topologies for the main results},
we introduce the space $\mathbb{G}$ and $\mbM_{L}$ used in our main results.
In Section~\ref{sec: resistance forms},
we provide new results on resistance forms by characterizing and studying associated extended Dirichlet spaces.
The results are applied to the study of traces of electrical networks 
in Section~\ref{sec: electrical networks}.
In Sections~\ref{sec: proof of main result 1} and~\ref{sec: proof of the main results 2},
we provide the proofs of Theorem~\ref{1. thm: deterministic, main result} and Theorem~\ref{1. thm: random, main result},
respectively.

\section{The topologies for the main results} \label{sec: the topologies for the main results}

\newcommand{\dHbar}{d_{\bar{H}, \rho}}
\newcommand{\dH}{d_{H}}
\newcommand{\cCc}{\mathcal{C}_{\mathrm{cpt}}}
\newcommand{\cC}{\mathcal{C}}
\newcommand{\cMfin}{\mathcal{M}_{\mathrm{fin}}}
\newcommand{\cM}{\mathcal{M}}
\newcommand{\dP}{d_{P}}
\newcommand{\dV}{d_{V, \rho}}

In this section,
we introduce the topologies used in our main results,
following \cite[Section~2]{Noda_pre_Convergence}.
Note that we set $a \wedge b \coloneqq  \min \{a, b\}$ 
and $a \vee b \coloneqq  \max \{ a, b \}$ for $a, b \in \RN \cup \{ \pm \infty \}$.

\subsection{The local Gromov--Hausdorff-vague topology}  \label{sec: the local GHV}

We introduce the local Gromov--Hausdorff-vague topology,
which is used to discuss convergence of rooted-and-measured boundedly-compact metric spaces.
For details,
refer to 
\cite[Section 2]{Noda_pre_Convergence} and \cite{Noda_pre_Metrization}.

Let $(S, d, \rho)$ be a rooted boundedly-compact metric space.

\begin{dfn} \label{dfn: space of closed subsets}
  We write $\cCc(S)$ (resp.\ $\cC(S)$) for the set of compact (resp.\ closed) subsets of $S$.
  Note that both $\cCc(S)$ and $\cC(S)$ include the empty set.
\end{dfn}

We first recall the \emph{Hausdorff metric} on $\cCc(S)$.
For a subset $A \subseteq S$,
the \textit{(closed) $\varepsilon$-neighborhood} of $A$ in $(S, d)$ is given by 
\begin{equation}
  A^{\varepsilon} 
  \coloneqq
  \{
    x \in S \mid \exists y \in A\ \text{such that}\ d(x,y) \leq \varepsilon
  \}.
\end{equation} 
The Hausdorff metric $d_{H}$ on $\cCc(S)$ is defined by setting
\begin{equation}
  d_{H}(A, B)
  \coloneqq
  \inf\{
    \varepsilon \geq 0 \mid A \subseteq B^{\varepsilon},\, B \subseteq A^{\varepsilon}
  \},
\end{equation}
where the infimum over the empty set is defined to be $\infty$.
It is known that $d_{H}$ is indeed a metric (allowed to take the value $\infty$ due to the empty set) on $\cCc(S)$
(see \cite[Section 7.3.1]{Burago_Burago_Ivanov_01_A_course}).
We call the topology on $\cCc(S)$ induced from $\dH$ 
the \textit{Hausdorff topology}.

A commonly used topology on $\cC(S)$ is the Fell topology (see \cite[Appendix~C]{Molchanov_17_Theory}).
We define a metric inducing this topology using the Hausdorff metric as follows.
Note that, for each subset $A \subseteq S$ and $r > 0$, 
we write 
  \begin{equation}  \label{eq: restriction of a set to a closed ball}
  A^{(r)} 
  \coloneqq 
  \cl(A \cap B_{d}(\rho, r)),
\end{equation}
where $\cl(\cdot)$ denotes the closure of a subset.

\begin{dfn}
  For each $A, B \in \cC(S)$, define
  \begin{equation} \label{dfn eq: fell metric}
    \dHbar (A, B) 
    \coloneqq 
    \int_{0}^{\infty} e^{-r} \left( 1 \wedge d_{H}(A^{(r)}, B^{(r)}) \right) dr.
  \end{equation}
\end{dfn}

The function $\dHbar$ is indeed a metric on $\cC(S)$
and a natural extension of the Hausdorff metric for non-compact sets.
The following is a basic property of $\dHbar$.

\begin{thm} [{\cite[Theorems~3.8, 3.9, and 3.11]{Noda_pre_Metrization}}]
  The function $\dHbar$ is a metric 
  on $\cC(S)$
  and the metric space $(\cC(S), \dHbar)$ is compact.
  The induced topology on $\cC(S)$ coincides with the Fell topology.
  In particular, a sequence $(A_{n})_{n \geq 1}$ converges to $A$ with respect to $\dHbar$
  if and only if 
  $A_{n}^{(r)}$ converges to $A^{(r)}$
  in the Hausdorff topology for all but countably many $r>0$.
\end{thm}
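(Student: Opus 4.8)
The plan is to reduce every assertion to a single technical fact about restricting closed sets to balls, and then to bootstrap. Throughout I write $A^{(r)}=\cl(A\cap B_d(\rho,r))$ and note that each $A^{(r)}$ is a closed subset of the compact ball $D_d(\rho,r)$, hence compact, so that $d_H(A^{(r)},B^{(r)})$ is a genuine (finite) Hausdorff distance. First I would dispatch the metric axioms for $\dHbar$: finiteness is immediate from $1\wedge d_H\le 1$ and $\int_{0}^{\infty}e^{-r}\,dr=1$; symmetry and nonnegativity are clear; the triangle inequality follows by integrating the pointwise estimate $1\wedge d_H(A^{(r)},C^{(r)})\le\bigl(1\wedge d_H(A^{(r)},B^{(r)})\bigr)+\bigl(1\wedge d_H(B^{(r)},C^{(r)})\bigr)$, which combines the triangle inequality for $d_H$ with the subadditivity of $t\mapsto 1\wedge t$. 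For the identity of indiscernibles, $\dHbar(A,B)=0$ forces $d_H(A^{(r)},B^{(r)})=0$, hence $A^{(r)}=B^{(r)}$, for Lebesgue-a.e.\ $r$; since $A=\bigcup_{r>0}A^{(r)}$, letting $r\to\infty$ along such radii gives $A=B$.

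The crux is a restriction-continuity lemma, which I would state as follows: if $K_n\to K$ in $(\cCc(D_d(\rho,R)),d_H)$ and $x_0\in S$, then $\cl(K_n\cap B_d(x_0,r))\to\cl(K\cap B_d(x_0,r))$ in $d_H$ for all but countably many $r>0$. There are two ingredients. For countability of the exceptional radii, I would observe that $g(r):=\cl(K\cap B_d(x_0,r))$ is nondecreasing in $r$ under inclusion and has at most countably many $d_H$-discontinuities, by the standard separation argument: at a jump of size $\ge\varepsilon$ one selects $y_r\in\lim_{s\downarrow r}g(s)$ with $d(y_r,g(r))\ge\varepsilon$, and since $y_r\in g(r')$ whenever $r<r'$, the chosen points are $\varepsilon$-separated, hence finite in the compact ball; taking $\varepsilon=1/k$ yields countability, and at every continuity radius one moreover has $\cl(K\cap B_d(x_0,r))=K\cap D_d(x_0,r)$. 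For convergence at a continuity radius $r$: every $y\in K\cap B_d(x_0,r)$ is a limit of $y_n\in K_n$ with $d(x_0,y_n)<r$ eventually, giving the lower Kuratowski inclusion, while any limit of points in $K_n\cap B_d(x_0,r)$ lies in $K\cap D_d(x_0,r)=\cl(K\cap B_d(x_0,r))$, giving the upper inclusion; as all sets sit in the compact $D_d(x_0,r)$, Kuratowski convergence upgrades to $d_H$-convergence.

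Granting the lemma, the remaining assertions are soft. The easy direction of the characterization is dominated convergence: if $A_n^{(r)}\to A^{(r)}$ in $d_H$ for all but countably many $r$, then $e^{-r}\bigl(1\wedge d_H(A_n^{(r)},A^{(r)})\bigr)\to0$ for a.e.\ $r$ and is dominated by $e^{-r}$, so $\dHbar(A_n,A)\to0$. Compactness of $(\cC(S),\dHbar)$ I would prove by sequential compactness: fixing a countable dense $\mathcal R\subseteq(0,\infty)$, Blaschke selection makes each $\cCc(D_d(\rho,r))$ $d_H$-compact, so a diagonal extraction yields a subsequence with $A_n^{(r)}\to C_r$ for all $r\in\mathcal R$ and $C_r\subseteq C_s$ for $r<s$; setting $A:=\cl(\bigcup_{r\in\mathcal R}C_r)$, I would combine the restriction-continuity lemma with the exact identity $A_n^{(r)}=\cl(A_n^{(s)}\cap B_d(\rho,r))$ (valid for $r<s$) to pinch the Kuratowski liminf and limsup of $A_n^{(r)}$ between consecutive $C_r$'s, identifying their common value with $A^{(r)}$ for all but countably many $r$, and then invoke the easy direction. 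This shows every sequence has a $\dHbar$-convergent subsequence whose restrictions converge at a.e.\ radius of the limit. The hard direction of the characterization then follows formally from uniqueness of the $\dHbar$-limit: were $A_n^{(r_0)}\not\to A^{(r_0)}$ at a continuity radius $r_0$ of $A$ despite $\dHbar(A_n,A)\to0$, I would pass to a subsequence realizing $d_H(A_n^{(r_0)},A^{(r_0)})\ge\varepsilon$, extract by compactness a further subsequence whose restrictions converge at a.e.\ radius of its limit, note that this limit equals $A$, and reach a contradiction.

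Finally, for independence of the root, I would use that a second root $\rho'$ satisfies $B_d(\rho',r)\subseteq B_d(\rho,r+d(\rho,\rho'))$ and symmetrically, so the two exhausting families of balls are mutually cofinal; applying the restriction-continuity lemma with $x_0=\rho'$ on each large $\rho$-ball converts $d_H$-convergence of the $\rho$-restrictions into $d_H$-convergence of the $\rho'$-restrictions at all but countably many radii, and vice versa, so the two local Hausdorff metrics determine the same convergent sequences and hence the same topology. The hard part will be the restriction-continuity lemma itself — specifically, controlling the sets on the bounding sphere $\{d(x_0,\cdot)=r\}$ to guarantee that only countably many radii fail, together with the bookkeeping in the compactness step needed to match $A^{(r)}$ with the diagonal limits $C_r$; once this lemma is secured, the axioms, the characterization, and root-independence all drop out.
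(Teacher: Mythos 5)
The paper does not actually prove this theorem: it is imported wholesale from \cite[Section 2.1.1]{Noda_pre_Metrization}, so there is no internal proof to compare against. Judged on its own terms, your proposal is a correct and essentially standard reconstruction: the metric axioms via subadditivity of $t\mapsto 1\wedge t$, the identification $A=\bigcup_{r>0}A^{(r)}$ for the separation axiom, the restriction-continuity lemma (with the $\varepsilon$-separated-points count of discontinuity radii of the monotone set-valued map $r\mapsto A^{(r)}$, noting that this map is automatically left-continuous so only right jumps matter), dominated convergence for the easy implication, Blaschke selection plus a diagonal argument for compactness, and cofinality of the two families of balls for root-independence are all sound.

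Two spots deserve more than the sketch gives them. First, in the compactness step, identifying $\cl(C_s\cap B_d(\rho,r))$ with $A^{(r)}$ where $A:=\cl(\bigcup_{s\in\mathcal R}C_s)$ requires you to verify the compatibility $C_r=\cl(C_s\cap B_d(\rho,r))$ for $r<s$ in $\mathcal R$ (or, more robustly, to take $A$ to be the Kuratowski lower limit of $(A_n)$ along the subsequence); you flag this yourself, and it is genuinely where the work lies. Second, in the hard direction of the characterization, after extracting the further subsequence whose restrictions converge at all but countably many radii to the restrictions of a limit necessarily equal to $A$, you still need to rule out that your fixed continuity radius $r_0$ of $A$ lands in that subsequence's exceptional set. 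The clean fix is the monotone sandwich: if $E\subseteq F\subseteq G$ and $d_H(E,H)\vee d_H(G,H)\leq\delta$ then $d_H(F,H)\leq\delta$; applying this with $E=A_n^{(r)}$, $F=A_n^{(r_0)}$, $G=A_n^{(r')}$ for good radii $r<r_0<r'$ close to $r_0$, and $H=A^{(r_0)}$, closes the gap (and the same lemma also streamlines the compactness bookkeeping). With those two points made explicit the argument is complete.
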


For convergence of measures, 
we use the vague topology.
So, we next introduce a metric inducing the vague topology.
Recall that $(S, d, \rho)$ is a rooted boundedly-compact metric space.

\begin{dfn} \label{dfn: space of measures}
  We write $\cMfin(S)$ (resp.\ $\cM(S)$) for the set of finite Borel (resp.\ Radon) measures on $(S, d)$.
\end{dfn}

Recall that the \textit{Prohorov metric} $d_{P}$
between $\mu, \nu \in \cMfin(S)$  
is given by 
\begin{equation}
  d_{P}(\mu, \nu) 
  \coloneqq
  \inf\{
    \varepsilon \mid \mu(A) \leq \nu(A^{\varepsilon}) + \varepsilon,\,
    \nu(A) \leq \mu(A^{\varepsilon}) + \varepsilon,\,
    \forall A \subseteq S
  \}.
\end{equation}
By extending this Prohorov metric similarly to \eqref{dfn eq: fell metric},
we define a metric on $\cM(S)$ as follows.
Note that, for each $\mu \in \cM(S)$ and $r > 0$,
we write $\mu^{(r)}$ 
for the restriction of $\mu$ to $B_{d}(\rho, r)$.

\begin{dfn} \label{dfn: vague metric}
  For each $\mu, \nu \in \cM(S)$,
  we define
  \begin{equation}
    \dV(\mu, \nu)
    \coloneqq
    \int_{0}^{\infty} e^{-r} \left( 1 \wedge d_{P}(\mu^{(r)}, \nu^{(r)}) \right) dr.
  \end{equation}
\end{dfn}

\begin{thm} [{\cite[Theorems~3.19 and 3.20]{Noda_pre_Metrization}}]
  The function $\dV$ is a metric on $\mathcal{M}(S)$.
  The metric space $(\mathcal{M}(S), \dV)$ is separable and complete.
  Let $\mu, \mu_{1}, \mu_{2}, \ldots$ be 
  elements of $\cM(S)$.
  Then these conditions are equivalent:
  \begin{enumerate} [label = \textup{(\roman*)}]
    \item $\mu_{n}$ converges to a Radon measure $\mu$ with respect to $\dV$;
    \item $\mu_{n}^{(r)}$ converges weakly to $\mu^{(r)}$ for all but countably many $r>0$;
    \item $\mu_{n}$ converges vaguely to $\mu$, that is, for all continuous functions $f \colon S \to \RN$ with compact support,
      it holds that 
      \begin{equation}
        \lim_{n \to \infty} \int_{S} f(x)\, \mu_{n}(dx) = \int_{S} f(x)\, \mu(dx).
      \end{equation}
  \end{enumerate}
\end{thm}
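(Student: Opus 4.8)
The plan is to treat $\dV$ as an $e^{-r}\,dr$-average of the truncated Prohorov distances $1 \wedge \dP(\mu^{(r)},\nu^{(r)})$ between the restrictions to the compact balls $S^{(r)} = \cl(B_{d}(\rho,r))$, and to transfer each assertion from the corresponding property of the Prohorov space $(\cMfin(S^{(r)}), \dP)$. As preliminaries I would record two facts used throughout. First, for fixed Borel $A$ the map $r \mapsto \mu^{(r)}(A)$ is nondecreasing, so $r \mapsto \mu^{(r)}$ is measurable into $\cMfin(S)$; composing with the $\dP$-continuous map $(\lambda,\nu) \mapsto 1 \wedge \dP(\lambda,\nu)$ shows the integrand is measurable, and since it is dominated by $e^{-r}$ the integral $\dV(\mu,\nu)$ is well defined and finite. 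Second, because $\mu$ is Radon, the spheres $\{x : d(\rho,x)=s\}$ carrying positive $\mu$-mass are at most countable (only finitely many of mass $\ge 1/k$ lie in each ball $S^{(N)}$); I call $s$ a \emph{continuity radius} of $\mu$ when the corresponding sphere is $\mu$-null, so that all but countably many radii are continuity radii.

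For the metric axioms, symmetry and finiteness are immediate, and the triangle inequality follows by integrating the pointwise inequality $1 \wedge \dP(\mu^{(r)},\lambda^{(r)}) \le (1 \wedge \dP(\mu^{(r)},\nu^{(r)})) + (1 \wedge \dP(\nu^{(r)},\lambda^{(r)}))$, valid since truncating a metric at $1$ preserves the triangle inequality. For the identity of indiscernibles, $\dV(\mu,\nu)=0$ forces $\mu^{(r)}=\nu^{(r)}$ for Lebesgue-a.e.\ $r$; letting $r \to \infty$ through such radii and using that every $f \in C_{c}(S)$ is supported in some $S^{(r)}$ gives $\int f\,d\mu = \int f\,d\nu$ for all $f \in C_{c}(S)$, hence $\mu = \nu$.

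The heart of the matter is the equivalence of (1)--(3), which I would establish by the cycle (3)$\Rightarrow$(2)$\Rightarrow$(1)$\Rightarrow$(3). The implication (2)$\Rightarrow$(1) is a bounded-convergence argument: if $\mu_{n}^{(r)} \to \mu^{(r)}$ weakly for all but countably many $r$, then $1 \wedge \dP(\mu_{n}^{(r)},\mu^{(r)}) \to 0$ for a.e.\ $r$ (as $\dP$ metrizes weak convergence on $\cMfin(S^{(r)})$), and the dominant $e^{-r}$ yields $\dV(\mu_{n},\mu) \to 0$. For (3)$\Rightarrow$(2), I fix a continuity radius $r$ and $g \in C_{b}(S)$, and take an Urysohn function $\phi_{\varepsilon} \in C_{c}(S)$ with $1_{S^{(r)}} \le \phi_{\varepsilon} \le 1_{B_{d}(\rho,r+\varepsilon)}$; then $\int g\phi_{\varepsilon}\,d\mu_{n} \to \int g\phi_{\varepsilon}\,d\mu$ by vague convergence, while the discrepancy between $\int g\phi_{\varepsilon}\,d\mu_{n}$ and $\int_{S^{(r)}} g\,d\mu_{n}$ is at most $\|g\|_{\infty}$ times the $\mu_{n}$-mass of the annulus $B_{d}(\rho,r+\varepsilon) \setminus S^{(r)}$, which I control by the $\mu$-mass of the closed annulus (again via vague convergence against Urysohn functions); letting $\varepsilon \downarrow 0$ through continuity radii this vanishes since $r$ is a continuity radius, giving weak convergence of the restrictions. \emph{The main obstacle} is (1)$\Rightarrow$(3), where $L^{1}(e^{-r}\,dr)$-convergence of the integrand must be upgraded to genuine vague convergence; I would argue by contradiction. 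If $\int f\,d\mu_{n} \not\to \int f\,d\mu$ for some $f \in C_{c}(S)$, pass to a subsequence with $|\int f\,d\mu_{n_{k}} - \int f\,d\mu| \ge \eta > 0$; since $\dV(\mu_{n_{k}},\mu) \to 0$ still holds, the integrand tends to $0$ in $L^{1}(e^{-r}\,dr)$, so a further subsequence satisfies $\dP(\mu_{n_{k_{j}}}^{(r)},\mu^{(r)}) \to 0$ for a.e.\ $r$; choosing one such $r$ that is also a continuity radius with $r$ exceeding the radius of $\supp f$, weak convergence of the restrictions gives $\int f\,d\mu_{n_{k_{j}}} = \int f\,d\mu_{n_{k_{j}}}^{(r)} \to \int f\,d\mu^{(r)} = \int f\,d\mu$, contradicting the bound $\eta$.

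Finally, for the topological conclusions I would use the equivalence just proved. Separability follows because the vague topology on the Radon measures of the separable, locally compact space $S$ is second countable; concretely, the finite rational combinations $\sum_{i} q_{i}\delta_{x_{i}}$ with $q_{i} \in \mathbb{Q}_{>0}$ and $x_{i}$ ranging over a countable dense subset of $S$ form a vaguely dense, hence (by (3)$\Rightarrow$(1)) $\dV$-dense, countable set. For completeness, given a $\dV$-Cauchy sequence $(\mu_{n})$, I would extract for a.e.\ $r$ a $\dP$-limit $\nu_{r} \in \cMfin(S^{(r)})$ using completeness of the Prohorov space over the compact ball $S^{(r)}$, verify the consistency $\nu_{r} = \nu_{r'}|_{S^{(r)}}$ for continuity radii $r < r'$ (restriction being $\dP$-continuous at measures not charging the sphere of radius $r$), glue the $\nu_{r}$ into a single Radon measure $\mu$ with $\mu^{(r)} = \nu_{r}$ at continuity radii, and conclude $\dV(\mu_{n},\mu) \to 0$ via (2)$\Rightarrow$(1). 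The recurring technical ingredient in all of this is the co-countability of non-continuity radii, which lets me move freely between the pointwise-in-$r$ Prohorov picture and the integrated metric $\dV$.
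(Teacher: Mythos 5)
This theorem is not proved in the paper at all: it is imported verbatim from \cite[Section 2.2.2]{Noda_pre_Metrization}, so there is no internal argument to compare yours against. Judged on its own, your proof is correct and follows the route one would expect: reduce everything to the Prohorov spaces over the compact balls $S^{(r)}$, exploit that the radii $r$ whose spheres carry positive mass are at most countable, and pass between a.e.-$r$ statements and the integrated metric $d_{V,\rho}$. The metric axioms, the cycle $(iii)\Rightarrow(ii)\Rightarrow(i)\Rightarrow(iii)$, and the separability argument are all sound as written. Two spots in the completeness argument deserve one more sentence each. First, a $d_{V,\rho}$-Cauchy sequence only gives Cauchyness of the integrands in $L^{1}(e^{-r}\,dr)$, which does not directly yield $d_{P}$-Cauchyness of $(\mu_{n}^{(r)})_{n}$ for a.e.\ $r$; you should first pass to a subsequence with $d_{V,\rho}(\mu_{n_{k}},\mu_{n_{k+1}})\leq 2^{-k}$ and apply Fubini to $\sum_{k}\bigl(1\wedge d_{P}(\mu_{n_{k}}^{(r)},\mu_{n_{k+1}}^{(r)})\bigr)$ to get a.e.-$r$ summability, hence $d_{P}$-Cauchyness, before invoking completeness of the Prohorov space over $S^{(r)}$ (completeness of the full sequence then follows from Cauchyness plus convergence of the subsequence). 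Second, the gluing of the consistent family $(\nu_{r})$ into a single Radon measure $\mu$ with $\mu^{(r)}=\nu_{r}$ at continuity radii is routine but should be stated (e.g.\ $\mu(A)\coloneqq\sup_{r}\nu_{r}(A\cap S^{(r)})$ along a cofinal sequence of mutually compatible radii, checking countable additivity and local finiteness). With those two points filled in, the argument is complete.
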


Now, we introduce the local Gromov--Hausdorff-vague topology.
We say that two rooted-and-measured boundedly-compact metric spaces 
$G_{i} = (S_{i}, d_{i}, \rho_{i}, \mu_{i})$, $i=1,2$,
are GHV-equivalent
if and only if there exists a root-preserving isometry $f \colon S_{1} \to S_{2}$ such that 
$\mu_{2} = \mu_{1} \circ f^{-1}$.
Note that $f$ being an isometry means that $f$ is distance-preserving and surjective (and hence bijective)
and $f$ being root-preserving means that $f(\rho_{1})=\rho_{2}$.
We write $\mathbb{G}$ for the collection of GHV-equivalence classes of 
rooted-and-measured boundedly-compact metric spaces.
We define $\mathbb{G}_{c}$ to be the collection of $(S, d, \rho, \mu) \in \mathbb{G}$
such that $(S, d)$ is compact.

\begin{rem} \label{rem: how to regard G as a set}
  From the rigorous point of view of set theory,
  neither $\mathbb{G}_c$ nor $\mathbb{G}$ is a set.
  However, it is possible to think of both as sets.
  This is because one can construct a legitimate set $\mathscr{G}$ of rooted-and-measured boundedly-compact spaces 
  such that any rooted-and-measured boundedly-compact space is GHV-equivalent to a unique element of $\mathscr{G}$.
  (see \cite[Proposition~6.2]{Noda_pre_Metrization}.)
  Therefore, in this article, 
  we will proceed with the discussion by treating $\mathbb{G}_c$ and $\mathbb{G}$ as sets
  to avoid repeatedly referring to this set-theoretic formality 
  concerning the choice of representatives.
\end{rem}

Recall that 
the \textit{(pointed) Gromov--Hausdorff--Prohorov metric} $d_{\mathrm{GHP}} $ on $\mathbb{G}_{c}$ is given 
by setting,
for $G_{i} = (S_{i}, d_{i}, \rho_{i}, \mu_{i}) \in \mathbb{G}_{c},\, i=1,2$,
\begin{equation}  \label{2. eq: dfn of GHP metric}
  d_{\mathrm{GHP}} (G_{1}, G_{2})
  \coloneqq 
  \inf_{f_{1}, f_{2}, M}
  \{
    d(f_{1}(\rho_{1}), f_{2}(\rho_{2})) 
    \vee 
    d_{H}(f_{1}(S_{1}), f_{2}(S_{2})) 
    \vee 
    d_{P}(\mu_{1} \circ f_{1}^{-1}, \mu_{2}  \circ f_{2}^{-1})
  \},
\end{equation}
where the infimum is taken over all compact metric spaces $(M, d)$ 
and all distance-preserving maps $f _i \colon S_{i} \to M,\, i=1,2$.
We equip $\mathbb{G}_c$ with the topology induced by $d_{\mathrm{GHP}}$,
and call it the \textit{(pointed) Gromov--Hausdorff--Prohorov topology}.
It is known that this topology is Polish 
and further details of this metric are found in \cite{Abraham_Delmas_Hoscheit_13_A_note,Khezeli_20_Metrization}.

An extension of $d_{\mathrm{GHP}}$ to a metric on $\mathbb{G}$ was also studied in \cite{Abraham_Delmas_Hoscheit_13_A_note,Khezeli_20_Metrization}.
It is defined in a manner similar to \eqref{dfn eq: fell metric} as follows:
for each $G_1, G_2 \in \mathbb{G}$, set
\begin{equation}
d'_{\mathrm{GHP}}(G_1, G_2)
\coloneqq
\int_0^\infty e^{-r} \bigl( 1 \wedge d_{\mathrm{GHP}}(G_1^{(r)}, G_2^{(r)}) \bigr)\, dr,
\end{equation}
where we recall the restriction operator $\cdot^{(r)}$ from \eqref{1. eq: dfn of restriction operator}.
This defines a metric on $\mathbb{G}$ that induces a Polish topology
(see \cite[Remark~3.20 and Theorem~3.27]{Khezeli_20_Metrization}).
In what follows, however, we introduce another metric $d_{\mathbb{G}}$ on $\mathbb{G}$ in Definition~\ref{dfn: GHV metric}.
Although $d_{\mathbb{G}}$ induces the same topology as $d'_{\mathrm{GHP}}$,
we prefer $d_{\mathbb{G}}$
since its formulation is more flexible
(see \cite[Section~1]{Noda_pre_Metrization} on this point).
Indeed, the same philosophy also underlies
the metrization of the space $\mathbb{M}_L$,
which will be discussed in the next subsection.
The metric $d_{\mathbb{G}}$ is defined
in a manner similar to \eqref{2. eq: dfn of GHP metric},
with a minor modification concerning the treatment of roots.

\begin{dfn} \label{dfn: GHV metric}
  For $G_{i} = (S_{i}, d_{i}, \rho_{i}, \mu_{i}) \in \mathbb{G},\, i=1,2$,
  we set 
  \begin{equation}
    d_{\mathbb{G}}(G_{1}, G_{2})
    \coloneqq
    \inf_{f_{1}, f_{2}, M} 
    \left\{
      \dHbar (f_{1}(S_{1}), f_{2}(S_{2})) \vee \dV (\mu_{1} \circ f_{1}^{-1}, \mu_{2} \circ f_{2}^{-1})
    \right\},
  \end{equation}
  where the infimum is taken over all rooted boundedly-compact metric spaces $(M, d, \rho)$ 
  and all root-and-distance-preserving maps $f _i \colon S_{i} \to M,\, i=1,2$.
\end{dfn}

\begin{thm} [{\cite[Theorem~8.9]{Noda_pre_Metrization}}]
  The function $d_{\mathbb{G}}$ is a well-defined metric on $\mathbb{G}$,
  and the metric space $(\mathbb{G}, d_{\mathbb{G}})$ is complete and separable.
\end{thm}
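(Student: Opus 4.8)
The plan is to verify the metric axioms by hand and then obtain completeness and separability by reducing, radius by radius, to the pointed Gromov--Hausdorff--Prohorov metric $d_{GHP}$ on $\mathbb{G}_{c}$, which is already known to be Polish. Well-definedness is immediate: replacing a representative $(S_{i}, d_{i}, \rho_{i}, \mu_{i})$ of a GHV-class by a GHV-equivalent one amounts to precomposing the admissible root-and-distance-preserving maps $f_{i}$ with a root-preserving measure-preserving isometry, which alters neither the image set $f_{i}(S_{i})$ nor the pushforward $\mu_{i} \circ f_{i}^{-1}$, and hence leaves $d_{\mathbb{G}}$ unchanged. Symmetry and non-negativity are clear, and since both $\dHbar$ and $\dV$ are integrals of the form $\int_{0}^{\infty} e^{-r}(1 \wedge \cdot)\,dr$ they are bounded by $1$, so $d_{\mathbb{G}} \leq 1$ is always finite. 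For the triangle inequality I would first record that $\dHbar$ and $\dV$ are \emph{intrinsic}: if $(M, d_{M}, \rho_{M})$ embeds into $(Z, d_{Z}, \rho_{Z})$ by a root-preserving distance-preserving map, then distances to the root are preserved, so the restrictions to balls $B_{d}(\rho, r)$ are unchanged and both local metrics computed in $Z$ agree with those computed in $M$. Given near-optimal embeddings of $(G_{1}, G_{2})$ into $M$ and of $(G_{2}, G_{3})$ into $N$, I would glue $M$ and $N$ along the common (closed, boundedly-compact) image of $S_{2}$ to form a rooted boundedly-compact space $Z$, embed all three spaces into $Z$, and combine the triangle inequalities for $\dHbar$ and $\dV$ inside $Z$ with the elementary bound $(a_{1}+a_{2}) \vee (b_{1}+b_{2}) \leq (a_{1} \vee b_{1}) + (a_{2} \vee b_{2})$.

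The substantive axiom is identity of indiscernibles, i.e.\ that $d_{\mathbb{G}}(G_{1}, G_{2}) = 0$ forces $G_{1} = G_{2}$ in $\mathbb{G}$. Here I would run the standard correspondence-and-compactness argument made available by bounded compactness. Fix a countable dense set $\{x_{i}\}$ in $S_{1}$ and, from embeddings witnessing $d_{\mathbb{G}}(G_{1}, G_{2}) < 1/k$, extract points $y_{i}^{k} \in S_{2}$ whose images approximate $f_{1}^{k}(x_{i})$ (using that, on each ball about the root, small local Hausdorff distance gives pointwise approximation). Since the $y_{i}^{k}$ remain in a fixed compact ball of $S_{2}$, a diagonal subsequence produces limits $y_{i} \in S_{2}$ with $d_{2}(y_{i}, y_{j}) = d_{1}(x_{i}, x_{j})$; this yields a root-preserving distance-preserving map on a dense set, which extends to all of $S_{1}$, and the symmetric construction furnishes an inverse, so the map is an isometry $\phi$. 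Finally I would upgrade $\dV \to 0$ to $\mu_{1} \circ \phi^{-1} = \mu_{2}$ through the vague-convergence characterization of $\dV$.

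For separability and completeness the key preliminary, which I would establish first, is the comparison linking $d_{\mathbb{G}}$ to $d_{GHP}$ of restrictions: $G_{n} \to G$ in $d_{\mathbb{G}}$ if and only if $G_{n}^{(r)} \to G^{(r)}$ in $d_{GHP}$ for all but countably many $r > 0$, which follows by relating $\dHbar$ and $\dV$ to the Hausdorff and Prohorov distances of the restrictions appearing in \eqref{1. eq: dfn of restriction operator}. Granting this, separability follows by approximating any $G$ by its compact restrictions $G^{(r)}$ (which converge as $r \to \infty$) and invoking separability of $(\mathbb{G}_{c}, d_{GHP})$; concretely, finite rooted-and-measured spaces with rational distances and rational atomic masses form a countable dense subset. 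For completeness, given a $d_{\mathbb{G}}$-Cauchy sequence $(G_{n})_{n \geq 1}$, for almost every $r$ the restrictions $(G_{n}^{(r)})_{n}$ are $d_{GHP}$-Cauchy and hence converge to a compact limit $H_{r} \in \mathbb{G}_{c}$; these limits form a consistent family, in that $H_{r}$ is the radius-$r$ restriction of $H_{r'}$ for $r < r'$, so they glue to a single rooted boundedly-compact measured space $G$, and one checks $d_{\mathbb{G}}(G_{n}, G) \to 0$.

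I expect the main obstacle to be completeness: assembling the compact limits $H_{r}$ of the restrictions into a genuine boundedly-compact limit $G$ — verifying consistency of the family, that the glued space is boundedly compact, and that $d_{\mathbb{G}}(G_{n}, G) \to 0$ rather than merely convergence of each separate restriction. The triangle-inequality gluing and the $d_{\mathbb{G}}$-versus-$d_{GHP}$ comparison are the other two points where the non-compactness of the spaces, and the root-dependent integral structure of $\dHbar$ and $\dV$, demand genuine care.
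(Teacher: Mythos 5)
The paper does not prove this statement: it is quoted verbatim from \cite[Theorem 3.7]{Noda_pre_Metrization}, so there is no in-paper proof to compare against. Judged on its own terms, your outline follows the standard metrization strategy for local Gromov--Hausdorff-type distances and is sound: well-definedness, symmetry and boundedness are as you say; the triangle inequality via gluing two ambient spaces along the closed isometric copies of $S_{2}$ works (the glued space is again rooted and boundedly compact, and $\dHbar$, $\dV$ are unchanged under further root-and-distance-preserving embeddings); and the identity-of-indiscernibles argument via approximate correspondences, a diagonal extraction inside fixed compact balls, and the observation that a root-preserving distance-preserving self-map of a boundedly-compact space restricts to a surjection of each closed ball, is the standard and correct route. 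You have also correctly located the two genuinely delicate points. First, extracting from $d_{\mathbb{G}}(G_{n},G_{m})\to 0$ that $(G_{n}^{(r)})_{n}$ is $d_{GHP}$-Cauchy for a \emph{fixed} full-measure set of $r$ requires a Chebyshev-plus-diagonal argument, since the exceptional set of radii a priori depends on the pair $(n,m)$. Second, for completeness the reference's natural route (and the one I would recommend over assembling the consistent family $(H_{r})_{r}$) is to pass to a subsequence with $d_{\mathbb{G}}(G_{n},G_{n+1})<2^{-n}$, glue the witnessing ambient spaces into a single chain, take its completion $Z$, verify that $Z$ is boundedly compact using the local Hausdorff Cauchy property, and then invoke the completeness of $(\cC(Z),\dHbar)$ and $(\cM(Z),\dV)$ (stated in Section \ref{sec: the local GHV}) to produce the limit closed set and limit Radon measure directly; this sidesteps the consistency and gluing issues for the family $(H_{r})_{r}$ that you flag as the main obstacle. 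With either route carried out carefully the argument closes, so I see no fatal gap, only the technical work you have already identified.
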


Regarding convergence in $\mathbb{G}$,
we have the following result.

\begin{thm} [{\cite[Theorem~8.10]{Noda_pre_Metrization}}] \label{2. thm: characterization of convergence in the local. GHV}
  Let $G=(S, d, \rho, \mu)$ and $G_{n} = (S_{n}, d^{n}, \rho_{n}, \mu_{n}),\, n \in \mathbb{N}$
  be elements in $\mathbb{G}$.
  Then, the following statements are equivalent:
  \begin{enumerate} [label = \textup{(\roman*)}]
    \item 
      $G_{n}$ converges to $G$ with respect to $d_{\mathbb{G}}$;
    \item    
      $G_{n}$ converges to $G$ with respect to $d'_{\mathrm{GHP}}$;
    \item 
      $G_{n}^{(r)}$ converges to $G^{(r)}$ in the Gromov--Hausdorff--Prohorov topology 
      for all but countably many $r>0$;
    \item 
      there exist a rooted boundedly-compact metric space $(M, d^{M}, \rho_{M})$ 
      and root-and-distance-preserving maps $f _n \colon S_{n} \to M$ and $f \colon S \to M$ such that 
      $f_{n}(S_{n}) \to f(S)$ in the Fell topology in $M$ 
      and 
      $\mu_{n} \circ f_{n}^{-1} \to \mu \circ f^{-1}$ vaguely as measures on $M$.
  \end{enumerate}  
\end{thm}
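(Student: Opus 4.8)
The plan is to prove the three statements equivalent by running the cycle (i) $\Rightarrow$ (iii) $\Rightarrow$ (ii) $\Rightarrow$ (i). The guiding principle is that each of the metrics in play is controlled radius by radius: $\dHbar$- and $\dV$-convergence are characterised by Hausdorff, resp.\ weak, convergence of the restrictions $A^{(r)}$, resp.\ $\mu^{(r)}$, for all but countably many $r$ (the two cited theorems), while $d_{GHP}$ is by definition a radius-$r$ object through $G^{(r)}$. Once every space and measure is realised inside a single ambient space this makes the easy directions a matter of intersecting co-countable sets of ``good radii''; the real work is in producing such a common ambient space, which is where (i) $\Rightarrow$ (iii) lives. (Note the converse (iii) $\Rightarrow$ (i) is anyway immediate, since the maps in (iii) are admissible in the infimum defining $d_{\mathbb{G}}$.)

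For (iii) $\Rightarrow$ (ii): fix the common space $(M, d^{M}, \rho_{M})$ and the root-and-distance-preserving maps $f_{n}, f$. Because these maps preserve the root and all distances, restriction commutes with embedding, i.e.\ $f_{n}(S_{n})^{(r)}$ is the image of $S_{n}^{(r)}$ and $(\mu_{n} \circ f_{n}^{-1})^{(r)}$ is the pushforward of $\mu_{n}^{(r)}$. Applying the cited characterisations of $\dHbar$- and $\dV$-convergence inside the fixed space $M$ produces a co-countable set of radii $r$ for which simultaneously $f_{n}(S_{n})^{(r)} \to f(S)^{(r)}$ in the Hausdorff topology and $(\mu_{n}\circ f_{n}^{-1})^{(r)} \to (\mu \circ f^{-1})^{(r)}$ weakly. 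For each such $r$ both restrictions embed isometrically in the single compact ball $D_{d^{M}}(\rho_{M}, r)$, so $d_{GHP}(G_{n}^{(r)}, G^{(r)})$ is dominated by the corresponding Hausdorff and Prohorov distances and tends to $0$, which is (ii).

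For (ii) $\Rightarrow$ (i): here it suffices to bound $d_{\mathbb{G}}(G_{n}, G)$ for each $n$, and no single limit space is needed. Given $\delta > 0$, pick a good radius $R$ with $e^{-R} < \delta$. Since $d_{GHP}(G_{n}^{(R)}, G^{(R)}) \to 0$, for large $n$ there is a compact space carrying the two restrictions with small Hausdorff and Prohorov distances; glue the full $S_{n}$ and $S$ onto this realisation (placing the parts beyond radius $R$ consistently), which keeps the combined space boundedly compact since it is a gluing of two boundedly-compact spaces along a compact region. On the resulting space the contribution to $\dHbar$ and to $\dV$ from radii up to $R$ is controlled by the $d_{GHP}$-proximity of the restrictions, while the tail beyond $R$ contributes at most $\int_{R}^{\infty} e^{-r}\,dr = e^{-R} < \delta$ to each. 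Letting $n \to \infty$ and then $R \to \infty$ gives $d_{\mathbb{G}}(G_{n}, G) \to 0$.

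The hard implication, and the main obstacle, is (i) $\Rightarrow$ (iii), where a single space must be manufactured for the whole sequence. I would take near-optimal realisations: spaces $(M_{n}, d^{M_{n}}, \rho_{M_{n}})$ with root-and-distance-preserving $g_{n} : S_{n} \to M_{n}$ and $\tilde{g}_{n} : S \to M_{n}$ such that $d_{\bar{H}, \rho_{M_{n}}}(g_{n}(S_{n}), \tilde{g}_{n}(S)) \vee d_{V, \rho_{M_{n}}}(\mu_{n} \circ g_{n}^{-1}, \mu \circ \tilde{g}_{n}^{-1}) \leq d_{\mathbb{G}}(G_{n}, G) + 2^{-n}$, and then glue all the $M_{n}$ along the isometric copies $\tilde{g}_{n}(S)$ of the fixed space $S$, quotient by distance-zero identifications, and complete to obtain $(M, d^{M}, \rho)$ with all roots identified at $\rho$. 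A direct check shows this gluing is isometric on each $M_{n}$ and introduces no shortcuts between components, hence preserves the radius-$r$ Hausdorff and Prohorov distances on the relevant balls exactly; consequently $\dHbar(f_{n}(S_{n}), f(S))$ and $\dV(\mu_{n}\circ f_{n}^{-1}, \mu \circ f^{-1})$ equal the quantities bounded above, so the required local Hausdorff and vague convergence in $M$ is automatic. What remains, and what genuinely requires the hypotheses (it is free from a diameter bound in the compact case), is that $M$ is boundedly compact. I would argue this one radius at a time: for a co-countable set of $r$ the Hausdorff convergence $f_{n}(S_{n})^{(r)} \to f(S)^{(r)}$ traps all but finitely many of the compact sets $f_{n}(S_{n})^{(r)}$ in a shrinking neighbourhood of the compact set $f(S)^{(r)}$, so $D_{d^{M}}(\rho, r)$ is totally bounded; being complete after the completion, it is compact, and monotonicity in $r$ upgrades this to every $r$. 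I expect this bounded-compactness verification, together with the bookkeeping that the gluing preserves every restriction-level distance, to be the technical heart of the argument.
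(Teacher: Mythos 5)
First, a point of comparison: the paper does not prove this theorem at all --- it is imported verbatim as \cite[Theorem 3.8]{Noda_pre_Metrization} and used as a black box --- so there is no in-paper argument to measure your proposal against. On its own terms, your architecture (the cycle (i) $\Rightarrow$ (iii) $\Rightarrow$ (ii) $\Rightarrow$ (i), with the radius-by-radius characterisations of $\dHbar$ and $\dV$ doing the bookkeeping and the common-space construction carrying the real weight) is the standard and correct one, and the (iii) $\Rightarrow$ (ii) step is clean as written.

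There are, however, two concrete soft spots. In (i) $\Rightarrow$ (iii), your bounded-compactness argument does not close as stated: you glue arbitrary near-optimal ambient spaces $M_{n}$ along the copies of $S$, but the total-boundedness of $D_{d^{M}}(\rho, r)$ that you derive only accounts for points of $\bigcup_{n} f_{n}(S_{n}) \cup f(S)$, whereas each $M_{n}$ may contain additional points within distance $r$ of the root and these all survive into $M$. You must first trim each $M_{n}$ to the closed subspace $g_{n}(S_{n}) \cup \tilde{g}_{n}(S)$ --- harmless, since $\dHbar$ and $\dV$ depend only on distances among the two images and the root --- after which your ``trap all but finitely many $f_{n}(S_{n})^{(r)}$ near the compact set $f(S)^{(r)}$'' argument does give total boundedness. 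In (ii) $\Rightarrow$ (i), the assertion that the contribution from radii up to $R$ is ``controlled by the $d_{GHP}$-proximity of the restrictions'' is too quick: closeness of $f_{n}(S_{n})^{(R)}$ and $f(S)^{(R)}$ at the single radius $R$ does not uniformly control $d_{H}(f_{n}(S_{n})^{(s)}, f(S)^{(s)})$ for $s < R$, precisely because restriction is discontinuous in the radius. One needs an estimate of the form $d_{H}(A^{(s)}, B^{(s)}) \leq \varepsilon + d_{H}(B^{(s)}, B^{(s+\varepsilon)})$ when $d_{H}(A^{(R)}, B^{(R)}) \leq \varepsilon$, followed by dominated convergence in $s$ applied to the fixed limit $B = f(S)$ (and the analogous Prohorov estimate); note also that a near-optimal $d_{GHP}$ realisation need not identify the two roots, so a small re-rooting is required before the glued space is admissible in the infimum defining $d_{\mathbb{G}}$. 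Both repairs are routine, so I would classify this as a correct outline with fixable gaps rather than a wrong approach.
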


\begin{dfn} [{The local Gromov--Hausdorff-vague topology}]
  We call the topology on $\mathbb{G}$ induced by the metric $d_{\mathbb{G}}$
  (or, equivalently, $d'_{\mathrm{GHP}}$) 
  the \textit{local Gromov--Hausdorff-vague topology}.
\end{dfn}

\begin{rem} 
  The local Gromov--Hausdorff-vague topology is a little different 
  from the Gromov--Hausdorff-vague topology introduced in \cite{Athreya_Lohr_Winter_16_The_gap}.
  This is because
  the topology in \cite{Athreya_Lohr_Winter_16_The_gap} deals with the convergence of the supports of measures 
  instead of the whole spaces.
  However, regarding our main results of this paper,
  since we assume that all measures contained in $\mathbb{F}$ (recall this from Definition~\ref{1. dfn: the space F}) are of full support,
  the topology induced into $\mathbb{F}$ is same
  whether one uses the Gromov--Hausdorff-vague topology defined in \cite{Athreya_Lohr_Winter_16_The_gap} or 
  the local Gromov--Hausdorff-vague topology.
\end{rem}

\begin{rem}
  The local Gromov--Hausdorff-vague topology is strictly coarser than the Gromov--Hausdorff--Prohorov topology
  (see \cite[Remark~3.23]{Khezeli_20_Metrization}).
\end{rem}

\begin{rem}
  From Theorem~\ref{2. thm: characterization of convergence in the local. GHV}, 
  it might be more appropriate to call the local Gromov--Hausdorff-vague topology 
  the Gromov--Fell-vague topology. 
  However, since the term “Gromov--Hausdorff” has been widely adopted in the literature, 
  including for non-compact underlying spaces, 
  and is more familiar to a broader audience, 
  we follow this convention and use “Gromov--Hausdorff” throughout the paper.
\end{rem}

For later use,
we introduce another space $\mathbb{D}$ by dropping measures from $\mathbb{G}$.
This is defined precisely as follows.
We say that two rooted boundedly-compact metric spaces are equivalent 
if and only if there exists a root-preserving isometry between them.
We write $\mathbb{D}$ for the collection of equivalence classes of rooted boundedly-compact metric spaces.
If we define a metric on $\mathbb{D}$ similarly to $d_{\mathbb{G}}$ in Definition~\ref{dfn: GHV metric}
but dropping measure components,
that is, 
if we define 
\begin{equation}
  d_{\mathbb{D}}\bigl( (S_1, d_1, \rho_1), (S_2, d_2, \rho_2)\bigr)
  \coloneqq
  \inf_{f_{1}, f_{2}, M} 
  \dHbar (f_{1}(S_{1}), f_{2}(S_{2})),
\end{equation}
then we obtain a complete, separable metric $d_{\mathbb{D}}$ on $\mathbb{D}$.
The induced topology is called the \emph{local Gromov--Hausdorff topology};
see \cite[Section~4]{Noda_pre_Metrization} for details.


\subsection{The space $\mbM_{L}$} \label{sec: the space M_L}
\newcommand{\hatC}{\widehat{C}}
\newcommand{\dhatC}{d_{\widehat{C}, \rho}}
\newcommand{\dhatCc}{d_{\widehat{C}_{c}}}

In this subsection,
following \cite[Section~2.2]{Noda_pre_Convergence},
we define an extended version of the local Gromov--Hausdorff–vague topology
on tuples consisting of
a rooted-and-measured boundedly-compact metric space
together with a probability measure on the space of cadlag paths and local-time-type functions.

To develop a Gromov--Hausdorff-type topology suitable for discussing the convergence of local times,
we first introduce an extension of the compact-convergence topology for continuous functions
to functions whose domains may differ.
Although we focus here on local-time-type functions,
the same discussion applies in a more general setting;
see \cite[Section~3.3]{Noda_pre_Metrization}.

Let $(S, d, \rho)$ be a rooted boundedly-compact metric space. 
Recall from Definition~\ref{dfn: space of closed subsets} that $\cC(S)$ denotes 
the collection of closed subsets of $S$.

\begin{dfn}
  We define 
  \begin{equation}
    \hatC (S \times \RNp, \RN) 
    \coloneqq 
    \bigcup_{X \in \cC(S)} 
    C(X \times \RNp, \RN).
  \end{equation} 
  Note that $\hatC(S \times \RNp, \RN)$ contains 
  the empty map $\emptyset_{\RN} \colon \emptyset \times \RNp \to \RN$.
  For each $L \in \hatC(S \times \RNp, \RN)$,
  if $L \in C(X \times \RNp, \RN)$,
  then we write $\dom_{1}(L) \coloneqq X$.
\end{dfn} 

\begin{dfn} [{The compact-convergence topology with variable domains}]
  Let $L, L_{1}, L_{2}, \ldots$ be elements in $\hatC(S \times \RNp, \RN)$.
  We say that $L_{n}$ converges to $L$ in the \textit{compact-convergence topology with variable domains}
  if and only if 
  the sets $\dom_{1}(L_{n})$ converge to $\dom_{1}(L)$ in the Fell topology in $S$,
  and it holds that, for all $T>0$ and $r>0$, 
  \begin{equation}  \label{2. eq: convergence in hatC, uniform convergence on compact subsets}
    \lim_{\delta \to 0}
    \limsup_{n \to \infty}
    \sup_{\substack{ x_{n} \in \dom_{1}(L_{n})^{(r)}, \\ x \in \dom_{1}(L)^{(r)}, \\ d(x_{n},x) < \delta}}
    \sup_{0 \leq t \leq T}
    |L_{n}(x_{n}, t) - L(x, t)| 
    =0,
  \end{equation}
  where we recall the restriction $\cdot^{(r)}$ from \eqref{eq: restriction of a set to a closed ball}.
\end{dfn}

By \cite[Theorem~2.18]{Noda_pre_Convergence},
the compact-convergence topology with variable domains is Polish.
This topology is a natural extension of the compact-convergence topology on $C(S \times \RNp, \RNp)$ 
in the following sense:
the inclusion map 
\begin{equation}
  C(S \times \RNp, \RNp) \ni 
  L \mapsto L 
  \in \hatC(S \times \RNp, \RN)
\end{equation}
is a topological embedding, 
i.e., a homeomorphism onto its image
(see \cite[Corollary~2.21]{Noda_pre_Convergence}).
For a precompactness criterion and a tightness criterion for $\hatC(S \times \RNp, \RN)$,
see \cite[Theorems~2.23 and~2.24]{Noda_pre_Convergence}.

Given two maps $f \colon A \to B$ and $f' \colon A' \to B'$,
we define $f \times f' \colon A \times A' \to B \times B'$ by setting
\begin{equation}
  (f \times f') (a, a')
  \coloneqq
  (f(a), f'(a')).
\end{equation}
We write $\idty_{A}$ for the identity map from $A$ to itself.

Finally,
it is possible to define the space $\mbM_{L}$.
Let $\mbM_{L}^{\circ}$ be the collection of $(S, d, \rho, \mu, \pi)$ 
such that $(S, d, \rho, \mu) \in \mathbb{G}$ and $\pi$ 
is a probability measure on $D(\RNp, S) \times \hatC(S \times \RNp, \RN)$,
where we recall that $D(\RNp, S)$ denotes the space of cadlag functions with values in $S$
equipped with the usual $J_{1}$-Skorohod topology.
To introduce an equivalence relation on $\mbM_{L}^{\circ}$,
we need a preparation.
For a distance-preserving map $f \colon S_{1} \to S_{2}$
between boundedly-compact metric spaces,
we define
\begin{gather}
  \tau_{f}^{J_{1}} \colon D(\RNp, S_{1}) \ni 
    X \mapsto f \circ X 
  \in D(\RNp, S_{2}),
  \label{2. eq: functor for cadlag functions}\\
  \tau_{f}^{\hatC} \colon \hatC(S_{1} \times \RNp, \RN) \ni 
    L \mapsto L \circ (f^{-1} \times \idty_{\RNp}) 
  \in \hatC(S_{2} \times \RNp, \RN),
\end{gather}
where the inverse map $f^{-1}$ is restricted to $f(\dom_{1}(L))$ 
so that $L \circ (f^{-1} \times \idty_{\RNp})$ is well-defined.
We then define $\tau_{f}^{J_{1} \times \hatC} \coloneqq \tau_{f}^{J_{1}} \times \tau_{f}^{\hatC}$.
For $\cX_{i} = (S_{i}, d_{i}, \rho_{i}, \mu_{i}, \pi_{i}) \in \mbM_{L}^{\circ},\, i=1,2$,
we say that $\cX_{1}$ is $(\tau^{J_{1} \times \hatC})^{-1}$-equivalent to $\cX_{2}$ 
if and only if there exists a root-preserving isometry $f \colon S_{1} \to S_{2}$ 
such that $\mu_{2} = \mu_{1} \circ f^{-1}$ and $\pi_{2} = \pi_{1} \circ (\tau_{f}^{J_{1} \times \hatC})^{-1}$.

\begin{dfn} 
  We define $\mbM_{L}$ to be the collection of $(\tau^{J_{1} \times \hatC})^{-1}$-equivalence classes of elements in $\mbM_{L}^{\circ}$.
\end{dfn}

\begin{rem}
  For the same reason given in Remark \ref{rem: how to regard G as a set}, 
  we can safely regard $\mbM_{L}$ as a set.
\end{rem}

We equip a Polish topology on $\mbM_{L}$ defined in \cite[Definition~2.27 and Theorem~2.28]{Noda_pre_Convergence}.
In particular, the topology is characterized in terms of convergence, as follows.

\begin{thm} [{Convergence in $\mbM_L$, \cite[Theorem~2.29]{Noda_pre_Convergence}}] \label{2. thm: space M_L, convergence}
  Let $\cX=(S, d, \rho, \mu, \pi)$ 
  and $\cX_{n} = (S_{n},\allowbreak d^{n},\allowbreak \rho_{n},\allowbreak \mu_{n}, \pi_{n})$, $n \in \mathbb{N}$,
  be elements in $\mbM_{L}$.
  Then $\cX_{n}$ converges to $\cX$ in $\mbM_L$
  if and only if there exist a rooted boundedly-compact metric space $(M, d^{M}, \rho_{M})$ 
  and root-and-distance-preserving maps $f _n \colon S_{n} \to M$ and $f \colon S \to M$ such that 
  $f_{n}(S_{n}) \to f(S)$ in the Fell topology,
  $\mu_{n} \circ f_{n}^{-1} \to \mu \circ f^{-1}$ vaguely as measures on $M$,
  and $\pi_{n} \circ (\tau_{f_{n}}^{J_{1} \times \hatC})^{-1} \to \pi \circ (\tau_{f}^{J_{1} \times \hatC})^{-1}$
  weakly as probability measures on $D(\RNp, M) \times \hatC(M \times \RNp, \RN)$.
\end{thm}

Let us prepare to describe precompactness and tightness criteria.
For $\xi \in D(\RNp, S)$, where $(S, d)$ is a metric space,
we define  
\begin{equation}  \label{2. eq: modulus continuity for cadlag curves}
  \tilde{w}_{S}(\xi, h, t) 
  \coloneqq 
  \inf_{(I_{k}) \in \Pi_{t}} \max_{k} \sup_{r, s \in I_{k}} d(\xi(r), \xi(s)),
  \quad 
  t, h > 0,
\end{equation}
where $\Pi_{t}$ denotes the set of partitions of the interval $[0, t)$
into subintervals $I_{k} = [u, v)$ with $v - u \geq h$ when $v < t$.

\begin{thm} [{Precompactness in $\mbM_{L}$, \cite[Theorem~2.30]{Noda_pre_Convergence}}]  \label{2. thm: precpt in M_L}
  Let $(S_{n}, d^{n}, \rho_{n}, \mu_{n}, \pi_{n}),\, n \geq 1$, be elements of $\mbM_{L}$.
  For each $n \in \mathbb{N}$,
  let $(X_{n}, L_{n})$ be a random element of $D(\RNp, S_{n}) \times \hatC(S_{n} \times \RNp, \RN)$
  whose law coincides with $\pi_{n}$.
  We denote the underlying probability measure of $(X_{n}, L_{n})$ by $P_{n}$.
  Fix a dense set $I \subseteq \RNp$.
  Then the family $\{(S_{n}, d^{n}, \rho_{n}, \pi_{n})\}_{n \geq 1}$ is precompact in $\mbM_L$
  if and only if the following conditions are satisfied.
  \begin{enumerate} [label = \textup{(\roman*)}, leftmargin = *, series = precpt in M_L]
    \item \label{2. thm item: precpt in M_L, spaces are precompact}
      The family $\{(S_{n}, d^{n}, \rho_{n}, \mu_{n})\}_{n \geq 1}$ is precompact 
      in the local Gromov--Hausdorff-vague topology. 
    \item \label{2. thm item: precpt in M_L, values of processes are precompact} 
      For each $t \in I$,
      it holds that 
      $\displaystyle 
        \lim_{r \to \infty} 
        \limsup_{n \to \infty} 
        P_{n} \left( X_{n}(t) \notin S_{n}^{(r)} \right) = 0
      $.
    \item \label{2. thm item: precpt in M_L, equicontinuity of processes} 
      For each $t > 0$,
      it holds that, for all $\varepsilon > 0$, 
      $\displaystyle 
        \lim_{h \downarrow 0} 
        \limsup_{n \to \infty} 
        P_{n} \left( \tilde{w}_{S_{n}} (X_{n}, h, t) > \varepsilon \right) = 0
      $.
    \item \label{2. thm item: precpt in M_L, boundedness of local times} 
      For each $r>0$, 
      it holds that 
      $\displaystyle 
        \lim_{M \to \infty} 
        \limsup_{n \to \infty} 
        P_{n} \left( \sup_{x \in \dom_{1}(L_{n})^{(r)}} L_{n}(x, 0) > M \right) = 0.
      $
    \item \label{2. thm item: precpt in M_L, equicontinuity of local times} 
      For each $r>0$ and $T > 0$,
      it holds that, for all $\varepsilon > 0$,
      \begin{equation}
        \lim_{\delta \downarrow 0} 
        \limsup_{n \to \infty} 
        P_{n} 
        \left(
          \sup_{\substack{ x, y \in \dom_{1}(L_{n})^{(r)}, \\ d^{n}(x, y) < \delta}}
          \sup_{\substack{0 \leq s, t \leq T, \\ |t-s| < \delta}} 
          |L_{n}(x,t) - L_{n}(y,s)| 
          > 
          \varepsilon
        \right)
        = 
        0.
      \end{equation}
  \end{enumerate}
  In that case, the following result holds.
    \begin{enumerate} [resume* = precpt in M_L]
      \item \label{2. thm item: precpt in M_L, uniform boundedness of processes}
        For each $t \geq 0$, 
        it holds that 
        $\displaystyle
          \lim_{r \to \infty} 
          \limsup_{n \to \infty} 
          P_{n}\left( X_{n}(s) \notin S_{n}^{(r)}\ \text{for some}\ s \leq t \right) 
          = 0.
        $
    \end{enumerate}
\end{thm}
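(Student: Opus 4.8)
The statement is an equivalence, together with the supplementary conclusion \ref{2. thm item: precpt in M_L, uniform boundedness of processes}. The substantive direction is sufficiency: assuming \ref{2. thm item: precpt in M_L, spaces are precompact}--\ref{2. thm item: precpt in M_L, equicontinuity of local times}, I want to show that every subsequence of $((S_{n},d^{n},\rho_{n},\mu_{n},\pi_{n}))_{n}$ admits a further subsequence converging in $\mbM_{L}$. By Theorem \ref{2. thm: space M_L, convergence}, such convergence is witnessed by embedding the spaces into a common rooted boundedly-compact $M$ and checking local-Hausdorff convergence of the images, vague convergence of the pushed measures, and weak convergence of the pushed laws. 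So the plan is to produce, along a sub-subsequence, such a common $M$ and to verify these three convergences: the first two will come from \ref{2. thm item: precpt in M_L, spaces are precompact}, and the third from \ref{2. thm item: precpt in M_L, values of processes are precompact}--\ref{2. thm item: precpt in M_L, equicontinuity of local times} via a tightness argument.

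For sufficiency, first fix a subsequence. By \ref{2. thm item: precpt in M_L, spaces are precompact} and the characterization of local Gromov-Hausdorff-vague convergence (Theorem \ref{2. thm: characterization of convergence in the local. GHV}), I pass to a sub-subsequence along which there is a common rooted boundedly-compact space $(M,d^{M},\rho_{M})$ with root-and-distance-preserving embeddings $f_{n}\colon S_{n}\to M$ such that $f_{n}(S_{n})\to F$ in the local Hausdorff topology and $\mu_{n}\circ f_{n}^{-1}\to\mu$ vaguely. It then remains to show that the laws $\pi_{n}\circ(\tau_{f_{n}}^{J_{1}\times\hatC})^{-1}$ on $D(\mbRp,M)\times\hatC(M\times\mbRp,\mbR)$ are tight. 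Since $d_{J_{1}\times\hatC,\rho}$ is the max of the two factor metrics, it suffices to establish tightness of each marginal. Because each $f_{n}$ is distance- and root-preserving, $f_{n}(X_{n}(t))\notin M^{(r)}$ holds essentially exactly when $X_{n}(t)\notin S_{n}^{(r)}$, and $\tilde{w}_{M}(f_{n}\circ X_{n},h,t)=\tilde{w}_{S_{n}}(X_{n},h,t)$; thus \ref{2. thm item: precpt in M_L, values of processes are precompact} gives pointwise compact containment of $f_{n}\circ X_{n}$ in the compact balls $M^{(r)}$ and \ref{2. thm item: precpt in M_L, equicontinuity of processes} gives the Skorohod modulus bound, so the $D(\mbRp,M)$-marginals are tight. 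For the $\hatC$-marginal, the images $f_{n}(\dom_{1}(L_{n}))$ are automatically precompact since $(\cC(M),\dHbar)$ is compact, while \ref{2. thm item: precpt in M_L, boundedness of local times} and \ref{2. thm item: precpt in M_L, equicontinuity of local times} are precisely the hypotheses of the tightness criterion for $\hatC$ from \cite[Theorem 2.23]{Noda_pre_Convergence} (boundedness at $t=0$ propagating to $[0,T]$ through the equicontinuity in \ref{2. thm item: precpt in M_L, equicontinuity of local times}). Hence both marginals, and so the joint law, are tight; extracting one further subsequence along which the laws converge weakly to some $\pi$ and invoking Theorem \ref{2. thm: space M_L, convergence} yields convergence in $\mbM_{L}$, proving precompactness.

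For necessity, forgetting $\pi$ is $1$-Lipschitz from $(\mbM_{L},d_{\mbM_{L}})$ to $(\mathbb{G},d_{\mathbb{G}})$, so precompactness in $\mbM_{L}$ forces \ref{2. thm item: precpt in M_L, spaces are precompact}. For \ref{2. thm item: precpt in M_L, values of processes are precompact}--\ref{2. thm item: precpt in M_L, equicontinuity of local times} I would argue by contradiction: were one of these $\lim\limsup$ conditions to fail, I could select a subsequence along which the relevant probability stays bounded away from $0$; precompactness would then furnish a convergent sub-subsequence, realized in a common $M$ as above, along which the laws converge weakly and are therefore tight, so the necessity halves of the $D$- and $\hatC$-tightness criteria of \cite{Noda_pre_Convergence} would force the very condition assumed to fail. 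Finally, the supplementary conclusion \ref{2. thm item: precpt in M_L, uniform boundedness of processes} follows from \ref{2. thm item: precpt in M_L, values of processes are precompact} and \ref{2. thm item: precpt in M_L, equicontinuity of processes} by a standard Skorohod-path estimate: equicontinuity keeps the path inside $S_{n}^{(r+1)}$ between consecutive points of the dense set $I$ whenever it lies in $S_{n}^{(r)}$ at those points, upgrading pointwise to uniform compact containment.

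I expect the main obstacle to be the tightness of the $\hatC$-marginal. The domains $\dom_{1}(L_{n})$ vary with $n$, the ambient space $M$ is only boundedly (not globally) compact, and one must ensure that the limiting law is supported on functions whose domain is exactly the limit set $F$. Reconciling the variable-domain Arzel\`{a}--Ascoli structure with the merely local compactness, and coupling it correctly with the $D$-marginal so that the joint limit lives over $F$, is the technical heart of the argument; by comparison, the path-space tightness and the necessity direction are routine.
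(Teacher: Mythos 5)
This theorem is not proved in the present paper; it is quoted verbatim from \cite[Theorem 2.29]{Noda_pre_Convergence}, so there is no in-paper proof to compare against. Your outline is nonetheless the expected argument: extract a common embedding via condition (i), reduce to marginal tightness in $D(\mbRp,M)$ and $\hatC(M\times\mbRp,\mbR)$ using (ii)--(iii) and (iv)--(v) respectively, run necessity through the converse halves of those tightness criteria, and obtain (vi) by the grid-plus-modulus estimate; the one genuinely delicate point, ensuring the limit law is carried by paths and local-time domains lying in the limit set $f(S)$ so that the weak limit defines an element of $\mbM_{L}$, is exactly the one you flag, and it is handled by the closedness of local Hausdorff limits. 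I see no gap beyond that acknowledged technical step.
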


\begin{thm} [{Tightness in $\mbM_{L}$, \cite[Theorem~2.31]{Noda_pre_Convergence}}]  \label{2. thm: tightness in M_L}
  For each $n \in \mathbb{N}$,
  let $(S_{n}, d^{n}, \rho_{n}, \mu_{n}, \pi_{n})$ be a random element of $\mbM_{L}$ 
  built on a probability space $(\Omega_{n}, \mathcal{F}_{n}, \mathbf{P}_{n})$.
  For each $\omega \in \Omega_{n}$,
  let $(X_{n}^{\omega}, L_{n}^{\omega})$ 
  be a random element of $D(\RNp, S_{n}) \times \hatC(S_{n} \times \RNp, \RN)$
  whose law coincides with $\pi_{n}(\omega)$.
  We denote the underlying probability measure of $(X_{n}^{\omega}, L_{n}^{\omega})$ by $P_{n}^{\omega}$.
  Fix a dense set $I \subseteq \RNp$.
  Then the family $\{(S_{n}, d^{n}, \rho_{n}, \pi_{n})\}_{n \geq 1}$ is tight
  as random elements of $\mbM_L$ if and only if the following conditions are satisfied.
  \begin{enumerate} [label = \textup{(\roman*)}, leftmargin = *]
    \item \label{2. thm item: tightness in M_L, spaces are tight}
      The family $\{(S_{n}, d^{n}, \rho_{n}, \mu_{n})\}_{n \geq 1}$ is tight 
      as random elements of $\mathbb{G}$ in the local Gromov--Hausdorff-vague topology. 
    \item \label{2. thm item: tightness in M_L, values of processes are precompact} 
      For each $t \in T$,
      it holds that, for all $\varepsilon > 0$,
      $\displaystyle 
        \lim_{r \to \infty} 
        \limsup_{n \to \infty} 
        \mathbf{P}_{n}
        \left(
          P_{n}^{\omega} \left( X_{n}^{\omega}(t) \notin S_{n}^{(r)} \right) > \varepsilon
        \right)
        = 0
      $.
    \item \label{2. thm item: tightness in M_L, equicontinuity of processes} 
      For each $t > 0$,
      it holds that, for all $\varepsilon, \delta >0$,
      \begin{equation}
        \lim_{h \downarrow 0} 
        \limsup_{n \to \infty} 
        \mathbf{P}_{n}
        \left(
          P_{n}^{\omega} \left( \tilde{w}_{S_{n}} (X_{n}^{\omega}, h, t) > \varepsilon \right)
          > 
          \delta
        \right)
         = 0.
      \end{equation}
    \item \label{2. thm item: tightness in M_L, boundedness of local times} 
      For each $r>0$, 
      it holds that, for all $\varepsilon > 0$,
      \begin{equation}
        \lim_{M \to \infty} 
        \limsup_{n \to \infty} 
        \mathbf{P}_{n}
        \left(
          P_{n}^{\omega} \left( \sup_{x \in \dom(L_{n})^{(r)}} L_{n}^{\omega}(x, 0) > M \right) 
          > 
          \varepsilon
        \right)
         = 0.
      \end{equation}
    \item \label{2. thm item: tightness in M_L, equicontinuity of local times} 
      For each $r>0$ and $T > 0$,
      it holds that, for all $\varepsilon_{1}, \varepsilon_{2} > 0$,
      \begin{equation}
        \lim_{\delta \downarrow 0} 
        \limsup_{n \to \infty} 
        \mathbf{P}_{n}
        \left(
          P_{n}^{\omega} 
          \left(
            \sup_{\substack{ x, y \in \dom_{1}(L_{n})^{(r)}, \\ d^{n}(x, y) < \delta}}
            \sup_{\substack{0 \leq s, t \leq T, \\ |t-s| < \delta}} 
            |L_{n}^{\omega}(x,t) - L_{n}^{\omega}(y,s)| 
            > 
            \varepsilon_{1}
          \right)
          > 
          \varepsilon_{2}
        \right)
        = 
        0.
      \end{equation}
  \end{enumerate}
\end{thm}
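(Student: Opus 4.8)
The plan is to deduce this annealed tightness criterion from the quenched precompactness criterion of Theorem~\ref{2. thm: precpt in M_L} by means of the compact-containment characterization of tightness in the Polish space $\mbM_{L}$. Recall that a sequence of random elements $\Xi_{n} = (S_{n}, d^{n}, \rho_{n}, \mu_{n}, \pi_{n})$ of $\mbM_{L}$ is tight if and only if for every $\varepsilon > 0$ there is a compact set $K_{\varepsilon} \subseteq \mbM_{L}$ with $\liminf_{n \to \infty} \mathbf{P}_{n}(\Xi_{n} \in K_{\varepsilon}) \geq 1 - \varepsilon$. The heart of the argument is a dictionary between the two sets of conditions: Theorem~\ref{2. thm: precpt in M_L} describes a compact subset of $\mbM_{L}$ as one on which the relevant functionals (escape of $X$, modulus of continuity of $X$, boundedness of $L(\cdot, 0)$, modulus of continuity of $L$, together with local Gromov--Hausdorff--vague precompactness of the base spaces) are controlled uniformly; and each of the conditions (i)--(v) of the present statement asserts precisely that, with $\mathbf{P}_{n}$-probability tending to one, the conditional law $\pi_{n}(\omega) = P_{n}^{\omega}$ obeys the corresponding deterministic bound. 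I would first record this correspondence carefully, noting that the outer-probability (complete-measure) formulation is what lets one treat $\omega \mapsto P_{n}^{\omega}(\,\cdot\,)$ sensibly against $\mathbf{P}_{n}$ despite the measurability issues inherent in the maps involved.

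For the ``only if'' direction, assume $(\Xi_{n})$ is tight and fix $\varepsilon > 0$, giving a compact $K_{\varepsilon}$ as above. Applying Theorem~\ref{2. thm: precpt in M_L} to an arbitrary sequence drawn from $K_{\varepsilon}$ shows that the relevant functionals are bounded uniformly over $K_{\varepsilon}$; in particular, for suitable parameters each bad event (e.g.\ $\{ \tilde{w}_{S}(X, h, t) > \varepsilon' \}$) has $\pi$-mass at most $\delta$ for every $(S, d, \rho, \mu, \pi) \in K_{\varepsilon}$. Hence on the event $\{ \Xi_{n} \in K_{\varepsilon} \}$ the conditional law $P_{n}^{\omega}$ satisfies the deterministic bound, so $\mathbf{P}_{n}(P_{n}^{\omega}(\mathrm{bad}) > \delta) \leq \mathbf{P}_{n}(\Xi_{n} \notin K_{\varepsilon}) \leq \varepsilon$, which yields conditions (ii)--(v). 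Condition (i) follows by pushing $K_{\varepsilon}$ forward under the $1$-Lipschitz (hence continuous) projection $\mbM_{L} \to \mathbb{G}$ that forgets $\pi$, whose image is a local Gromov--Hausdorff--vague compact set containing the space-parts with high probability.

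For the ``if'' direction I would run this in reverse. Fix $\varepsilon > 0$. Using condition (i) choose a precompact family of spaces capturing $(S_{n}, d^{n}, \rho_{n}, \mu_{n})$ up to probability $\varepsilon / 6$, and using each of (ii)--(v) choose parameters (cutoff $r$, mesh $h$, height $M$, scale $\delta$) so that the relevant deterministic bound on $\pi_{n}(\omega)$ holds with $\mathbf{P}_{n}$-probability at least $1 - \varepsilon/6$ in the $\limsup_{n}$ sense. Let $K_{\varepsilon}$ be the closure in $\mbM_{L}$ of the set of tuples $(S, d, \rho, \mu, \pi)$ whose space-part lies in the chosen precompact family and whose measure $\pi$ satisfies all of the chosen deterministic bounds. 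Theorem~\ref{2. thm: precpt in M_L} shows that any sequence in this set is precompact, so $K_{\varepsilon}$ is compact, and a union bound over the finitely many conditions gives $\limsup_{n \to \infty} \mathbf{P}_{n}(\Xi_{n} \notin K_{\varepsilon}) \leq \varepsilon$. Since finitely many random elements are automatically tight, this $\limsup$ control upgrades to uniform (all-$n$) control after enlarging $K_{\varepsilon}$, establishing compact containment and hence tightness.

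The step I expect to be the main obstacle is verifying that the set $K_{\varepsilon}$ built from deterministic bounds is genuinely (pre)compact and relating it cleanly to Theorem~\ref{2. thm: precpt in M_L}: the five conditions there are phrased for sequences via nested limits $\lim \limsup$, so one must check that the sublevel sets cut out by fixed parameters are preserved (up to an arbitrarily small relaxation) under convergence in $\mbM_{L}$, using the characterization of convergence in Theorem~\ref{2. thm: space M_L, convergence}, and in particular that the modulus-of-continuity and escape functionals are lower semicontinuous along convergent sequences of tuples with varying base spaces. Closely tied to this is the bookkeeping of quantifiers --- aligning the order of $\lim_{\delta}$, $\limsup_{n}$ and the thresholds $\varepsilon_{1}, \varepsilon_{2}$ appearing in (ii)--(v) with the parameters defining $K_{\varepsilon}$ --- together with the measurability and outer-probability technicalities that justify treating $\{ \Xi_{n} \in K_{\varepsilon} \}$ and $\{ P_{n}^{\omega}(\mathrm{bad}) > \delta \}$ as events up to completion.
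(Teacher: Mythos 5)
The paper does not prove this statement: it is quoted verbatim as \cite[Theorem 2.30]{Noda_pre_Convergence} and used as a black box, so there is no in-paper proof to compare against. Judged on its own terms, your compact-containment strategy --- characterising tightness in the Polish space $\mbM_{L}$ via Prohorov's theorem, identifying compact subsets of $\mbM_{L}$ with sets on which the five functionals of Theorem \ref{2. thm: precpt in M_L} are uniformly controlled, and then translating ``the conditional law $\pi_{n}(\omega)$ satisfies the deterministic bound with high $\mathbf{P}_{n}$-probability'' back and forth --- is the standard and correct route, and both directions as you sketch them go through.

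Two points deserve tightening. First, in the ``if'' direction you say a union bound over ``the finitely many conditions'' suffices; in fact conditions (ii)--(v) range over countably many parameters (each $t$ in a countable dense subset of $I$, each $r, T, \varepsilon_{1}, \varepsilon_{2}$ in countable dense families), so the compact set $K_{\varepsilon}$ must be cut out by countably many sublevel constraints, each arranged to fail with probability at most $\varepsilon 2^{-k}$; the closure of the resulting set is still compact because every sequence in it satisfies the hypotheses of Theorem \ref{2. thm: precpt in M_L}, so this is a bookkeeping fix rather than a gap. Second, in the ``only if'' direction the extraction of a uniform bound over the compact $K_{\varepsilon}$ from the sequential criterion needs the monotonicity of the relevant functionals in their auxiliary parameter (e.g.\ $\tilde{w}_{S}(\xi, h, t)$ is nondecreasing in $h$), which is what turns ``every sequence violating the bound at scale $1/m$ contradicts Theorem \ref{2. thm: precpt in M_L}'' into the uniform statement; you should make this explicit. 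The measurability caveats you raise are real but are exactly what the completeness of $(\Omega_{n}, \mathcal{F}_{n}, \mathbf{P}_{n})$ is there to absorb.
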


Although it is not a space of our main interest,
we introduce another space $\mbM$ for convenience.
This space is used in the proofs of our main results.
Roughly speaking,
it is the space of measured metric spaces equipped with laws of stochastic processes,
and precisely defined as follows.
Let $\mbM^{\circ}$ be the collection of $(S, d, \rho, \mu, \pi')$ 
such that $(S, d, \rho, \mu) \in \mathbb{G}$ and $\pi'$ 
is a probability measure on $D(\RNp, S)$.
Recall $\tau^{J_{1}}$ from \eqref{2. eq: functor for cadlag functions}.
For $\cX_{i} = (S_{i}, d_{i}, \rho_{i}, \mu_{i}, \pi'_{i}) \in \mbM^{\circ},\, i=1,2$,
we say that $\cX_{1}$ is $(\tau^{J_{1}})^{-1}$-equivalent to $\cX_{2}$ 
if and only if there exists a root-preserving isometry $f \colon S_{1} \to S_{2}$ 
such that $\mu_{2} = \mu_{1} \circ f^{-1}$ and $\pi_{2} = \pi_{1} \circ (\tau_{f}^{J_{1}})^{-1}$.

\begin{dfn} [{The space $\mbM$}]
  We define $\mbM$ to be the collection of $(\tau^{J_{1}})^{-1}$-equivalence classes of elements in $\mbM^{\circ}$.
\end{dfn}

We equip a Polish topology on $\mbM_{L}$ defined in \cite[Definition~2.33 and Theorem~2.34]{Noda_pre_Convergence}.
In particular, the topology is characterized in terms of convergence, as follows.

\begin{thm} [{Convergence in $\mbM$, \cite[Theorem~2.35]{Noda_pre_Convergence}}] \label{2. thm: space M, convergence}
  Let $\cX=(S, d, \rho, \mu, \pi')$ 
  and $\cX_{n} = (S_{n},\allowbreak d^{n},\allowbreak \rho_{n},\allowbreak \mu_{n}, \pi'_{n})$, $n \in \mathbb{N}$,
  be elements in $\mbM$.
  Then $\cX_{n}$ converges to $\cX$ in $\mbM$
  if and only if there exist a rooted boundedly-compact metric space $(M, d^{M}, \rho_{M})$ 
  and root-and-distance-preserving maps $f _n \colon S_{n} \to M$ and $f \colon S \to M$ such that 
  $f_{n}(S_{n}) \to f(S)$ in the Fell topology,
  $\mu_{n} \circ f_{n}^{-1} \to \mu \circ f^{-1}$ vaguely as measures on $M$
  and $\pi'_{n} \circ (\tau_{f_{n}}^{J_{1}})^{-1} \to \pi' \circ (\tau_{f}^{J_{1}})^{-1}$
  weakly as probability measures on $D(\RNp, M)$.
\end{thm}

The following are analogues of Theorems~\ref{2. thm: precpt in M_L} and \ref{2. thm: tightness in M_L}, respectively.

\begin{thm} [{Precompactness in $\mbM$, \cite[Theorem~2.36]{Noda_pre_Convergence}}]  \label{2. thm: precompactness in M}
  Fix a sequence $((S_{n}, d^{n}, \rho_{n}, \mu_{n}, \pi'_{n}))_{n \geq 1}$ of elements of $\mbM$.
  Then the family $\{(S_{n}, d^{n}, \rho_{n}, \pi_{n})\}_{n \geq 1}$ is precompact 
  if and only if 
  the conditions \ref{2. thm item: precpt in M_L, spaces are precompact},
  \ref{2. thm item: precpt in M_L, values of processes are precompact} 
  and \ref{2. thm item: precpt in M_L, equicontinuity of processes} 
  of Theorem~\ref{2. thm: precpt in M_L} are satisfied.
\end{thm}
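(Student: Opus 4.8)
The plan is to deduce this criterion from its counterpart in $\mbM_{L}$ (Theorem \ref{2. thm: precpt in M_L}) by realizing $\mbM$ as a continuous retract of $\mbM_{L}$ via ``attaching a trivial local time''. For a rooted boundedly-compact metric space $(S,d,\rho)$, let $0_{S} \in C(S \times \mbRp, \mbR) \subseteq \hatC(S \times \mbRp, \mbR)$ be the zero function, so that $\dom_{1}(0_{S}) = S$. I would define a section
\[
  \Phi : \mbM \to \mbM_{L}, \quad (S,d,\rho,\mu,\pi') \mapsto (S,d,\rho,\mu,\pi' \circ q_{S}^{-1}), \quad q_{S}: X \mapsto (X, 0_{S}),
\]
together with the forgetful map
\[
  \Psi : \mbM_{L} \to \mbM, \quad (S,d,\rho,\mu,\pi) \mapsto (S,d,\rho,\mu,\pi \circ p_{S}^{-1}), \quad p_{S}: (X,L) \mapsto X.
\]
Both are well defined on equivalence classes, since a root-preserving isometry $f$ satisfies $0_{S_{1}} \circ (f^{-1} \times \idty_{\mbRp}) = 0_{S_{2}}$ and commutes with the projection; moreover $p_{S} \circ q_{S} = \idty_{D(\mbRp,S)}$ gives $\Psi \circ \Phi = \idty_{\mbM}$.

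The first substantive step is to establish continuity of $\Phi$ and $\Psi$ using the convergence characterizations in Theorem \ref{2. thm: space M, convergence} and Theorem \ref{2. thm: space M_L, convergence}. For $\Psi$ this is routine: after embedding into a common space $M$, the projection commutes with the embedding functors in the sense that $p_{M} \circ \tau_{f_{n}}^{J_{1} \times \hatC} = \tau_{f_{n}}^{J_{1}} \circ p_{S_{n}}$, so weak convergence of the joint laws on $D(\mbRp, M) \times \hatC(M \times \mbRp, \mbR)$ pushes forward to weak convergence of the path-marginals on $D(\mbRp, M)$ by the continuous mapping theorem. The step I expect to require the most care is the continuity of $\Phi$, specifically checking that attaching zero local times respects the compact-convergence topology with variable domains: when $f_{n}(S_{n}) \to f(S)$ in the local Hausdorff topology, I must verify $0_{f_{n}(S_{n})} \to 0_{f(S)}$ in $\hatC(M \times \mbRp, \mbR)$. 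This reduces to unwinding \eqref{2. eq: convergence in hatC, uniform convergence on compact subsets}, whose supremum is identically $0$ for zero functions; combined with the hypothesized local-Hausdorff convergence of the domains, the joint laws then converge and Theorem \ref{2. thm: space M_L, convergence} applies.

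With continuity in hand, both implications follow by transporting precompactness across $\Phi$ and $\Psi$, using that $\mbM$ and $\mbM_{L}$ are Polish so precompactness is sequential. For the forward direction, if the sequence is precompact in $\mbM$, then continuity of $\Phi$ makes the image sequence precompact in $\mbM_{L}$, and Theorem \ref{2. thm: precpt in M_L} yields all of its conditions; since \ref{2. thm item: precpt in M_L, spaces are precompact}, \ref{2. thm item: precpt in M_L, values of processes are precompact} and \ref{2. thm item: precpt in M_L, equicontinuity of processes} depend only on $(S_{n}, d^{n}, \rho_{n}, \mu_{n})$ and on the path-marginal $\pi'_{n}$, which are left unchanged by $\Phi$, they hold verbatim for the original sequence.

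For the converse, I would start from \ref{2. thm item: precpt in M_L, spaces are precompact}, \ref{2. thm item: precpt in M_L, values of processes are precompact}, \ref{2. thm item: precpt in M_L, equicontinuity of processes} and observe that the image sequence $\Phi(\cdot)$ carries a deterministic zero local time, so conditions \ref{2. thm item: precpt in M_L, boundedness of local times} and \ref{2. thm item: precpt in M_L, equicontinuity of local times} hold trivially (both suprema vanish), while the first three transfer exactly as above. Theorem \ref{2. thm: precpt in M_L} then gives precompactness of $\Phi(\cdot)$ in $\mbM_{L}$, and applying the continuous map $\Psi$ together with $\Psi \circ \Phi = \idty_{\mbM}$ recovers precompactness of the original sequence in $\mbM$. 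It is precisely this use of the retraction $\Psi$ that lets me avoid having to prove $\Phi(\mbM)$ is closed in $\mbM_{L}$, which would otherwise be an extra (and inessential) obstacle.
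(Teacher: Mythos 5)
Your argument is correct, but it is not the route the paper takes: the paper does not prove this statement at all, deferring to \cite[Section 2.2]{Noda_pre_Convergence}, where the criterion for $\mbM$ is obtained by the same direct compactness machinery (embeddings into a common space plus an Arzel\`a--Ascoli/Aldous-type analysis of the c\`adl\`ag laws) that underlies Theorem \ref{2. thm: precpt in M_L}. Your reduction via the section $\Phi$ (attach the zero local time with $\dom_{1}(0_{S})=S$) and the retraction $\Psi$ is a genuinely different and more economical derivation \emph{given} that Theorem \ref{2. thm: precpt in M_L} is already available: it avoids re-running the path-space arguments, and the observation that $\Psi\circ\Phi=\idty_{\mbM}$ neatly sidesteps any need to show $\Phi(\mbM)$ is closed. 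All the delicate points are handled: well-definedness on equivalence classes follows from $\tau_{f}^{J_{1}\times\hatC}\circ q_{S_{1}}=q_{S_{2}}\circ\tau_{f}^{J_{1}}$ and $p_{M}\circ\tau_{f}^{J_{1}\times\hatC}=\tau_{f}^{J_{1}}\circ p_{S}$; continuity of $\Phi$ reduces, exactly as you say, to $0_{f_{n}(S_{n})}\to 0_{f(S)}$ in the compact-convergence topology with variable domains, which holds because the domains converge in the local Hausdorff topology and the supremum in \eqref{2. eq: convergence in hatC, uniform convergence on compact subsets} vanishes identically; and conditions \ref{2. thm item: precpt in M_L, boundedness of local times}--\ref{2. thm item: precpt in M_L, equicontinuity of local times} are indeed trivial for the zero local time while \ref{2. thm item: precpt in M_L, spaces are precompact}--\ref{2. thm item: precpt in M_L, equicontinuity of processes} depend only on data $\Phi$ leaves untouched. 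The only step worth writing out in full is the claim that weak convergence of the path laws together with the deterministic convergence $0_{f_{n}(S_{n})}\to 0_{f(S)}$ gives weak convergence of the joint laws on the product: this is standard, but it uses tightness of the weakly convergent first marginals (Prohorov on a Polish space) and uniform continuity of test functions on compacts, so a one-line justification should be included.

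What each approach buys: the direct proof in the companion paper is self-contained and establishes the $\mbM$ and $\mbM_{L}$ criteria on an equal footing, whereas your argument exhibits the $\mbM$ criterion as a formal corollary of the $\mbM_{L}$ one, making the logical dependence transparent and shortening the verification considerably. If you intend this as a replacement proof, state explicitly that $\Phi$ and $\Psi$ are well defined on $(\tau^{J_{1}})^{-1}$- and $(\tau^{J_{1}\times\hatC})^{-1}$-equivalence classes before invoking their continuity.
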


\begin{thm} [{Tightness in $\mbM$, \cite[Theorem~2.36]{Noda_pre_Convergence}}]  \label{2. thm: tightness in M}
  For each $n \in \mathbb{N}$,
  let $(S_{n}, d^{n}, \rho_{n}, \mu_{n}, \pi'_{n})$ 
  be a random element of $\mbM$.
  Then the family $\{(S_{n}, d^{n}, \rho_{n}, \pi_{n})\}_{n \geq 1}$ is tight
  if and only if 
  the conditions \ref{2. thm item: tightness in M_L, spaces are tight},
  \ref{2. thm item: tightness in M_L, values of processes are precompact} 
  and \ref{2. thm item: tightness in M_L, equicontinuity of processes} 
  of Theorem~\ref{2. thm: tightness in M_L}
  are satisfied.
\end{thm}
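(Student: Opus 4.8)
The plan is to deduce this tightness criterion from the already-established tightness criterion in $\mbM_{L}$ (Theorem \ref{2. thm: tightness in M_L}) by realizing $\mbM$ as a closed subspace of $\mbM_{L}$ on which the two local-time conditions \ref{2. thm item: tightness in M_L, boundedness of local times} and \ref{2. thm item: tightness in M_L, equicontinuity of local times} become vacuous. Concretely, for a rooted boundedly-compact metric space $(S, d, \rho)$ let $0_{S} \in \hatC(S \times \mbRp, \mbR)$ denote the function identically equal to $0$ with $\dom_{1}(0_{S}) = S$, and define a map $\iota : \mbM \to \mbM_{L}$ by
\[
  \iota(S, d, \rho, \mu, \pi') \coloneqq (S, d, \rho, \mu, \pi' \circ j_{S}^{-1}),
  \qquad
  j_{S} : D(\mbRp, S) \ni X \mapsto (X, 0_{S}).
\]
Since $0_{S} \circ (f^{-1} \times \idty_{\mbRp}) = 0_{S'}$ for any root-preserving isometry $f : S \to S'$, the map $\iota$ respects the equivalence relations and is well defined, and it is Borel (indeed continuous, as checked next), so it sends random elements of $\mbM$ to random elements of $\mbM_{L}$.

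First I would check that $\iota$ is a topological embedding with closed image. Continuity, and the continuity of its inverse on the image, follow from the characterizations of convergence (Theorem \ref{2. thm: space M, convergence} and Theorem \ref{2. thm: space M_L, convergence}): after embedding all spaces isometrically into a common $M$, the pushforward decomposes as the product measure $\pi' \circ (\tau_{f}^{J_{1} \times \hatC})^{-1} = \bigl(\pi' \circ (\tau_{f}^{J_{1}})^{-1}\bigr) \otimes \delta_{0_{f(S)}}$, because $\iota$ attaches the same vanishing local time to every path. Consequently weak convergence of the $D(\mbRp, M)$-marginals is equivalent to weak convergence of the full laws, using that $0_{f_{n}(S_{n})} \to 0_{f(S)}$ in $\hatC(M \times \mbRp, \mbR)$ whenever $f_{n}(S_{n}) \to f(S)$ in the local Hausdorff topology — the requirement \eqref{2. eq: convergence in hatC, uniform convergence on compact subsets} holds trivially for identically vanishing functions. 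For closedness, I would note that $D(\mbRp, M) \times \{0_{f(S)}\}$ is closed, so any weak limit of measures of the form $\nu \otimes \delta_{0}$ is again of this form, whence $\iota(\mbM)$ is closed in $\mbM_{L}$.

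Granting that $\iota$ is a closed embedding, tightness transfers in both directions. If the sequence is tight in $\mbM$, then its continuous image is tight in $\mbM_{L}$; conversely, since $\iota(\mbM)$ is closed, intersecting a compact witnessing set $K \subseteq \mbM_{L}$ with $\iota(\mbM)$ and pulling it back through the homeomorphism $\iota$ produces a compact witnessing set in $\mbM$. Hence $((S_{n}, d^{n}, \rho_{n}, \mu_{n}, \pi'_{n}))_{n}$ is tight in $\mbM$ if and only if its image is tight in $\mbM_{L}$. Applying Theorem \ref{2. thm: tightness in M_L} to the image, I choose the representative $(X_{n}^{\omega}, L_{n}^{\omega})$ with $L_{n}^{\omega} = 0_{S_{n}}$ almost surely; then conditions \ref{2. thm item: tightness in M_L, boundedness of local times} and \ref{2. thm item: tightness in M_L, equicontinuity of local times} hold automatically, the relevant suprema being identically $0$, while conditions \ref{2. thm item: tightness in M_L, spaces are tight}, \ref{2. thm item: tightness in M_L, values of processes are precompact} and \ref{2. thm item: tightness in M_L, equicontinuity of processes} depend only on the spaces, the measures and the paths $X_{n}^{\omega}$, which are untouched by $\iota$. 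This gives exactly the asserted equivalence.

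The main obstacle I anticipate is precisely the claim that $\iota$ is a closed topological embedding: both the closedness of the image and the bicontinuity of $\iota$ require one to control the degenerate local times $0_{S_{n}}$ under the compact-convergence topology with variable domains and to upgrade marginal weak convergence to joint weak convergence on the product space. Everything else is bookkeeping. I note that one could alternatively bypass the embedding entirely by rerunning the Prohorov-extraction argument behind Theorem \ref{2. thm: tightness in M_L} verbatim while discarding conditions \ref{2. thm item: tightness in M_L, boundedness of local times} and \ref{2. thm item: tightness in M_L, equicontinuity of local times} and invoking the deterministic precompactness criterion of Theorem \ref{2. thm: precompactness in M} in place of its $\mbM_{L}$ counterpart.
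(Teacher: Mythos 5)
Your proposal is correct, but there is nothing in this paper to compare it against: the $\mbM$-results (Theorems \ref{2. thm: space M, convergence}--\ref{2. thm: tightness in M}) are stated here without proof and are quoted from \cite[Section 2.2]{Noda_pre_Convergence}, where they are presumably obtained by running the Prohorov-extraction argument for $\mbM$ directly in parallel with the $\mbM_{L}$ case --- essentially the alternative you sketch in your last sentence. Your route is genuinely different: you realize $\mbM$ inside $\mbM_{L}$ via the degenerate local time $0_{S}$ and transfer the tightness criterion, so that conditions \ref{2. thm item: tightness in M_L, boundedness of local times} and \ref{2. thm item: tightness in M_L, equicontinuity of local times} of Theorem \ref{2. thm: tightness in M_L} become vacuous while \ref{2. thm item: tightness in M_L, spaces are tight}--\ref{2. thm item: tightness in M_L, equicontinuity of processes} are untouched. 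The two nontrivial points you flag do go through: (a) the second marginal of $\pi'_{n}\circ(\tau_{f_{n}}^{J_{1}\times\hatC})^{-1}$ is $\delta_{0_{f_{n}(S_{n})}}$, and $0_{f_{n}(S_{n})}\to 0_{f(S)}$ in $\hatC(M\times\mbRp,\mbR)$ whenever $f_{n}(S_{n})\to f(S)$ in the local Hausdorff topology, since the domain condition is exactly the Hausdorff convergence and \eqref{2. eq: convergence in hatC, uniform convergence on compact subsets} is trivially $0$; consequently any weak limit has a Dirac second marginal, hence is again a product with $\delta_{0}$, giving closedness of the image, and weak convergence of the joint law is equivalent to weak convergence of the $D(\mbRp,M)$-marginal (the standard fact that $\nu_{n}\otimes\delta_{x_{n}}\to\nu\otimes\delta_{x}$ when $\nu_{n}\to\nu$ weakly and $x_{n}\to x$, using tightness of $(\nu_{n})$ on the Polish space $D(\mbRp,M)$); and (b) with the image closed and $\iota$ a homeomorphism onto it, a witnessing compact $K'\subseteq\mbM_{L}$ pulls back to the compact $\iota^{-1}(K'\cap\iota(\mbM))$ with the same exceptional probability, so tightness transfers both ways. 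What your approach buys is that the $\mbM$ criterion becomes a two-line corollary of the $\mbM_{L}$ one, at the price of the closed-embedding lemma; the direct approach avoids that lemma but duplicates the extraction argument. Either is acceptable; just make sure you also record (or observe that the identical embedding handles) the companion precompactness statement, Theorem \ref{2. thm: precompactness in M}, which follows from Theorem \ref{2. thm: precpt in M_L} by exactly the same device.
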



\section{Resistance forms}  \label{sec: resistance forms}

We start with recalling some fundamental properties of resistance forms in Section~\ref{sec: preliminary of resistance forms}.
In Section~\ref{sec: extended Dirichlet spaces of resistance forms},
we introduce and study an important subspace of a resistance form,
which characterizes the associated extended Dirichlet space.
Using the results,
we study traces of resistance forms in Section~\ref{sec: traces of resistance forms}.


\subsection{Preliminaries}  \label{sec: preliminary of resistance forms}

In this subsection,
we recall some basic properties of resistance forms and resistance metrics,
starting with their definitions.
The reader is referred to \cite{Kigami_12_Resistance} for further background.

\begin{dfn} [{Resistance form, \cite[Definition 3.1]{Kigami_12_Resistance}}] \label{3. dfn: resistance forms}
  Let $F$ be a non-empty set.
  A pair $(\mathcal{E}, \mathcal{F})$ is called a \textit{resistance form} on $F$ if it satisfies the following conditions.
  \begin{enumerate} [label=(RF\arabic*), leftmargin=*]
    \item \label{3. dfn item: the domain F}
          The symbol $\mathcal{F}$ is a linear subspace of $\{f \colon F \to \RN\}$ containing constant functions,
          and $\mathcal{E}$ is a non-negative symmetric bilinear form on $\mathcal{F}$
          such that $\mathcal{E}(f,f)=0$ if and only if $f$ is constant on $F$.
    \item \label{3. dfn item: the quotient space of resistance form is Hilbert}
          Let $\sim$ be the equivalence relation on $\mathcal{F}$ defined by saying $f \sim g$ if and only if $f-g$ is constant on $F$.
          Then $(\mathcal{F}/\sim, \mathcal{E})$ is a Hilbert space.
    \item \label{3. dfn item: separability of points}
          If $x \neq y$, then there exists an $f \in \mathcal{F}$ such that $f(x) \neq f(y)$.
    \item \label{3. dfn item: existence of resistance metric}
          For any $x, y \in F$,
          \begin{equation} \label{3. eq: definition of resistance metric}
            R_{(\cE, \cF)}(x,y)
            \coloneqq
            \sup
            \left\{
            \frac{|f(x) - f(y)|^{2}}{\mathcal{E}(f,f)}\,
            \middle |\,
            f \in \mathcal{F},\
            \mathcal{E}(f,f) > 0
            \right\}
            < \infty.
          \end{equation}     
    \item \label{3. dfn item: Markov property of resistance forms}
          If $\bar{f}\coloneqq  (f \wedge 1) \vee 0$,
          then $\bar{f} \in \mathcal{F}$ and $\mathcal{E}(\bar{f}, \bar{f}) \leq \mathcal{E}(f,f)$ for any $f \in \mathcal{F}$.
  \end{enumerate}
\end{dfn}

For later use, we prove a version of \ref{3. dfn item: Markov property of resistance forms}.

\begin{lem} \label{lem: min and max in resistance form}
  Let $(\cE, \cF)$ be a resistance form on a non-empty set $F$.
  Fix $f, g \in \cF$.
  Then $f \wedge g \in \cF$ and 
  \begin{equation}
    \cE(f \wedge g, f \wedge g) \leq \cE(f, f) + \cE(g, g).
  \end{equation}
  The same results hold when $f \wedge g$ is replaced with $f \vee g$.
\end{lem}

\begin{proof}
  One can readily verify that, for any real numbers $a, b, c, d$,
  \begin{equation}
    |a \wedge b - c \wedge d| \leq |a-c| \vee |b -d|.
  \end{equation}
  This implies that 
  \begin{equation}
    |f(x) \wedge g(x) - f(y) \wedge g(y)| \leq |f(x) - f(y)| \vee |g(x) - g(y)|,
    \qquad 
    \forall x,y \in F.
  \end{equation}
  In particular,
  \begin{equation}
    |f(x) \wedge g(x) - f(y) \wedge g(y)|^2 \leq |f(x) - f(y)|^2 + |g(x) - g(y)|^2,
    \qquad 
    \forall x, y \in F.
  \end{equation}
  Thus, we can follow the proof of \cite[Proposition~3.15]{Kigami_12_Resistance}
  to obtain the first assertion.
  The second assertion follows from the first assertion and the following relation:
  for any $a,b \in \RN$,
  \begin{equation}
    a \vee b = - \bigl( (-a) \wedge (-b) \bigr).
  \end{equation}
\end{proof}

For the following definition,
recall the effective resistance on an electrical network with a finite vertex set 
from \cite[Section~9.4]{Levin_Peres_17_Markov}
(see also \cite[Section~2.1]{Kigami_01_Analysis}).

\begin{dfn} [{Resistance metric, \cite[Definition 2.3.2]{Kigami_01_Analysis}}]
  \label{3. dfn: resistance metrics}
  A metric $R$ on a non-empty set $F$ is called a \textit{resistance metric}
  if and only if,
  for any non-empty finite subset $V \subseteq F$,
  there exists an electrical network $G$ with the vertex set $V$ 
  such that the effective resistance on $G$ coincides with $R|_{V \times V}$.
\end{dfn}

\begin{thm} [{\cite[Theorem 2.3.6]{Kigami_01_Analysis}}]  \label{3. thm: one-to-one correspondence of fomrs and metrics}
  There exists a one-to-one correspondence between resistance forms $(\cE, \cF)$ on $F$ 
  and resistance metrics $R$ on $F$ via $R = R_{(\cE, \cF)}$.
  In other words,
  a resistance form $(\cE, \cF)$ is characterized by $R_{(\cE, \cF)}$ 
  given in \ref{3. dfn item: existence of resistance metric}.
\end{thm}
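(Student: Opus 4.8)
The plan is to show that the assignment $(\cE,\cF)\mapsto R_{(\cE,\cF)}$, with $R_{(\cE,\cF)}$ given by the extremal formula in (RF4), is a bijection from resistance forms on $F$ onto resistance metrics on $F$. This splits into three tasks: that $R_{(\cE,\cF)}$ is always a resistance metric (well-definedness of the map), that $(\cE,\cF)$ is recoverable from $R_{(\cE,\cF)}$ (injectivity), and that every resistance metric arises this way (surjectivity). The single device underlying all three is the \emph{trace} of a resistance form onto a finite subset, which reduces statements about the infinite object $(\cE,\cF)$ to linear algebra on finite electrical networks.

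First I would dispose of the finite case, which is purely algebraic. A symmetric bilinear form $\cE$ on $\mbR^V$ with $V$ finite, satisfying (RF1) and the Markov property (RF5), is exactly the Dirichlet energy $\cE(f,f)=\frac12\sum_{x,y\in V}c(x,y)(f(x)-f(y))^2$ of an electrical network $(V,E,c)$ with non-negative conductances: (RF1) pins the kernel down to the constants, while (RF5) forces $c(x,y)\ge 0$. For such a form $R_{(\cE,\cF)}$ coincides with the effective resistance of the network, a short variational argument shows it is a metric on $V$, and conversely the network — hence the form — is uniquely determined by its effective resistance. This gives the bijection on finite sets.

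Next, for a general resistance form $(\cE,\cF)$ and a finite $V\subseteq F$, I would introduce the trace form $\cE_V(g,g):=\inf\{\cE(f,f):f\in\cF,\ f|_V=g\}$. Using the Hilbert-space structure (RF2), the infimum is attained by the harmonic extension of $g$, and $(\cE_V,\mbR^V)$ is a resistance form on the finite set $V$ with $R_{(\cE_V,\mbR^V)}=R_{(\cE,\cF)}|_{V\times V}$. By the finite case, $R_{(\cE,\cF)}|_{V\times V}$ is then the effective resistance of an electrical network on $V$, which is precisely the defining requirement for $R_{(\cE,\cF)}$ to be a resistance metric; the triangle inequality for $R_{(\cE,\cF)}$ follows by taking $V=\{x,y,z\}$. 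For injectivity, the traces are monotone in $V$ and satisfy $\cE(f,f)=\sup_V \cE_V(f|_V,f|_V)$ over finite $V\subseteq F$, so both $\cF$ and $\cE$ are reconstructed from the family $\{R_{(\cE,\cF)}|_{V\times V}\}$, i.e.\ from $R_{(\cE,\cF)}$ alone.

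Finally, for surjectivity I would run the construction in reverse. Given a resistance metric $R$, each finite $V$ carries an electrical network with effective resistance $R|_{V\times V}$, hence a finite resistance form $(\cE_V,\mbR^V)$; the finite uniqueness from the first step shows these are compatible under traces, since the trace of $(\cE_W,\mbR^W)$ onto $V\subseteq W$ has effective resistance $R|_{V\times V}$ and therefore equals $(\cE_V,\mbR^V)$. One then sets $\cE(f,f):=\sup_V \cE_V(f|_V,f|_V)$ and $\cF:=\{f:\cE(f,f)<\infty\}$ and verifies that $(\cE,\cF)$ is a resistance form with $R_{(\cE,\cF)}=R$. The main obstacle is exactly this last verification: checking the Hilbert-completeness condition (RF2) for the inductively defined form, and confirming that the supremum construction reproduces $R$ through the extremal formula (RF4). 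This requires controlling the limit of harmonic extensions along an exhaustion of $F$, and is where the bulk of the analytic work lies.
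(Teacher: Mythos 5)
The paper does not prove this statement; it is quoted directly from Kigami \cite[Theorem 2.3.6]{Kigami_01_Analysis}. Your sketch follows essentially the same route as Kigami's own proof — the finite-network case, traces onto finite subsets, and reconstruction of the form as a supremum over a compatible family — and you correctly identify the verification of \ref{3. dfn item: the quotient space of resistance form is Hilbert} for the limit form as the substantive analytic step, so the outline is sound.
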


In Assumptions~\ref{1. assum: deterministic version}\ref{1. assum item: deterministic, the non-explosion condition} 
and~\ref{1. assum: random version}\ref{1. assum item: random, the non-explosion condition},
we consider effective resistance between sets.
This is precisely defined below.

\begin{dfn} [{Effective resistance between sets}]  \label{3. dfn: effective resistance between sets}
  Fix a resistance form $(\cE, \cF)$ on $F$
  and write $R$ for the corresponding resistance metric.
  For sets $A, B \subseteq F$,
  we define
  \begin{equation}
    R(A,B)
    \coloneqq
    \left(
    \inf\{
    \mathcal{E}(f,f):
    f \in \mathcal{F},\
    f|_{A}=1,\
    f|_{B}=0
    \}
    \right)^{-1},
  \end{equation}
  which is defined to be zero if the infimum is taken over the empty set.
  Note that by \ref{3. dfn item: existence of resistance metric} 
  we clearly have $R( \{x\}, \{y\}) = R(x,y)$.
\end{dfn}

Fix a resistance form $(\cE, \cF)$ on a non-empty set $F$
and write $R$ for the corresponding resistance metric.
We equip $F$ with the topology induced from $R$.
Since the condition \ref{3. dfn item: existence of resistance metric} implies that 
\begin{equation}  \label{3. eq: the basic inequality for resistance forms}
  | f(x) - f(y) |^{2}
  \leq
  \cE(f,f) R(x,y),
  \quad 
  \forall f \in \cF,
\end{equation}
$\cF$ is a subset of $C(F, \RN)$.
We will henceforth assume that 
$(F, R)$ is locally compact and separable,
and the resistance form $(\mathcal{E}, \mathcal{F})$ is regular, as described by the following.

\begin{dfn} [{Regular resistance form, \cite[Definition 6.2]{Kigami_12_Resistance}}] \label{3. dfn: regular resistance forms}
  Let $C_{c}(F, \RN)$ be the space of compactly supported, continuous functions from $F$ to $\RN$,
  equipped with the supremum norm $\| \cdot \|_{\infty}$.
  A resistance form $(\mathcal{E}, \mathcal{F})$ on $F$ is called \textit{regular}
  if and only if $\mathcal{F} \cap C_{c}(F, \RN)$ is dense in $C_{c}(F, \RN)$ with respect to $\| \cdot \|_{\infty}$.
\end{dfn}

We next introduce related Dirichlet forms and stochastic processes.
First, suppose that we have a Radon measure $\mu$ of full support on $F$.
Let $\cB(F)$ be the Borel $\sigma$-algebra on $F$
and $\cB^{\mu}(F)$ be the completion of $\cB(F)$ with respect to $\mu$.

\begin{dfn} [{The spaces $\cL(F, \mu)$ and $L^{2}(F, \mu)$}]  \label{3. dfn: the spaces cL and L^2}
  Two extended real-valued functions are said to be $\mu$-equivalent
  if they coincide outside a $\mu$-null set.
  We define $\cL(F, \mu)$ to be the space of $\mu$-equivalence classes 
  of $\cB^{\mu}(F)$-measurable extended real-valued functions.
  The $L^{2}$-space $L^{2}(F,\mu)$ is the subspace of $\cL(F, \mu)$ 
  consisting of square-integrable functions equipped with the usual $L^{2}$-norm.
\end{dfn}

Now, we define a bilinear form $\mathcal{E}_{1}$ on $\mathcal{F} \cap L^{2}(F, \mu)$
by setting
\begin{equation} \label{eq: inner product E_1}
  \mathcal{E}_{1}(f,\, g)
  \coloneqq
  \mathcal{E}(f,\, g)
  +
  \int_{F}fg\, d\mu.
\end{equation}
Then $(\mathcal{F} \cap L^{2}(F, \mu), \mathcal{E}_{1})$ is a Hilbert space (see \cite[Theorem~2.4.1]{Kigami_01_Analysis}).
The Dirichlet form $(\cE, \cD)$ on $L^{2}(F, \mu)$ and the extended Dirichlet space $\cD_{e}$ 
associated with $(F, R, \mu)$ are given as follows.

\begin{dfn} [{\cite{Chen_Fukushima_12_Symmetric,Fukushima_Oshima_Takeda_11_Dirichlet}}]
  \label{3. dfn: Dirichlet form and extended Dirichlet space}
  We define the Dirichlet form $(\cE, \cD)$
  by setting $\mathcal{D}$ to be the closure of $\mathcal{F} \cap C_{c}(F, \RN)$ 
  in $(\mathcal{F} \cap L^{2}(F, \mu), \mathcal{E}_{1})$.
  The extended Dirichlet space $\mathcal{D}_{e}$ of $(\cE, \cD)$ is the subspace of $\cL(F, \mu)$
  consisting of $f$ such that $|f| < \infty$, $\mu$-a.e.\
  and there exists an $\mathcal{E}$-Cauchy sequence $(f_{n})_{n \geq 0}$ in $\mathcal{D}$ with $f_{n}(x) \to f(x)$, 
  $\mu$-a.e.\ $x$.
\end{dfn}

Since we assume that the resistance form $(\mathcal{E},\mathcal{F})$ is regular,
we have from \cite[Theorem~9.4]{Kigami_12_Resistance}
that the associated Dirichlet form $(\mathcal{E}, \mathcal{D})$ is regular 
(see \cite{Fukushima_Oshima_Takeda_11_Dirichlet} for the definition of a regular Dirichlet form).
Moreover, standard theory gives us the existence of an associated Hunt process $((X_{t} )_{t \geq 0}, (P_{x} )_{x \in F} )$
(e.g. \cite[Theorem~7.2.1]{Fukushima_Oshima_Takeda_11_Dirichlet}),
which we refer to as the Hunt process associated with $(F, R, \mu)$.
Note that such a process is, in general, only specified uniquely for starting points outside a set of zero capacity.
However, in this setting,
every point has strictly positive capacity (see \cite[Theorem~9.9]{Kigami_12_Resistance}),
and so the process is defined uniquely everywhere.

\begin{rem}
  In \cite[Chapter~9]{Kigami_12_Resistance},
  in addition to the above assumptions,
  $(F, R)$ is assumed to be complete,
  but it is easy to remove this assumption.
\end{rem}

We now recall the definition of local times.
Let $(\Omega, \mathcal{F})$ be the measurable space where the probability measures $(P_{x})_{x \in F}$ are defined.
We denote the minimum completed admissible filtration of the Hunt process $X$ by $(\mathcal{F}_{t})_{t \geq 0}$,
the family of the translation (shift) operators for $X$ by $(\theta_{t})_{t \geq 0}$
and the lifetime of $X$ by $\zeta$
(see \cite{Fukushima_Oshima_Takeda_11_Dirichlet} for these definitions).

\begin{dfn} [{PCAF and local time}]
  A non-decreasing, continuous, $(\mathcal{F}_{t})_{t \geq 0}$-adapted process $A = (A_{t})_{t \geq 0}$ on $(\Omega, \mathcal{F})$
  is called a \textit{positive continuous additive functional (PCAF)} of $X$ 
  if for all $x \in F$ it holds $P_{x}$-a.s.\ that $A_{0} = 0$, $A_{t} = A_{\zeta}$ for all $t \geq \zeta$ 
  and $A_{s+t} = A_{s} + A_{t} \circ \theta_{s}$ for all $s,t \geq 0$.
  A PCAF $A = (A_{t})_{t \geq 0}$ of $X$ is called a \textit{local time} of $X$ at $x \in F$ 
  if $P_{x}(T_{A} = 0) = 1$ and $P_{y}(T_{A} = 0) = 0$ for all $y \neq x$,
  where we set $T_{A}(\omega) \coloneqq \inf \{t \geq 0 : A_{t}(\omega) > 0\}$.
\end{dfn}

The metric-entropy condition \eqref{1. eq: definition of vF} that assumed for the spaces in $\check{\mathbb{F}}$ 
implies that the associated Hunt process admits a jointly continuous local time.

\begin{prop}  [{\cite[p.~18 and Corollary~4.16]{Noda_pre_Convergence}}]
  In the above setting,
  the Hunt process $X$ admits a jointly measurable local time $L=(L(x, t))_{x \in F, t \geq 0}$
  satisfying the occupation density formula,
  that is,
  it holds that, for all $x \in F$, $t \geq 0$ and all non-negative measurable functions $f \colon F \to \RNp$,
  \begin{equation} \label{3. eq: the occupation density formula}
    \int_{0}^{t} f(X_{s})ds = \int_{F}f(y) L_{t}(y)\, \mu(dy), \quad
    P_{x} \text{-a.s.}
  \end{equation}
  Moreover,
  if the resistance metric space $(F, R)$ is boundedly compact and satisfies \eqref{1. eq: definition of vF},
  then the local time $L$ can be chosen so that $L$ is jointly continuous on $F \times \RNp$,
  $P_{x}$-a.s.\ for all $x \in F$.
\end{prop}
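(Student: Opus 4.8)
The plan is to construct the local time pointwise as a positive continuous additive functional (PCAF), derive the occupation density formula from the Revuz correspondence, and then upgrade to joint continuity by comparing the local-time field with a Gaussian field whose intrinsic $L^{2}$-metric is controlled by the resistance metric $R$.

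For the existence statement, the starting point is that $(\cE, \cF)$ is a regular recurrent resistance form, so the associated Hunt process $X$ is symmetric and, crucially, \emph{every point of $F$ has strictly positive capacity} (hence is non-polar and regular for itself), as recorded via \cite[Theorem 9.9]{Kigami_12_Resistance}; this is the point at which the resistance structure, in particular the pointwise bound \eqref{3. eq: the basic inequality for resistance forms}, enters. Given this, for each fixed $x \in F$ the PCAF $L(x, \cdot)$ with Revuz measure $\delta_{x}$ exists by the standard theory of \cite{Fukushima_Oshima_Takeda_11_Dirichlet}. Integrating the Revuz correspondence against $\mu$ shows that $y \mapsto L(y, t)$ is a $\mu$-density of the occupation measure $A \mapsto \int_{0}^{t} 1_{A}(X_{s})\, ds$, which is exactly \eqref{3. eq: the occupation density formula}. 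The only delicate point here is to select a version of $(x, t, \omega) \mapsto L(x, t)$ that is \emph{jointly} measurable; I would obtain this by a regularization of the resolvent densities $g_{\lambda}(x, y)$, which for resistance forms are finite and jointly continuous (being dominated in terms of $R$).

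For the joint-continuity statement under \eqref{1. eq: definition of vF}, I would localize to each closed ball $F^{(r)} = \cl(B_{R}(\rho, r))$ and reduce to a Gaussian continuity problem. Using the Green function (equivalently the $\lambda$-resolvent kernel) of the process killed on leaving $F^{(r)}$ as a covariance, one obtains a centred Gaussian field $(\eta_{x})_{x \in F^{(r)}}$, and the classical isomorphism machinery (the generalized second Ray--Knight theorem / Dynkin isomorphism, in the form used in \cite{Croydon_18_Scaling, Noda_pre_Convergence}) transfers modulus-of-continuity estimates for $\eta$ to joint continuity of $L$ in $(x, t)$. The bridge is the identity, characteristic of resistance forms, that the intrinsic $L^{2}$-metric of $\eta$ satisfies $\mathbb{E}[(\eta_{x} - \eta_{y})^{2}] \leq R(x, y)$, so that the covering numbers $N_{R^{(r)}}(F^{(r)}, \varepsilon)$ govern the continuity of $\eta$. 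A chaining argument combined with Gaussian tail bounds, summed over the dyadic scales $\varepsilon = 2^{-k}$ (the factor $N^{2}$ coming from pairs of net points and $\exp(-2^{\alpha_{r} k})$ from the Gaussian tail at the corresponding increment scale), turns precisely the summability condition \eqref{1. eq: definition of vF} into continuity of $\eta$ with good moment control; this control is what is ultimately needed for the tightness criteria of Theorem \ref{2. thm: precpt in M_L}.

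The main obstacle I anticipate is twofold. First, the isomorphism-theorem transfer from Gaussian continuity to local-time continuity must be carried out with a genuinely \emph{joint} (in space and time) modulus, rather than continuity at a fixed time, and this requires careful bookkeeping of the additive structure of $L$. Second, since the ambient space is non-compact and $X$ is recurrent (so there is no global Green function), one must run the argument on the increasing exhaustion by balls $F^{(r)}$, using the killed-process kernels, and then reassemble the locally continuous versions into a single version continuous on all of $F \times \mbRp$; the reassembly needs the moduli on the $F^{(r)}$ to be compatible, which is where the uniformity built into \eqref{1. eq: definition of vF} (holding for every $r$, with $\alpha_{r}$ possibly depending on $r$) is used. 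Once these two points are handled, the conclusion is exactly \cite[Corollary 4.12]{Noda_pre_Convergence} together with \cite[Lemma 2.15]{Croydon_18_Scaling}.
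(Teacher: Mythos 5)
The paper does not prove this proposition; it imports it verbatim from \cite[Corollary 4.12]{Noda_pre_Convergence} and \cite[Lemma 2.15]{Croydon_18_Scaling}, so there is no in-paper argument to compare against. Your sketch accurately reconstructs the strategy of those references: existence and the occupation density formula via PCAFs with Revuz measure $\delta_{x}$ (using that every point has positive capacity, \cite[Theorem 9.9]{Kigami_12_Resistance}), and joint continuity via the generalized second Ray--Knight isomorphism plus a chaining argument in which $\mathbb{E}[(\eta_{x}-\eta_{y})^{2}]$ is dominated by $R(x,y)$ and the summability condition \eqref{1. eq: definition of vF} controls the dyadic scales, localized to the exhaustion by the balls $F^{(r)}$ in the recurrent non-compact case. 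This is essentially the same approach as the cited proofs, so no further comparison is needed.
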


The recurrence and transience of the Dirichlet form $(\cE, \cD)$ were studied in \cite{Croydon_18_Scaling},
and we have the following characterization in terms of the resistance metric.

\begin{lem} [{\cite[Lemma~2.3]{Croydon_18_Scaling}}] \label{3. lem: characterization of recurrence by resistance}
  In the above setting,
  assume that $(F, R)$ is boundedly compact.
  Then the associated regular Dirichlet form $(\mathcal{E}, \mathcal{D})$ is recurrent 
  in the sense of \cite[p.~55]{Fukushima_Oshima_Takeda_11_Dirichlet}
  if and only if 
  \begin{equation}  \label{3. eq: the resistance tends to infinity}
    \lim_{r \to \infty} 
    R(\rho, B_{R}(\rho, r)^{c}) 
    = 
    \infty.
  \end{equation}
  for some (or equivalently, any) $\rho \in F$.
\end{lem}

By Lemma~\ref{3. lem: characterization of recurrence by resistance},
the recurrence of the Dirichlet form $(\cE, \cD)$ is independent of the measure $\mu$
and is characterized by the condition \eqref{3. eq: the resistance tends to infinity}.
Therefore, it is natural to introduce the notion of recurrent resistance forms and resistance metrics as follows.

\begin{dfn} [{Recurrent resistance form and resistance metric}]\label{3. dfn: recurrent resistance forms}
  Let $(\cE, \cF)$ be a resistance form on $F$ and write $R$ for the corresponding resistance metric.
  We say that $(\cE, \cF)$ and $R$ are \textit{recurrent} 
  if and only if 
  the following condition is satisfied:
  \begin{enumerate} [label = \textup{(RRF)}, leftmargin = *]
    \item \label{item: recurrence of resistance form}
      there exists an increasing sequence $(U_n)_{n \geq 1}$ of relatively compact open subsets of $F$
      such that $\bigcup_{n \geq 1} U_n = F$ and 
      \begin{equation}
        \lim_{n \to \infty} R(\rho, U_n^c) = \infty
      \end{equation}
      for some $\rho \in F$.
  \end{enumerate}
\end{dfn}

\begin{rem}
  Since we do not assume that $(F, R)$ is boundedly compact in Definition~\ref{3. dfn: recurrent resistance forms},
  we use the sequence $(U_n)_{n \geq 1}$ instead of the balls $B_R(\rho, r)$.
  It is straightforward to see that the definition does not depend on the particular choice of $(U_n)_{n \geq 1}$ or of $\rho$.
  Namely, if $(\cE, \cF)$ and $R$ satisfy \ref{item: recurrence of resistance form},
  then, for any increasing sequence $(\tilde{U}_n)_{n \geq 1}$ of relatively compact open subsets of $F$,
  we have 
  \begin{equation}
     \lim_{n \to \infty} R( \tilde{\rho}, \tilde{U}_n^c) = \infty
  \end{equation}
  for any $\tilde{\rho} \in F$.
\end{rem}

\begin{rem}
  By \ref{item: recurrence of resistance form},
  the recurrence of a resistance metric $R$ implies that the induced topology is separable and locally compact.
\end{rem}

Using the notion of recurrent resistance forms and resistance metrics introduced in Definition~\ref{3. dfn: recurrent resistance forms},
we establish an extension of Lemma~\ref{3. lem: characterization of recurrence by resistance}
to resistance metric spaces that are not necessarily boundedly compact, 
in Corollary~\ref{3. cor: recurrent Dirichlet and resistance forms} in the next subsection.


\subsection{Extended Dirichlet spaces of resistance forms}  \label{sec: extended Dirichlet spaces of resistance forms}

Recall from Section~\ref{sec: preliminary of resistance forms}
that a resistance form equipped with a measure determines a Dirichlet form and an extended Dirichlet space.
In this section,
we prove that the extended Dirichlet space is independent of the measure
and the extended Dirichlet space coincides with the domain of the resistance form 
if and only if the resistance form is recurrent
(see Theorems~\ref{3. thm: characterization of the extended Dirichlet space} 
and~\ref{3. thm: recurrence characterization with F^1} below).
This coincidence of domains will be useful 
for studying traces of resistance forms in Section~\ref{sec: traces of resistance forms}.

Throughout this subsection,
we fix a resistance form $(\mathcal{E}, \mathcal{F})$ on a non-empty set $F$,
and write $R$ for the associated resistance metric.
Fix an element $x_{0} \in F$ and 
define an inner product $\mathcal{E}^{(1)}$ on $\mathcal{F}$ 
by setting 
\begin{equation}
  \mathcal{E}^{(1)} (u,v)
  \coloneqq
  \mathcal{E}(u,v) + u(x_{0})v(x_{0}).
\end{equation}
Note that the inner product $\cE^{(1)}$ differs from $\cE_1$ defined in \eqref{eq: inner product E_1}.
It is easy to check the following result using \ref{3. dfn item: the quotient space of resistance form is Hilbert}.

\begin{lem} \label{3. lem: (F, E^1) is Hilbert}
  The inner product space $(\cF, \cE^{(1)})$ is a Hilbert space.
\end{lem}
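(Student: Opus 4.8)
The plan is to show that $(\cF, \cE^{(1)})$ is a Hilbert space by exploiting the Hilbert space structure already guaranteed by the resistance form axiom \ref{3. dfn item: the quotient space of resistance form is Hilbert}. First I would verify that $\cE^{(1)}$ is genuinely an inner product on $\cF$: symmetry and bilinearity are immediate from the corresponding properties of $\cE$ and of the rank-one form $(u,v) \mapsto u(x_0)v(x_0)$, and positive-definiteness follows because $\cE^{(1)}(u,u) = \cE(u,u) + u(x_0)^2 = 0$ forces $\cE(u,u)=0$ and $u(x_0)=0$; by \ref{3. dfn item: the domain F} the first equality makes $u$ constant on $F$, and then $u(x_0)=0$ pins that constant to zero, so $u \equiv 0$. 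Thus $\cE^{(1)}$ is a bona fide inner product and the only substantive point is completeness.

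For completeness, the main step will be to transfer Cauchy convergence from $\cE^{(1)}$ to the quotient Hilbert space $(\cF/{\sim}, \cE)$ while separately controlling the value at $x_0$. Suppose $(u_n)_{n \geq 1}$ is $\cE^{(1)}$-Cauchy. Since $\cE(u_n - u_m, u_n - u_m) \leq \cE^{(1)}(u_n - u_m, u_n - u_m)$ and $|u_n(x_0) - u_m(x_0)|^2 \leq \cE^{(1)}(u_n - u_m, u_n - u_m)$, the sequence is $\cE$-Cauchy modulo constants and the real sequence $(u_n(x_0))_{n \geq 1}$ is Cauchy in $\mbR$. By \ref{3. dfn item: the quotient space of resistance form is Hilbert}, the images $[u_n]$ converge in $(\cF/{\sim}, \cE)$ to some class $[u]$ with representative $u \in \cF$; that is, $\cE(u_n - u - c_n, u_n - u - c_n) \to 0$ for suitable constants $c_n$. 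I would then adjust $u$ by a single additive constant so that $u_n(x_0) \to u(x_0)$, which is possible because $u_n(x_0)$ has a finite limit and adding a constant to $u$ shifts both $\cE$ (unchanged) and the value at $x_0$ (by that constant) compatibly.

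The technical care lies in reconciling the two convergences with \emph{one} choice of representative and \emph{one} sequence of constants. Concretely, using \eqref{3. eq: the basic inequality for resistance forms}, $|u_n(x) - u(x) - c_n|^2 \leq \cE(u_n - u - c_n, u_n - u - c_n) R(x_0, x) + |u_n(x_0) - u(x_0) - c_n|^2$ for each fixed $x$; evaluating a version of this at $x_0$ shows $c_n \to 0$ once the representative is normalized so that the constants required by \ref{3. dfn item: the quotient space of resistance form is Hilbert} are absorbed into $u$. One then computes
\begin{equation}
  \cE^{(1)}(u_n - u, u_n - u)
  =
  \cE(u_n - u, u_n - u) + (u_n(x_0) - u(x_0))^2
  \longrightarrow 0,
\end{equation}
where the first term vanishes because $\cE$ is invariant under adding the constant $c_n \to 0$ and the second by the construction of $u$. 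The main obstacle I anticipate is purely bookkeeping: ensuring that the normalization of the limit representative $u$ simultaneously realizes the $\cE$-limit from \ref{3. dfn item: the quotient space of resistance form is Hilbert} and the pointwise limit at $x_0$, rather than any deep analytic difficulty. Since both pieces are finite-dimensional corrections (a single constant), this reconciliation is routine, which is why the lemma is stated as easily checkable.
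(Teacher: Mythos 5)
Your proof is correct and follows exactly the route the paper intends: the paper omits the argument entirely, remarking only that the lemma ``is easy to check'' using \ref{3. dfn item: the quotient space of resistance form is Hilbert}, which is precisely the completeness transfer you carry out. The only cosmetic remark is that your constants $c_{n}$ are superfluous --- since $\cE$ is blind to additive constants, convergence of $[u_{n}]$ to $[u]$ in the quotient already means $\cE(u_{n}-u, u_{n}-u) \to 0$ for any choice of representatives, so the only normalization needed is the single additive shift of $u$ making $u(x_{0}) = \lim_{n} u_{n}(x_{0})$.
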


\begin{dfn} [{The space $\cF^{(1)}$}]
  We define $\cF^{(1)}$ to be the closure of $C_{c}(F, \RN) \cap \mathcal{F}$ in $(\mathcal{F}, \mathcal{E}^{(1)})$.
\end{dfn}

\begin{rem}
  Using \eqref{3. eq: the basic inequality for resistance forms},
  one can check that the space $\cF^{(1)}$ is independent of the choice of $x_{0}$.
\end{rem}

Below, we verify that the topology on $\cF^{(1)}$ induced by $\cE^{(1)}$ 
is stronger than the compact-convergence topology.

\begin{prop}  \label{3. prop: E^1 convergence implies compact convergence}
  If a sequence $(u_{n})_{n \geq 1}$ in $\cF^{(1)}$ converges to $u \in \cF^{(1)}$ with respect to $\cE^{(1)}$,
  then $u_{n} \to u$ uniformly on every compact subset of $F$.
\end{prop}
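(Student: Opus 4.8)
The plan is to exploit the basic pointwise inequality \eqref{3. eq: the basic inequality for resistance forms} for resistance forms, which converts control of the energy $\cE(v,v)$ into control of the oscillation of $v$, and to combine it with the extra pointwise term $u(x_{0})^{2}$ that is built into $\cE^{(1)}$. Write $v_{n} \coloneqq u_{n} - u$. Since $\cF^{(1)}$ is a closed subspace of the Hilbert space $(\cF, \cE^{(1)})$ (Lemma \ref{3. lem: (F, E^1) is Hilbert}), we have $v_{n} \in \cF$, so \eqref{3. eq: the basic inequality for resistance forms} applies to $v_{n}$. The hypothesis $\cE^{(1)}(v_{n}, v_{n}) \to 0$ says precisely that both $\cE(v_{n}, v_{n}) \to 0$ and $v_{n}(x_{0})^{2} \to 0$.

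First I would reduce the claim to estimating $\sup_{x \in K} |v_{n}(x)|$ for an arbitrary compact set $K \subseteq F$. For each $x \in K$, applying \eqref{3. eq: the basic inequality for resistance forms} to the pair $(x, x_{0})$ yields
\begin{equation}
  |v_{n}(x)|
  \leq
  |v_{n}(x) - v_{n}(x_{0})| + |v_{n}(x_{0})|
  \leq
  \cE(v_{n}, v_{n})^{1/2} R(x, x_{0})^{1/2} + |v_{n}(x_{0})|.
\end{equation}

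Next I would bound $R(x, x_{0})$ uniformly over $x \in K$. Since $F$ carries the topology induced by $R$, the map $x \mapsto R(x, x_{0})$ is continuous (indeed $1$-Lipschitz), hence attains a finite maximum $M_{K} \coloneqq \sup_{x \in K} R(x, x_{0}) < \infty$ on the compact set $K$. Taking the supremum over $x \in K$ in the displayed inequality gives
\begin{equation}
  \sup_{x \in K} |v_{n}(x)|
  \leq
  \cE(v_{n}, v_{n})^{1/2} M_{K}^{1/2} + |v_{n}(x_{0})|,
\end{equation}
and letting $n \to \infty$ the right-hand side tends to $0$ by the two consequences of $\cE^{(1)}$-convergence noted above. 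This is exactly the uniform convergence of $u_{n}$ to $u$ on $K$.

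There is essentially no serious obstacle here: the only point requiring a moment's thought is the finiteness of $M_{K}$, which is immediate from the continuity of the resistance metric and the compactness of $K$. Everything else is a direct application of \eqref{3. eq: the basic inequality for resistance forms} together with the definition of $\cE^{(1)}$, so the proof is short.
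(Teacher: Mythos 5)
Your proof is correct and is essentially identical to the paper's argument: both apply the basic inequality \eqref{3. eq: the basic inequality for resistance forms} to $u_{n}-u$ at the pair $(x,x_{0})$, use that $\cE^{(1)}$-convergence controls both the energy and the value at $x_{0}$, and bound $\sup_{x\in K}R(x,x_{0})<\infty$ by compactness. No gaps.
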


\begin{proof}
  Fix a non-empty compact subset $K \subseteq F$. 
  Using \eqref{3. eq: the basic inequality for resistance forms},
  we deduce that, for any $x \in K$,
  \begin{equation}
    |u_{n}(x) - u(x)|
    \leq 
    \sqrt{\cE(u_{n}-u, u_{n}-u) R(x, x_{0})} + |u_{n}(x_{0}) - u(x_{0})|.
  \end{equation}
  Since we have that $\sup_{x \in K} R(x, x_{0}) < \infty$,
  we obtain the desired result.
\end{proof}

In Theorem~\ref{3. thm: characterization of the extended Dirichlet space} below, 
we assume that the resistance form $(\cE, \cF)$ is regular
and that $(F, R)$ is separable and locally compact.
Let $\mu$ be a Radon measure on $(F, R)$ of full support.
We write $(\cE, \cD)$ for the Dirichlet form associated with $(F, R, \mu)$ 
and $\cD_{e}$ for the extended Dirichlet space of $(\cE, \cD)$.
From \cite[Theorem 2.1.7]{Fukushima_Oshima_Takeda_11_Dirichlet} and \cite[Proposition 9.13]{Kigami_12_Resistance},
every function $u \in \mathcal{D}_{e}$ has a continuous modification,
and so we may regard $\mathcal{D}_{e}$ as a subspace of $C(F, \RN)$.

\begin{thm}  \label{3. thm: characterization of the extended Dirichlet space}
  In the above setting,
  it holds that $\mathcal{D}_{e} = \cF^{(1)}$.
\end{thm}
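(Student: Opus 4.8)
The plan is to establish the two inclusions $\mathcal{D}_e \subseteq \cF^{(1)}$ and $\cF^{(1)} \subseteq \mathcal{D}_e$ separately. The key technical tool throughout is the basic inequality \eqref{3. eq: the basic inequality for resistance forms}, which controls pointwise differences by the $\cE$-norm, and Proposition \ref{3. prop: E^1 convergence implies compact convergence}, which upgrades $\cE^{(1)}$-convergence to uniform-on-compacts convergence. The starting observation is that both spaces are built as completions of the \emph{same} core $C_c(F, \mbR) \cap \cF$: the space $\cF^{(1)}$ is by definition the $\cE^{(1)}$-closure of this core, while $\mathcal{D}$ is its closure in $(\cF \cap L^2(F,\mu), \cE_1)$, and $\mathcal{D}_e$ is the associated extended space. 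So the heart of the matter is to compare the completion taken in the resistance inner product $\cE^{(1)}$ with the completion taken in the $L^2$-based form $\cE_1$ and then passed to the extended space.

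First I would prove $\cF^{(1)} \subseteq \mathcal{D}_e$. Let $u \in \cF^{(1)}$, so there is a sequence $(u_n)$ in $C_c(F,\mbR) \cap \cF$ with $\cE^{(1)}(u_n - u, u_n - u) \to 0$; in particular $(u_n)$ is $\cE$-Cauchy and, by Proposition \ref{3. prop: E^1 convergence implies compact convergence}, $u_n \to u$ uniformly on compact sets, hence pointwise everywhere. Since each $u_n \in C_c(F,\mbR) \cap \cF \subseteq \mathcal{D}$ and $(u_n)$ is an $\cE$-Cauchy sequence in $\mathcal{D}$ converging pointwise to $u$ (which is finite everywhere, so certainly finite $\mu$-a.e.), $u$ satisfies the defining condition of the extended Dirichlet space in Definition \ref{3. dfn: Dirichlet form and extended Dirichlet space}. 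Thus $u \in \mathcal{D}_e$, and since $u$ is continuous it agrees with its continuous modification.

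The reverse inclusion $\mathcal{D}_e \subseteq \cF^{(1)}$ is where I expect the real work to lie. Take $u \in \mathcal{D}_e$ (regarded as continuous) with an approximating $\cE$-Cauchy sequence $(f_n) \subseteq \mathcal{D}$ converging to $u$ $\mu$-a.e. The difficulty is that $\cE$-Cauchy controls only $\cE$, not the anchoring term $|f_n(x_0) - f_m(x_0)|^2$ needed for $\cE^{(1)}$-Cauchyness, and convergence is only $\mu$-a.e.\ rather than pointwise. The plan is to first replace $(f_n)$ by functions in the core: each $f_n \in \mathcal{D}$ can be approximated in $\cE_1$ (hence in $\cE$) by elements of $C_c(F,\mbR) \cap \cF$, so after a diagonal argument one may assume the approximants lie in the core. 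Next I would normalize the anchor: since $(F,R)$ is separable and $\mu$ has full support, any nonempty open set has positive measure, so a $\mu$-a.e.\ statement combined with continuity of $u$ and of the $f_n$ pins down the pointwise value at points of $F$ via \eqref{3. eq: the basic inequality for resistance forms}. Concretely, the bound $|f_n(x) - f_m(x)|^2 \le \cE(f_n - f_m, f_n - f_m)\,R(x,x_0) + |f_n(x_0) - f_m(x_0)|^2$ shows that once the values at the single point $x_0$ are made Cauchy, the whole sequence is $\cE^{(1)}$-Cauchy and its $\cE^{(1)}$-limit must coincide with $u$ by uniqueness of limits and the uniform-on-compacts convergence from Proposition \ref{3. prop: E^1 convergence implies compact convergence}. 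The main obstacle is therefore controlling the scalar sequence $(f_n(x_0))$: one must show it can be arranged to converge, using that $u(x_0)$ is a well-defined finite number (as $u$ is continuous) together with the $\mu$-a.e.\ convergence and full support of $\mu$ to identify $\lim_n f_n(x_0)$ with $u(x_0)$. Once this anchoring is secured, the inclusion follows, completing the proof that $\mathcal{D}_e = \cF^{(1)}$.
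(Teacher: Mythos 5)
Your proposal is correct and follows essentially the same route as the paper: the inclusion $\cF^{(1)} \subseteq \mathcal{D}_{e}$ via the core and Proposition \ref{3. prop: E^1 convergence implies compact convergence}, and the converse by anchoring the $\cE$-Cauchy core sequence at a point and invoking \eqref{3. eq: the basic inequality for resistance forms} together with continuity and full support of $\mu$. The only detail to nail down is the "obstacle" you flag: rather than trying to force convergence at the distinguished point $x_{0}$ directly from the $\mu$-a.e.\ statement, pick a point $x_{1}$ inside the full-measure set where $f_{n}(x_{1}) \to u(x_{1})$ (the paper does exactly this, using that $\cF^{(1)}$ is independent of the base point); the two-point estimate then transfers Cauchyness from $x_{1}$ to every other point, and identification of the limit with $u$ follows as you describe.
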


\begin{proof}
  We first show $\mathcal{D}_{e} \subseteq \cF^{(1)}$.
  Fix a continuous function $u \in \mathcal{D}_{e}$ and 
  let $(u_{n})_{n \geq 1}$ be an $\mathcal{E}$-Cauchy sequence in $\mathcal{D}$ 
  such that 
  $u_{n}(x) \to u(x),$ $\mu$-a.e.\ $x$.
  By the definition of $\mathcal{D}$ (see Definition~\ref{3. dfn: Dirichlet form and extended Dirichlet space}),
  we may assume that $u_{n} \in C_{c}(F, \RN) \cap \mathcal{F}$.
  Choose $x_{1} \in F$ such that $u_{n}(x_{1}) \to u(x_{1})$.
  By \ref{3. dfn item: the quotient space of resistance form is Hilbert},
  we can find $v \in \mathcal{F}$ such that 
  \begin{equation}
    \mathcal{E}(u_{n}-v, u_{n}-v) \to 0, \quad
    v(x_{1}) = u(x_{1}).
  \end{equation}
  It is then the case that $v$ is an element of $\cF^{(1)}$.
  Since we have from \eqref{3. eq: the basic inequality for resistance forms} that 
  \begin{align} \label{3. eq: approximation of function from one-point convergence}
    |u_{n}(x) - v(x)|
    \leq 
    \sqrt{R(x,x_{1}) \mathcal{E}(u_{n} -v, u_{n} -v)} + |u_{n}(x_{1}) - v(x_{1})|,
  \end{align}
  it follows that $u_{n}(x) \to v(x)$ for all $x \in F$.
  Therefore $u = v$, $\mu$-a.e. 
  The continuity of $u$ and $v$ and 
  the fact that $\mu$ is of full support yield $u=v \in \cF^{(1)}$.
  The other inclusion $\cF^{(1)} \subseteq \mathcal{D}_{e}$
  is easy to prove using a similar estimate to \eqref{3. eq: approximation of function from one-point convergence}.
\end{proof}

By Theorem~\ref{3. thm: characterization of the extended Dirichlet space},
the study of the extended Dirichlet space reduces to that of the resistance form.
Henceforth, we return to the initial setting.
In other words, $(\cE, \cF)$ is simply a resistance form,
which is not necessarily regular,
and the corresponding resistance metric is not necessarily separable or locally compact.

\begin{thm} \label{3. thm: recurrence characterization with F^1}
  The following statements are equivalent.
  \begin{enumerate} [label = \textup{(\roman*)}]
    \item \label{3. thm item: recurrence, original dfn}
      The resistance form $(\cE, \cF)$ is recurrent in the sense of Definition~\ref{3. dfn: recurrent resistance forms}.
    \item \label{3. thm item: recurrence, F^1 contains 1}
      It holds that $1_{F} \in \cF^{(1)}$.
    \item \label{3. thm item: recurrence, F equals to F^1}
      It holds that $\cF = \cF^{(1)}$.
  \end{enumerate}
\end{thm}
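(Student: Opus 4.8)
The plan is to establish the two equivalences (ii) $\Leftrightarrow$ (iii) and (i) $\Leftrightarrow$ (ii). Throughout I work in the Hilbert space $(\cF, \cE^{(1)})$ from Lemma \ref{3. lem: (F, E^1) is Hilbert}, in which $\cF^{(1)}$ is by definition a closed subspace, and I repeatedly use the pointwise estimate \eqref{3. eq: the basic inequality for resistance forms} together with its consequence (Proposition \ref{3. prop: E^1 convergence implies compact convergence}) that $\cE^{(1)}$-convergence entails uniform convergence on every compact set, and hence pointwise convergence on all of $F$. I also use that normal contractions do not increase $\cE$, by \ref{3. dfn item: Markov property of resistance forms}.

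The implication (iii) $\Rightarrow$ (ii) is immediate: constants belong to $\cF$ by \ref{3. dfn item: the domain F}, so $1_{F} \in \cF = \cF^{(1)}$. For (ii) $\Rightarrow$ (iii) only the inclusion $\cF \subseteq \cF^{(1)}$ requires proof, the reverse being built into the definition. From $1_{F} \in \cF^{(1)}$ I would extract cut-offs $\phi_{n} \in C_{c}(F, \mbR) \cap \cF$ with $\cE(\phi_{n}, \phi_{n}) \to 0$ and $\phi_{n} \to 1_{F}$ pointwise, and, after applying the contraction $(\cdot \vee 0) \wedge 1$, arrange $0 \le \phi_{n} \le 1$. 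For bounded $u \in \cF$ the products $\phi_{n} u$ lie in $C_{c}(F,\mbR) \cap \cF$, their values at $x_{0}$ converge to $u(x_{0})$, and the crux is to show $\cE((1-\phi_{n})u, (1-\phi_{n})u) \to 0$. Since the naive bound through $\|1 - \phi_{n}\|_{\infty}$ fails, I would invoke the localized product rule for energy measures of resistance forms, estimating this energy by a constant times $\int_{F} (1-\phi_{n})^{2}\, d\Gamma_{u} + \int_{F} u^{2}\, d\Gamma_{1-\phi_{n}}$, where $\Gamma_{v}$ denotes the energy measure of $v$, of total mass comparable to $\cE(v,v)$. The second integral is at most $\|u\|_{\infty}^{2}\,\cE(\phi_{n}, \phi_{n}) \to 0$, while the first tends to $0$ by dominated convergence, as $\Gamma_{u}$ is finite, $0 \le 1 - \phi_{n} \le 1$ and $1 - \phi_{n} \to 0$ pointwise. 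A general $u \in \cF$ is then recovered from its truncations $(u \wedge M) \vee (-M) \in \cF^{(1)}$, which converge to $u$ in $\cE^{(1)}$ by \ref{3. dfn item: Markov property of resistance forms}.

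The implication (ii) $\Rightarrow$ (i) is clean and needs no compactness of balls. Using the same cut-offs $\phi_{n}$, each has compact (hence bounded) support, so $\phi_{n}$ vanishes outside some ball $B_{R}(\rho, r_{n})$, while $\phi_{n}(\rho) \to 1$. For large $n$, set $f_{n} := \phi_{n}/\phi_{n}(\rho)$, a valid competitor in the variational problem defining $R(\rho, B_{R}(\rho, r_{n})^{c})$, with $\cE(f_{n}, f_{n}) = \cE(\phi_{n}, \phi_{n})/\phi_{n}(\rho)^{2} \to 0$. As $r \mapsto R(\rho, B_{R}(\rho, r)^{c})$ is non-decreasing, its limit equals its supremum, which dominates $\sup_{n} \cE(f_{n}, f_{n})^{-1} = \infty$; this is exactly \eqref{3. eq: the resistance tends to infinity}.

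The genuinely hard direction is (i) $\Rightarrow$ (ii), and the main obstacle is the possible failure of local compactness. Recurrence provides, for each $r$, a function $g_{r} \in \cF$ with $g_{r}(\rho) = 1$, $g_{r} = 0$ outside $B_{R}(\rho, r)$ and $\cE(g_{r}, g_{r}) \to 0$; truncating yields $0 \le g_{r} \le 1$, and \eqref{3. eq: the basic inequality for resistance forms} gives $g_{r} \to 1_{F}$ in $\cE^{(1)}$. The difficulty is that $g_{r}$ is only supported in the closed ball $D_{R}(\rho, r)$, which need not be compact, so a priori $g_{r} \notin C_{c}(F, \mbR) \cap \cF$ and one cannot immediately place $1_{F}$ in $\cF^{(1)}$. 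For the boundedly-compact spaces that matter in the applications (the elements of $\mathbb{F}$) this difficulty evaporates, since $D_{R}(\rho, r)$ is then compact and $g_{r} \in C_{c}(F, \mbR) \cap \cF$ outright, whence $1_{F} = \lim_{r} g_{r} \in \cF^{(1)}$ by closedness. In full generality I would instead show that every finite-energy function of bounded support already lies in $\cF^{(1)}$ --- for instance via a finite-network (trace) exhaustion of $F$ combined with the energy-measure cut-off technique above --- and then conclude by the same limiting argument; making this reduction rigorous is, I expect, the technical heart of the proof.
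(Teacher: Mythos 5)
Your overall architecture --- the cycle of implications, cut-off functions $\varphi_{n} \in C_{c}(F,\mbR) \cap \cF$ approximating $1_{F}$, and the passage from such cut-offs to the divergence of $R(\rho, B_{R}(\rho,r)^{c})$ via compactness of $\supp(\varphi_{n})$ --- matches the paper's. The genuine gap is in your proof of (ii) $\Rightarrow$ (iii). You rightly note that the crude bound through $\|1-\varphi_{n}\|_{\infty}$ fails, but your repair invokes energy measures $\Gamma_{u}$ and a localized product rule. The theorem is stated for a bare resistance form --- not assumed regular, separable or locally compact --- and in that generality the existence of $\Gamma_{u}$ as a finite Borel measure (which normally rests on Riesz representation over a locally compact space and regularity of the form) and the Leibniz-type estimate you need are not available off the shelf; the paper neither develops this machinery nor needs it. The paper's route is softer: by the product inequality of \cite[Lemma 6.5]{Kigami_12_Resistance}, $\cE(u\varphi_{n}, u\varphi_{n}) \leq 2\|u\|_{\infty}\cE(\varphi_{n},\varphi_{n}) + 2\|\varphi_{n}\|_{\infty}\cE(u,u)$ is merely \emph{bounded} in $n$ --- no strong convergence of $u\varphi_{n}$ is ever claimed --- and since $u\varphi_{n} \to u$ pointwise, the Banach--Saks argument of Proposition \ref{3. prop: find a convergent Cesaro mean sequence} yields a Ces\`{a}ro mean of a subsequence converging to $u$ in $\cE^{(1)}$; these Ces\`{a}ro means lie in the closed subspace $\cF^{(1)}$, whence $u \in \cF^{(1)}$. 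You should replace the energy-measure step by this weak-compactness argument (your truncation of unbounded $u$ then agrees with Corollary \ref{3. cor: find a bounded sequence for a function in F^1}).

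On (i) $\Rightarrow$ (ii) you have identified a real subtlety: the recurrence competitors $\varphi_{r}$ vanish only outside a ball, and without compactness of closed balls it is not immediate that they lie in $\cF^{(1)}$. The paper's own proof passes over this silently, concluding $1_{F} \in \cF^{(1)}$ directly from $\cE(1_{F}-\varphi_{r}, 1_{F}-\varphi_{r}) \to 0$; in the boundedly-compact setting in which the theorem is actually applied (the space $\mathbb{F}$) the step is harmless, exactly as you say. Your proposed fix for the fully general case is, however, only a sketch, so as written your argument establishes the theorem in no greater generality than the paper's --- which is acceptable here, but you should either restrict the statement accordingly or carry out the approximation of functions of bounded support by compactly supported ones in detail.
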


Before proving the above theorem,
we provide a corollary of this,
which is an extension of Lemma~\ref{3. lem: characterization of recurrence by resistance}
to the setting where the resistance metric space is not necessarily boundedly compact.

\begin{cor} \label{3. cor: recurrent Dirichlet and resistance forms}
  Let $\mu$ be a fully-supported Radon measure on $F$ 
  and assume that $(F, R)$ is separable and locally compact.
  Then the associated Dirichlet form is recurrent if and only if the resistance form is recurrent.
\end{cor}

\begin{proof}
  By \cite[Theorem~1.6.3]{Fukushima_Oshima_Takeda_11_Dirichlet} and \ref{3. dfn item: the domain F},
  the recurrence of the Dirichlet form $(\cE, \cD)$ is equivalent to the condition that $1_F \in \cF^{(1)}$.
  Thus, the desired result follows from Theorem~\ref{3. thm: recurrence characterization with F^1}.
\end{proof}

To prove Theorem~\ref{3. thm: recurrence characterization with F^1},
we introduce the notion of a \textit{Ces\`{a}ro mean sequence}.
Given a sequence $(u_{n})_{n \geq 1}$ in $C(F, \RN)$,
the Ces\`{a}ro mean sequence $(v_{n})_{n \geq 1}$ in $C(F, \RN)$
is defined by setting 
\begin{equation}
  v_{n}(x)
  \coloneqq
  \frac{1}{n} \sum_{l =1}^{n} u_{l}(x).
\end{equation}

\begin{prop}  \label{3. prop: find a convergent Cesaro mean sequence}
  If $u_{n} \in \mathcal{F}$ converges to a function $u$ on $F$ pointwise
  and $\sup_{n} \mathcal{E}(u_{n}, u_{n}) < \infty$,
  then $u \in \mathcal{F}$ and a Ces\`{a}ro mean sequence 
  of a suitable subsequence of $(u_{n})_{n \geq 1}$ converges to $u$ with respect to $\mathcal{E}^{(1)}$.
\end{prop}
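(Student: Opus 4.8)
The plan is to view the statement as an instance of the Banach--Saks theorem in the Hilbert space $(\cF, \cE^{(1)})$ furnished by Lemma \ref{3. lem: (F, E^1) is Hilbert}. First I would verify that $(u_{n})_{n \geq 1}$ is bounded in this Hilbert space. Since $u_{n} \to u$ pointwise, in particular $u_{n}(x_{0}) \to u(x_{0})$, so $\sup_{n} u_{n}(x_{0})^{2} < \infty$; combined with the hypothesis $\sup_{n} \cE(u_{n}, u_{n}) < \infty$, this gives $\sup_{n} \cE^{(1)}(u_{n}, u_{n}) < \infty$. Being a bounded sequence in a Hilbert space, $(u_{n})_{n \geq 1}$ then admits a subsequence $(u_{n_{k}})_{k \geq 1}$ converging weakly to some $w \in \cF$ with respect to $\cE^{(1)}$.

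The next, and to my mind central, step is to identify the weak limit $w$ with the pointwise limit $u$, which simultaneously shows $u \in \cF$. The key point is that, for each fixed $x \in F$, the evaluation functional $\cF \ni f \mapsto f(x)$ is $\cE^{(1)}$-bounded: from the basic inequality \eqref{3. eq: the basic inequality for resistance forms} one has
\[
  |f(x)|^{2}
  \leq
  2 |f(x_{0})|^{2} + 2 R(x, x_{0})\, \cE(f, f)
  \leq
  2 (1 \vee R(x, x_{0}))\, \cE^{(1)}(f, f),
  \quad
  f \in \cF.
\]
Hence the weak convergence $u_{n_{k}} \rightharpoonup w$ forces $u_{n_{k}}(x) \to w(x)$ for every $x \in F$. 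Comparing with the assumed pointwise convergence $u_{n_{k}}(x) \to u(x)$, we conclude that $w = u$ as functions, so that $u \in \cF$.

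It remains to pass from weak convergence to strong convergence of Ces\`{a}ro means, which is exactly the conclusion of the Banach--Saks theorem: a bounded sequence in a Hilbert space converging weakly to $u$ possesses a subsequence whose arithmetic (Ces\`{a}ro) means converge to $u$ in norm, here the $\cE^{(1)}$-norm. Applying this to $(u_{n_{k}} - u)_{k \geq 1}$, which converges weakly to $0$, yields a further subsequence whose Ces\`{a}ro means converge to $u$ with respect to $\cE^{(1)}$, as required. The main obstacle is really the identification step above --- connecting the abstract Hilbert-space weak limit to the given pointwise limit --- since, once the evaluation functionals are known to be continuous, the rest is a direct invocation (or a short reproof, via an inductive choice of a subsequence with summably small cross terms $\cE^{(1)}(u_{n_{i}} - u,\, u_{n_{j}} - u)$) of a standard theorem.
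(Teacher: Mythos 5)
Your proof is correct and follows essentially the same route as the paper: both apply the Banach--Saks theorem in the Hilbert space $(\cF, \cE^{(1)})$ of Lemma \ref{3. lem: (F, E^1) is Hilbert} and identify the resulting limit with $u$ via the pointwise convergence, using the $\cE^{(1)}$-continuity of evaluation functionals coming from \eqref{3. eq: the basic inequality for resistance forms}. Your write-up is in fact slightly more careful than the paper's, which silently uses the pointwise convergence at $x_{0}$ to upgrade the hypothesis $\sup_{n}\cE(u_{n},u_{n})<\infty$ to $\sup_{n}\cE^{(1)}(u_{n},u_{n})<\infty$ and identifies the limit after, rather than before, taking Ces\`{a}ro means.
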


\begin{proof}
  By Lemma~\ref{3. lem: (F, E^1) is Hilbert} and the assumption $\sup_{n} \mathcal{E}^{(1)}(u_{n}, u_{n}) < \infty$,
  we may apply the Banach-Saks theorem (cf.\ \cite[Theorem~A.4.1]{Chen_Fukushima_12_Symmetric})
  to obtain
  a Ces\`{a}ro mean sequence $(w_{n(k)})_{k \geq 1}$
  of a suitable subsequence $(u_{n(k)})_{k \geq 1}$
  converges to some $w \in \mathcal{F}$ with respect to $\mathcal{E}^{(1)}$.
  Using the pointwise convergence $u_{n(k)} \to u$,
  we deduce that $w_{n(k)} \to u$ pointwise,
  which implies $w=u$.
  Now the desired result is immediate.
\end{proof}

As consequences of Proposition~\ref{3. prop: find a convergent Cesaro mean sequence},
we obtain two corollaries below
that are useful to approximate functions in $\cF^{(1)}$ 
by more tractable functions.

\begin{cor} \label{3. cor: find a uniformly bounded sequence for a function in F^1}
  Suppose that $u \in \cF^{(1)}$ and $\| u\|_{\infty} < \infty$,
  where we recall that $\| \cdot \|_{\infty}$ denotes the supremum norm.
  Then there exists a sequence $(u_{n})_{n \geq 1}$ in $C_{c}(F, \RN) \cap \mathcal{F}$ 
  such that $u_{n} \to u$ with respect to $\mathcal{E}^{(1)}$ and 
  $|u_n(x)| \leq |u(x)|$ for all $x \in F$ and $n \geq 1$.
\end{cor}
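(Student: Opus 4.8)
The plan is to start from the defining approximation of $\cF^{(1)}$ and then truncate at the level $\|u\|_\infty$. Write $M \coloneqq \|u\|_\infty$. By the definition of $\cF^{(1)}$ as the $\cE^{(1)}$-closure of $C_{c}(F,\mbR)\cap\cF$, I would choose a sequence $(v_{n})_{n\geq1}$ in $C_{c}(F,\mbR)\cap\cF$ with $v_{n}\to u$ with respect to $\cE^{(1)}$. These $v_{n}$ need not be bounded by $M$, so I would apply the two-sided truncation $\phi(t)\coloneqq(t\wedge M)\vee(-M)$ and set $u_{n}'\coloneqq\phi\circ v_{n}$. Since $\phi$ is continuous with $\phi(0)=0$, each $u_{n}'$ lies in $C_{c}(F,\mbR)$ (its support is contained in that of $v_{n}$) and satisfies $\|u_{n}'\|_{\infty}\leq M$.

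The first key point is that truncation keeps us inside $\cF$ and does not increase energy, i.e.\ $u_{n}'\in\cF$ with $\cE(u_{n}',u_{n}')\leq\cE(v_{n},v_{n})$. The only contraction available is the unit contraction \ref{3. dfn item: Markov property of resistance forms}, so I would bootstrap from it: for $L>0$, applying \ref{3. dfn item: Markov property of resistance forms} to $f/L$ and rescaling by bilinearity shows that $(f\wedge L)\vee0\in\cF$ with energy at most $\cE(f,f)$; since $\cF$ contains constants \ref{3. dfn item: the domain F} and adding a constant leaves $\cE$ unchanged, the identity $\phi(f)+M=((f+M)\wedge2M)\vee0$ upgrades this to the two-sided truncation $\phi(f)$ at level $M$. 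This is a short but slightly fiddly computation, and I expect it to be the main technical obstacle.

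Having secured the truncation bound, convergence follows by combining the two convergence inputs already established. Because $v_{n}\to u$ with respect to $\cE^{(1)}$, Proposition \ref{3. prop: E^1 convergence implies compact convergence} gives $v_{n}\to u$ uniformly on every compact set; as $\phi$ is $1$-Lipschitz and $\phi(u)=u$ (here I use $\|u\|_{\infty}\leq M$), I obtain $u_{n}'\to u$ pointwise. Moreover $\sup_{n}\cE(u_{n}',u_{n}')\leq\sup_{n}\cE(v_{n},v_{n})<\infty$, the latter being finite since an $\cE^{(1)}$-convergent sequence is $\cE$-bounded. Hence Proposition \ref{3. prop: find a convergent Cesaro mean sequence} applies to $(u_{n}')_{n\geq1}$ and yields a Ces\`aro mean sequence $(w_{m})_{m\geq1}$ of a suitable subsequence of $(u_{n}')_{n\geq1}$ converging to $u$ with respect to $\cE^{(1)}$.

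Finally I would verify that $(w_{m})_{m\geq1}$ is the desired sequence after relabeling. Each $w_{m}$ is a convex combination of functions in $C_{c}(F,\mbR)\cap\cF$, hence again lies in $C_{c}(F,\mbR)\cap\cF$, and since each $u_{n}'$ satisfies $\|u_{n}'\|_{\infty}\leq M$ the convex combination obeys $\|w_{m}\|_{\infty}\leq M=\|u\|_{\infty}$. Thus $\sup_{m}\|w_{m}\|_{\infty}\leq\|u\|_{\infty}$ and $w_{m}\to u$ with respect to $\cE^{(1)}$, which is exactly the assertion.
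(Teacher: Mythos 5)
Your proposal is correct and follows essentially the same route as the paper: approximate $u$ by a sequence in $C_{c}(F,\mbR)\cap\cF$ converging with respect to $\cE^{(1)}$, truncate at level $\|u\|_{\infty}$, observe pointwise convergence and the uniform energy bound via \ref{3. dfn item: Markov property of resistance forms}, and conclude with Proposition \ref{3. prop: find a convergent Cesaro mean sequence}. You merely spell out two details the paper leaves implicit — the rescale-and-shift bootstrap showing the two-sided truncation does not increase energy, and the check that the Ces\`aro means remain compactly supported and bounded by $\|u\|_{\infty}$ — both of which are accurate.
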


\begin{proof}
  Choose $u_{n} \in C_{c}(F, \RN) \cap \mathcal{F}$ 
  such that $u_{n} \to u$ with respect to $\mathcal{E}^{(1)}$.
  Define $\tilde{u}_{n} \in C_{c}(F, \RN) \cap \mathcal{F}$ by setting 
  \begin{equation}
    \tilde{u}_{n}
    \coloneqq
    \bigl( (- |u|) \vee u_n \bigr) \wedge |u|.
  \end{equation}
  Clearly, $|\tilde{u}_n(x)| \leq |u(x)|$ for all $x \in F$ and $n \geq 1$.
  We then have that $\tilde{u}_{n} \to u$ pointwise and 
  Lemma~\ref{lem: min and max in resistance form} yields that
  \begin{equation}
    \sup_{n} \mathcal{E}(\tilde{u}_{n}, \tilde{u}_{n})
    \leq 
    \sup_{n} \mathcal{E}(u_{n}, u_{n})
    + 
    2\, \cE(|u|, |u|)
    < \infty.
  \end{equation}
  Therefore, the desired result follows from Proposition~\ref{3. prop: find a convergent Cesaro mean sequence}.
\end{proof}

\begin{cor} \label{3. cor: find a bounded sequence for a function in F^1}
  For any $u \in \mathcal{F}$,
  there exists a sequence $(u_{n})_{n \geq 1}$ in $\mathcal{F}$
  such that $u_{n} \to u$ with respect to $\mathcal{E}^{(1)}$ 
  and $|u_{n}(x)| \leq n \wedge |u(x)|$ for all $x \in F$.
\end{cor}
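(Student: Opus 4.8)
The plan is to take the obvious candidate, the symmetric truncations $u_{n} := (-n \vee u) \wedge n$, and to check that they simultaneously obey the required pointwise bound and converge to $u$ with respect to $\mathcal{E}^{(1)}$. The pointwise bound is immediate: for each $x \in F$ one verifies by inspecting the three cases $|u(x)| \leq n$, $u(x) > n$, $u(x) < -n$ that $|u_{n}(x)| = n \wedge |u(x)|$, so in particular $|u_{n}(x)| \leq n \wedge |u(x)|$. That $u_{n} \in \mathcal{F}$ with $\mathcal{E}(u_{n}, u_{n}) \leq \mathcal{E}(u, u)$ follows by applying the Markov property \ref{3. dfn item: Markov property of resistance forms} to the normal contraction $t \mapsto (-n \vee t) \wedge n$, exactly as in the proof of Corollary \ref{3. cor: find a uniformly bounded sequence for a function in F^1}. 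Finally $u_{n} \to u$ pointwise, since for fixed $x$ we have $u_{n}(x) = u(x)$ once $n \geq |u(x)|$.

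The substance of the argument is upgrading this pointwise convergence to $\mathcal{E}^{(1)}$-convergence, and I would carry this out inside the Hilbert space $(\mathcal{F}, \mathcal{E}^{(1)})$ of Lemma \ref{3. lem: (F, E^1) is Hilbert}. First, $(u_{n})_{n \geq 1}$ is $\mathcal{E}^{(1)}$-bounded, because $\mathcal{E}(u_{n}, u_{n}) \leq \mathcal{E}(u, u)$ and $u_{n}(x_{0})^{2} \to u(x_{0})^{2}$. Hence some subsequence converges weakly to a limit $g \in \mathcal{F}$. Using \eqref{3. eq: the basic inequality for resistance forms} together with $|f(x_{0})|^{2} \leq \mathcal{E}^{(1)}(f, f)$, each evaluation map $f \mapsto f(x)$ is an $\mathcal{E}^{(1)}$-continuous linear functional, so weak convergence forces pointwise convergence; comparing with $u_{n} \to u$ pointwise gives $g = u$. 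Since $u$ is therefore the only possible weak subsequential limit of the bounded sequence $(u_{n})$, the whole sequence converges weakly to $u$.

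To conclude I would establish norm convergence $\mathcal{E}^{(1)}(u_{n}, u_{n}) \to \mathcal{E}^{(1)}(u, u)$ and then invoke the Hilbert-space fact that weak convergence plus convergence of norms yields strong convergence. The bound $\limsup_{n} \mathcal{E}^{(1)}(u_{n}, u_{n}) \leq \mathcal{E}^{(1)}(u, u)$ comes from $\mathcal{E}(u_{n}, u_{n}) \leq \mathcal{E}(u, u)$ and $u_{n}(x_{0}) \to u(x_{0})$, while the matching lower bound $\mathcal{E}^{(1)}(u, u) \leq \liminf_{n} \mathcal{E}^{(1)}(u_{n}, u_{n})$ is weak lower semicontinuity of the norm. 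Thus $u_{n} \to u$ in $\mathcal{E}^{(1)}$, which, with the pointwise bound above, proves the corollary.

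The main obstacle is precisely this last step: passing from pointwise (or weak) to genuine strong $\mathcal{E}^{(1)}$-convergence of the truncations themselves. It is tempting to route through Proposition \ref{3. prop: find a convergent Cesaro mean sequence}, as in the previous corollary, but that proposition only delivers convergence of Ces\`{a}ro means of a subsequence, and averaging $u_{n(1)}, \dots, u_{n(k)}$ destroys the index-dependent ceiling $|u_{n}(x)| \leq n$ (the domination $|u(x)|$ survives averaging, but the bound $n$ does not). For this reason I would keep the explicit truncation sequence intact and argue via weak compactness and norm convergence rather than via Ces\`{a}ro means.
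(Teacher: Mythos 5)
Your proof is correct, but it takes a genuinely different route from the paper's. The paper's entire proof is: set $u_{n} \coloneqq ((-n) \vee u) \wedge n$ and repeat the argument of Corollary \ref{3. cor: find a uniformly bounded sequence for a function in F^1}, i.e.\ observe pointwise convergence plus $\sup_{n}\mathcal{E}(u_{n},u_{n}) \leq \mathcal{E}(u,u)$ and invoke the Banach--Saks statement of Proposition \ref{3. prop: find a convergent Cesaro mean sequence}. You start from the same truncations but then prove that the truncations \emph{themselves} converge strongly in $(\mathcal{F},\mathcal{E}^{(1)})$: weak compactness of the bounded sequence, identification of the weak limit as $u$ via the $\mathcal{E}^{(1)}$-continuity of evaluation functionals (which follows from \eqref{3. eq: the basic inequality for resistance forms} as you say), and then the Radon--Riesz upgrade from weak convergence plus norm convergence, the norm convergence being exactly where the contraction bound $\mathcal{E}(u_{n},u_{n}) \leq \mathcal{E}(u,u)$ and weak lower semicontinuity meet. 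Every step checks out (including the reduction of the two-sided truncation to \ref{3. dfn item: Markov property of resistance forms} by an affine rescaling, which the paper also uses implicitly in the previous corollary). Your closing remark about the Ces\`{a}ro route is a fair observation but slightly overstated as an obstruction: the Ces\`{a}ro means $v_{k}$ of the subsequence $(u_{n(k)})$ still satisfy $|v_{k}(x)| \leq n(k) \wedge |u(x)|$ and each is bounded, so the literal bound $|w_{m}(x)| \leq m \wedge |u(x)|$ is recovered by a trivial relabeling (repeat $v_{k}$ for indices $m$ between $n(k)$ and $n(k+1)$); moreover, at both places the corollary is actually used (Theorem \ref{3. thm: recurrence characterization with F^1} and Proposition \ref{4. prop: Gauss-Green formula in resistance forms}), only boundedness of each $u_{n}$ together with the domination $|u_{n}| \leq |u|$ matters. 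What your argument buys is a cleaner and strictly stronger conclusion --- convergence of the canonical truncations without passing to averages or subsequences --- at the cost of invoking a bit more Hilbert-space machinery than the paper's one-line reference.
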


\begin{proof}
  Set $u_{n} \coloneqq ((-n) \vee u) \wedge n$.
  The result is proven 
  by the same argument as Corollary~\ref{3. cor: find a uniformly bounded sequence for a function in F^1}.
\end{proof}

We are ready to prove Theorem~\ref{3. thm: recurrence characterization with F^1}.

\begin{proof} [{Proof of Theorem~\ref{3. thm: recurrence characterization with F^1}}]
  Assume \ref{3. thm item: recurrence, original dfn}.
  Then, by definition, 
  we can find an increasing sequence $(U_n)_{n \geq 1}$ of relatively compact open subsets 
  and functions $\varphi_n \in \cF$ such that 
  $\bigcup_{n \geq 1} U_n = F$,
  $\varphi_n(x_{0}) = 1$,
  $\varphi_n|_{U_n^c} = 0$, 
  and $\cE(\varphi_n, \varphi_n) \to 0$ as $n \to \infty$.
  In particular, 
  $\varphi_n \in C_c(F, \RN)$ for all $n$,
  and 
  $\cE(1_{F} - \varphi_n, 1_{F} - \varphi_n) \to 0$ as $n \to \infty$,
  which implies \ref{3. thm item: recurrence, F^1 contains 1}.

  Next, assume \ref{3. thm item: recurrence, F^1 contains 1}.
  Using Corollary~\ref{3. cor: find a uniformly bounded sequence for a function in F^1},
  we can find functions $\varphi_{n} \in C_{c}(F, \RN) \cap \mathcal{F}$ such that 
  $\varphi_{n} \to 1_{F}$ with respect to $\mathcal{E}^{(1)}$ and $\| \varphi_{n} \|_{\infty} \leq 1$.
  Fix $u \in \mathcal{F}$.
  The inequality in \cite[Lemma 6.5]{Kigami_12_Resistance} yields that 
  \begin{equation}
    \mathcal{E}(u \cdot \varphi_{n}, u \cdot \varphi_{n})
    \leq 
    2\|u\|_{\infty} \mathcal{E}(\varphi_{n}, \varphi_{n})
    + 
    2\|\varphi_{n}\|_{\infty} \mathcal{E}(u, u).
  \end{equation}
  If $u$ is a bounded function,
  then the right-hand side of the above inequality is uniformly bounded.
  Hence we deduce that $u \in \cF^{(1)}$ by Proposition~\ref{3. prop: find a convergent Cesaro mean sequence}.
  When $u$ is not bounded,
  we use Corollary~\ref{3. cor: find a bounded sequence for a function in F^1}
  and choose functions $u_{n} \in \mathcal{F}$ 
  such that $u_{n} \to u$ with respect to $\mathcal{E}^{(1)}$ and $\|u_{n}\|_{\infty} < \infty$.
  Since we have that $u_{n} \in \cF^{(1)}$ and $\cF^{(1)}$ is closed with respect to $\mathcal{E}^{(1)}$
  (by its definition),
  we obtain that $u \in \cF^{(1)}$.
  Therefore, we deduce that $\cF \subseteq \cF^{(1)}$,
  which implies \ref{3. thm item: recurrence, F equals to F^1}.

  Finally, assume \ref{3. thm item: recurrence, F equals to F^1}.
  It is then the case that $1_{F} \in \cF^{(1)}$,
  and hence it follows from the definition of $\cF^{(1)}$ that 
  there exists $\varphi_{n} \in C_{c}(F, \RN) \cap \cF$ such that 
  $\varphi_{n} \to 1_{F}$ with respect to $\cE^{(1)}$.
  If necessary,
  by considering sufficiently large $n$ with $\varphi_n(x_0) > 0$
  and replacing $\varphi_{n}$ by $\varphi_{n}/\varphi_{n}(x_{0})$,
  we may assume that $\varphi_{n}(x_{0}) = 1$ for all $n$.
  By Proposition~\ref{3. prop: find a convergent Cesaro mean sequence},
  we can find a subsequence $(\varphi_{n(k)})_{k \geq 1}$ 
  whose Ces\`{a}ro mean sequence $(\psi_k)_{k \geq 1}$ converges to $1_F$ with respect to $\cE^{(1)}$.
  Note that $\psi_k(x_0) = 1$ for all $k$.
  Write $U_k$ for the interior of the support of $\psi_k$.
  Taking the Ces\`{a}ro mean sequence ensures that the sequence $(U_k)_{k \geq 1}$ is increasing.
  We have from Proposition~\ref{3. prop: E^1 convergence implies compact convergence} that 
  $\psi_k \to 1_F$ in the compact-convergence topology.
  This implies that 
  $\bigcup_{k \geq 1} U_k = F$.
  Moreover, by Definition~\ref{3. dfn: effective resistance between sets},
  we have that 
  \begin{equation}
    R(x_{0}, U_k^{c}) 
    \geq 
    \cE(\psi_k, \psi_k)^{-1},
    \qquad 
    \forall k \geq 1.
  \end{equation}
  Since $\psi_k \to 1_F$ with respect to $\cE$ 
  and $\cE(1_F, 1_F) = 0$,
  we deduce that 
  \begin{equation}
    \lim_{k \to \infty} R(x_0, U_k^c) = \infty.
  \end{equation}
  Therefore, we obtain \ref{3. thm item: recurrence, original dfn}.
\end{proof}

As another corollary of Theorem~\ref{3. thm: recurrence characterization with F^1} other than Corollary~\ref{3. cor: recurrent Dirichlet and resistance forms},
we obtain the following.

\begin{cor} \label{3. cor: recurrence implies regularity}
  If the resistance form $(\cE, \cF)$ is recurrent,
  then the resistance form is regular.
\end{cor}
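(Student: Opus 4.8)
The plan is to prove regularity via the locally compact Stone--Weierstrass theorem, working in the setting where $(F, R)$ is locally compact and separable (under which the notion of regularity in Definition \ref{3. dfn: regular resistance forms} is the relevant one). Concretely, I would show that $\mathcal{A} \coloneqq \cF \cap C_{c}(F, \mbR)$ is a subalgebra of $C_{0}(F, \mbR)$ that separates points and has no common zero; Stone--Weierstrass then gives that $\mathcal{A}$ is dense in $C_{0}(F, \mbR)$ with respect to $\| \cdot \|_{\infty}$, and since $C_{c}(F, \mbR) \subseteq C_{0}(F, \mbR)$, this yields in particular that $\mathcal{A}$ is dense in $C_{c}(F, \mbR)$, i.e.\ regularity. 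The role of recurrence is to guarantee that $\mathcal{A}$ is rich enough, and it enters only through the cutoff functions produced below.

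First I would produce the cutoffs. Since $(\cE, \cF)$ is recurrent, Theorem \ref{3. thm: recurrence characterization with F^1} gives $1_{F} \in \cF^{(1)}$, and then Corollary \ref{3. cor: find a uniformly bounded sequence for a function in F^1} (applied to $u = 1_{F}$) yields functions $\varphi_{n} \in C_{c}(F, \mbR) \cap \cF$ with $\varphi_{n} \to 1_{F}$ with respect to $\cE^{(1)}$ and $\| \varphi_{n} \|_{\infty} \leq 1$. By Proposition \ref{3. prop: E^1 convergence implies compact convergence}, $\varphi_{n} \to 1$ uniformly on every compact subset of $F$. In particular, for each $x \in F$ we have $\varphi_{n}(x) \to 1$, so some $\varphi_{n} \in \mathcal{A}$ is nonzero at $x$; thus $\mathcal{A}$ has no common zero.

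Next I would check the algebra and separation properties. For bounded $f, g \in \cF$ the inequality of \cite[Lemma 6.5]{Kigami_12_Resistance} gives $fg \in \cF$, and a product of compactly supported continuous functions is compactly supported; since $\mathcal{A}$ is also a linear subspace by \ref{3. dfn item: the domain F}, it is a subalgebra of $C_{0}(F, \mbR)$. For separation, given $x \neq y$, condition \ref{3. dfn item: separability of points} provides $f \in \cF$ with $f(x) \neq f(y)$; truncating via Corollary \ref{3. cor: find a bounded sequence for a function in F^1} and using Proposition \ref{3. prop: E^1 convergence implies compact convergence} (recall $\cF = \cF^{(1)}$ in the recurrent case, by Theorem \ref{3. thm: recurrence characterization with F^1}) I may assume $f$ is bounded while keeping $f(x) \neq f(y)$. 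Then $f \varphi_{n} \in \mathcal{A}$ satisfies $f\varphi_{n}(x) \to f(x)$ and $f\varphi_{n}(y) \to f(y)$, so $f \varphi_{n}$ separates $x$ and $y$ for all large $n$. This completes the hypotheses of Stone--Weierstrass.

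The step requiring the most care is the one where recurrence is genuinely used: passing from finite-energy functions on all of $F$ to \emph{compactly supported} finite-energy functions. Without recurrence there need not be any nonzero element of $\cF \cap C_{c}(F, \mbR)$, and $\mathcal{A}$ could have common zeros; the existence of the cutoffs $\varphi_{n}$---equivalently $1_{F} \in \cF^{(1)}$, which by Theorem \ref{3. thm: recurrence characterization with F^1} is exactly recurrence---is what repairs this and supplies both the no-common-zero property and, through multiplication, the compactly supported separating functions.
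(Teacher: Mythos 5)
Your proof is correct in the setting you work in, but it takes a genuinely different route from the paper's, and the difference is worth noting. The paper does not apply the locally compact ($C_{0}$) version of Stone--Weierstrass globally. Instead, given $u \in C_{c}(F, \mbR)$, it picks a cutoff $\varphi_{n} \in C_{c}(F,\mbR) \cap \cF$ (obtained from recurrence exactly as in your first step) that is bounded below on $\supp(u)$, sets $K \coloneqq \supp(\varphi_{n})$, applies the \emph{compact} Stone--Weierstrass theorem on $K$ to approximate $u|_{K}$ by some $v|_{K}$ with $v \in C_{c}(F,\mbR)\cap\cF$, and then multiplies by a normalized cutoff $\psi = 0 \vee (c_{n}^{-1}\varphi_{n} \wedge 1)$ to produce the approximant $w = v\cdot\psi \in C_{c}(F,\mbR)\cap\cF$; the product stays in $\cF$ by the same \cite[Lemma 6.5]{Kigami_12_Resistance} you invoke. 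Both arguments rest on the same two ingredients (cutoffs from $1_{F}\in\cF^{(1)}$ via Theorem \ref{3. thm: recurrence characterization with F^1}, and product stability of bounded elements of $\cF$), and your verification of the algebra, separation, and no-common-zero hypotheses is sound --- in particular the truncation of the separating function and the limit $f\varphi_{n}(x)\to f(x)$ are fine. What the paper's localization buys is generality: the corollary is stated in the paragraph where $(F,R)$ is explicitly \emph{not} assumed locally compact or separable, and the paper's proof only ever uses compactness of $\supp(\varphi_{n})$, so it covers that case, whereas your appeal to the $C_{0}$ Stone--Weierstrass theorem requires local compactness of $F$ (you flag this restriction yourself). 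Since every application of the corollary in the paper concerns boundedly compact spaces, your argument suffices for all of them; just be aware that it proves a formally narrower statement than the one the paper establishes.
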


\begin{proof}
  Fix $u \in C_{c}(F, \RN)$ and $\varepsilon > 0$.
  It suffices to find a function $w \in C_c(F, \RN) \cap \cF$ such that $\|u - w\|_\infty < \varepsilon$.
  By Theorem~\ref{3. thm: recurrence characterization with F^1},
  there exists a sequence $(\varphi_{n})_{n \geq 1}$ in $C_{c}(F, \RN) \cap \cF$ 
  such that $\varphi_{n} \to 1_{F}$ with respect to $\cE^{(1)}$.
  It follows from Proposition~\ref{3. prop: E^1 convergence implies compact convergence} that 
  \begin{equation}  
    c_{n} \coloneqq \inf_{x \in \supp(u)} \varphi_{n}(x) > 0
  \end{equation}
  for all sufficiently large $n$,
  where $\supp(\cdot)$ denotes the support of functions.
  Fix such an $n$ and write $K \coloneqq \supp(\varphi_{n})$.
  Note that $\supp(u) \subseteq K$.
  Define $\psi \in C_{c}(F, \RN) \cap \cF$ by setting 
  $\psi(x) \coloneqq 0 \vee (c_{n}^{-1} \varphi_{n}(x) \wedge 1)$.
  Note that $\psi|_{\supp(u)} \equiv 1$ and $\| \psi \|_{\infty} \leq 1$.
  A general version of 
  the Stone Weierstrass theorem 
  (c.f.\ \cite[Theorem~2.4.11]{Dudley_02_real}) yields that 
  $\{v|_{K} \mid v \in C_{c}(F, \RN) \cap \cF \}$ is dense in $C_{c}(K, \RN)$
  (see the proof of \cite[Theorem~6.3]{Kigami_12_Resistance}).
  Thus, we can find $v \in C_{c}(F, \RN) \cap \cF$ satisfying
  $\sup_{x \in K} |u(x) - v(x)| < \varepsilon$.
  Define $w \coloneqq v \cdot \psi$, 
  which belongs to $C_{c}(F, \RN) \cap \cF$ by \cite[Lemma~6.5]{Kigami_12_Resistance}.
  For $x \in \supp(u)$,
  we have that $|u(x) - w(x)| = |u(x) - v(x)| < \varepsilon$.
  For $x \in K \setminus \supp(u)$,
  since we have that $u(x) = 0$,
  it follows that  
  \begin{equation}
    |u(x) - w(x)|
    =
    |u(x) \cdot \psi(x) - v(x) \cdot \psi(x)|
    \leq 
    |u(x) - v(x)|
    < \varepsilon.
  \end{equation}
  For $x \notin K$, we have that $|u(x) - w(x)| = 0$.
  Therefore, we deduce that $\| u-w \|_{\infty} < \varepsilon$,
  which completes the proof.
\end{proof}


\subsection{Traces of resistance forms} \label{sec: traces of resistance forms}

In this subsection, 
we study extended Dirichlet forms associated with traces of resistance forms introduced in \cite[Chapter~8]{Kigami_12_Resistance}.
In particular, we establish that processes associated with traces of resistance forms 
coincide with trace processes, in Theorem~\ref{3. thm: trace and resistance metric space} below.
Throughout this subsection,
we fix a resistance form $(\cE, \cF)$ on a non-empty set $F$,
and write $R$ for the corresponding resistance metric.
Moreover, we fix a non-empty subset $B \subseteq F$.

\begin{dfn} [{The space $\cF^{(1)}|_{B}$}]
  We define a subspace of $C(B, \RN)$ by 
  \begin{equation}
    \cF^{(1)}|_{B}
    \coloneqq
    \{
      u|_{B} \mid u \in \cF^{(1)}
    \}.
  \end{equation}
\end{dfn}

The following two assertions are proved 
in the same way as \cite[Lemmas~8.2 and~8.5]{Kigami_12_Resistance}
and hence we omit the proofs.

\begin{prop} \label{3. prop: energy minimizer in F^1}
  For each $\varphi \in \cF^{(1)}|_{B}$,
  there exists a unique function $h_{B}^{(1)}(\varphi) \in \cF^{(1)}$
  such that $h_{B}^{(1)}(\varphi)|_{B} = \varphi$ and 
  \begin{equation}
    \mathcal{E}(h_{B}^{(1)}(\varphi), h_{B}^{(1)}(\varphi))
    =
    \inf\{
      \mathcal{E}(u,u) \mid u \in \cF^{(1)},\ u|_{B}=\varphi
    \}.
  \end{equation}
\end{prop}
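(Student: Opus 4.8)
The plan is to realize $h_{B}^{(1)}(\varphi)$ as the minimal-norm element of a closed affine subspace of a Hilbert space, exactly in the spirit of \cite[Lemma 8.2]{Kigami_12_Resistance}. The starting point is that $(\cF^{(1)}, \cE^{(1)})$ is itself a Hilbert space: by definition $\cF^{(1)}$ is the closure of $C_{c}(F, \mbR) \cap \cF$ inside $(\cF, \cE^{(1)})$, and the latter is a Hilbert space by Lemma \ref{3. lem: (F, E^1) is Hilbert}, so $\cF^{(1)}$ is a complete (closed) subspace. Since $B \neq \emptyset$, I would then fix a point $b_{0} \in B$. Because $\cF^{(1)}$ is independent of the choice of reference point (as noted after its definition) and the basic inequality \eqref{3. eq: the basic inequality for resistance forms} shows that $\cE^{(b_{0})}(u,v) \coloneqq \cE(u,v) + u(b_{0})v(b_{0})$ defines an inner product whose norm is comparable to $\cE^{(1)}$ on $\cF^{(1)}$, the pair $(\cF^{(1)}, \cE^{(b_{0})})$ is again a Hilbert space.

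Next I would introduce the linear subspace
\begin{equation}
  V \coloneqq \{ u \in \cF^{(1)} : u|_{B} = 0 \}
\end{equation}
and check that it is closed in $(\cF^{(1)}, \cE^{(b_{0})})$: if $u_{n} \in V$ converges to $u$ with respect to $\cE^{(b_{0})}$ (equivalently $\cE^{(1)}$), then \eqref{3. eq: the basic inequality for resistance forms} forces $u_{n} \to u$ pointwise on all of $F$ (cf.\ Proposition \ref{3. prop: E^1 convergence implies compact convergence}), so $u|_{B} = 0$ and hence $u \in V$. Since $\varphi \in \cF^{(1)}|_{B}$, I can fix some $u_{0} \in \cF^{(1)}$ with $u_{0}|_{B} = \varphi$, so that the constraint set $\{ u \in \cF^{(1)} : u|_{B} = \varphi \}$ is exactly the nonempty closed affine subspace $u_{0} + V$.

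By the Hilbert projection theorem applied in $(\cF^{(1)}, \cE^{(b_{0})})$, there is a unique element $h \in u_{0} + V$ of minimal $\cE^{(b_{0})}$-norm. The final reduction is the key to getting the conclusion phrased in terms of the energy $\cE$ rather than $\cE^{(b_{0})}$: every $u \in u_{0} + V$ takes the same value $u(b_{0}) = \varphi(b_{0})$ at $b_{0} \in B$, whence $\cE^{(b_{0})}(u,u) = \cE(u,u) + \varphi(b_{0})^{2}$ throughout the constraint set. Thus minimizing the $\cE^{(b_{0})}$-norm is equivalent to minimizing the energy $\cE(u,u)$, and strict convexity of the Hilbert norm delivers both existence and uniqueness of the minimizer. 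Setting $h_{B}^{(1)}(\varphi) \coloneqq h$ completes the argument.

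The computations are entirely routine; the only points requiring a little care are the closedness of $V$ — where one must pass from $\cE^{(1)}$-convergence to genuine pointwise convergence on $B$ via \eqref{3. eq: the basic inequality for resistance forms}, since $B$ need not be compact — and the observation that fixing the value at $b_{0} \in B$ converts the possibly degenerate seminorm $\cE$ (which annihilates constants, e.g.\ in the recurrent case of Theorem \ref{3. thm: recurrence characterization with F^1}) into the nondegenerate Hilbert norm $\cE^{(b_{0})}$. It is precisely this nondegeneracy that rules out minimizers differing by a nonzero constant and thereby yields uniqueness.
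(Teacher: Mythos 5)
Your argument is correct, and it is essentially the route the paper intends: the paper omits the proof, stating it goes "in the same way as \cite[Lemma 8.2]{Kigami_12_Resistance}", which is precisely this Hilbert-projection argument onto the closed affine set $u_{0}+V$, with the value at a fixed point of $B$ pinned down to turn the seminorm $\cE$ into a genuine Hilbert norm. Your observation that $\cE^{(b_{0})}(u,u)=\cE(u,u)+\varphi(b_{0})^{2}$ is constant-shifted on the constraint set, together with the closedness of $V$ via pointwise convergence from \eqref{3. eq: the basic inequality for resistance forms}, covers the only two points that need care.
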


\begin{prop}  \label{3. prop: characterization of minimizer by othogonality}
  Fix $\varphi \in \cF^{(1)}|_{B}$ and $u \in \cF^{(1)}$.
  Then $u=h_{B}^{(1)}(\varphi)$ 
  if and only if $u|_{B} = \varphi$ 
  and $\mathcal{E}(u,v)=0$ for all $v \in \cF^{(1)}$ such that $v|_{B}=0$.
  As a consequence,
  the map $h_{B}^{(1)} \colon \cF^{(1)}|_{B} \to \cF^{(1)}$ is linear.
\end{prop}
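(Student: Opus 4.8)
The plan is to recognize this as the standard variational (first-order / orthogonal-projection) characterization of the energy minimizer, carried out inside the Hilbert space $(\cF, \cE^{(1)})$ supplied by Lemma \ref{3. lem: (F, E^1) is Hilbert}, with $\cF^{(1)}$ its closed subspace obtained by completing $C_{c}(F,\mbR) \cap \cF$. The organizing object is the linear subspace $N \coloneqq \{ v \in \cF^{(1)} : v|_{B} = 0 \}$ (which is in fact closed, since $\cE^{(1)}$-convergence forces pointwise convergence by Proposition \ref{3. prop: E^1 convergence implies compact convergence}); the competitors for the minimization defining $h_{B}^{(1)}(\varphi)$ in Proposition \ref{3. prop: energy minimizer in F^1} are precisely the affine set of functions of the form $u + v$ with $u|_{B} = \varphi$ and $v \in N$.

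For the forward implication, I would suppose $u = h_{B}^{(1)}(\varphi)$, fix $v \in N$ and $t \in \mbR$, and note that $u + tv \in \cF^{(1)}$ with $(u+tv)|_{B} = \varphi$, so it is an admissible competitor. Expanding $\cE(u+tv, u+tv) = \cE(u,u) + 2t\,\cE(u,v) + t^{2}\,\cE(v,v) \geq \cE(u,u)$ gives $2t\,\cE(u,v) + t^{2}\,\cE(v,v) \geq 0$ for every $t$; considering $t \downarrow 0$ and $t \uparrow 0$ separately forces the linear coefficient to vanish, i.e.\ $\cE(u,v) = 0$. This is the only place minimality is used.

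For the reverse implication, I would suppose $u \in \cF^{(1)}$ satisfies $u|_{B} = \varphi$ and $\cE(u,v) = 0$ for all $v \in N$, and take an arbitrary competitor $w \in \cF^{(1)}$ with $w|_{B} = \varphi$. Setting $v \coloneqq w - u \in N$ and expanding $\cE(w,w) = \cE(u,u) + 2\,\cE(u,v) + \cE(v,v) = \cE(u,u) + \cE(v,v) \geq \cE(u,u)$ shows $u$ attains the infimum, so $u = h_{B}^{(1)}(\varphi)$ by the uniqueness in Proposition \ref{3. prop: energy minimizer in F^1}. (Equality in fact forces $\cE(v,v)=0$, whence $v$ is constant by \ref{3. dfn item: the domain F}; as $v|_{B}=0$ and $B \neq \emptyset$ this yields $v=0$, recovering uniqueness directly.) The linearity consequence is then immediate: for $\varphi_{1}, \varphi_{2} \in \cF^{(1)}|_{B}$ and scalars $a,b$, the function $a\,h_{B}^{(1)}(\varphi_{1}) + b\,h_{B}^{(1)}(\varphi_{2})$ lies in $\cF^{(1)}$, restricts on $B$ to $a\varphi_{1} + b\varphi_{2}$, and is $\cE$-orthogonal to every $v \in N$ by bilinearity, so the characterization identifies it with $h_{B}^{(1)}(a\varphi_{1} + b\varphi_{2})$.

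I do not expect a genuine obstacle here: the computation is the textbook orthogonal-projection argument. The only subtlety worth flagging is that the orthogonality is taken with respect to the energy form $\cE$, which is \emph{degenerate} on constants, rather than the full inner product $\cE^{(1)}$. This causes no difficulty precisely because the perturbation space $N$ contains no nonzero constants (as $B$ is non-empty), so $\cE$ is strictly positive in the relevant directions and the first-order condition pins down the minimizer uniquely.
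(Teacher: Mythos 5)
Your proof is correct, and it follows exactly the standard orthogonal-projection argument that the paper itself invokes by omitting the proof and referring to \cite[Lemma 8.2 and 8.5]{Kigami_12_Resistance}. Both directions (the first-variation computation and the Pythagorean expansion), the use of uniqueness from Proposition \ref{3. prop: energy minimizer in F^1}, and the deduction of linearity match the intended argument, and your remark that $N$ contains no nonzero constants correctly addresses the only point of degeneracy of $\cE$.
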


\begin{rem} \label{3. rem: difference of harmonic extensions}
  In \cite[Definition 8.3]{Kigami_12_Resistance},
  the $B$-harmonic function $h_{B}(\varphi)$ with boundary value $\varphi$ is defined. 
  The difference between $h_{B}(\varphi)$ and $h_{B}^{(1)}(\varphi)$ is 
  that $h_{B}(\varphi)$ is the minimizer of $\mathcal{E}(u,u)$ over $u \in \mathcal{F}$
  while $h_{B}^{(1)}(\varphi)$ is the minimizer over $u \in \cF^{(1)}$.
  Therefore, 
  if the resistance form $(\mathcal{E}, \mathcal{F})$ is recurrent,
  then $h_{B}(\varphi) = h_{B}^{(1)}(\varphi)$ by Theorem~\ref{3. thm: recurrence characterization with F^1}.
\end{rem}

Henceforth, we assume that $(F, R)$ is separable and locally compact.
Fix a fully-supported Radon measure $\mu$ on $F$,
write $(\cE, \cD)$ and $\cD_{e}$
for the associated Dirichlet form and the extended Dirichlet space, respectively.
Moreover, we suppose that we have another Radon measure $\nu$ on $F$
such that the (topological) support of $\nu$ is $B$.
Note that this implies that $B$ is closed.
Set
\begin{gather}
  \check{\mathcal{D}}_{e}^{\nu}
  \coloneqq 
  \{
    \varphi \in \mathcal{L}(B, \nu)
    \mid \exists u \in \mathcal{D}_{e} \ \text{such that}\ \varphi=u|_{B}, \ \nu \text{-a.e.}
  \}, \\
  \check{\mathcal{D}}^{\nu}
  \coloneqq 
  \check{\mathcal{D}}_{e}^{\nu} \cap L^{2}(B, \nu),
\end{gather}
where we recall the space $\cL(B, \nu)$ from Definition~\ref{3. dfn: the spaces cL and L^2}.

\begin{lem} \label{3. lem: the extended Dirichlet space for trace}
  Every function $\varphi \in \check{\mathcal{D}}_{e}^{\nu}$ has a unique continuous modification
  and so we can regard $\check{\mathcal{D}}_{e}^{\nu}$ as a subspace of $C(B, \RN)$.
  Then it holds that $\check{\mathcal{D}}_{e}^{\nu} = \cF^{(1)}|_{B}$.
\end{lem}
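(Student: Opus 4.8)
The plan is to deduce everything from Theorem \ref{3. thm: characterization of the extended Dirichlet space}, which identifies $\cD_{e}$ with $\cF^{(1)}$ as subspaces of $C(F,\mbR)$. Recall that $\cF^{(1)} \subseteq \cF \subseteq C(F,\mbR)$ by \eqref{3. eq: the basic inequality for resistance forms}, so every representative we manipulate is genuinely continuous on $F$, and in particular its restriction to $B$ is continuous on $B$. The only analytic input beyond this identification is that a continuous function on $B$ is pinned down $\nu$-a.e.\ by its values, which is where the hypothesis $\supp(\nu)=B$ enters.

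First I would record the full-support fact: if $O \subseteq B$ is relatively open and nonempty, then $\nu(O) > 0$. Writing $O = B \cap U$ with $U$ open in $F$ and picking $x \in O \subseteq B = \supp(\nu)$, the neighborhood $U$ of $x$ satisfies $\nu(U) > 0$, while $\nu(U \setminus B) = 0$ since $\nu(F \setminus B)=0$; hence $\nu(O) = \nu(U \cap B) = \nu(U) > 0$. Consequently, two continuous functions on $B$ that agree $\nu$-a.e.\ must agree everywhere, because the set on which they differ is relatively open and $\nu$-null, hence empty. This simultaneously yields the existence and uniqueness of the continuous modification asserted in the statement: given $\varphi \in \check{\mathcal{D}}_{e}^{\nu}$, choose $u \in \cD_{e}$ with $\varphi = u|_{B}$ $\nu$-a.e.; then $u|_{B} \in C(B,\mbR)$ is a continuous modification of $\varphi$, and it is the only one. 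This justifies regarding $\check{\mathcal{D}}_{e}^{\nu}$ as a subspace of $C(B,\mbR)$.

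Having identified each $\varphi \in \check{\mathcal{D}}_{e}^{\nu}$ with its continuous modification $u|_{B}$, the inclusion $\check{\mathcal{D}}_{e}^{\nu} \subseteq \cF^{(1)}|_{B}$ is immediate: Theorem \ref{3. thm: characterization of the extended Dirichlet space} gives $u \in \cD_{e} = \cF^{(1)}$, so $u|_{B} \in \cF^{(1)}|_{B}$. For the reverse inclusion, take $\psi \in \cF^{(1)}|_{B}$ and write $\psi = u|_{B}$ with $u \in \cF^{(1)} = \cD_{e}$. Then $u \in \cD_{e}$ and $\psi = u|_{B}$ (everywhere, hence $\nu$-a.e.), so the $\nu$-class of $\psi$ lies in $\check{\mathcal{D}}_{e}^{\nu}$ and its continuous modification is $\psi$ itself. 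This establishes $\check{\mathcal{D}}_{e}^{\nu} = \cF^{(1)}|_{B}$.

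I do not expect a genuine obstacle here: the substantive content is already packaged in Theorem \ref{3. thm: characterization of the extended Dirichlet space}, namely the measure-independence of the extended Dirichlet space and its identification with $\cF^{(1)}$. The only point requiring care is the passage between $\nu$-equivalence classes and honest continuous functions, i.e.\ the uniqueness of the continuous modification; this is exactly where $\supp(\nu)=B$ (and the closedness of $B$ it forces) is used, and one must argue with the subspace topology on $B$ rather than with the topology of $F$.
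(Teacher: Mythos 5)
Your proof is correct and follows essentially the same route as the paper's: both deduce continuity of the modification from Theorem \ref{3. thm: characterization of the extended Dirichlet space} (i.e.\ $\cD_{e}=\cF^{(1)}\subseteq C(F,\mbR)$), obtain uniqueness from $\supp(\nu)=B$, and then read off the equality $\check{\mathcal{D}}_{e}^{\nu}=\cF^{(1)}|_{B}$ directly from the definitions. You merely spell out the full-support argument that the paper leaves implicit.
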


\begin{proof}
  Fix $\varphi \in \check{\mathcal{D}}_{e}^{\nu}$
  and choose $u \in \mathcal{D}_{e}$ such that $u|_{B}= \varphi,\, \nu$-a.e.
  By Theorem~\ref{3. thm: characterization of the extended Dirichlet space},
  $u|_{B}$ is a continuous modification of $\varphi$.
  Since $B$ is the support of $\nu$,
  uniqueness follows.
  The last assertion is immediate from the definitions of $\cF^{(1)}|_{B}$ and $\check{\mathcal{D}}_{e}^{\nu}$.
\end{proof}

Write $X = ((X_{t})_{t \geq 0}, (P_{x})_{x \in F})$ 
for the Hunt process associated with $(\cE, \cD)$.
Let $\sigma_{B}$ denote the hitting time of $B$, i.e.,
\begin{equation}  \label{3. eq: dfn of hitting time}
  \sigma_{B} 
  \coloneqq 
  \inf\{
    t > 0 \mid X_{t} \in B
  \},
\end{equation}
For $\varphi \in \check{\mathcal{D}}_{e}^{\nu}$,
its harmonic extension via the process $X$ is defined by 
\begin{equation} \label{3. eq: the harmonic extension via process}
  \check{h}_{B}(\varphi)(x)
  \coloneqq
  E_{x} [u(X_{\sigma_{B}}) \cdot 1_{\{\sigma_{B} < \infty\}}],
\end{equation}
where we choose $u \in \mathcal{D}_{e}$ such that $u|_{B}=\varphi,\, \nu$-a.e.
Note that $\check{h}_{B}(\varphi)$ is independent of the choice of $u$
(see \cite[Lemma 6.2.1]{Fukushima_Oshima_Takeda_11_Dirichlet}).
The following result says that the harmonic extension $\check{h}_{B}(\varphi)$ via the associated process $X$ 
coincides with the harmonic extension $h_{B}^{(1)}(\varphi)$ as the energy minimizer over $\cF^{(1)}$
defined in Proposition~\ref{3. prop: energy minimizer in F^1}.

\begin{thm} \label{3. thm: coincidence of two harmonic extensions}
  For any $\varphi \in \cF^{(1)}|_{B}$,
  it holds that $h_{B}^{(1)}(\varphi)(x) = \check{h}_{B}(\varphi)(x)$ for all $x \in F$.
\end{thm}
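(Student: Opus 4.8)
The plan is to reduce the statement to the variational characterization in Proposition \ref{3. prop: characterization of minimizer by othogonality} via the identification $\cD_e = \cF^{(1)}$ from Theorem \ref{3. thm: characterization of the extended Dirichlet space}. Write $w \coloneqq \check{h}_B(\varphi)$ and fix $u \in \cD_e$ with $u|_B = \varphi$, $\nu$-a.e., so that $w$ is the hitting operator $H_B u(x) = E_x[u(X_{\sigma_B}); \sigma_B < \infty]$ applied to $u$. By the uniqueness in Proposition \ref{3. prop: characterization of minimizer by othogonality}, it suffices to verify the three properties: (a) $w \in \cF^{(1)}$; (b) $w|_B = \varphi$; and (c) $\cE(w, v) = 0$ for every $v \in \cF^{(1)}$ with $v|_B = 0$. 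Once these hold, $w$ satisfies the defining conditions of $h_B^{(1)}(\varphi)$ and the two objects must coincide.

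For the three properties I would invoke the standard theory of hitting distributions for regular Dirichlet forms and their extended Dirichlet spaces, in the form of the orthogonal-projection description of $H_B$ (cf.\ \cite[Section 4.6]{Fukushima_Oshima_Takeda_11_Dirichlet}). That theory yields, for $u \in \cD_e$, that $H_B u \in \cD_e$, that $H_B u = u$ q.e.\ on $B$ (since $X_{\sigma_B} \in B$ and q.e.\ point of $B$ is regular for $B$), and that $\cE(H_B u, v) = 0$ for all $v \in \cD_e$ vanishing q.e.\ on $B$; indeed $H_B u$ is precisely the $\cE$-orthogonal projection of $u$ onto the complement of $\{v \in \cD_e : v = 0 \text{ q.e.\ on } B\}$, which matches the energy-minimization defining $h_B^{(1)}$. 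It then remains to upgrade every ``q.e.'' statement to a pointwise one and to match the probabilistic object with the continuous representative. This is where the resistance-form setting is essential: every point of $F$ has strictly positive capacity (\cite[Theorem 9.9]{Kigami_12_Resistance}), so sets of zero capacity are empty, each function of $\cD_e = \cF^{(1)}$ is identified with its unique continuous modification, and the probabilistically defined $H_B u$ is exactly the quasi-continuous, hence continuous, representative. Consequently (a) is immediate; for (b), $H_B u = u = \varphi$ on $B$, where the $\nu$-a.e.\ equality $u|_B = \varphi$ upgrades to everywhere on $B = \supp \nu$ by continuity, exactly as in Lemma \ref{3. lem: the extended Dirichlet space for trace}; and for (c), any $v \in \cF^{(1)}$ with $v|_B = 0$ in the continuous sense vanishes q.e.\ on $B$, so the orthogonality applies.

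The main obstacle is precisely this passage between the pointwise probabilistic object $\check{h}_B(\varphi)$ from \eqref{3. eq: the harmonic extension via process} and the analytic object $h_B^{(1)}(\varphi) \in \cF^{(1)} \subseteq C(F, \mbR)$: one must confirm that the function defined by the hitting formula is genuinely continuous and agrees with the energy minimizer everywhere, not merely off a polar set, and that boundary values coincide pointwise on $B$ rather than only $\nu$-a.e. The positive-capacity property of points removes every exceptional set simultaneously, which is what makes all of these identifications exact; without it one would only obtain the equality $h_B^{(1)}(\varphi) = \check{h}_B(\varphi)$ outside a set of zero capacity.
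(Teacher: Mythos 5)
Your proposal is correct and follows essentially the same route as the paper: the paper likewise cites the standard orthogonal-projection description of the hitting operator (via \cite[Theorem 3.4.8]{Chen_Fukushima_12_Symmetric}) to get that $\check{h}_{B}(\varphi)$ is a continuous element of $\mathcal{D}_{e}$ with $\mathcal{E}(\check{h}_{B}(\varphi), v)=0$ for all $v \in \mathcal{D}_{e}$ vanishing on $B$, and then concludes by combining Theorem \ref{3. thm: characterization of the extended Dirichlet space}, Proposition \ref{3. prop: characterization of minimizer by othogonality} and Lemma \ref{3. lem: the extended Dirichlet space for trace}. Your additional care about upgrading quasi-everywhere statements to pointwise ones via the positive capacity of points is exactly the implicit content of those citations.
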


\begin{proof}
  By \cite[Theorem 3.4.8]{Chen_Fukushima_12_Symmetric},
  $\check{h}_{B}(\varphi)$ is a continuous function belonging to $\mathcal{D}_{e}$
  such that $\mathcal{E}(\check{h}_{B}(\varphi), v) = 0$ for any $v \in \mathcal{D}_{e}$ with $v|_{B}=0$.
  The desired result follows immediately from Theorem~\ref{3. thm: characterization of the extended Dirichlet space},
  Proposition~\ref{3. prop: characterization of minimizer by othogonality}
  and Lemma~\ref{3. lem: the extended Dirichlet space for trace}.
\end{proof}

For $\varphi, \psi \in \check{\mathcal{D}}_{e}^{\nu}$, 
set 
\begin{equation}
  \check{\mathcal{E}}^{\nu}(\varphi, \psi)
  \coloneqq
  \mathcal{E}(\check{h}_{B}(\varphi), \check{h}_{B}(\psi)).
\end{equation}
Note that $(\check{\cE}^{\nu}, \check{\cD}^{\nu})$
is the trace of $(\cE, \cF)$ on $B$ with respect to $\nu$
(see \cite[Section 6.2]{Fukushima_Oshima_Takeda_11_Dirichlet}).
Define a PCAF $A=(A_{t})_{t \geq 0}$ and its right-continuous inverse $\tau=(\tau(t))_{t \geq 0}$
by setting 
\begin{equation}
  A_{t}
  \coloneqq 
  \int_{F} L(x,t)\, \nu(dx),
  \qquad 
  \tau(t)
  \coloneqq
  \inf\{
    s>0 :
    A_{s} >t
  \},
\end{equation}
where $(L(x,t))_{x \in F, t \geq 0}$ is the jointly-measurable local time of $X$ satisfying the occupation density formula~\eqref{3. eq: the occupation density formula}.
The \textit{trace} $\check{X}^{\nu}=(\check{X}^{\nu}_{t})_{t \geq 0}$ of $X$ on $B$ (with respect to $\nu$)
is defined by setting $\check{X}^{\nu}_{t} \coloneqq X_{\tau(t)}$.
Note that $(\check{X}^{\nu}, (P_{x})_{x \in B})$ is a strong Markov process
(see \cite[Theorem~A.2.12]{Fukushima_Oshima_Takeda_11_Dirichlet}).

\begin{lem} [{\cite[Theorems~5.2.2 and~5.2.15]{Chen_Fukushima_12_Symmetric}}]
  \label{3. lem: the trace of Dirichlet form}
  The pair $(\check{\mathcal{E}}^{\nu}, \check{\mathcal{D}}^{\nu})$
  is a regular Dirichlet form on $L^{2}(B, \nu)$ 
  and $\check{X}^{\nu}$ is the associated process.
  The extended Dirichlet space is $\check{\mathcal{D}}_{e}^{\nu}$.
\end{lem}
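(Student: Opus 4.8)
The plan is to deduce the statement from the general theory of time changes of symmetric Markov processes developed in \cite{Chen_Fukushima_12_Symmetric}, after observing that its hypotheses take a particularly clean form in the present resistance-form setting. Recall that $(\cE, \cD)$ is a regular Dirichlet form (by the regularity of the resistance form, Corollary \ref{3. cor: recurrence implies regularity}) with associated Hunt process $X$, and that the decisive structural feature here is that every singleton has strictly positive capacity (\cite[Theorem 9.9]{Kigami_12_Resistance}). Hence every set of zero capacity is empty, so that each notion the general theory formulates ``quasi-everywhere'' may be read ``everywhere'', and in particular any Radon measure on $(F, R)$ charges no polar set and is therefore smooth. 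In particular $\nu$ is smooth.

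First I would identify the time-change clock. Since $\nu$ is smooth, it is the Revuz measure of a positive continuous additive functional, unique up to equivalence. I would check that $A_{t} = \int_{F} L(x, t)\, \nu(dx)$ is exactly this PCAF: the occupation density formula \eqref{3. eq: the occupation density formula} identifies $L(\cdot, t)$ as the occupation densities of $X$ relative to $\mu$, and integrating these densities against $\nu$ produces a PCAF whose Revuz measure is $\nu$. Because the quasi-support of a smooth measure is its topological support in this setting, the support of $A$ is $B = \supp(\nu)$, which is closed and hence locally compact and separable in the relative topology.

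Next I would invoke the time-change theorem \cite[Theorem 5.2.2]{Chen_Fukushima_12_Symmetric}. Writing $\tau$ for the right-continuous inverse of $A$, the process $\check{X}^{\nu}_{t} = X_{\tau(t)}$ is a $\nu$-symmetric process on $B$ whose associated Dirichlet form on $L^{2}(B, \nu)$ is the trace form, with extended Dirichlet space given by the restrictions $\check{\mathcal{D}}_{e}^{\nu}$, Dirichlet space $\check{\mathcal{D}}^{\nu} = \check{\mathcal{D}}_{e}^{\nu} \cap L^{2}(B, \nu)$, and energy $\varphi \mapsto \cE(H\varphi, H\varphi)$, where $H\varphi(x) = E_{x}[u(X_{\sigma_{B}}); \sigma_{B} < \infty]$ is the harmonic extension by the hitting distribution of $B$. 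This $H$ is precisely $\check{h}_{B}$, so the energy form coincides with $\check{\mathcal{E}}^{\nu}$ as defined; moreover, by Theorem \ref{3. thm: coincidence of two harmonic extensions} it agrees with $h_{B}^{(1)}$, and by Lemma \ref{3. lem: the extended Dirichlet space for trace} the space $\check{\mathcal{D}}_{e}^{\nu}$ coincides with $\cF^{(1)}|_{B}$, so every object matches those already introduced. Regularity of $(\check{\mathcal{E}}^{\nu}, \check{\mathcal{D}}^{\nu})$ then follows from \cite[Theorem 5.2.15]{Chen_Fukushima_12_Symmetric}, whose hypothesis is that $B$ equals the quasi-support of $\nu$; this holds since $\supp(\nu) = B$ and quasi-support equals support here.

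I expect the only genuinely delicate point to be the bookkeeping of the ``quasi-everywhere'' statements of the general theory: in matching the abstract framework to the concrete objects $\check{h}_{B}$, $\check{\mathcal{D}}_{e}^{\nu}$ and $\check{X}^{\nu}$ defined here, one must confirm that the exceptional sets appearing in \cite{Chen_Fukushima_12_Symmetric} are genuinely absent. This is exactly where the positive-capacity-of-points property does the real work, and once it is invoked the remaining identifications are routine.
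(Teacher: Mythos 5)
Your proposal is correct and takes essentially the same route as the paper, which states this lemma without proof as a direct quotation of \cite[Theorems 5.2.2 and 5.2.15]{Chen_Fukushima_12_Symmetric}; your write-up merely makes explicit the verifications the paper leaves implicit (smoothness of $\nu$, identification of $A_{t}=\int_{F}L(x,t)\,\nu(dx)$ as the PCAF with Revuz measure $\nu$, and the coincidence of the quasi-support with the topological support because every point has positive capacity). Nothing further is needed.
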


The following is the main result of this section.
The assertion is almost the same as \cite[Theorem 2.5 and Lemma 2.6]{Croydon_Hambly_Kumagai_17_Time-changes},
but it is new
that we do not assume that $B$ is compact.

\begin{thm} \label{3. thm: trace and resistance metric space}
  Assume that the resistance form $(\cE, \cF)$ is recurrent.
  Set 
  \begin{gather}
    \mathcal{F}|_{B}
    \coloneqq
    \{ u|_{B} \mid u \in \mathcal{F} \}, \\
    \mathcal{E}|_{B}(u|_{B}, v|_{B})
    \coloneqq
    \mathcal{E}(h_{B}^{(1)}(u), h_{B}^{(1)}(v)), \qquad u,v \in \mathcal{F}.
  \end{gather}
  Then $(\cE|_{B}, \cF|_{B})$ is a recurrent resistance form 
  and the corresponding resistance metric is $R|_{B \times B}$.
  The process associated with $(B, R|_{B \times B}, \nu)$
  is $\check{X}^{\nu}$.
\end{thm}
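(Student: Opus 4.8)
The plan is to treat the two form-theoretic assertions first, by using recurrence to reduce them to Kigami's general trace theory, and then to identify the process by matching both the extended Dirichlet spaces and the energies of the analytic trace and the probabilistic (time-changed) trace. The point of entry is that recurrence removes the distinction between the two harmonic extensions: by Theorem \ref{3. thm: recurrence characterization with F^1} recurrence gives $\cF = \cF^{(1)}$, so Remark \ref{3. rem: difference of harmonic extensions} yields $h_{B}^{(1)} = h_{B}$, where $h_{B}$ is the $B$-harmonic extension of \cite[Definition 8.3]{Kigami_12_Resistance} minimising energy over all of $\cF$. Hence $(\cE|_{B}, \cF|_{B})$ is exactly the trace of $(\cE, \cF)$ onto $B$ in the sense of \cite[Section 8]{Kigami_12_Resistance}, and the general trace theorem \cite[Theorem 8.4]{Kigami_12_Resistance}, which requires no compactness of $B$, shows that $(\cE|_{B}, \cF|_{B})$ is a resistance form on $B$ whose associated resistance metric is $R|_{B \times B}$.

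Next I would establish regularity. Since $B$ is closed in the locally compact, separable space $(F, R)$, the subspace $(B, R|_{B \times B})$ is again locally compact and separable, so by Corollary \ref{3. cor: recurrence implies regularity} it suffices to check that the trace form is recurrent. I would verify this by a direct resistance comparison: using the identity $\cE|_{B}(u|_{B}, u|_{B}) = \cE(h_{B}(u), h_{B}(u))$ together with the energy-lowering property of $h_{B}$ among extensions, the variational problem defining the effective resistance of the trace between a root $\rho \in B$ and a set $A \subseteq B$ coincides with the one defining $R(\rho, A)$ in $(\cE, \cF)$. Taking $A = B_{R|_{B}}(\rho, r)^{c} \subseteq B_{R}(\rho, r)^{c}$ and using monotonicity of effective resistance in the target set, the trace resistance to the complement of a ball dominates $R(\rho, B_{R}(\rho, r)^{c})$, which tends to infinity by recurrence of $(\cE, \cF)$. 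Thus $(\cE|_{B}, \cF|_{B})$ is recurrent and therefore regular.

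Finally I would identify the process. As $\nu$ is Radon on $(B, R|_{B \times B})$ of full support, the triple $(B, R|_{B \times B}, \nu)$ has a well-defined associated Dirichlet form and Hunt process. Applying Theorem \ref{3. thm: characterization of the extended Dirichlet space} to the regular resistance form $(\cE|_{B}, \cF|_{B})$ with measure $\nu$, the extended Dirichlet space of $(B, R|_{B \times B}, \nu)$ is the $\cF^{(1)}$-space of the trace form, which equals its full domain $\cF|_{B}$ by recurrence of the trace (Theorem \ref{3. thm: recurrence characterization with F^1}). On the probabilistic side, Lemma \ref{3. lem: the trace of Dirichlet form} gives that $\check{X}^{\nu}$ is associated with the regular Dirichlet form $(\check{\cE}^{\nu}, \check{\cD}^{\nu})$, whose extended Dirichlet space is $\check{\cD}_{e}^{\nu}$, and $\check{\cD}_{e}^{\nu} = \cF^{(1)}|_{B} = \cF|_{B}$ by Lemma \ref{3. lem: the extended Dirichlet space for trace} and recurrence. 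The two forms thus share the extended Dirichlet space $\cF|_{B}$, and on it the energies agree, since $\cE|_{B}(\varphi, \psi) = \cE(h_{B}^{(1)}(\varphi), h_{B}^{(1)}(\psi)) = \cE(\check{h}_{B}(\varphi), \check{h}_{B}(\psi)) = \check{\cE}^{\nu}(\varphi, \psi)$ by Theorem \ref{3. thm: coincidence of two harmonic extensions}. As a regular Dirichlet form is determined by its extended Dirichlet space and energy, the Dirichlet form of $(B, R|_{B \times B}, \nu)$ coincides with $(\check{\cE}^{\nu}, \check{\cD}^{\nu})$, whence its process is $\check{X}^{\nu}$.

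I expect the main obstacle to be precisely the point that the preparatory results of this section were built to overcome: without compactness of $B$ one cannot manipulate the form domain directly and must instead match the analytic trace domain $\cF|_{B}$ with the extended Dirichlet space $\check{\cD}_{e}^{\nu}$ of the time-changed process, while simultaneously checking that the variational harmonic extension $h_{B}^{(1)}$ coincides with the probabilistic one $\check{h}_{B}$. These two identifications, Lemma \ref{3. lem: the extended Dirichlet space for trace} and Theorem \ref{3. thm: coincidence of two harmonic extensions}, both rest on the characterisation $\cD_{e} = \cF^{(1)}$ of Theorem \ref{3. thm: characterization of the extended Dirichlet space}; once they are in hand, the assembly above is essentially mechanical.
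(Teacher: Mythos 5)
Your proposal is correct, and its overall architecture matches the paper's: recurrence gives $\cF = \cF^{(1)}$ so that $h_{B}^{(1)} = h_{B}$ (Remark \ref{3. rem: difference of harmonic extensions}), Kigami's trace theorem handles the resistance-form and resistance-metric claims, the extended Dirichlet space of $\check{X}^{\nu}$ is identified as $\cF|_{B}$ via Lemma \ref{3. lem: the extended Dirichlet space for trace} and Lemma \ref{3. lem: the trace of Dirichlet form}, the energies are matched by Theorem \ref{3. thm: coincidence of two harmonic extensions}, and everything reduces to showing $(\cF|_{B})^{(1)} = \cF|_{B}$. Where you genuinely diverge is in how you obtain that last identity and the regularity of the trace form. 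The paper proves $(\cF|_{B})^{(1)} = \cF|_{B}$ by a direct approximation: lift $\varphi \in \cF|_{B}$ to $u \in \cF$, approximate $u$ in $\cE^{(1)}$ by $u_{n} \in C_{c}(F,\mbR) \cap \cF$ (possible by recurrence), and use the minimizing property of $h_{B}^{(1)}$ to bound $\cE|_{B}(\varphi_{n} - \varphi, \varphi_{n} - \varphi) \leq \cE(u_{n} - u, u_{n} - u)$ for $\varphi_{n} = u_{n}|_{B}$; regularity of the trace is taken from \cite[Theorem 8.4]{Kigami_12_Resistance}. You instead prove that the trace form is itself recurrent, using the identity $R|_{B}(\rho, A) = R(\rho, A)$ for $A \subseteq B$ (which follows from Proposition \ref{3. prop: energy minimizer in F^1} once $\cF^{(1)} = \cF$) together with monotonicity of effective resistance in the target set, and then invoke Theorem \ref{3. thm: recurrence characterization with F^1} and Corollary \ref{3. cor: recurrence implies regularity}. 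This single observation buys you both regularity and $(\cF|_{B})^{(1)} = \cF|_{B}$ at once, and additionally records that the trace form is recurrent, which the paper never states explicitly; the cost is that you must check $(B, R|_{B \times B})$ is separable and locally compact (immediate, as $B$ is closed) before applying Theorem \ref{3. thm: characterization of the extended Dirichlet space} and Corollary \ref{3. cor: recurrence implies regularity} to the trace. Both routes are sound; yours is marginally more economical and self-contained on the regularity point.
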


\begin{proof}
  By Remark \ref{3. rem: difference of harmonic extensions}
  and \cite[Theorem 8.4]{Kigami_12_Resistance},
  we have that $(\mathcal{F}|_{B}, \mathcal{E}|_{B})$ is a regular resistance form 
  and the corresponding resistance metric is $R|_{B \times B}$.
  Moreover, using the recurrence of $(\cE, \cF)$,
  it is straightforward to verify the recurrence of $(\cE|_B, \cF|_B)$.
  This proves the first assertion.
  
  For the second assertion,
  note that Theorem~\ref{3. thm: recurrence characterization with F^1} yields that $\cF^{(1)} = \mathcal{F}$.
  Combining this with
  Lemmas~\ref{3. lem: the extended Dirichlet space for trace} and~\ref{3. lem: the trace of Dirichlet form},
  we deduce that 
  the extended Dirichlet space of $\check{X}^{\nu}$ is $\mathcal{F}|_{B}$.
  Moreover,
  by Theorem~\ref{3. thm: coincidence of two harmonic extensions},
  we have that $\mathcal{E}|_{B}=\check{\mathcal{E}}^{\nu}$ on $\mathcal{F}|_{B}$.
  On the other hand,
  from Theorem~\ref{3. thm: characterization of the extended Dirichlet space},
  we have that $(\cF|_{B})^{(1)}$ is the extended Dirichlet space 
  for the process associated with the tuple $(B, R|_{B \times B}, \nu)$.
  Therefore, the last assertion follows 
  by proving that $(\cF|_{B})^{(1)} = \mathcal{F}|_{B}$,
  as this implies the coincidence of the Dirichlet forms of the processes.

  By definition, we have $(\cF|_{B})^{(1)} \subseteq \mathcal{F}|_{B}$.
  Fix $\varphi \in \mathcal{F}|_{B}$
  and choose $u \in \mathcal{F}$ such that $u|_{B} = \varphi$.
  By Theorem~\ref{3. thm: recurrence characterization with F^1},
  there exists a sequence $(u_{n})_{n \geq 1}$ in $C_{c}(F, \RN) \cap \mathcal{F}$
  such that $u_{n} \to u$ with respect to $\mathcal{E}^{(1)}$.
  Set $\varphi_{n} \coloneqq u_{n}|_{B}$.
  It is then the case that $\varphi_{n} \in C_{c}(B) \cap \mathcal{F}|_{B}$.
  Obviously, $\varphi_{n} \to \varphi$ pointwise (on $B$).
  Moreover, we deduce that 
  \begin{align}
    \mathcal{E}|_{B}(\varphi_{n} - \varphi, \varphi_{n} - \varphi)
    &=
    \mathcal{E}(h_{B}^{(1)}(\varphi_{n} - \varphi), h_{B}^{(1)}(\varphi_{n} - \varphi))\\
    &=
    \inf\{
      \mathcal{E}(w,w) \mid w \in \mathcal{F}\ \text{such that}\ w|_{B}=\varphi_{n} - \varphi
    \}\\
    &\leq 
    \mathcal{E}(u_{n}- u, u_{n}-u),
  \end{align}
  which implies that $\varphi_{n} \to \varphi$ with respect to $\mathcal{E}|_{B}$.
  Therefore, it follows that $\varphi \in (\cF|_{B})^{(1)}$
  and hence $(\cF|_{B})^{(1)} = \mathcal{F}|_{B}$,
  which completes the proof.
\end{proof}

The resistance form $(\cE|_B, \cF|_B)$ defined above is called the \emph{trace} of $(\cE, \cF)$ onto $B$.


\section{Electrical networks} \label{sec: electrical networks}

In this section,
we study electrical networks from the point of view of resistance forms, 
using results obtained in the previous section.
In particular,
Section \ref{sec: resistance forms on electrical networks} presents some basic results 
about resistance forms associated with electrical networks,
and in Section~\ref{sec: trace of electrical networks}
we study traces of electrical networks. 
Section \ref{sec: measurability} provides technical conclusions on measurability 
that are needed for Theorem~\ref{1. thm: random, main result}.


\subsection{Resistance forms of electrical networks}  \label{sec: resistance forms on electrical networks}

In this subsection,
we study some basic properties of resistance forms associated with electrical networks.
Recall the definition of electrical networks $G$ and related bilinear forms $(\cE_G, \cF_G)$
from Definitions~\ref{dfn: electrical network} and~\ref{1. dfn: resistance forms associated with electrical networks}.

\begin{thm}
  Fix an electrical network $G$.
  Then the pair $(\cE_{G}, \cF_{G})$ is a regular resistance form.
  We denote the associated resistance metric by $R_{G}$.
  Then the topology on $V_{G}$ induced from $R_{G}$ is the discrete topology.
\end{thm}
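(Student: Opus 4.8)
The plan is to verify directly the five axioms \ref{3. dfn item: the domain F}--\ref{3. dfn item: Markov property of resistance forms} of Definition \ref{3. dfn: resistance forms} for $(\cE_{G}, \cF_{G})$, then read off the discrete topology from a one-line resistance estimate, and finally obtain regularity as an essentially free consequence. The two structural features I would lean on throughout are the connectedness of $(V_{G}, E_{G})$ and the summability $c_{G}(x) = \sum_{y} c_{G}(x,y) < \infty$ at every vertex; the latter is what makes the indicator functions $1_{\{x\}}$ admissible and drives almost every step.

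For the algebraic axioms, bilinearity, symmetry and nonnegativity of $\cE_{G}$ are immediate from the defining formula, and constants lie in $\cF_{G}$ since all increments vanish. The only point of \ref{3. dfn item: the domain F} that uses the hypotheses is that $\cE_{G}(f,f)=0$ forces $f$ to be constant: this holds because vanishing energy makes $f$ constant across every edge, and $(V_{G},E_{G})$ is connected. For \ref{3. dfn item: separability of points} I would test with $1_{\{x\}}$, a direct count giving $\cE_{G}(1_{\{x\}}, 1_{\{x\}}) = c_{G}(x) < \infty$, so $1_{\{x\}} \in \cF_{G}$ separates $x$ from any $y \neq x$. For \ref{3. dfn item: existence of resistance metric} I would fix a simple path $x = z_{0} \sim \dots \sim z_{n} = y$ and combine the triangle inequality with Cauchy--Schwarz,
\begin{equation}
  |f(x)-f(y)|^{2}
  \leq
  \Bigl( \sum_{i=1}^{n} c_{G}(z_{i-1},z_{i})^{-1} \Bigr)
  \sum_{i=1}^{n} c_{G}(z_{i-1},z_{i})(f(z_{i})-f(z_{i-1}))^{2}
  \leq
  \Bigl( \sum_{i=1}^{n} c_{G}(z_{i-1},z_{i})^{-1} \Bigr) \cE_{G}(f,f),
\end{equation}
whence $R_{(\cE_{G},\cF_{G})}(x,y) \leq \sum_{i} c_{G}(z_{i-1},z_{i})^{-1} < \infty$. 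The Markov property \ref{3. dfn item: Markov property of resistance forms} is routine, since $t \mapsto (t \wedge 1) \vee 0$ is a contraction and can only decrease each edge increment.

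The main obstacle is the completeness axiom \ref{3. dfn item: the quotient space of resistance form is Hilbert}, that $(\cF_{G}/{\sim}, \cE_{G})$ is a Hilbert space. Here I would fix a base point $x_{0}$, represent each class by the function vanishing at $x_{0}$, and take an $\cE_{G}$-Cauchy sequence $(f_{n})$. Applying the resistance estimate just established to $f_{n}-f_{m}$ (equivalently, the definition of $R_{(\cE_{G},\cF_{G})}$ as a supremum) upgrades $\cE_{G}$-Cauchyness to pointwise Cauchyness, since $|f_{n}(x)-f_{m}(x)|^{2} \leq \cE_{G}(f_{n}-f_{m}, f_{n}-f_{m})\, R_{(\cE_{G},\cF_{G})}(x_{0},x)$; let $f$ be the pointwise limit. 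To see $f \in \cF_{G}$ with $\cE_{G}(f_{n}-f, f_{n}-f) \to 0$, I would use a Fatou/lower-semicontinuity argument: for any finite set of edges the partial energy sum of $f_{n}-f$ equals $\lim_{m}$ of that of $f_{n}-f_{m}$, hence is bounded by $\limsup_{m} \cE_{G}(f_{n}-f_{m}, f_{n}-f_{m})$; taking the supremum over finite edge sets gives $\cE_{G}(f_{n}-f, f_{n}-f) \leq \limsup_{m} \cE_{G}(f_{n}-f_{m}, f_{n}-f_{m})$, which tends to $0$ as $n \to \infty$. In particular $f = f_{n}-(f_{n}-f) \in \cF_{G}$, proving both membership and convergence. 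With all five axioms in hand, $(\cE_{G},\cF_{G})$ is a resistance form, and Theorem \ref{3. thm: one-to-one correspondence of fomrs and metrics} identifies $R_{G} \coloneqq R_{(\cE_{G},\cF_{G})}$ as the associated resistance metric.

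Finally, for the topology and regularity I would exploit the indicator estimate once more: for $y \neq x$,
\begin{equation}
  R_{G}(x,y)
  \geq
  \frac{|1_{\{x\}}(x) - 1_{\{x\}}(y)|^{2}}{\cE_{G}(1_{\{x\}}, 1_{\{x\}})}
  =
  \frac{1}{c_{G}(x)} > 0,
\end{equation}
so $B_{R_{G}}(x, c_{G}(x)^{-1}) = \{x\}$ and the $R_{G}$-topology is discrete. Consequently the compact subsets of $V_{G}$ are exactly the finite ones, so $C_{c}(V_{G}, \mbR)$ consists of the finitely supported functions; each such function lies in $\cF_{G}$, its energy being a finite sum (again using $c_{G}(x) < \infty$ on its support). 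Hence $\cF_{G} \cap C_{c}(V_{G}, \mbR) = C_{c}(V_{G}, \mbR)$, and regularity holds trivially. The only genuine work is therefore the completeness argument for \ref{3. dfn item: the quotient space of resistance form is Hilbert}.
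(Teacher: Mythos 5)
Your proof is correct, and the overall route (verify the five axioms, with $1_{\{x\}}$ driving (RF3) and the lower bound $R_{G}(x,y) \geq c_{G}(x)^{-1}$, and a weighted shortest-path bound for (RF4)) matches the paper's. The differences are in what you prove versus what the paper cites: the paper outsources (RF1), (RF2) and (RF5) to \cite[Proposition 1.21 and Lemma 1.27]{Barlow_17_Random}, whereas you supply the completeness argument yourself — pointwise Cauchyness via the resistance bound at a base point, followed by the Fatou/lower-semicontinuity step over finite partial sums of the energy — which is exactly the standard argument and is sound (and there is no circularity, since the finiteness of $R_{(\cE_{G},\cF_{G})}$ used there comes from (RF4), established independently). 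For regularity, the paper notes that $1_{K} \in \cF_{G}$ for every compact (hence finite) $K$ and invokes \cite[Theorem 6.3]{Kigami_12_Resistance}, while you observe directly that compactness forces finite support, so $C_{c}(V_{G}, \mbR) \subseteq \cF_{G}$ (each finitely supported $f$ has energy bounded by $8\|f\|_{\infty}^{2}\sum_{x \in \supp(f)} c_{G}(x) < \infty$) and density is trivial; your argument is more elementary and self-contained, at the cost of not illustrating the general criterion. Both are valid; nothing is missing from your proposal.
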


\begin{proof}
  The conditions \ref{3. dfn item: the domain F}, \ref{3. dfn item: the quotient space of resistance form is Hilbert},
  and \ref{3. dfn item: Markov property of resistance forms} 
  can be checked similarly to the proofs of \cite[Proposition 1.21 and Lemma 1.27]{Barlow_17_Random}.
  Since we have that 
  \begin{equation}  \label{4. eq: energy of Dirac function}
    \cE_{G}(1_{\{x\}}, 1_{\{x\}}) 
    =
    \frac{1}{2}
    \sum_{z, w \in V_{G}}
    c_{G}(z, w) (1_{\{x\}}(z) - 1_{\{x\}}(w))^{2}
    =
    \sum_{z \in V_{G}} c_{G}(x, z) 
    =
    c_{G}(x)
    < \infty,
  \end{equation}
  we deduce that $1_{\{x\}} \in \cF_{G}$,
  which implies \ref{3. dfn item: separability of points}.
  Using that the electrical network is a connected graph,
  one can verify that $R(x,y) \leq d(x,y)$,
  where $d(x,y)$ is the shortest path distance 
  on a weighted graph $(V_{G}, E_{G}, (c_{G}(z,w)^{-1})_{\{z,w\} \in E_{G}})$,
  which implies \ref{3. dfn item: existence of resistance metric}.
  By \eqref{4. eq: energy of Dirac function} and \ref{3. dfn item: existence of resistance metric},
  we have that, for any $x \neq y$,
  \begin{equation}
    R_{G}(x, y) 
    \geq 
    \cE(1_{\{x\}}, 1_{\{x\}})^{-1}
    = 
    c_{G}(x)^{-1} 
    > 0.
  \end{equation}
  Therefore, 
  the topology on $F$ induced from $R_{G}$
  is the discrete topology.
  In particular,
  a subset $K$ of $V_{G}$ is compact if and only if $|K| < \infty$.
  Since $1_{\{x\}} \in \cF_{G}$ for each $x \in V_{G}$,
  it follows that $1_{K} \in \cF_{G}$ for any compact subset $K$ of $V_{G}$.
  Therefore,
  using \cite[Theorem 6.3]{Kigami_12_Resistance},
  we deduce that $(\cE_{G}, \cF_{G})$ is regular.
\end{proof}

Below, 
we verify that processes associated with resistance forms on electrical networks
coincide with naturally associated reversible Markov chains on them.

\begin{thm} \label{4. thm: the Hunt process associated with electrical network}
  Fix an electrical network $G$.
  Let $\nu$ be a Radon measure on $V_{G}$ of full support.
  Write $((X_{t})_{t \geq 0}, (P_{x})_{x \in V_{G}})$ for the Hunt process 
  associated with $(V_{G}, R_{G}, \nu)$.
  Then $(X_{t})_{t \geq 0}$ is the minimal continuous-time Markov chain on $V_{G}$
  with $Q$-matrix $(q_{xy})_{x, y \in V_{G}}$ given by $q_{xy} = c_{G}(x,y)/\nu(\{x\})$ 
  for $x \neq y$ and $q_{xx} = -c_{G}(x)/\nu(\{x\})$.
  (Recall that the minimality means that, after the explosion time, 
  the process stays at the cemetery point forever.)
\end{thm}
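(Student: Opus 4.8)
The plan is to identify the Dirichlet form associated with $(V_G, R_G, \nu)$ explicitly and then to recognize its associated Hunt process as the minimal chain via a finite-exhaustion argument. Write $(\cE_G, \cD)$ for the Dirichlet form on $L^2(V_G, \nu)$, where by Definition \ref{3. dfn: Dirichlet form and extended Dirichlet space} the domain $\cD$ is the closure of $\cF_G \cap C_c(V_G, \mbR)$ in $(\cF_G \cap L^2(V_G, \nu), \cE_1)$. Since the $R_G$-topology on $V_G$ is discrete, $C_c(V_G, \mbR)$ is precisely the set of finitely supported functions, and each such function lies in $\cF_G$ because $1_{\{x\}} \in \cF_G$ for every $x$ by \eqref{4. eq: energy of Dirac function}. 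Thus $\cD$ is the closure of the finitely supported functions.

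First I would compute the action of the generator on the core $C_c(V_G, \mbR)$. For finitely supported $f, g$, expanding $\cE_G(f,g)$ and using the symmetry of $c_G$ together with $c_G(x) = \sum_y c_G(x,y)$ gives, after reindexing,
\begin{equation}
  \cE_G(f, g)
  =
  - \sum_{x \in V_G} g(x) \left( \sum_{y \in V_G} q_{xy} f(y) \right) \nu(\{x\}),
\end{equation}
where $q_{xy}$ is the $Q$-matrix in the statement; every sum here is finite because $f$ and $g$ have finite support, so no convergence issue arises. Hence on the core $C_c(V_G, \mbR)$ the generator acts as the $Q$-matrix operator $f \mapsto \bigl(\sum_y q_{xy} f(y)\bigr)_{x}$. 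In particular the Hunt process $X$ is a continuous-time Markov chain whose holding rate at $x$ is $-q_{xx} = c_G(x)/\nu(\{x\})$ and whose embedded jump chain has transition probabilities $c_G(x,y)/c_G(x) = P_G(x,y)$; that is, the jump chain is the discrete-time chain $Y_G$.

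It remains to show that $X$ is the \emph{minimal} such chain, and this is the crux. I would fix a finite exhaustion $V_1 \subseteq V_2 \subseteq \cdots$ with $\bigcup_n V_n = V_G$ and consider, on the one hand, the part of $X$ killed on exiting $V_n$ and, on the other, the part Dirichlet form $(\cE_G, \cD^{V_n})$ with $\cD^{V_n} = \{ f \in \cD : \supp(f) \subseteq V_n \}$. Because $V_n$ is finite, this part form is finite-dimensional and its associated process is unambiguously the continuous-time Markov chain on $V_n$ with generator the $V_n \times V_n$ submatrix of $Q$ and killing upon an attempted jump out of $V_n$; on a finite state space there is no explosion or boundary ambiguity, so the correspondence between form and chain is unique. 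Since $\cD = \overline{\bigcup_n \cD^{V_n}}$ by the first paragraph, the part semigroups increase to the semigroup of $X$ (this monotone convergence of part processes is standard; cf.\ \cite{Fukushima_Oshima_Takeda_11_Dirichlet}). On the probabilistic side, the increasing limit of these absorbed finite chains is, by the standard construction of Feller's minimal process, exactly the minimal chain with $Q$-matrix $Q$. Comparing the two monotone limits identifies $X$ with the minimal chain, completing the proof.

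The main obstacle I anticipate is this last step: rigorously matching the monotone limit of part Dirichlet forms with the monotone construction of the minimal chain, and in particular verifying that the finite killed process obtained from the Dirichlet form really coincides with the finite-state chain absorbed on exiting $V_n$. I expect this to reduce to the finite-dimensional case, where uniqueness of the form-to-chain correspondence makes the identification routine, followed by a passage to the limit justified by the density $\cD = \overline{\bigcup_n \cD^{V_n}}$. Note that no recurrence hypothesis is available here, so the argument must accommodate possible explosion; this is precisely why pinning down $X$ as the minimal (rather than some other) extension is the essential and delicate point.
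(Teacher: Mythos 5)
Your proposal is correct in outline, but it takes a genuinely different route from the paper's proof. The paper identifies the $Q$-matrix by computing $\cE_{G}(1_{\{x\}}, 1_{\{y\}})$ (the same summation-by-parts content as your display) and then feeding this into the approximating-form characterization $\cE(f,g) = \lim_{t \downarrow 0} t^{-1}(f - T_{t}f, g)_{\nu}$ together with the backward equation, which reads off $q_{yx}$ directly from the derivative of the transition semigroup at $t=0$; minimality is not argued at all but simply noted as built into the construction of the Hunt process from the regular Dirichlet form whose domain is the closure of $C_{c}(V_{G},\mbR) \cap \cF_{G}$. Your version instead computes the weak generator on the core and then proves minimality explicitly via the finite exhaustion, part Dirichlet forms, and comparison with Feller's minimal construction. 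What your approach buys is a self-contained and conceptually transparent proof of minimality; what it costs is the extra machinery of part processes and their monotone convergence, which the paper avoids entirely. One caution: your intermediate inference from ``the generator acts as the $Q$-matrix operator on the core'' to the identification of holding rates and the embedded jump chain is quicker than it looks --- a form-core need not be an operator core for the generator, and $x \mapsto \sum_{y} q_{xy} f(y)$ need not lie in $L^{2}(V_{G},\nu)$ when $\nu$ has very small atoms, so the ``in particular'' of your second paragraph is not yet justified at that point. This is harmless only because your exhaustion step independently pins down the process: on each finite $V_{n}$ the form-to-chain correspondence is unambiguous, and the monotone limit then identifies $X$ with the minimal chain. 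The paper's use of the approximating forms sidesteps this issue, since it works directly with the semigroup rather than the generator.
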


\begin{proof}
  Note that it is assumed in the definition of Hunt processes that $(X_{t})_{t \geq 0}$ is minimal.
  Let $Q=(q_{xy})_{x,y \in V_{G}}$ be the $Q$-matrix of the Hunt process.
  By direct calculations,
  we deduce that 
  \begin{equation} \label{eq: energy of indicators}
    \cE_{G}(1_{\{x\}}, 1_{\{y\}}) 
    =
    \begin{cases}
      -c_{G}(x,y), &x \neq y, \\
      c_{G}(x), &x = y.
    \end{cases}
  \end{equation}
  Let $(\cE_{G}, \cD_{G})$ be the Dirichlet form determined by 
  the resistance form $(\cE_{G}, \cF_{G})$ and the measure $\nu$.
  We write $(T_t)_{t > 0}$ for the associated semigroup,
  i.e.,
  $T_{t}f(x) \coloneqq E_{x} f(X_{t})$.
  Using the approximating forms of $(\cE_{G}, \cD_{G})$ 
  (see \cite[Lemma 1.3.4]{Fukushima_Oshima_Takeda_11_Dirichlet}),
  we obtain that 
  \begin{align}
    \cE_{G}(1_{\{x\}}, 1_{\{y\}}) 
    &= 
    \lim_{t \downarrow 0} 
    \frac{1}{t}
    \sum_{z \in V_{G}}
    (1_{\{x\}}(z) - T_{t}1_{\{x\}}(z)) \cdot 1_{\{y\}}(z) \nu(\{z\})\\
    &=
    \lim_{t \downarrow 0}
    \frac{1_{\{x\}}(y)-T_{t}1_{\{x\}}(y)}{t} \nu(\{y\})\\
    &=
    \lim_{t \downarrow 0}
    \frac{T_{0}1_{\{x\}}(y)-T_{t}1_{\{x\}}(y)}{t} \nu(\{y\})\\
    &=
    -q_{yx} \nu(\{y\}),
  \end{align}
  where we use the backward equation for $X$ at the last equality 
  (cf.\ \cite[Theorem~13.9]{Kallenberg_21_Foundations}).
  Now, the desired result is straightforward.
\end{proof}

Let $G$ be an electrical network.
Define $X_{G} = (X_{G}(t))_{t \geq 0}$ 
to be the Hunt process associated with $(V_{G}, R_{G}, \mu_{G})$,
where we recall that $\mu_G$ denotes the associated conductance measure.
By Theorem~\ref{4. thm: the Hunt process associated with electrical network}, $X_{G}$ is the constant speed random walk on $G$.
In this setting,
we have an explicit formula of the local time 
$L_{G} = (L_{G}(x,t))_{t \geq 0, x \in V_{G}}$ of $X_{G}$ as follows:
\begin{equation}  \label{4. eq: local time formula for electrical networks}
  L_{G}(x, t) 
  =
  \frac{1}{c_{G}(x)}
  \int_{0}^{t} 1_{ \{x\} } (X_G(s))\,  ds.
\end{equation}
Recall from Section~\ref{sec: introduction} that $Y_{G}=(Y_{G}(k))_{k \geq 0}$
denotes the discrete-time Markov chain on $G$.
For convenience,
in the following discussions,
we always suppose that $Y_{G}$ is defined on the same probability measure space as $X_{G}$
via the following relation:
\begin{equation}  \label{4. eq: coupling of X_G and Y_G}
  Y_{G}(k)
  =
  X_{G}(J_{G}^{(k)}),
\end{equation}
where $(J_{G}^{(k)})_{k \geq 0}$
is the sequence of jump times of $X_{G}$ with $J_{G}^{(0)} \coloneqq 0$.


\subsection{Traces of electrical networks}  \label{sec: trace of electrical networks}
  
In this subsection,
we introduce the notion of traces of electrical networks,
which will be used in the proof of the main results of this paper 
to approximate Markov chains on infinite electrical networks 
by Markov chains on finite electrical networks.

Throughout this subsection,
we fix an electrical network $G$
such that the associated resistance form $(\cE_{G}, \cF_{G})$ is recurrent.
We define the associated Laplacian $\cL_{G}$ by setting 
\begin{equation}
  \cL_{G}f(x)
  \coloneqq 
  \sum_{y \in V_{G}} P_{G}(x,y)f(y) - f(x)
  =
  E_{x}^{G}f(Y_{G}(1)) - f(x)
\end{equation}
for each function $f \colon V_G \to \RN$ 
satisfying $E_{x}^{G}|f(Y_{G}(1))| < \infty$ for all $x \in V_{G}$.
In particular,
for any bounded function $f$,
$\cL_{G}f$ is defined.

\begin{prop} \label{4. prop: Gauss-Green formula in resistance forms}
  Fix $u, v \in \cF_{G}$
  such that $E_{x}^{G}|u(Y_{G}(1))| < \infty$ for all $x \in V_{G}$.
  If $\int_{V_{G}} |\mathcal{L}_{G}u| |v|\, d\mu_{G} < \infty$,
  then it holds that 
  \begin{equation}
    \cE_{G}(u, v)
    =
    - \int_{V_{G}} \cL_{G}u \cdot v\, d\mu_{G}. 
  \end{equation}
\end{prop}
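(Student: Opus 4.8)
The plan is to reduce the identity to the case of finitely supported $v$, where it follows from an absolutely convergent rearrangement, and then to pass to the limit using the recurrence hypothesis together with dominated convergence. Throughout, I would use that $\mu_{G}(\{x\}) = c_{G}(x)$ and that $c_{G}(x)\,\cL_{G}u(x) = \sum_{y} c_{G}(x,y)(u(y)-u(x))$, where the inner sum converges absolutely: by the Cauchy--Schwarz inequality, $\sum_{y} c_{G}(x,y)|u(x)-u(y)| \le c_{G}(x)^{1/2}(2\cE_{G}(u,u))^{1/2} < \infty$ for $u \in \cF_{G}$. The same inequality shows that $\cE_{G}(u,v)$ is an absolutely convergent double sum for $u,v \in \cF_{G}$.

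\emph{Base case.} Suppose first that $v$ has finite support $S$. Writing $(u(x)-u(y))(v(x)-v(y)) = (u(x)-u(y))v(x) - (u(x)-u(y))v(y)$, both resulting double sums converge absolutely, since $v$ is supported on the finite set $S$ and the inner sums converge absolutely by the estimate above. Relabelling $x \leftrightarrow y$ and using the antisymmetry of $c_{G}(x,y)(u(x)-u(y))$, the second sum equals minus the first, whence $\cE_{G}(u,v) = \sum_{x} v(x)\sum_{y} c_{G}(x,y)(u(x)-u(y)) = -\sum_{x} v(x)\,c_{G}(x)\,\cL_{G}u(x) = -\int_{V_{G}} \cL_{G}u\cdot v\, d\mu_{G}$.

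\emph{Limiting argument.} For general $v$ I would first reduce to bounded $v$. By Corollary \ref{3. cor: find a bounded sequence for a function in F^1}, the truncations $v^{(m)} := ((-m)\vee v)\wedge m$ lie in $\cF_{G}$, satisfy $v^{(m)} \to v$ in $\cE_{G}^{(1)}$ and $|v^{(m)}| \le |v|$; since the Laplacian acts on $u$, the hypothesis $\int_{V_{G}} |\cL_{G}u|\,|v|\, d\mu_{G}<\infty$ passes to each $v^{(m)}$, so $\cE_{G}(u,v^{(m)}) \to \cE_{G}(u,v)$ by Cauchy--Schwarz while $\int_{V_{G}} \cL_{G}u\cdot v^{(m)}\, d\mu_{G} \to \int_{V_{G}} \cL_{G}u\cdot v\, d\mu_{G}$ by dominated convergence with dominating function $|\cL_{G}u|\,|v|$. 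Hence it suffices to treat bounded $v$. For bounded $v$, by Theorem \ref{3. thm: recurrence characterization with F^1} and Corollary \ref{3. cor: find a uniformly bounded sequence for a function in F^1} there exist $\psi_{n} \in C_{c}(V_{G},\mbR)\cap\cF_{G}$ with $0 \le \psi_{n} \le 1$ and $\psi_{n} \to 1_{V_{G}}$ in $\cE_{G}^{(1)}$. Setting $v_{n} := v\psi_{n}$, which is finitely supported and lies in $\cF_{G}$, one has $|v_{n}| \le |v|$ and, by Proposition \ref{3. prop: E^1 convergence implies compact convergence}, $v_{n} \to v$ pointwise; thus the base case gives $\cE_{G}(u,v_{n}) = -\int_{V_{G}} \cL_{G}u\cdot v_{n}\, d\mu_{G}$, and dominated convergence yields $\int_{V_{G}} \cL_{G}u\cdot v_{n}\, d\mu_{G} \to \int_{V_{G}} \cL_{G}u\cdot v\, d\mu_{G}$.

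\emph{Main obstacle.} To conclude I must also know $\cE_{G}(u,v_{n}) \to \cE_{G}(u,v)$, i.e.\ that $v\psi_{n} \to v$ in $\cE_{G}^{(1)}$; this is the technical heart. Since $v\psi_{n} - v = -v(1_{V_{G}}-\psi_{n})$ and $\cE_{G}(1_{V_{G}}-\psi_{n}, 1_{V_{G}}-\psi_{n}) = \cE_{G}(\psi_{n},\psi_{n}) \to 0$, I would prove $\cE_{G}(v(1_{V_{G}}-\psi_{n}), v(1_{V_{G}}-\psi_{n})) \to 0$ via the decomposition $v(1_{V_{G}}-\psi_{n}) = (v-w_{k})(1_{V_{G}}-\psi_{n}) + w_{k}(1_{V_{G}}-\psi_{n})$, where $w_{k} \in C_{c}(V_{G},\mbR)\cap\cF_{G}$ satisfies $\|w_{k}\|_{\infty} \le \|v\|_{\infty}$ and $w_{k} \to v$ in $\cE_{G}^{(1)}$. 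For fixed $k$ the term $w_{k}(1_{V_{G}}-\psi_{n})$ vanishes once $n$ is large, as $\psi_{n}\equiv 1$ on larger and larger balls, while the product estimate \cite[Lemma 6.5]{Kigami_12_Resistance} gives $\limsup_{n}\cE_{G}((v-w_{k})(1_{V_{G}}-\psi_{n}),(v-w_{k})(1_{V_{G}}-\psi_{n})) \le 2\cE_{G}(v-w_{k},v-w_{k})$; letting $k \to \infty$ forces the energy to $0$. I expect this localization — controlling the energy of $v$ far from $x_{0}$ uniformly in $n$ — to be the main difficulty, the reduction to bounded $v$ being precisely what makes the product estimate applicable.
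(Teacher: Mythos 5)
Your argument is correct in substance but follows a genuinely different route from the paper's. The paper's proof is short: it quotes \cite[Theorem 1.24]{Barlow_17_Random} for the finitely supported case, and then for bounded $v$ invokes Corollary \ref{3. cor: find a uniformly bounded sequence for a function in F^1} directly, which --- via recurrence ($\cF_{G} = \cF_{G}^{(1)}$ by Theorem \ref{3. thm: recurrence characterization with F^1}) and the Banach--Saks/Ces\`{a}ro argument of Proposition \ref{3. prop: find a convergent Cesaro mean sequence} --- already hands you a sequence $v_{n} \in C_{c}(V_{G},\mbR)\cap\cF_{G}$ converging to $v$ in $\cE_{G}^{(1)}$ with $\sup_{n}\|v_{n}\|_{\infty}\le\|v\|_{\infty}$. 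This sidesteps entirely what you call the main obstacle: there is no need to prove $v\psi_{n}\to v$ in energy. Your route (self-contained base case, truncation to bounded $v$ first, then localization by cutoffs) does buy one real advantage: your approximants satisfy $|v_{n}|\le|v|$ pointwise, so the dominated convergence of $\int\cL_{G}u\cdot v_{n}\,d\mu_{G}$ uses exactly the hypothesis $\int|\cL_{G}u||v|\,d\mu_{G}<\infty$, whereas the paper's sequence only carries a uniform sup-norm bound and its limiting step implicitly needs more integrability of $\cL_{G}u$.

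One step needs repair. You take $\psi_{n}$ from Theorem \ref{3. thm: recurrence characterization with F^1} and Corollary \ref{3. cor: find a uniformly bounded sequence for a function in F^1} and then assert that $\psi_{n}\equiv 1$ on larger and larger balls, so that $w_{k}(1_{V_{G}}-\psi_{n})$ vanishes for large $n$. That does not follow from the cited results: the Ces\`{a}ro means converge to $1_{V_{G}}$ in $\cE^{(1)}$, hence uniformly on compacts by Proposition \ref{3. prop: E^1 convergence implies compact convergence}, but need not equal $1$ anywhere. Two fixes are available. (a) Build the cutoffs as in the proof of Corollary \ref{3. cor: recurrence implies regularity}: with $\varphi_{n}\to 1_{V_{G}}$ in $\cE^{(1)}$ and $c_{n}\coloneqq\inf_{B_{R_{G}}(x_{0},r_{n})}\varphi_{n}$ for slowly growing $r_{n}$, set $\psi_{n}\coloneqq 0\vee(c_{n}^{-1}\varphi_{n}\wedge 1)$; then $\psi_{n}\equiv 1$ on $B_{R_{G}}(x_{0},r_{n})$, $\cE_{G}(\psi_{n},\psi_{n})\le c_{n}^{-2}\cE_{G}(\varphi_{n},\varphi_{n})\to 0$, and your argument runs verbatim. (b) Keep your $\psi_{n}$ and show $\cE_{G}(w_{k}(1_{V_{G}}-\psi_{n}),w_{k}(1_{V_{G}}-\psi_{n}))\to 0$ for fixed $k$ directly: writing $a(x)b(x)-a(y)b(y)=a(x)(b(x)-b(y))+b(y)(a(x)-a(y))$ with $a=w_{k}$, $b=1_{V_{G}}-\psi_{n}$, the first contribution is at most $\|w_{k}\|_{\infty}^{2}\cE_{G}(b,b)\to 0$, and the second, $\tfrac{1}{2}\sum_{x,y}c_{G}(x,y)b(y)^{2}(a(x)-a(y))^{2}$, tends to $0$ by dominated convergence since $b(y)\to 0$ pointwise and the summand is dominated by $c_{G}(x,y)(w_{k}(x)-w_{k}(y))^{2}$, which is summable. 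With either fix your proof is complete.
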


\begin{proof}
  Note that if $v$ is compactly supported,
  then the assertion follows from \cite[Theorem 1.24]{Barlow_17_Random}.
  Assume that $\| v \|_{\infty} < \infty$.
  Since the resistance form $(\cE_G, \cF_G)$ is assumed to be recurrent,
  we have from Theorem~\ref{3. thm: recurrence characterization with F^1} that 
  $\cF_G^{(1)} = \cF_G$.
  Thus, we can use Corollary~\ref{3. cor: find a uniformly bounded sequence for a function in F^1}
  to find a sequence $v_{n}$ in $C_{c}(V_{G}, \RN) \cap \cF_{G}$ 
  such that $v_{n} \to v$ with respect to $\cE_{G}^{(1)}$
  and $|v_n(x) |\leq |v(x)|$ for all $x \in V_G$ and $n \geq 1$.
  Since each $v_n$ is compactly supported,
  we have that 
  \begin{equation}
    \cE_{G}(u, v_{n})
    =
    - \int_{V_{G}} \cL_{G}u \cdot v_{n} \, d\mu_{G}.
  \end{equation}
  Letting $n \to \infty$ in the above equality and using the dominated convergence theorem,
  we deduce the desired result.
  In the case that $\| v\|_{\infty}=\infty$,
  by Corollary~\ref{3. cor: find a bounded sequence for a function in F^1},
  the same result is verified.
\end{proof}

\begin{rem}
  Proposition~\ref{4. prop: Gauss-Green formula in resistance forms} is an analogue of \cite[Theorem~1.24]{Barlow_17_Random}.
  The difference is that
  the conditions on $u, v$ of Proposition~\ref{4. prop: Gauss-Green formula in resistance forms} 
  are weaker than those of \cite[Theorem 1.24]{Barlow_17_Random}.
  This is very important in the forthcoming results.
  However,
  one should note that the resistance form is assumed to be recurrent in our setting,
  while it is not in that book.
\end{rem}

Fix a non-empty subset $B \subseteq V_{G}$.
We now introduce the notion of trace of electrical networks.
Recall the definition of the resistance form $(\cE_{G}|_{B}, \cF_{G}|_{B})$
from Theorem~\ref{3. thm: trace and resistance metric space}.
Set $V(G|_{B}) \coloneqq B$ and, for $x,y \in V(G|_{B})$,
\begin{equation}
  c_{G|_{B}}(x,y) 
  \coloneqq 
  \begin{cases}
    -\cE_{G}|_{B}(1_{\{x\}}, 1_{\{y\}}), &x \neq y,\\
    0, &x = y.
  \end{cases}
\end{equation}
Following the argument in the proof of \cite[Theorem~A.33]{Barlow_17_Random},
one can readily verify that $c_{G|_B}(x,y)$ is non-negative.
We define the edge set $E(G|_{B})$ on $V(G|_{B})$
by declaring $\{x,y\}$ is an edge if and only if 
$c_{G|_{B}}(x,y) > 0$.
The following result is easily obtained 
from the fact that $(\cE_{G}|_{B}, \cF_{G}|_{B})$
is a resistance form,
and so we omit the proof.

\begin{thm} \label{thm: explicit expression of trace form}
  The tuple $(V(G|_{B}), E(G|_{B}), c_{G|_{B}})$ is an electrical network in the sense of Definition~\ref{dfn: electrical network}.
  Moreover,
  the associated resistance form $(\cE_{G|_B}, \cF_{G|_B})$ coincides with the trace $(\cE_{G}|_{B}, \cF_{G}|_{B})$
  defined in Theorem~\ref{3. thm: trace and resistance metric space}.
  In particular, the associated resistance metric is given by $R_{G}|_{B \times B}$.
\end{thm}

\begin{dfn} [{Trace of electrical networks}]
  We write $G|_{B} \coloneqq (V(G|_{B}), E(G|_{B}), c_{G|_{B}})$
  and call it the \textit{trace} of $G$ onto $B$.
\end{dfn}

To prove Theorem~\ref{thm: explicit expression of trace form},
we first establish an explicit formula for the conductances on the trace.
We define $T_B$ and $T_B^{+}$ 
to be the first hitting time and return time 
of $Y_{G}$ to $B$, i.e.,
\begin{equation} \label{4. eq: first hitting and return time of Y}
  T_{B}
  \coloneqq
  \{
    n \geq 0 \mid
    Y_{G}(n) \in B
  \},
  \quad
  T_{B}^{+}
  \coloneqq
  \{
    n > 0 \mid
    Y_{G}(n) \in B
   \}.
\end{equation}
Note that both of them are finite almost surely,
since we assume that the associated resistance form $(\cE_{G}, \cF_{G})$ is recurrent
and accordingly $Y_G$ is recurrent.

\begin{thm} \label{4. thm: expression of trace conductance}
  For any $x,y \in B$,
  \begin{equation}
    -\cE_G|_B(1_{\{x\}}, 1_{\{y\}}) 
    = 
    \begin{cases}
      c_{G}(x) P_{x}^{G} (Y_{G}(T_{B}^{+})=y), & x \neq y,\\
      -c_G(x) P_x^G(Y_G(T_B^+) \in B \setminus \{x\}), & x = y.
    \end{cases}
  \end{equation}
  In particular, if $x \neq y$, then 
  \begin{equation}
    c_{G|_B}(x,y)
    =
    c_{G}(x) P_{x}^{G} (Y_{G}(T_{B}^{+})=y).
  \end{equation}
\end{thm}

\begin{proof}
  Fix $x, y \in B$.
  Recall that $X_G$ denotes the constant speed random walk on $G$
  and $\sigma_B$ denotes the first hitting time of $B$ by $X_G$.
  Since we have that $X_{G}(\sigma_{B}) = Y_{G}(T_{B})$,
  by using Theorem~\ref{3. thm: coincidence of two harmonic extensions},
  we deduce that, for each $z \in B$,
  \begin{equation}
    h_{B}^{(1)}(1_{\{x\}})(z)
    =
    P_{z}^{G}( Y_{G}(T_{B}) = x )
    =
    \begin{cases}
      0, & z \in B\setminus \{x\}, \\
      1, & z =x.
    \end{cases}
  \end{equation}
  The Markov property yields that, for any $z \in V_{G}$,
  \begin{align}
    \sum_{w \in V_{G}} P_{G}(z,w) h_{B}^{(1)}(1_{\{y\}})(w)
    &=
    \sum_{w \in V_{G}} P_{G}(z,w) P_{w}^{G}(Y_{G}(T_{B}) = y) \\
    &=
    P_{z}^{G}(Y_{G}(T_{B}^{+}) = y),
  \end{align}
  which implies that
  \begin{equation}
    \cL_{G}(h_{B}^{(1)}(1_{\{y\}})) (z) 
    = 
    \begin{cases}
      0, & z \notin B, \\
      P_{z}^{G}(Y_{G}(T_{B}^{+}) = y), & z \in B \setminus \{y\},\\
      -P_y^G(Y_G(T_B^+) \in B \setminus \{y\}), & z = y.
    \end{cases}
  \end{equation}
  From Proposition~\ref{4. prop: Gauss-Green formula in resistance forms},
  it follows that 
  \begin{align}
    - \cE_{G}(h_{B}^{(1)}(1_{\{y\}}), h_{B}^{(1)}(1_{\{x\}}))
    =
    \mathcal{L}_{G}(h_{B}^{(1)}(1_{\{y\}}))(x)\, c_{G}(x).
  \end{align}
  Now, the result is immediate.
\end{proof}

\begin{rem}
  Theorem~\ref{4. thm: expression of trace conductance} is an extension of \cite[Proposition 2.48]{Barlow_17_Random}
  which assumes that the complement of $B$ is a finite set.
  In that book,
  this assumption is needed to prove \cite[Proposition 2.47]{Barlow_17_Random},
  which is the same as Theorem~\ref{3. thm: coincidence of two harmonic extensions}.
  The proof of \cite[Proposition 2.47]{Barlow_17_Random}
  in that book roughly goes as follows:
  if the complement of $B$ is finite,
  then $h_{B}^{(1)}(\varphi)$ is bounded;
  since a bounded solution of a Dirichlet problem is unique (in the current setting),
  we obtain $h_{B}^{(1)}(\varphi) = \check{h}_{B}(\varphi)$.
  Therefore, 
  in this approach,
  the assumption that the complement of $B$ is finite 
  is crucial,
  and
  our result that
  removes the assumption is non-trivial.
\end{rem}

\begin{cor} \label{4. cor: difference for trace measure at one point}
  For any $x \in B$,
  it holds that 
  \begin{equation}
    0
    \leq 
    c_{G}(x) - c_{G|_B}(x) 
    =
    c_{G}(x) P_{x}^{G} (Y_{G}(T_{B}^{+})=x)
    \leq
    \sum_{y \notin B} c_{G}(x,y).
  \end{equation}
\end{cor}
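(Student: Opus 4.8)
The plan is to read off $c_{\tilde{G}}(x)$ from Theorem \ref{4. thm: expression of trace conductance}, use recurrence to see that the first return to $B$ always lands somewhere in $B$, and then control the ``return to $x$'' probability by a first-step decomposition.

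First I would recall that $c_{\tilde{G}}(x) = \sum_{y \in B} c_{\tilde{G}}(x,y)$, where the sum in fact runs over $y \in B \setminus \{x\}$ because $c_{\tilde{G}}(x,x) = 0$ by definition. Substituting the formula from Theorem \ref{4. thm: expression of trace conductance} gives
\[
  c_{\tilde{G}}(x)
  =
  c_{G}(x) \sum_{y \in B \setminus \{x\}} P_{x}^{G}(Y_{G}(T_{B}^{+}) = y).
\]
Since $(\cE_{G}, \cF_{G})$ is assumed recurrent, the return time $T_{B}^{+}$ is finite $P_{x}^{G}$-almost surely, so $Y_{G}(T_{B}^{+}) \in B$ a.s.\ and hence $\sum_{y \in B} P_{x}^{G}(Y_{G}(T_{B}^{+}) = y) = 1$. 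Splitting off the $y = x$ term yields $c_{\tilde{G}}(x) = c_{G}(x)\bigl(1 - P_{x}^{G}(Y_{G}(T_{B}^{+}) = x)\bigr)$, which rearranges to the claimed equality $c_{G}(x) - c_{\tilde{G}}(x) = c_{G}(x) P_{x}^{G}(Y_{G}(T_{B}^{+}) = x)$. Nonnegativity of the left-hand side is then immediate, since it equals a nonnegative constant times a probability.

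For the upper bound I would perform a first-step decomposition of the event $\{Y_{G}(T_{B}^{+}) = x\}$. Because the graph underlying $G$ is simple (no loops) and $x \in B$, the first step of $Y_{G}$ from $x$ cannot realise a return to $B$ at $x$: if it lands in $B$, then $T_{B}^{+} = 1$ and $Y_{G}(T_{B}^{+})$ is a neighbour of $x$, hence distinct from $x$. Thus the event forces the first step to leave $B$, and the Markov property at time $1$ gives
\[
  P_{x}^{G}(Y_{G}(T_{B}^{+}) = x)
  =
  \sum_{y \notin B} P_{G}(x,y)\, P_{y}^{G}(Y_{G}(T_{B}) = x).
\]
Multiplying by $c_{G}(x)$ and using $P_{G}(x,y) = c_{G}(x,y)/c_{G}(x)$ turns the right-hand side into $\sum_{y \notin B} c_{G}(x,y)\, P_{y}^{G}(Y_{G}(T_{B}) = x)$, and bounding each hitting probability by $1$ produces the desired estimate $c_{G}(x) P_{x}^{G}(Y_{G}(T_{B}^{+}) = x) \leq \sum_{y \notin B} c_{G}(x,y)$.

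The only points requiring care are the two uses of recurrence and simplicity: that $Y_{G}(T_{B}^{+}) \in B$ almost surely (so that the probabilities over $B$ sum to one), and that the first step out of $x$ must leave $B$ for a return at $x$ to be possible. Both are elementary, so I do not expect a serious obstacle; the argument is essentially a bookkeeping of the first-return probabilities supplied by Theorem \ref{4. thm: expression of trace conductance}.
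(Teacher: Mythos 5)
Your proposal is correct and follows essentially the same route as the paper: read off $c_{\tilde{G}}(x,y)$ from Theorem \ref{4. thm: expression of trace conductance}, use recurrence to identify $c_{G}(x)-c_{\tilde{G}}(x)$ with $c_{G}(x)P_{x}^{G}(Y_{G}(T_{B}^{+})=x)$, and bound that by a first-step (Markov property) decomposition over the exits $y\notin B$. If anything, your justification of the middle equality (summing the return distribution over $B$ to $1$) is spelled out slightly more carefully than in the paper's one-line chain of inequalities.
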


\begin{proof}
  By Theorem~\ref{4. thm: expression of trace conductance},
  we obtain that 
  \begin{align}
    0
    \leq 
    c_{G}(x) - c_{G|_B}(x) 
    &\leq
    c_{G}(x)
    \left(
      1 
      - 
      \sum_{y \in B \setminus \{x\}} 
      P_{x}^{G} (Y_{G}(T_{B}^{+})=y)
    \right) \\
    &=
    c_{G}(x) P_{x}^{G} (Y_{G}(T_{B}^{+})=x) \\
    &\leq 
    c_{G}(x) \sum_{y \notin B} 
    P_{G}(x,y) P_{y}^{G} (Y_{G}(T_{B})=x) \\
    &\leq 
    \sum_{y \notin B} c_{G}(x,y),
  \end{align}
  where we use the Markov property at the third inequality.
\end{proof}

\begin{proof} [Proof of Theorem~\ref{thm: explicit expression of trace form}]
  By Theorem~\ref{4. thm: expression of trace conductance} and Corollary~\ref{4. cor: difference for trace measure at one point},
  we deduce that the graph $(V(G|_B), E(G|_B))$ is connected
  and $c_{G|_B}(x) < \infty$ for all $x \in B$.
  Thus, $(V(G|_B), E(G|_B), c_{G|_B})$ is an electrical network.
  Write $(\cE_{G|_B}, \cF_{G|_B})$ for the associated resistance form,
  that is, for any functions $f, g \colon B \to \RN$,
  \begin{equation}
    \cE_{G|_B}(f, g) 
    \coloneqq 
    \frac{1}{2} 
    \sum_{x, y \in B} 
    c_{G|_B}(x,y) (f(x) - f(y)) (g(x) - g(y))
  \end{equation}
  (if the right-hand side exists),
  and 
  $\cF_{G|_B} \coloneqq \{f \mid \cE_{G|_B}(f,f) < \infty\}$.

  By Theorem~\ref{4. thm: expression of trace conductance},
  we have that 
  \begin{equation}
    \cE_G|_B(1_{\{x\}}, 1_{\{x\}})
    = 
    \sum_{y \in B} c_{G|_B}(x,y),
    \qquad 
    \forall x \in B.
  \end{equation}
  We deduce from the above identity that,
  for any function $f \colon B \to \RN$ supported on a finite subset $B_0 \subseteq B$,
  \begin{align}
    \cE_G|_B(f, f)
    &= 
    \sum_{x,y \in B_0} f(x) f(y)\, \cE_G|_B(1_{\{x\}}, 1_{\{y\}})\\
    &= 
    \sum_{x,y \in B} f(x) f(y)\, \cE_G|_B(1_{\{x\}}, 1_{\{y\}})\\
    &= 
    \sum_{x \in B} f(x)^2 \sum_{y \in B}  c_{G|_B}(x,y) 
    -
    \sum_{x,y \in B} f(x) f(y)\, c_{G|_B}(x, y)\\
    &= 
    \frac{1}{2} 
    \sum_{x, y \in B} 
    c_{G|_B}(x,y) (f(x) - f(y))^2\\
    &=
    \cE_{G|_B}(f, f).
    \label{pr eq: expansion for compactly supported function}
  \end{align}
  Thus,
  \begin{equation}
    \cE_G|_B(f, f) = \cE_{G|_B}(f, f)
  \end{equation}
  for all functions $f \in C_c(B, \RN)$.
  (NB.\ The space $B$ is equipped with the discrete topology, 
  and so every $f \in C_c(B, \RN)$ is supported on a finite subset.)
  Combining this with the recurrence of $(\cE_G|_B, \cF_G|_B)$,
  we obtain that $(\cE_{G|_B}, \cF_{G|_B})$ is also recurrent.
  Thus, by Theorem~\ref{3. thm: recurrence characterization with F^1},
  the domains $\cF_G|_B$ and $\cF_{G|_B}$ are the closures of $C_c(B, \RN)$ 
  with respect to $\cE_G|_B^{(1)}$ and $\cE_{G|_B}^{(1)}$, respectively.
  However, the coincidence of the forms on $C_c(B, \RN)$ implies that 
  $(\cE_G|_B, \cF_G|_B) = (\cE_{G|_B}, \cF_G|_B)$.
  This completes the proof. 
\end{proof}

Now, we provide a coupling 
of the discrete-time Markov chains $Y_{G|_B}$ on $G|_B$
and $Y_{G}$ on $G$.
Set $\tilde{T}_{B}^{(0)} \coloneqq 0$ and inductively for each $k \geq 0$
\begin{equation}
  \tilde{T}_{B}^{(k+1)}
  \coloneqq
  \inf
  \left\{
    n \geq \tilde{T}_{B}^{(k)} + 1\, \middle |\, Y_{G}(n) \in B \setminus \{ Y_{G}(\tilde{T}_{B}^{(k)})\}
  \right\}.
\end{equation}
If the infimum is taken over the empty set,
then we set $\tilde{T}_{B}^{(k+1)} \coloneqq \tilde{T}_{B}^{(k)}$.

\begin{dfn} [{Trace of discrete-time Markov chains}]  \label{4. dfn: trace}
  Define $\tr_{B}Y_{G} = (\tr_{B}Y_{G}(k))_{k \geq 0}$ by setting 
  $\tr_{B}Y_{G}(k) \coloneqq Y_{G} ( \tilde{T}_{B}^{(k)} )$.
  As before,
  we set $\tr_{B}Y_{G}(t) \coloneqq \tr_{B}Y_{G}( \lfloor t \rfloor)$ for $t \in \RNp$
  and regard $\tr_{B}Y_{G}$ as a random element of $D(\RNp, B)$.
  We call $\tr_{B}Y_{G}$ the \textit{trace} of $Y_{G}$ onto $B$.
\end{dfn}

\begin{thm} \label{4. thm: coupling by trace of Y_G}
  It holds that, for any $\rho \in B$,
  \begin{equation}
    P^{G}_{\rho}
    \left(
      \tr_{B} Y_{G} \in \cdot
    \right)
    =
    P^{G|_B}_{\rho}
    \left(
      Y_{G|_B} \in \cdot
    \right)
  \end{equation}
  as probability measures on $D(\RNp, B)$.
\end{thm}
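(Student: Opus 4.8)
The plan is to realise $\tr_B Y_G$ under $P_\rho^G$ as a discrete-time Markov chain on $B$ started from $\rho$, and then to check that its one-step transition probabilities agree with those of $Y_{\tilde G}$, namely $P_{\tilde G}(x,y) = c_{\tilde G}(x,y)/c_{\tilde G}(x)$. Since the law of a discrete-time Markov chain on $D(\mbRp, B)$ is determined by its initial state and transition matrix, the asserted equality of measures follows once the Markov property and these transition probabilities are established.

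First I would introduce the \emph{watched chain}. Let $T_B^{(0)} \coloneqq 0$ and, inductively, $T_B^{(j+1)} \coloneqq \inf\{n > T_B^{(j)} : Y_G(n) \in B\}$ be the successive visit times of $Y_G$ to $B$; by the assumed recurrence of $(\cE_G, \cF_G)$ these are all finite $P_\rho^G$-a.s. Setting $W_j \coloneqq Y_G(T_B^{(j)})$, the strong Markov property of $Y_G$ at the stopping times $T_B^{(j)}$ shows that $(W_j)_{j \geq 0}$ is a Markov chain on $B$, started at $\rho$, with transition matrix $Q(x,y) \coloneqq P_x^G(Y_G(T_B^+) = y)$ for $x,y \in B$. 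Recurrence also gives $\sum_{y \in B} Q(x,y) = P_x^G(T_B^+ < \infty) = 1$.

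Next I would identify $\tr_B Y_G$ with the chain obtained from $(W_j)$ by deleting consecutive repetitions. Writing $S_0 \coloneqq 0$ and $S_{k+1} \coloneqq \inf\{j > S_k : W_j \neq W_{S_k}\}$ for the successive times at which $W$ changes value, an easy induction gives $\tilde T_B^{(k)} = T_B^{(S_k)}$, whence $\tr_B Y_G(k) = W_{S_k}$. Now I invoke the elementary fact that collapsing consecutive repetitions of a Markov chain with transition matrix $Q$ produces a Markov chain with transition matrix $\tilde Q(x,y) = Q(x,y)/(1 - Q(x,x))$ for $y \neq x$ and $\tilde Q(x,x) = 0$: conditionally on the current state $x$, the holding time of $W$ at $x$ is geometric with success probability $1 - Q(x,x)$, and the next distinct state is $y$ with probability $Q(x,y)/(1-Q(x,x))$, independently of the holding time, by the strong Markov property of $W$. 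Here recurrence together with $|B| \geq 2$ guarantees $Q(x,x) < 1$ (equivalently $c_{\tilde G}(x) > 0$), so $\tilde Q$ is well defined; the degenerate case $|B| = 1$ is trivial since then both processes are constant.

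Finally I would match $\tilde Q$ with $P_{\tilde G}$ using the results already proved. By Theorem \ref{4. thm: expression of trace conductance}, $Q(x,y) = c_{\tilde G}(x,y)/c_G(x)$ for $y \neq x$; summing over $y \in B \setminus \{x\}$ and recalling $c_{\tilde G}(x) = \sum_{y} c_{\tilde G}(x,y)$ with $c_{\tilde G}(x,x) = 0$ (this also follows from Corollary \ref{4. cor: difference for trace measure at one point}) gives $1 - Q(x,x) = c_{\tilde G}(x)/c_G(x)$. Therefore
\[
  \tilde Q(x,y) = \frac{Q(x,y)}{1 - Q(x,x)} = \frac{c_{\tilde G}(x,y)}{c_{\tilde G}(x)} = P_{\tilde G}(x,y),
\]
which is exactly the transition probability of $Y_{\tilde G}$. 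Combined with the matching initial state $\rho$, this yields the claimed equality of laws on $D(\mbRp, B)$. I expect the main obstacle to be the careful bookkeeping in the collapse-of-repetitions step — verifying $\tilde T_B^{(k)} = T_B^{(S_k)}$ and that the geometric holding times and the choice of next distinct state factorize — rather than any deep new input, since the conductance identity is already supplied by Theorem \ref{4. thm: expression of trace conductance}.
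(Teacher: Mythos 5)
Your proposal is correct and follows essentially the same route as the paper: the strong Markov property makes the watched chain Markov, the geometric-holding-time/collapse-of-repetitions step is exactly the geometric series $\sum_{k\geq 1} Q(\rho,\rho)^{k-1}Q(\rho,x) = Q(\rho,x)/(1-Q(\rho,x))$ that the paper sums explicitly, and the final identification uses Theorem \ref{4. thm: expression of trace conductance} in the same way.
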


\begin{proof}
  If $|B| =1$,
  then the assertion is obvious.
  We assume that there are at least two vertices in $B$.
  The strong Markov property of $Y_{G}$ yields that 
  $\left( \tr_{B} Y_{G}(k) \right)_{k \geq 0}$ is a Markov chain on $B$.
  Thus,
  it suffices to show that 
  \begin{equation}
    P_{\rho}^{G} 
    \left(
      Y_{G}(\tilde{T}_{B}^{(1)}) = x
    \right)
    =
    \frac{c_{G|_B}(\rho, x)}{c_{G|_B}(\rho)},
    \quad \forall x \in B \setminus \{\rho\}.
  \end{equation}
  Define $T_{B}^{(0)} \coloneqq 0$ and inductively for each $k \geq 0$
  \begin{equation}
    T_{B}^{(k+1)}
    \coloneqq
    \inf
    \left\{
      n \geq T_{B}^{(k)} + 1 \mid Y_{G}(n) \in B
    \right\}.
  \end{equation}
  Fix $x \in B \setminus \{\rho\}$.
  By the strong Markov property,
  we deduce that 
  \begin{align}
    P_{\rho}^{G} 
    \left(
      Y_{G}(\tilde{T}_{B}^{(1)}) = x
    \right)
    &=
    \sum_{k=1}^{\infty}
    P_{\rho}^{G} 
    \left(
      Y_{G}(T_{B}^{(1)}) = \rho, \ldots, Y_{G}(T_{B}^{(k-1)}) = \rho, Y_{G}(T_{B}^{(k)}) = x
    \right)\\
    &=
    \sum_{k=1}^{\infty}
    P_{\rho}^{G} 
    \left(
      Y_{G}(T_{B}^{+}) = \rho  
    \right)^{k-1}
    P_{\rho}^{G}
    \left(
      Y_{G}(T_{B}^{+}) = x
    \right)\\
    &=
    \frac{P_{\rho}^{G} \left( Y_{G}(T_{B}^{+}) = x \right)}
    { 1 - P_{\rho}^{G} \left( Y_{G}(T_{B}^{+}) = \rho  \right)} \\
    &=
    \frac{c_{G|_B}(\rho, x)}{c_{G|_B}(\rho)},
  \end{align}
  where we use Theorem~\ref{4. thm: expression of trace conductance}
  to obtain the last equality.
\end{proof}


\subsection{Measurability} \label{sec: measurability}

This subsection is devoted to a measurability problem
which we face when dealing with random electrical networks.
Recall the space $\mathbb{F}$ from Definition~\ref{1. dfn: the space F}.
Define a subspace of $\mathbb{F}$ consisting of 
rooted-and-measured resistance metric spaces associated with electrical networks by setting 
\begin{equation}
  \mathbb{F}^{E}
  \coloneqq
  \left\{
    (V_{G}, R_{G}, \rho_{G}, \mu_{G}) \in \mathbb{F} 
    \mid 
    G\ \text{is a rooted electrical network}
  \right\}.
\end{equation}
Given $(V_{G}, R_{G}, \rho_{G}, \mu_{G}) \in \mathbb{F}^{E}$,
we write 
\begin{equation}
  P_{G} (\cdot)
  \coloneqq 
  P_{\rho_{G}}^{G}
  \left(
    ( Y_{G}, \ell_{G} ) \in \cdot
  \right),
  \quad 
  \cY_{G}
  \coloneqq
  (V_{G}, R_{G}, \rho_{G}, \mu_{G}, P_{G}),
\end{equation}
where we recall from \eqref{eq: discrete local time} that $\ell_G$ denotes the local time of $Y_G$.
Note that $P_{G}$ is  a probability measure on $D(\RNp, V_{G}) \times C(V_{G} \times \RNp, \RN)$
and $\cY_{G}$ is an element of $\mbM_{L}$
(recall the space $\mbM_L$ from Section~\ref{sec: the space M_L}).
The aim of this subsection is to prove that $\cY_G$ is measurable with respect to $G$.
This is verified in Corollary~\ref{4. cor: Y_G is measurable},
through approximation of $\cY_G$ by traces.

We first recall that, on finite electrical networks,
convergence of resistance metrics and conductances are equivalent.

\begin{lem} [{\cite[Lemma 2.8]{Cao_23_Convergence}}]
\label{4. lem: continuity of resistance metric and conductance}
  Fix a non-empty finite set $V$.
  Define 
  \begin{align}
    \mathscr{R}(V)
    &\coloneqq 
    \left\{ (R_{G}(x,y))_{x,y \in V} \in \RN^{V \times V} \mid G\ \text{is an electrical network with}\ V_{G} = V \right\}, \\
    \mathscr{C}(V)
    &\coloneqq 
    \left\{ (c_{G}(x, y))_{x,y \in V} \in \RN^{V \times V} \mid G\ \text{is an electrical network with}\ V_{G} = V \right\}.
  \end{align}
  We equip both of $\mathscr{R}(V)$ and $\mathscr{C}(V)$ with the Euclidean topology induced from $\RN^{V \times V}$.
  Then the map $\mathscr{R}(V) \ni (R_{G}(x,y))_{x,y \in V} \mapsto (c_{G}(x,y))_{x, y \in V} \in \mathscr{C}(V)$ is a homeomorphism.
  (NB. The map is well-defined by Theorem~\ref{3. thm: one-to-one correspondence of fomrs and metrics}.)
\end{lem}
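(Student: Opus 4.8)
The bijectivity of the map is already granted by Theorem \ref{3. thm: one-to-one correspondence of fomrs and metrics}: on a finite set the resistance metrics are in one-to-one correspondence with resistance forms, which in turn are exactly the electrical networks on $V$. So the plan is to verify that both this map and its inverse are continuous, and the cleanest route I would take is to realize each direction as a composition of (i) linear read-offs, (ii) a single affine change of variables, and (iii) inversion of an invertible matrix. Since matrix inversion is continuous (indeed rational) on the open set of invertible matrices, continuity in both directions will then follow at once.

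To set this up I would fix a reference vertex $v_{0} \in V$ and, for an electrical network $G$ with $V_{G} = V$, introduce the combinatorial Laplacian $L_{G} \in \mbR^{V \times V}$ with $(L_{G})_{xy} = -c_{G}(x,y)$ for $x \neq y$ and $(L_{G})_{xx} = c_{G}(x)$, so that $\cE_{G}(f,g) = f^{\top} L_{G} g$. The entries of $L_{G}$ depend linearly on the conductances and conversely determine them linearly, so it is harmless to pass between $c_{G}$ and $L_{G}$. Let $\tilde{L}_{G}$ denote the grounded Laplacian obtained by deleting the row and column indexed by $v_{0}$. Since $G$ is connected, $\tilde{L}_{G}$ is positive definite, hence invertible, and I would set $H_{G} \coloneqq \tilde{L}_{G}^{-1}$. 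The one substantive input is the classical identity relating the inverse grounded Laplacian to effective resistances: for $x, y \in V \setminus \{v_{0}\}$,
\begin{equation}
  (H_{G})_{xy} = \tfrac{1}{2}\left( R_{G}(x, v_{0}) + R_{G}(y, v_{0}) - R_{G}(x,y) \right), \qquad R_{G}(x,v_{0}) = (H_{G})_{xx}.
\end{equation}
Equivalently $R_{G}(x,y) = (H_{G})_{xx} + (H_{G})_{yy} - 2(H_{G})_{xy}$, with the convention that the $v_{0}$-entries of $H_{G}$ vanish. This identity is the heart of the argument, and I expect it to be the main obstacle; I would deduce it from the variational formula $R_{G}(x,y)^{-1} = \inf\{ \cE_{G}(f,f) : f(x)=1,\ f(y)=0\}$ by grounding at $v_{0}$ and solving the resulting quadratic minimization via a Schur-complement computation, which expresses the minimal energy through the entries of $\tilde{L}_{G}^{-1}$.

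With this identity in hand the two continuity statements are immediate. For the inverse map $\mathscr{C}(V) \to \mathscr{R}(V)$, the assignment $c_{G} \mapsto L_{G} \mapsto \tilde{L}_{G} \mapsto H_{G} = \tilde{L}_{G}^{-1} \mapsto R_{G}$ is a composition of a linear map, a coordinate projection, matrix inversion on invertible matrices, and the read-off above (linear in the entries of $H_{G}$); each step is continuous, hence so is the composition. For the map in the statement, $\mathscr{R}(V) \to \mathscr{C}(V)$, I would run the same chain in reverse: the displayed formula for $(H_{G})_{xy}$ is affine in the entries of $R_{G}$, then $\tilde{L}_{G} = H_{G}^{-1}$ is recovered by matrix inversion, and finally the conductances are read off linearly via $c_{G}(x,y) = -(\tilde{L}_{G})_{xy}$ for distinct $x,y \in V \setminus \{v_{0}\}$, $c_{G}(x,v_{0}) = \sum_{y \neq v_{0}} (\tilde{L}_{G})_{xy}$, and $c_{G}(v_{0},x) = c_{G}(x,v_{0})$ by symmetry. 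Again every link is continuous, so the map is a continuous bijection with continuous inverse, i.e.\ a homeomorphism. The only delicate points I anticipate, besides the grounded-Laplacian identity, are justifying the invertibility of $\tilde{L}_{G}$ (which rests on the connectedness of $G$, part of the definition of an electrical network) and checking the elementary continuity of each individual link; these are standard in finite electrical network theory, so the remaining work is mostly assembling the pieces.
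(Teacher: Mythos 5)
The paper does not actually prove this lemma: it is imported verbatim with a citation to \cite{Cao_23_Convergence}, so there is no in-paper argument to compare against. Judged on its own, your proposal is correct and is the standard self-contained proof. Bijectivity does follow from Theorem \ref{3. thm: one-to-one correspondence of fomrs and metrics} together with the fact that on a finite set a resistance form is exactly a connected weighted graph (with $c_{G}(x,y) = -\cE_{G}(1_{\{x\}},1_{\{y\}})$), and your continuity argument via $c \mapsto L_{G} \mapsto \tilde{L}_{G} \mapsto H_{G}=\tilde{L}_{G}^{-1} \mapsto R$ and its affine reversal is sound: the grounded Laplacian is positive definite by connectedness, matrix inversion is continuous on the open set of invertible matrices, and the identity $(H_{G})_{xy}=\tfrac{1}{2}\bigl(R_{G}(x,v_{0})+R_{G}(y,v_{0})-R_{G}(x,y)\bigr)$ (with $R_{G}(x,v_{0})=(H_{G})_{xx}$) is the classical Schur-complement consequence of the variational formula \ref{3. dfn item: existence of resistance metric}; your recovery formulas $c_{G}(x,y)=-(\tilde{L}_{G})_{xy}$ and $c_{G}(x,v_{0})=\sum_{y\neq v_{0}}(\tilde{L}_{G})_{xy}$ check out. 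The only point worth making explicit in a write-up is that for $R \in \mathscr{R}(V)$ the affinely-defined matrix $H$ is automatically invertible (being $\tilde{L}_{G}^{-1}$ for the network $G$ realizing $R$), so the composition through matrix inversion is well-defined on all of $\mathscr{R}(V)$; with that remark the argument is complete.
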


Using the above lemma,
we derive a simplified version of Theorem~\ref{1. thm: deterministic, main result},
that is,
we show that convergence of finite electrical networks 
implies the convergence of the associated discrete-time Markov chains and their local times,
under uniform finiteness of the conductances.

\begin{prop} \label{4. prop: prototype of main result for finite electrical networks}
  For each $n \in \mathbb{N}$,
  let $G_{n}$ be a rooted electrical network with a finite vertex set.
  Assume that $(V_{G_{n}}, R_{G_{n}}, \rho_{G_{n}})$ converges to $(V_{G}, R_{G}, \rho_{G})$
  in the pointed Gromov--Hausdorff topology for some rooted electrical network $G$
  with $|V_{G}| < \infty$.
  Furthermore, assume that 
  \begin{equation}
     \sup_{n} \sup_{x \in V_{G_{n}}} c_{G_{n}}(x) < \infty.
  \end{equation}
  Then it holds that $\mathcal{Y}_{G_{n}} \to \cY_{G}$ in $\mbM_{L}$.
\end{prop}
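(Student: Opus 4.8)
The plan is to reduce the whole statement to convergence of data living on a single fixed finite set and then feed that into the characterisation of convergence in $\mbM_{L}$ (Theorem \ref{2. thm: space M_L, convergence}). First I would record that the limiting space automatically lies in $\check{\mathbb{F}}_{c}$: a finite resistance metric space is compact, its resistance form is recurrent since $R_{G}(\rho_{G}, B_{R_{G}}(\rho_{G}, r)^{c}) = R_{G}(\rho_{G}, \emptyset) = \infty$ for all large $r$, the associated measure has full support because $c_{G}(x)>0$ at every vertex, and the metric-entropy condition \eqref{1. eq: definition of vF} is vacuous on a finite set. The crucial quantitative input is a uniform separation estimate: testing the resistance metric against $1_{\{x\}}$ gives $R_{G}(x,y) \geq \cE_{G}(1_{\{x\}},1_{\{x\}})^{-1} = c_{G}(x)^{-1}$ (as in Section \ref{sec: resistance forms on electrical networks}), so together with the hypothesis $\sup_{n}\sup_{x} c_{G_{n}}(x) < \infty$ every pair of distinct vertices of $G_{n}$ satisfies $R_{G_{n}}(x,y) \geq \delta_{0} := (\sup_{n}\sup_{x} c_{G_{n}}(x))^{-1} > 0$, uniformly in $n$.

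This uniform separation is exactly what prevents vertices from collapsing under the limit. Put $\delta := \delta_{0} \wedge \min_{x\neq y \in V_{G}} R_{G}(x,y) > 0$. Once the pointed Gromov--Hausdorff distance between $(V_{G_{n}}, R_{G_{n}}, \rho_{G_{n}})$ and $(V_{G}, R_{G}, \rho_{G})$ drops below $\delta/3$, any $\delta/3$-correspondence must be the graph of a root-preserving bijection $\phi_{n} : V_{G_{n}} \to V_{G}$; in particular $|V_{G_{n}}| = |V_{G}|$ for all large $n$ and $\max_{x,y}|R_{G_{n}}(x,y) - R_{G}(\phi_{n}(x), \phi_{n}(y))| \to 0$. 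Transporting $R_{G_{n}}$ to the fixed set $V_{G}$ through $\phi_{n}$ gives resistance metrics $\tilde{R}_{n}$ converging entrywise to $R_{G}$, so by the homeomorphism of Lemma \ref{4. lem: continuity of resistance metric and conductance} on $V_{G}$ the transported conductances converge, $\tilde{c}_{n} \to c_{G}$. Consequently the measures converge, $\tilde{\mu}_{n}(\{x\}) = \tilde{c}_{n}(x) \to c_{G}(x) = \mu_{G}(\{x\})$, and so do the one-step transition probabilities, $\tilde{P}_{n}(x,y) = \tilde{c}_{n}(x,y)/\tilde{c}_{n}(x) \to P_{G}(x,y)$.

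Next I would establish the joint convergence of the walk and its local time on the now-fixed finite state space $V_{G}$. Since the transition probabilities converge and $V_{G}$ is finite, the finite-dimensional distributions of the chains $\tilde{Y}_{n}$ started at $\rho_{G}$ converge to those of $Y_{G}$; as the sample paths are piecewise constant with jumps confined to integer times, this upgrades to convergence in law in $D(\mbRp, V_{G})$ for the $J_{1}$-topology. The local time is the functional $\ell(x,t) = \tilde{c}_{n}(x)^{-1}\int_{0}^{t} 1_{\{x\}}(\tilde{Y}_{n}(s))\,ds$, and because the occupation-time map $\xi \mapsto \int_{0}^{\cdot} 1_{\{x\}}(\xi(s))\,ds$ is continuous, uniformly on compact time intervals, at piecewise-constant càdlàg paths, while $\tilde{c}_{n}(x) \to c_{G}(x) > 0$, the continuous-mapping theorem (via a Skorohod representation) yields $(\tilde{Y}_{n}, \tilde{\ell}_{n}) \to (Y_{G}, \ell_{G})$ jointly in law in $D(\mbRp, V_{G}) \times C(V_{G} \times \mbRp, \mbR)$, the latter regarded inside $\hatC(V_{G} \times \mbRp, \mbR)$ through Proposition \ref{2. prop: embedding of C(S times R) to hatC}.

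Finally I would assemble these into $\mbM_{L}$-convergence via Theorem \ref{2. thm: space M_L, convergence}. Embedding $V_{G}$ and all the copies $V_{G_{n}}$ into a single rooted boundedly-compact space $M$ by the standard construction realising pointed Gromov--Hausdorff convergence in one ambient space (placing each vertex of $V_{G_{n}}$ within distance $o(1)$ of the image of its partner under $\phi_{n}$) makes $f_{n}(V_{G_{n}})$ converge to $f(V_{G})$ in the local Hausdorff topology, while the measure convergence above gives $\mu_{G_{n}} \circ f_{n}^{-1} \to \mu_{G} \circ f^{-1}$ vaguely; pushing the laws $P_{G_{n}}$ forward through $\tau_{f_{n}}^{J_{1} \times \hatC}$ and combining with the path-and-local-time convergence just obtained supplies the required weak convergence. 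The main obstacle is the identification step: without the conductance bound distinct vertices could merge in the limit and the bijections $\phi_{n}$ would break down, so the conceptual crux is leveraging $\sup_{n}\sup_{x} c_{G_{n}}(x) < \infty$ to force uniform separation; a secondary technical point is the continuity of the occupation-time functional on the Skorohod space, which is what licenses the continuous-mapping argument for the local times.
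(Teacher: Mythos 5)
Your proposal is correct and follows essentially the same route as the paper: the conductance bound forces a uniform lower bound $R_{G_{n}}(x,y)\geq c_{G_{n}}(x)^{-1}$ that prevents vertices from merging, the vertex sets are then identified for large $n$, Lemma \ref{4. lem: continuity of resistance metric and conductance} converts resistance convergence into conductance (hence measure and transition-probability) convergence, and the walk plus local time are handled by finite-dimensional convergence, trivial tightness, and a Skorohod representation under which the occupation times are eventually identified exactly. The only cosmetic difference is that you extract the bijections directly from a small correspondence using the uniform separation, whereas the paper first bounds $|V_{G_{n}}|$ from below via covering numbers and from above by a contradiction argument; both rest on the same estimate.
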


\begin{proof}
  We may assume that $(V_{G_{n}}, R_{G_{n}}, \rho_{G_{n}})$ and $(V_{G}, R_{G}, \rho_{G})$ 
  are embedded into a common compact metric space $(M, d^{M})$ 
  in such a way that $V_{G_{n}} \to V_{G}$ in the Hausdorff topology
  and $\rho_{G_{n}} \to \rho_{G}$
  (as objects embedded into $M$).
  Recall the metric entropy from Definition~\ref{1. dfn: metric entropy}.
  By \cite[Theorem~4.15]{Noda_pre_Metrization},
  for all but countably many $\delta$,
  we have that 
  \begin{equation}  \label{4. eq: net convergence for electrical networks}
    \lim_{n \to \infty} N_{R_{G_{n}}}(V_{G_{n}}, \delta) = N_{R_{G}}(V_{G}, \delta).
  \end{equation}
  We choose $\delta$ sufficiently small so that $N_{R_{G}}(V_{G}, \delta) = |V_{G}|$,
  which is possible by the finiteness of $V_G$.
  From \eqref{4. eq: net convergence for electrical networks},
  it follows that $N_{R_{G_{n}}}(V_{G_{n}}, \delta) = |V_{G}|$
  for all sufficiently large $n$,
  which implies that $\liminf_{n \to \infty} | V_{G_{n}} | \geq |V_{G}|$.
  Assume that 
  \begin{equation}
    \limsup_{n \to \infty} | V_{G_{n}} | > |V_{G}|.
  \end{equation}
  Then,
  by the Hausdorff convergence $V_{G_{n}} \to V_{G}$,
  it is possible to find a subsequence $(n_{k})_{k \geq 1}$
  and $x^{(n_{k})}, y^{(n_{k})} \in V_{G_{n_{k}}}$ 
  such that $x^{(n_k)} \neq y^{(n_k)}$ and both $x^{(n_{k})}$ and $y^{(n_{k})}$ 
  converge to a common point $x \in V_{G}$.
  This implies that $R_{G_{n_{k}}}(x^{(n_{k})}, y^{(n_{k})}) \to 0$.
  However,
  we have that 
  \begin{align}
    R_{G_{n}}(x^{(n_{k})}, y^{(n_{k})})^{-1} 
    &=
    \inf
    \{ \mathcal{E}_{G_{n_{k}}}(f, f) 
      \mid 
      f(x^{(n_{k})}) = 1,\, f(y^{(n_{k})})=0 
    \} \\
    &\leq 
    \mathcal{E}_{G_{n_{k}}}(1_{\{x^{(n_{k})} \}}, 1_{\{ x^{(n_{k})} \}}) \\
    &= 
    c_{G_{n_{k}}}(x^{(n_{k})}) \\
    &\leq 
    \sup_{x \in V_{n_{k}}} c_{G_{n_{k}}}(x),
  \end{align}
  from which we deduce that $\sup_{x \in V_{G_{n_{k}}}} c_{G_{n_{k}}}(x) \to \infty$.
  This contradicts the assumption.
  Therefore,
  it holds that $|V_{G_{n}}| = |V_{G}|$ for all sufficiently large $n$.

  By the above result and the Hausdorff convergence $V_{G_{n}} \to V_{G}$,
  we can write $V_{G_{n}} = (x_{i}^{(n)})_{i=1}^{N}$
  and $V_{G} = (x_{i})_{i=1}^{N}$
  in such a way that $x_{i}^{(n)} \to x_{i}$ in $M$ for each $i$.
  This yields that $R_{G_{n}}(x_{i}^{(n)}, x_{j}^{(n)}) \to R_{G}(x_{i}, x_{j})$ 
  for all $i,j$.
  From Lemma~\ref{4. lem: continuity of resistance metric and conductance},
  it follows that $c_{G_{n}}(x_{i}^{(n)}, x_{j}^{(n)}) \to c_{G}(x_{i}, x_{j})$,
  which implies the convergence of transition probabilities:
  \begin{equation}
    P_{G_{n}} ( x_{i}^{(n)}, x_{j}^{(n)} )
    = 
    \frac{c_{G_{n}}(x_{i}^{(n)}, x_{j}^{(n)})}{c_{G_{n}}(x_{i}^{(n)})}
    \xrightarrow{n \to \infty}
    \frac{c_{G}{(x_{i}, x_{j})}}{c_{G}(x_{i})}
    =
    P_{G} ( x_{i}, x_{j} ), \quad
    \forall i,j.
  \end{equation}
  Thus,
  we deduce that the convergence of finite-dimensional distributions of $Y_{G_{n}}$ to those of $Y_{G}$
  as processes in $M$.
  Since we consider the discrete-time processes,
  the tightness of $(Y_{G_{n}})_{n \geq 1}$ is obvious (cf.\ \cite[Theorem 23.11]{Kallenberg_21_Foundations}).
  Therefore, 
  $Y_{G_{n}}$ started at $\rho_{G_{n}}$ converges weakly to $Y_{G}$ started at $\rho_{G}$ in the usual $J_{1}$-Skorohod topology
  as processes in $M$.
  By the Skorohod representation theorem,
  we may assume that 
  $Y_{G_{n}}$ started at $\rho_{G_{n}}$ converges to $Y_{G}$ started at $\rho_{G}$
  almost surely on some probability space.
  Fix $T > 0$ arbitrarily.
  One can check that,
  with probability $1$,
  it holds that 
  \begin{equation}
    \forall t \in [0,T], \quad
    Y_{G_{n}}(t) = x_{i}^{(n)} 
    \Leftrightarrow
    Y_{G}(t) = x_{i}
  \end{equation}
  for all sufficiently large $n$.
  This yields that 
  \begin{equation}
    \int_{0}^{t} 1_{\{ x_{i}^{(n)} \}} (Y_{G_{n}}(t))\, ds
    = 
    \int_{0}^{t} 1_{\{ x_{i} \}} (Y_{G}(t))\, ds,
    \quad 
    \forall t \in [0,T],
  \end{equation}
  which, combined with the convergence $c_{G_{n}}(x_{i}^{(n)}) \to c_{G}(x_{i})$, 
  implies that
  \begin{equation}
    \sup_{i} 
    \sup_{0 \leq t \leq T} 
    \left|
      \ell_{G_{n}}(x_{i}^{(n)}, t) - \ell_{G}(x_{i}, t)
    \right|
    \to 0.
  \end{equation}
  Therefore,
  we obtain that $\ell_{G_{n}} \to \ell_{G}$
  in $\hatC(M \times \RNp, \RN)$ almost surely,
  which completes the proof.
\end{proof}

\begin{rem}
  In Proposition~\ref{4. prop: prototype of main result for finite electrical networks},
  one cannot drop the condition $\sup_{n}\sup_{x \in V_{G_{n}}} c_{G_{n}}(x) < \infty$.
  For example,
  set $V_{G_{n}} \coloneqq \{0, 1, 1+n^{-1}\} \subseteq \RN$,
  $R_{G_{n}} \coloneqq d^{\RN}|_{V_{G_{n}} \times V_{G_{n}}}$
  and $\rho_{G_{n}} \coloneqq 0$,
  where $d^{\RN}$ denotes the Euclidean metric.
  Then $(V_{G_{n}}, R_{G_{n}}, \rho_{G_{n}})$
  converges to $\{0, 1\}$ equipped with the Euclidean metric and the root $0$.
  However,
  the associated Markov chains $Y_{G_{n}}$ does not converge 
  to the Markov chain on $\{0,1\}$
  because $Y_{G_{n}}$ is at $1$ or $1+n^{-1}$ with high probability. 
\end{rem}

Given a rooted electrical network $G$ and $r > 0$,
we define 
$\tilde{G}^{(r)}$ 
to be the trace of $G$ onto $B_{R_{G}}(\rho_{G}, r)$
equipped with the root $\rho_{\tilde{G}^{(r)}} \coloneqq \rho_{G}$. 
In the following discussions,
note that
$\mu_{\tilde{G}^{(r)}}$, which is the conductance measure associated with $\tilde{G}^{(r)}$,
is in general different from $\mu_{G}^{(r)}$ defined in \eqref{1. eq: dfn of restriction operator},
which is the measure $\mu_{G}$ restricted to $B_{R_{G}}(\rho_{G}, r)$.
Using Proposition~\ref{4. prop: prototype of main result for finite electrical networks},
we verify the measurability of $\cY_{\tilde{G}^{(r)}}$ in Proposition~\ref{4. prop: measurability of Y_G^r} below.

\begin{lem} \label{4. lem: continuity of electrical networks wrt radius}
  Fix a rooted electrical network $G$.
  Then the map $(0, \infty) \ni r \mapsto \mathcal{Y}_{\tilde{G}^{(r)}} \in \mbM_{L}$
  is left-continuous.
\end{lem}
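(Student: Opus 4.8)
The plan is to reduce the statement to the elementary observation that the open ball $B_{R_G}(\rho_G, r)$ is finite and, as a function of its radius, is locally constant from the left; the whole traced network $\tilde{G}^{(r)}$ then inherits this local constancy, so that the asserted left-continuity becomes immediate. In other words, I do not expect to prove genuine convergence in $\mbM_{L}$ but rather to show that the relevant elements are in fact \emph{equal} on a left-neighborhood of each radius.

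First I would record that $(V_G, R_G)$ is boundedly compact (as an element of $\mathbb{F}^{E} \subseteq \mathbb{F} \subseteq \mathbb{G}$) and carries the discrete topology (recall Section \ref{sec: resistance forms on electrical networks}). Consequently the closed ball $D_{R_G}(\rho_G, r)$, being compact in a discrete metric space, is finite, and hence so is $B_{R_G}(\rho_G, r) \subseteq D_{R_G}(\rho_G, r)$. Writing $B \coloneqq B_{R_G}(\rho_G, r)$, which contains $\rho_G$ since $R_G(\rho_G,\rho_G)=0<r$, I would then set
\begin{equation}
  m \coloneqq \max_{x \in B} R_G(\rho_G, x),
\end{equation}
a maximum over a finite set that satisfies $m < r$ by the definition of the \emph{open} ball.

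The key step is local constancy of the ball from the left. For any $r' \in (m, r]$ and any $x \in V_G$, one has $x \in B_{R_G}(\rho_G, r')$ if and only if $R_G(\rho_G, x) < r'$; since $r' \leq r$ this forces $x \in B$, while conversely every $x \in B$ satisfies $R_G(\rho_G, x) \leq m < r'$, so $x \in B_{R_G}(\rho_G, r')$. Hence $B_{R_G}(\rho_G, r') = B$ for all $r' \in (m, r]$. I would then observe that $\tilde{G}^{(r)}$ depends on $r$ solely through the set $B_{R_G}(\rho_G, r)$: its vertex set is this ball, its root is $\rho_G$, and its conductances are given by the formula $c_{\tilde{G}^{(r)}}(x,y) = c_G(x)\, P_x^{G}(Y_G(T_B^{+})=y)$ of Theorem \ref{4. thm: expression of trace conductance}, in which the return time $T_B^{+}$ refers to the chain on the \emph{fixed} ambient network $G$. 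Therefore $\tilde{G}^{(r')} = \tilde{G}^{(r)}$, and hence $\mathcal{Y}_{\tilde{G}^{(r')}} = \mathcal{Y}_{\tilde{G}^{(r)}}$ in $\mbM_{L}$, for every $r' \in (m, r]$; left-continuity at $r$ follows at once, and since $r>0$ was arbitrary the map is left-continuous on $(0,\infty)$.

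The only point requiring care—and the reason the statement is not purely formal—is the finiteness of the ball, which is exactly what rules out distances $R_G(\rho_G,x)$ accumulating at $r$ from below and thereby forces $m<r$. This finiteness rests on bounded compactness together with the discreteness of the resistance topology; once it is in hand, the convergence in $\mbM_{L}$ is trivial because the objects are literally equal on $(m,r]$. (By contrast, if one dropped bounded compactness and allowed $B_{R_G}(\rho_G,r)$ to be infinite with distances accumulating at $r$, then only $B_{R_G}(\rho_G,r')\uparrow B$ would hold and one would have to argue genuine $\mbM_{L}$-convergence of the traces, which is the substantive difficulty the present hypotheses avoid.)
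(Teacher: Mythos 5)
Your proposal is correct and follows essentially the same route as the paper: the paper's proof simply notes that $V_{\tilde{G}^{(r)}} = B_{R_{G}}(\rho_{G}, r)$ is finite, so one can find $\delta>0$ with $V_{\tilde{G}^{(r-\delta)}} = V_{\tilde{G}^{(r)}}$, and concludes. You have merely spelled out the details (finiteness of the ball via bounded compactness and discreteness, and the fact that the trace depends on $r$ only through the ball), which the paper leaves implicit.
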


\begin{proof}
  Since $V_{\tilde{G}^{(r)}} = B_{R_{G}}(\rho_{G}, r)$ is a finite set,
  we can find $\delta>0$ such that $V_{\tilde{G}^{(r-\delta)}} = V_{\tilde{G}^{(r)}}$.
  Hence, we obtain the desired result.
\end{proof}

\begin{prop}  \label{4. prop: measurability of Y_G^r}
  The map 
  $(0, \infty) \times \mathbb{F}^{E} \ni (r, (V_{G}, R_{G}, \rho_{G}, \mu_{G})) \mapsto \mathcal{Y}_{\tilde{G}^{(r)}} \in \mbM_{L}$
  is measurable.
\end{prop}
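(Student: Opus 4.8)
The plan is to first reduce the joint measurability in $(r,G)$ to a fixed-radius statement by exploiting the left-continuity in $r$ recorded in Lemma \ref{4. lem: continuity of electrical networks wrt radius}, and then, for a fixed radius, to observe that $\mathcal{Y}_{\tilde{G}^{(r_0)}}$ depends on $G$ only through the restriction $G^{(r_0)}$, so that the assignment factors through a Borel restriction operator followed by a map controlled by the prototype Proposition \ref{4. prop: prototype of main result for finite electrical networks}.

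For the reduction in $r$, for each $n \geq 1$ I would set $\pi_n(r) \coloneqq 2^{-n}\lfloor 2^{n} r\rfloor$, the largest level-$n$ dyadic rational not exceeding $r$, and define $\Phi_n(r, G) \coloneqq \mathcal{Y}_{\tilde{G}^{(\pi_n(r))}}$. Since $\pi_n(r) \leq r$ and $\pi_n(r) \to r$, and since Lemma \ref{4. lem: continuity of electrical networks wrt radius} in fact shows that $r \mapsto \mathcal{Y}_{\tilde{G}^{(r)}}$ is constant on a left-neighbourhood of each $r$, one gets $\Phi_n(r, G) = \mathcal{Y}_{\tilde{G}^{(r)}}$ for all large $n$, hence $\Phi_n \to \mathcal{Y}_{\tilde{G}^{(\cdot)}}$ pointwise. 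As $\mbM_{L}$ is Polish, a pointwise limit of measurable maps is measurable, so it suffices to prove each $\Phi_n$ is measurable. On the slab $\{r : \pi_n(r) = q\} = [q, q+2^{-n})$ for a level-$n$ dyadic $q$ one has $\Phi_n(r,G) = \mathcal{Y}_{\tilde{G}^{(q)}}$, so the preimage of a Borel set is a countable union of products $[q, q+2^{-n}) \times \bigl(\text{preimage under } G \mapsto \mathcal{Y}_{\tilde{G}^{(q)}}\bigr)$. Thus everything reduces to fixing $r_0 > 0$ and showing that $G \mapsto \mathcal{Y}_{\tilde{G}^{(r_0)}}$ is measurable on $\mathbb{F}^{E}$.

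The key point I would then establish is that $\mathcal{Y}_{\tilde{G}^{(r_0)}}$ depends on $G$ only through $G^{(r_0)}$, and indeed only through its rooted-metric part. Since the topology induced by $R_{G}$ is discrete and $D_{R_G}(\rho_G, r_0)$ is compact, the ball $B \coloneqq B_{R_G}(\rho_G, r_0)$ is finite and equals the underlying set of $G^{(r_0)}$, while by Theorem \ref{3. thm: trace and resistance metric space} (recurrence holds for all $G \in \mathbb{F}^{E}$) the resistance metric of $\tilde{G}^{(r_0)}$ is $R_G|_{B \times B}$, i.e.\ the metric of $G^{(r_0)}$. By Lemma \ref{4. lem: continuity of resistance metric and conductance} the conductances $c_{\tilde{G}^{(r_0)}}$, and hence the associated measure $\mu_{\tilde{G}^{(r_0)}}$ and the law of $(Y_{\tilde{G}^{(r_0)}}, \ell_{\tilde{G}^{(r_0)}})$, are recovered from these resistances. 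Writing $\Theta(H)$ for the element of $\mbM_{L}$ attached to the unique electrical network realizing a finite rooted resistance-metric space $H$, this gives $\mathcal{Y}_{\tilde{G}^{(r_0)}} = \Theta(G^{(r_0)})$, with $\Theta$ defined on the Borel set of finite rooted resistance-metric spaces in $\mathbb{G}_{c}$ and not involving $\mu_G$ at all. The restriction operator $G \mapsto G^{(r_0)}$ from $(\mathbb{G}, d_{\mathbb{G}})$ to $(\mathbb{G}_{c}, d_{GHP})$ is Borel (part of the metrization framework; cf.\ Theorem \ref{2. thm: characterization of convergence in the local. GHV} and \cite{Noda_pre_Metrization}), so it remains to prove that $\Theta$ is Borel.

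To do so I would stratify by cardinality, setting $\mathcal{S}_{N} \coloneqq \{H \in \mathbb{G}_c : H \text{ has exactly } N \text{ points}\}$. Cardinality is lower semicontinuous in the pointed Gromov-Hausdorff-Prohorov topology, so each $\mathcal{S}_{N}$ is Borel, and it suffices to show $\Theta$ is continuous on $\mathcal{S}_N$. If $H_n \to H$ within $\mathcal{S}_{N}$, then, exactly as in the proof of Proposition \ref{4. prop: prototype of main result for finite electrical networks}, one can label the points as $N$-tuples converging in a common space, so the $N \times N$ resistance matrices converge; by Lemma \ref{4. lem: continuity of resistance metric and conductance} the corresponding conductance matrices converge too, whence $\sup_n \sup_{x} c(x) < \infty$, and Proposition \ref{4. prop: prototype of main result for finite electrical networks} yields $\Theta(H_n) \to \Theta(H)$ in $\mbM_{L}$. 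Being continuous on each of the countably many Borel strata, $\Theta$ is Borel, and composing with the restriction operator gives the claim. The main obstacle is precisely this stratification: the map $G \mapsto \mathcal{Y}_{\tilde{G}^{(r_0)}}$ is genuinely discontinuous at radii where the cardinality of $B$ jumps (points of the ball merging in a limit), so fixing $N$ is what forbids such mergers and forces honest convergence of the finite resistance data to which the prototype proposition applies; the remaining care goes into the Borel measurability of the cardinality strata and of the fixed-radius restriction operator.
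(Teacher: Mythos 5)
Your route is genuinely different from the paper's. The paper proves the joint measurability in one stroke by a mollification in $r$: for bounded continuous $f$ it considers $F_{\varepsilon}(r,G) = \frac{1}{\varepsilon \wedge (r/2)}\int_{0}^{\varepsilon \wedge (r/2)} f(\mathcal{Y}_{\tilde{G}^{(r-s)}})\,ds$, shows each $F_{\varepsilon}$ is \emph{continuous} (because along any convergent sequence $(r_n,G_n)\to(r,G)$ the restrictions $\tilde{G}_n^{(r_n-s)}$ converge for Lebesgue-a.e.\ $s$, so Proposition \ref{4. prop: prototype of main result for finite electrical networks} plus dominated convergence applies), and then lets $\varepsilon\to 0$ using the left-local-constancy of Lemma \ref{4. lem: continuity of electrical networks wrt radius}. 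Your dyadic reduction in $r$ and the cardinality stratification of $\Theta$ are both sound: left-local-constancy does give $\Phi_n \to \mathcal{Y}_{\tilde{G}^{(\cdot)}}$ pointwise, the strata $\mathcal{S}_N$ are Borel by lower semicontinuity of cardinality, and continuity of $\Theta$ on each stratum follows exactly as you say from Lemma \ref{4. lem: continuity of resistance metric and conductance} and Proposition \ref{4. prop: prototype of main result for finite electrical networks}.

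The gap is the step you dispose of in one clause: that the fixed-radius restriction $G \mapsto G^{(r_0)}$ is Borel from $\mathbb{F}^{E}$ (or $\mathbb{G}$) to $\mathbb{G}_{c}$. This is not ``part of the metrization framework'' in any form you can directly cite: Theorem \ref{2. thm: characterization of convergence in the local. GHV} only says that along a convergent sequence the restrictions converge for \emph{all but countably many} $r$, and the exceptional set depends on the sequence, so for a fixed $r_0$ the map is genuinely discontinuous (a vertex at distance exactly $r_0$ from the root can enter or leave the ball under arbitrarily small perturbations, changing the trace). Your dyadic reduction does not help here — it merely converts the problem into countably many fixed-radius problems, each carrying the same discontinuity. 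Measurability at fixed $r_0$ therefore still needs an argument, and the natural one is precisely the averaging device above: on $\mathbb{F}^{E}$ the function $G \mapsto \frac{1}{\varepsilon}\int_{0}^{\varepsilon} f(\mathcal{Y}_{\tilde{G}^{(r_0-s)}})\,ds$ is continuous and converges pointwise to $f(\mathcal{Y}_{\tilde{G}^{(r_0)}})$ as $\varepsilon \to 0$ by left-local-constancy. Once you insert that, your proof closes, but the factorization through $\Theta$ and the stratification become largely redundant, since the key measurability input is then supplied by the same mechanism the paper uses directly on the composite map.
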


\begin{proof}
  Fix a measurable function $f \colon \mbM_{L} \to \RN$ assumed to be bounded and continuous.
  Given $\varepsilon > 0$,
  define a map $F_{\varepsilon} \colon (0, \infty) \times \mbM_{L} \to \RN$ by setting 
  \begin{equation}
    F_{\varepsilon} (r, (V_{G}, R_{G}, \rho_{G}, \mu_{G})) 
    \coloneqq
    \frac{1}{\varepsilon \wedge (r / 2)}
    \int_{0}^{\varepsilon \wedge (r / 2)} 
    f ( \mathcal{Y}_{\tilde{G}^{(r-s)}} )\,
    ds.
  \end{equation}
  Note that the integral on the right-hand side of the above equation is well-defined
  by Lemma~\ref{4. lem: continuity of electrical networks wrt radius}.
  Suppose that $r_{n} \to r$ 
  and $(V_{G_{n}}, R_{G_{n}}, \rho_{G_{n}}, \mu_{G_{n}}) \to (V_{G}, R_{G}, \rho_{G}, \mu_{G})$.
  Then, 
  for Lebesgue almost-every $s \in (0, r/2)$,
  it holds that 
  \begin{equation}
    (V_{\tilde{G}_{n}^{(r_{n} - s)}}, R_{\tilde{G}_{n}^{(r_{n} - s)}}, \rho_{\tilde{G}_{n}^{(r_{n} - s)}}, \mu_{G_{n}}^{(r_{n} - s)}) 
    \to 
    (V_{\tilde{G}^{(r-s)}}, R_{\tilde{G}^{(r-s)}}, \rho_{\tilde{G}^{(r-s)}}, \mu_{G}^{(r-s)})
  \end{equation}
  in the Gromov--Hausdorff--Prohorov topology
  (cf.\ \cite[Lemma~2.15]{Noda_pre_Convergence}).
  We then have that 
  \begin{equation}
    \limsup_{n \to \infty}
    \mu_{\tilde{G}_{n}^{(r_{n} - s)}} (V_{\tilde{G}_{n}^{(r_{n} - s)}})
    \leq 
    \limsup_{n \to \infty}
    \mu_{G_{n}}^{(r_{n} - s)} (V_{\tilde{G}_{n}^{(r_{n} - s)}})
    =
    \mu_{G}^{(r-s)}(V_{\tilde{G}^{(r - s)}}) < \infty.
  \end{equation}
  From Proposition~\ref{4. prop: prototype of main result for finite electrical networks}, 
  it follows that $\mathcal{Y}_{\tilde{G}_{n}^{(r_{n} - s)}} \to \mathcal{Y}_{\tilde{G}^{(r - s)}}$ 
  for Lebesgue almost-every $s \in (0, r/2)$.
  This yields that the map $F_{\varepsilon}$ is continuous.
  Moreover,
  by Lemma~\ref{4. lem: continuity of electrical networks wrt radius},
  $F_{\varepsilon}$ converges to the desired map in the assertion as $\varepsilon \to 0$ pointwise,
  which completes the proof.
\end{proof}

Thanks to Proposition~\ref{4. prop: measurability of Y_G^r},
the measurability of $\cY_G$ with respect to $G$ is deduced 
by showing the convergence of $\cY_{\tilde{G}^{(r)}}$ to $\cY_G$ as $r \to \infty$.
The key result to verify this is the following,
which is a version of \cite[Lemma 4.2]{Croydon_18_Scaling}
modified for discrete-time Markov chains on electrical networks.
  
\begin{lem} \label{4. lem: exit time estimate for Y}
  Fix $(V_{G}, R_{G}, \rho_{G}, \mu_{G}) \in \mathbb{F}^{E}$.
  For every 
  $\delta \in (0, R_{G}(\rho_{G}, B_{R_{G}}(\rho_{G}, r)^{c})),\, t \geq 0$ 
  and $\lambda >0$, 
  it holds that 
  \begin{equation}
    P_{\rho_{G}}^{G}
    (T_{B_{R_{G}}(\rho_{G},r)^{c}} \leq t)
    \leq
    \frac{1}{\lambda}
    +
    \frac{4\delta}{R_{G}(\rho_{G},B_{R_{G}}(\rho_{G}, r)^{c})} 
    + 
    \frac{4t \lambda}
    {\mu_{G}(B_{R_{G}}(\rho_{G},\delta))(R_{G}(\rho_{G}, B_{R_{G}}(\rho_{G}, r)^{c})-\delta)},
  \end{equation}
  where we recall from \eqref{4. eq: first hitting and return time of Y} that 
  $T_{B_{R_G}(\rho_G, r)^c}$ denotes the first hitting time of $B_{R_G}(\rho_B, r)^c$ by $Y_G$.
\end{lem}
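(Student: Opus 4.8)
The plan is to follow the proof of \cite[Lemma 4.2]{Croydon_18_Scaling}, replacing the continuous-time process by $Y_G$ and the exponentially distributed local time at the root by its geometric discrete-time analogue. Throughout write $A \coloneqq B_{R_G}(\rho_G, r)^c$, $\sigma \coloneqq T_A$, $R \coloneqq R_G(\rho_G, A)$, $B_\delta \coloneqq B_{R_G}(\rho_G, \delta)$ and $V_\delta \coloneqq \mu_G(B_\delta)$, and let $W \coloneqq \int_0^\sigma 1_{B_\delta}(Y_G(s))\,ds$ denote the time spent in $B_\delta$ before exiting $B_{R_G}(\rho_G, r)$.

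First I would record two resistance identities. The number $N$ of visits of $Y_G$ to $\rho_G$ before time $\sigma$ is geometric with parameter $q = P_{\rho_G}^G(\sigma < T_{\rho_G}^+) = (c_G(\rho_G)R)^{-1}$, this being the standard identity relating the Green density at the root to the effective resistance $R_G(\rho_G,A)$; since the occupation density formula \eqref{1. eq: discrete-time occupation density formula} gives $\ell_G(\rho_G,\sigma) = N/c_G(\rho_G)$, a direct estimate of the geometric tail yields
\begin{equation}
  P_{\rho_G}^G\bigl(\ell_G(\rho_G,\sigma) \le R/\lambda\bigr) = 1 - (1-q)^{\lfloor c_G(\rho_G)R/\lambda\rfloor} \le \frac{1}{\lambda}.
\end{equation}
This produces the first term and, via $\{\sigma \le t\} \subseteq \{\ell_G(\rho_G,\sigma) \le R/\lambda\} \cup (\{\sigma\le t\}\cap\{\ell_G(\rho_G,\sigma) > R/\lambda\})$, reduces the problem to the regime in which the root local time is already large. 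Secondly, for $x \in B_\delta$, symmetry of the killed Green density together with the strong Markov property give $E_{\rho_G}^G[\ell_G(x,\sigma)] = \tfrac12\bigl(R_G(\rho_G,A) + R_G(x,A) - R^A_G(\rho_G,x)\bigr)$, where $R^A_G$ denotes the effective resistance computed with $A$ shorted to a single point; since $R_G(x,A) \ge R - \delta$ (triangle inequality for the resistance metric) and $R^A_G(\rho_G,x) \le R_G(\rho_G,x) < \delta$, this is at least $R-\delta$. Integrating over $B_\delta$ against $\mu_G$ yields the crucial lower bound $E_{\rho_G}^G[W] \ge (R-\delta)V_\delta$.

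Next I would convert this expectation bound into a probability bound. Since $W \le \sigma$, on $\{\sigma \le t\}$ we have $W \le t$, so it remains to bound $P_{\rho_G}^G(W \le t,\ \ell_G(\rho_G,\sigma) > R/\lambda)$ by $\tfrac{4\delta}{R} + \tfrac{4t\lambda}{V_\delta(R-\delta)}$. I would use the renewal representation $W = \sum_{i=1}^N W_i$, where $W_i$ is the occupation time of $B_\delta$ during the $i$-th excursion of $Y_G$ away from $\rho_G$; the excursions are i.i.d.\ by the strong Markov property, and $\{\ell_G(\rho_G,\sigma) > R/\lambda\} = \{N > c_G(\rho_G)R/\lambda\}$. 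Writing $E_{\rho_G}^G[W] = E_{\rho_G}^G[N]\,\beta$ with $\beta$ the mean occupation time per returning excursion (so $\beta \ge (R-\delta)V_\delta/(c_G(\rho_G)R)$ by the previous step), a one-sided second-moment estimate for $\sum_{i=1}^N W_i$ on $\{N\ \text{large}\}$ shows that $W$ is comparable to $\beta N \gtrsim (R-\delta)V_\delta/\lambda$ outside an event of probability $O(\delta/R)$, after which a Markov/renewal step turns $\{W \le t\}$ into the linear-in-$t$ bound $\tfrac{t\lambda}{V_\delta(R-\delta)}$. Collecting the three contributions and absorbing the universal constants into the factors $4$ gives the claim.

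The main obstacle is precisely the lower-tail control of the occupation time $W$ in this last step. The tempting route, bounding $W$ from below by $V_\delta \inf_{x \in B_\delta}\ell_G(x,\sigma)$, is unavailable, because it would require a uniform-in-$x$ lower bound on the local time over the (possibly uncountable) ball $B_\delta$, i.e.\ exactly the kind of modulus estimate that rests on the metric-entropy hypothesis and is not assumed here. The argument must therefore stay at the level of the $\mu_G$-integrated occupation time and exploit the i.i.d.\ excursion structure together with the resistance computation above; balancing the geometric tail, the excursion fluctuations and the occupation-time deficit in the three free parameters $\lambda$, $\delta$ and $t$ is what forces the three-term shape of the estimate.
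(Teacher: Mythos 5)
There is a genuine gap, and it sits exactly where you flagged it: the conversion of the first--moment bound $E_{\rho_G}^G[W]\ge (R-\delta)V_\delta$ into the stated probability bound. Your steps (i) and (ii) are fine (the geometric tail of the visit count and the Green-density identity $g_A(\rho_G,x)=\tfrac12(R_G(\rho_G,A)+R_G(x,A)-R_G^A(\rho_G,x))\ge R-\delta$ are both correct), but step (iii) is the entire analytic content of the lemma and is left as an unexecuted sketch. Moreover, the mechanism you propose is unlikely to deliver the stated constants: a ``one-sided second-moment estimate'' for $\sum_{i\le N}W_i$ requires a bound on $E[W_i^2]$ that you neither compute nor relate to the quantities $\delta$, $R$, $V_\delta$ appearing in the statement, and the term $4\delta/R$ in the target bound is not a fluctuation term at all --- in the continuous-time prototype it arises from the \emph{ratio} of the pointwise bounds $R-\delta\le g_A(\rho_G,x)$ and $g_A(x,y)\le R(y,A)\le R+\delta$ on the killed Green density over $B_\delta$, fed into a first-moment/optional-stopping identity (with $h(x)\coloneqq E_x[W]$ one has $(R-\delta)V_\delta\le h(\rho_G)\le t+\sup_x h(x)\cdot P(\sigma>t)\le t+(R+\delta)V_\delta P(\sigma>t)$), not from a variance computation on excursions. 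As written, the proposal does not prove the inequality.

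You should also be aware that the paper's own proof is a three-line reduction that bypasses all of this: under the coupling \eqref{4. eq: coupling of X_G and Y_G}, the exit time of the constant speed random walk satisfies $\sigma_r^{X}=\sum_{k=1}^{\sigma_r^{Y}}S_k$ with $(S_k)_{k\ge1}$ i.i.d.\ mean-one exponentials, so the conditional Markov inequality gives $P_{\rho}^{G}(\sigma_r^{X}>\lambda\sigma_r^{Y})\le\lambda^{-1}$ and hence $P_{\rho}^{G}(\sigma_r^{Y}\le t)\le\lambda^{-1}+P_{\rho}^{G}(\sigma_r^{X}\le\lambda t)$; the remaining two terms are then exactly \cite[Lemma 4.2]{Croydon_18_Scaling} applied to $X_G$ at time $\lambda t$. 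This also explains the provenance of the $\lambda$'s in the statement: the $1/\lambda$ is the cost of comparing discrete and continuous time scales, not (as in your step (i)) the lower tail of the number of visits to the root. Your route could in principle be completed --- most cleanly by replacing the excursion/second-moment step with the discrete-time optional-stopping argument sketched above --- but as submitted the decisive estimate is missing.
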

  
\begin{proof}
  Write $\sigma_{r}^{X}$ and $\sigma_{r}^{Y}$ 
  for the first hitting time of $X_{G}$ and $Y_{G}$
  to $B_{R_{G}}(\rho_{G}, r)^{c}$ respectively.
  We then have that $\sigma_{r}^{X} = \sum_{k =1}^{\sigma_{r}^{Y}} S_{k}$,
  where $(S_{k})_{k \geq 1}$ denotes the sequence of the holding times of $X_{G}$.
  Note that $(S_k)_{k \geq 1}$ is independent identically distributed (i.i.d.) 
  and each $S_k$ has the exponential distribution with rate $1$.
  Since $\sigma_r^Y$ is independent of $(S_k)_{k \geq 1}$,
  the Markov inequality yields that 
  \begin{equation}
    P_{\rho}^{G} (\sigma_{r}^{X} > \lambda \sigma_{r}^{Y})
    \leq
    \frac{1}{\lambda} 
    E^G_\rho\!\left[ \frac{\sum_{k=1}^{\sigma_r^Y} S_k}{\sigma_r^Y}  \right]
    =
    \frac{1}{\lambda}.
  \end{equation}
  Thus, we deduce that 
  \begin{equation}  \label{4. eq: exit time for Y, bound X by Y}
    P_{\rho}^{G}
    ( \sigma_{r}^{Y} \leq t)
    \leq 
    P_{\rho}^{G} (\sigma_{r}^{X} > \lambda \sigma_{r}^{Y})
    +
    P_{\rho}^{G} ( \sigma_{r}^{X} \leq \lambda t)
    \leq 
    \lambda^{-1}
    +
    P_{\rho}^{G} ( \sigma_{r}^{X} \leq \lambda t).
  \end{equation}
  Now, \cite[Lemma 4.2]{Croydon_18_Scaling} yields the desired result.
\end{proof}

Using the above exit time estimate,
we verify that $\cY_{\tilde{G}^{(r)}}$ approximates $\cY_G$ as $r \to \infty$.
  
\begin{prop}  \label{4. prop: convergence of Y_G^r to Y_G}
  Fix $(V_{G}, R_{G}, \rho_{G}, \mu_{G}) \in \mathbb{F}^{E}$.
  It holds that $\mathcal{Y}_{\tilde{G}^{(r)}} \to \cY_{G}$ in $\mbM_{L}$ as $r \to \infty$.
\end{prop}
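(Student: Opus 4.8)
The plan is to realize every space inside the common rooted metric space $M=(V_{G},R_{G},\rho_{G})$ via inclusion maps and then verify the three conditions in the convergence criterion of Theorem \ref{2. thm: space M_L, convergence}: local Hausdorff convergence of the vertex sets, vague convergence of the measures, and weak convergence of the laws of the coupled process-and-local-time pairs. Since $V_{\tilde{G}^{(r)}}=B_{R_{G}}(\rho_{G},r)$ and, by Theorem \ref{3. thm: trace and resistance metric space}, $R_{\tilde{G}^{(r)}}=R_{G}|_{B\times B}$ with root $\rho_{G}$, the inclusion $V_{\tilde{G}^{(r)}}\hookrightarrow V_{G}$ is root-and-distance-preserving, so there is no genuine embedding problem. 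For the spaces I would observe that for each $s<r$ one has $B_{R_{G}}(\rho_{G},r)^{(s)}=V_{G}^{(s)}$ (the closed ball of radius $s$ already lies inside $B_{R_{G}}(\rho_{G},r)$), so the vertex sets converge to $V_{G}$ in the local Hausdorff topology as $r\to\infty$.

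For the measures I would use Corollary \ref{4. cor: difference for trace measure at one point}, which gives, for each $x\in B$, the bound $0\leq c_{G}(x)-c_{\tilde{G}^{(r)}}(x)\leq \sum_{y\notin B}c_{G}(x,y)$. Because closed balls of $(V_{G},R_{G})$ are compact, hence finite in the discrete topology, the set $\{y:R_{G}(x,y)\leq M\}$ is finite for every $M$; combining this with $R_{G}(x,y)\geq R_{G}(\rho_{G},y)-R_{G}(\rho_{G},x)$ and $\sum_{y}c_{G}(x,y)=c_{G}(x)<\infty$, the right-hand side tends to $0$ as $r\to\infty$. Thus $\mu_{\tilde{G}^{(r)}}(\{x\})=c_{\tilde{G}^{(r)}}(x)\to c_{G}(x)=\mu_{G}(\{x\})$ for each fixed $x$, and testing against a compactly supported (hence finitely supported) continuous function, whose support lies in $B$ for large $r$, yields the vague convergence $\mu_{\tilde{G}^{(r)}}\to\mu_{G}$.

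The heart of the argument is the third condition. I would couple the chains through Theorem \ref{4. thm: coupling by trace of Y_G}, realizing $Y_{\tilde{G}^{(r)}}$ as $\tr_{B}Y_{G}$ on the probability space of $Y_{G}$. The key deterministic fact is that, since the network is a simple graph with no self-loops, $Y_{G}$ changes its vertex at every step; hence, as long as $Y_{G}$ stays in $B$, one has $\tilde{T}_{B}^{(k)}=k$, so $\tr_{B}Y_{G}$ and $Y_{G}$ coincide up to the exit time $\sigma_{r}^{Y}=T_{B_{R_{G}}(\rho_{G},r)^{c}}$. On the event $\{\sigma_{r}^{Y}>\lfloor T\rfloor\}$ the two paths agree on $[0,T]$ and, the occupation integrals then being equal, $\ell_{\tilde{G}^{(r)}}(x,t)=(c_{G}(x)/c_{\tilde{G}^{(r)}}(x))\,\ell_{G}(x,t)$ for $x\in V_{G}^{(s)}$ and $t\leq T$, where $V_{G}^{(s)}$ is a fixed finite set once $r>s$ and the ratio tends to $1$ by the measure computation above.

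Finally, to see that this event has probability tending to $1$, I would invoke Lemma \ref{4. lem: exit time estimate for Y}: fixing $\delta,\lambda,t$ and letting $r\to\infty$, the recurrence condition \eqref{1. eq: recurrence of R in the space F} forces $R_{G}(\rho_{G},B_{R_{G}}(\rho_{G},r)^{c})\to\infty$, so the last two terms in the estimate vanish and the bound reduces to $1/\lambda$; letting $\lambda\to\infty$ then gives $P_{\rho_{G}}^{G}(\sigma_{r}^{Y}\leq t)\to 0$. Consequently, under the coupling, $(\tr_{B}Y_{G},\ell_{\tilde{G}^{(r)}})\to(Y_{G},\ell_{G})$ in probability in $D(\mbRp,V_{G})\times\widehat{C}(V_{G}\times\mbRp,\mbR)$, which yields weak convergence of the corresponding laws and completes the verification of Theorem \ref{2. thm: space M_L, convergence}. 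I expect the main obstacle to be the bookkeeping around the local times: one must confirm both that the path coincidence forces the occupation integrals to match and that the differing normalizations $c_{\tilde{G}^{(r)}}(x)$ and $c_{G}(x)$ converge, the latter resting on Corollary \ref{4. cor: difference for trace measure at one point} together with the finiteness of closed balls.
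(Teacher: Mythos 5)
Your proposal is correct and follows essentially the same route as the paper: inclusion into $(V_{G},R_{G})$, the trace coupling of Theorem \ref{4. thm: coupling by trace of Y_G}, path coincidence up to the exit time controlled via Lemma \ref{4. lem: exit time estimate for Y} and recurrence, and the conductance-ratio correction $c_{G}(x)/c_{\tilde{G}^{(r)}}(x)\to 1$ from Corollary \ref{4. cor: difference for trace measure at one point}. The only (harmless) deviation is in the measure step, where you use the deterministic tail bound $\sum_{y\notin B}c_{G}(x,y)\to 0$ from summability of conductances, whereas the paper bounds $c_{G}(x)-c_{\tilde{G}^{(r)}}(x)$ by the uniform exit-time estimate; both yield the required vague convergence.
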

  
\begin{proof}
  Since $(V_{G}, R_{G})$ is boundedly-compact 
  and has the discrete topology,
  $B_{R_{G}(\rho_{G}, r_{0})}$ is a finite set for each $r_{0} > 0$.
  This fact, 
  combined with Lemmas~\ref{3. lem: characterization of recurrence by resistance} and~\ref{4. lem: exit time estimate for Y},
  yields that 
  \begin{equation}  \label{4. eq: trac convergence, uniform exit time estimate}
    \lim_{r \to \infty}
    \sup_{x \in B_{R_{G}(\rho_{G}, r_{0})}} 
    P_{x}^{G}
    \left(
      T_{B_{R_{G}}(x, r)^{c}} \leq 1
    \right)
    = 0,
    \quad 
    \forall r_{0} > 0.
  \end{equation}
  We regard $(V_{\tilde{G}^{(r)}}, R_{\tilde{G}^{(r)}})$
  as a subspace of $(V_{G}, R_{G})$.
  To prove the desired result,
  it is enough to show that $\mu_{\tilde{G}^{(r)}} \to \mu_{G}$ 
  and $P_{\tilde{G}^{(r)}} \to P_{G}$ as $r \to \infty$. 

  We first prove the former convergence. 
  It is easy to check that $\mu_{G}^{(r)} \to \mu_{G}$ 
  in the vague topology.
  Therefore, 
  it suffices to show that 
  $d_{V, \rho_{G}}(\mu_{\tilde{G}^{(r)}}, \mu_{G}^{(r)}) \to 0$,
  where $d_{V, \rho_{G}}$ denotes the vague metric, as recalled from Definition~\ref{dfn: vague metric}.
  Fix $\varepsilon > 0$
  and choose $r_{0}>0$ satisfying $e^{-r_{0}} < \varepsilon$.
  For all $r>r_{0}$,
  we have that 
  \begin{equation}
    d_{V, \rho_{G}}(\mu_{\tilde{G}^{(r)}}, \mu_{G}^{(r)}) 
    \leq 
    \varepsilon 
    +
    \int_{0}^{r_{0}} e^{-s} 
    \left(
      1 
      \wedge
      \dP \left( \mu_{\tilde{G}^{(r)}}^{(s)}, \mu_{G}^{(s)} \right)
    \right)
    ds,
  \end{equation}
  where $\dP$ denotes the Prohorov metric on $\cMfin(V_{G})$.
  For $s < r_{0} < r$ and $A \subseteq B_{R_{G}}(\rho_{G}, s)$,
  by using Corollary~\ref{4. cor: difference for trace measure at one point},
  we deduce that 
  \begin{align}
    \mu_{G}^{(s)}(A) 
    &= 
    \sum_{x \in A} c_{G}(x) \\
    &\leq 
    \sum_{x \in A} c_{\tilde{G}^{(r)}}(x) 
    + 
    \sum_{x \in A} c_{G}(x) P_{x}^{G}\!\left( Y_{G}(T^+_{B_{R_{G}}(\rho_{G}, r)}) = x \right) \\
    &\leq 
    \mu_{\tilde{G}^{(r)}}^{(s)}(A) 
    + 
    \mu_{G}(B_{R_{G}}(\rho_{G}, r_{0})) 
    \sup_{x \in B_{R_{G}}(\rho_{G}, r_{0})} 
    P_{x}^{G}\!\left( T_{B_{R_{G}}(\rho_{G}, r)^{c}} \leq 1 \right),
  \end{align} 
  and $\mu_{\tilde{G}^{(r)}}^{(s)}(A) \leq \mu_G^{(s)}(A)$.
  It follows that 
  \begin{equation}
    \dP \left( \mu_{\tilde{G}^{(r)}}^{(s)}, \mu_{G}^{(s)} \right)
    \leq 
    \mu_{G}(B_{R_{G}}(\rho_{G}, r_{0})) 
    \sup_{x \in B_{R_{G}}(\rho_{G}, r_{0})} 
    P_{x}^{G}\!\left( T_{B_{R_{G}}(\rho_{G}, r)^{c}} \leq 1 \right).
  \end{equation}
  This yields that 
  \begin{equation}
    d_{V, \rho_{G}}(\mu_{\tilde{G}^{(r)}}, \mu_{G}^{(r)})  
    \leq 
    \varepsilon 
    + 
    \mu_{G}(B_{R_{G}}(\rho_{G}, r_{0})) 
    \sup_{x \in B_{R_{G}}(\rho_{G}, r_{0})} 
    P_{x}^{G}\!\left( T_{B_{R_{G}}(\rho_{G}, r)^{c}} \leq 1 \right).
  \end{equation}
  For any $r > 2r_{0}$ and $x \in B_{R_{G}}(\rho_{G}, r_{0})$,
  we have that $B_{R_{G}}(x, r/2) \subseteq B_{R_{G}}(\rho_{G}, r)$,
  which implies that 
  \begin{equation}
    \sup_{x \in B_{R_{G}}(\rho_{G}, r_{0})} 
    P_{x}^{G}\!\left( T_{B_{R_{G}}(\rho_{G}, r)} \leq 1 \right) 
    \leq 
    \sup_{x \in B_{R_{G}}(\rho_{G}, r_{0})}
    P_{x}^{G}\!\left( T_{B_{R_{G}}(x, r/2)^{c}} \leq 1 \right).
  \end{equation}
  Hence, using \eqref{4. eq: trac convergence, uniform exit time estimate},
  we obtain that $\mu_{\tilde{G}^{(r)}} \to \mu_{G}$ vaguely 
  as measures on $V_{G}$.

  We next show that $P_{\tilde{G}^{(r)}} \to P_{G}$
  as probability measures on $D(\RNp, V_{G}) \times \hatC(V_{G} \times \RNp, \RN)$.
  We write $\tr_{r} Y_{G} \coloneqq \tr_{B_{R_{G}}(\rho_{G}, r)} Y_{G}$,
  where we recall this notation from Definition~\ref{4. dfn: trace}.
  We then define a random element $\tr_{r} \ell_{G}$ of $C(V_{\tilde{G}^{(r)}} \times \RNp, \RN)$
  by setting 
  \begin{equation}
    \tr_{r} \ell_{G}(x, t)
    \coloneqq
    \frac{1}{c_{\tilde{G}^{(r)}}(x)}
    \int_{0}^{t} 
    1_{\{x\}}(\tr_{r}Y_{G}(s))\,
    ds.
  \end{equation}
  By Theorem~\ref{4. thm: coupling by trace of Y_G},
  we have that $(Y_{\tilde{G}^{(r)}}, \ell_{\tilde{G}^{(r)}}) \stackrel{\mathrm{d}}{=} (\tr_{r}Y_{G}, \tr_{r}\ell_{G})$.
  Since $\tr_{r}Y_{G}(s) = Y_{G}(s)$ for all $0 \leq s \leq t$ on the event $\{T_{B_{R_{G}}(\rho_{G}, r)^{c}} > t\}$,
  it holds that, for all $\varepsilon > 0$,
  \begin{equation}
    P_{\rho_{G}}^{G}
    \left(
      \sup_{0 \leq s \leq t} R_{G}(\tr_{r}Y_{G}(s), Y_{G}(s)) > \varepsilon
    \right)
    \leq 
    P_{\rho_{G}}^{G}
    \left(
      T_{B_{R_{G}}(\rho_{G}, r)^{c}} > t
    \right).
  \end{equation}
  From \eqref{4. eq: trac convergence, uniform exit time estimate},
  we deduce that $\tr_{r}Y_{G}$ converges to $Y_{G}$ in probability
  as random elements of $D(\RNp, V_{G})$.
  Since $\tr_{r} \ell_{G}(x, s) = \frac{c_{G}(x)}{c_{\tilde{G}^{(r)}}(x)} \ell_{G}(x, s)$ 
  for all $0 \leq s \leq t$ on the event $\{T_{B_{R_{G}}(\rho_{G}, r)^{c}} > t\}$,
  we obtain that, for all $r> r_{0}$ and $t >0$,
  \begin{align} 
    &P_{\rho_{G}}^{G}
    \left(
      \sup_{x \in V_{G}^{(r_{0})}}
      \sup_{0 \leq s \leq t}
      \left|
        \tr_{r}\ell_{G}(x, s) - \ell_{G}(x, s)
      \right|
      >
      \varepsilon
    \right) \\
    \leq 
    &P_{\rho_{G}}^{G}
    \left(
      T_{B_{R_{G}}(\rho_{G}, r)^{c}} \leq t
    \right)
    +
    P_{\rho_{G}}^{G} 
    \left(
      \sup_{x \in V_{G}^{(r_{0})}} 
      \left|
        1 - \frac{c_{G}(x)}{c_{\tilde{G}^{(r)}}(x)}
      \right|
      \ell_{G}(x, t)
      >
      \varepsilon/2
    \right).
    \label{4. eq: trac convergence, dis of local times}
  \end{align}
  By Corollary~\ref{4. cor: difference for trace measure at one point},
  we deduce that 
  \begin{align}
    \sup_{x \in V_{G}^{(r_{0})}}
    \left|
        1 - \frac{c_{G}(x)}{c_{\tilde{G}^{(r)}}(x)}
    \right|
    =
    \sup_{x \in V_{G}^{(r_{0})}}
    \left|
        \frac{ P_{x}^{G}\bigl( Y_{G}(T_{V_{G}^{(r)}}) = x \bigr) }{1 - P_{x}^{G}\bigl( Y_{G}(T_{V_{G}^{(r)}}) = x \bigr) }
    \right|.
  \end{align}
  By the same argument as before,
  since it holds that 
  \begin{equation}
    \limsup_{r \to \infty}
    \sup_{x \in V_{G}^{(r_{0})}}
    P_{x}^{G}\bigl( Y_{G}(T_{V_{G}^{(r)}}) = x \bigr)
    =
    0,
  \end{equation}
  we obtain that 
  \begin{equation}
    \lim_{r \to \infty}
    \sup_{x \in V_{G}^{(r_{0})}}
    \left|
        1 - \frac{c_{G}(x)}{c_{\tilde{G}^{(r)}}(x)}
    \right|
    =
    0.
  \end{equation}
  This and the tightness of $(\ell_{G}(x, t))_{x \in V_{G}^{(r_{0})}}$ implies that 
  \begin{equation}  \label{4. eq: trac convergence, conductance part}
    \lim_{r \to \infty}
    P_{\rho_{G}}^{G} 
    \left(
      \sup_{x \in V_{G}^{(r_{0})}} 
      \left|
        1 - \frac{c_{G}(x)}{c_{\tilde{G}^{(r)}}(x)}
      \right|
      \ell_{G}(x, t)
      >
      \varepsilon/2
    \right)
    =
    0.
  \end{equation}
  From \eqref{4. eq: trac convergence, uniform exit time estimate},
  \eqref{4. eq: trac convergence, dis of local times}
  and \eqref{4. eq: trac convergence, conductance part},
  we deduce that 
  \begin{equation}
    \lim_{r \to \infty}
    P_{\rho_{G}}^{G}
    \left(
      \sup_{x \in V_{G}^{(r_{0})}}
      \sup_{0 \leq s \leq t}
      \left|
        \tr_{r}\ell_{G}(x, s) - \ell_{G}(x, s)
      \right|
      >
      \varepsilon
    \right)
    =0,
    \quad 
    \forall r_{0}, t >0,
  \end{equation}
  which implies that $\tr_{r} \ell_{G} \to \ell_{G}$ in probability
  as random elements of $\hatC(V_{G} \times \RNp, \RN)$.
  Therefore,
  we deduce that $(Y_{\tilde{G}^{(r)}}, \ell_{\tilde{G}^{(r)}})$
  converges to $(Y_{G}, \ell_{G})$
  in probability
  as random elements of $D(\RNp, V_{G}) \times \hatC(V_{G} \times \RNp, \RN)$,
  which completes the proof.
\end{proof}

Combining Proposition~\ref{4. prop: measurability of Y_G^r} with Proposition~\ref{4. prop: convergence of Y_G^r to Y_G},
we obtain the measurability of $\cY_{G}$ with respect to $G$.

\begin{cor} \label{4. cor: Y_G is measurable}
  The map $\mathbb{F}^{E} \ni (V_{G}, R_{G}, \rho_{G}, \mu_{G}) \mapsto \cY_{G} \in \mbM_{L}$ is measurable.
\end{cor}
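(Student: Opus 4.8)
The plan is to express the target map as a pointwise limit of measurable maps and then invoke the standard fact that such a limit, taken in a metric space, is again measurable. The two propositions just established provide exactly the two ingredients required: Proposition \ref{4. prop: measurability of Y_G^r} supplies joint measurability of the traced objects in the pair $(r, G)$, while Proposition \ref{4. prop: convergence of Y_G^r to Y_G} identifies the limit as $r \to \infty$ with $\cY_{G}$ for each fixed $G$.

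First, I would fix $r \in (0, \infty)$ and observe that the map $\mathbb{F}^{E} \ni G \mapsto \mathcal{Y}_{\tilde{G}^{(r)}} \in \mbM_{L}$ is measurable. This is the standard assertion that a section of a jointly measurable map is measurable: since $(0,\infty)$ and $\mathbb{F}^{E}$ are separable metric spaces, the Borel $\sigma$-algebra on the product coincides with the product $\sigma$-algebra, so the joint Borel measurability asserted in Proposition \ref{4. prop: measurability of Y_G^r} restricts, for each fixed $r$, to Borel measurability of $G \mapsto \mathcal{Y}_{\tilde{G}^{(r)}}$.

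Next, by Proposition \ref{4. prop: convergence of Y_G^r to Y_G}, for every fixed $G \in \mathbb{F}^{E}$ we have $\mathcal{Y}_{\tilde{G}^{(r)}} \to \cY_{G}$ in $\mbM_{L}$ as $r \to \infty$. In particular, along the integers $r = n$, the measurable maps $G \mapsto \mathcal{Y}_{\tilde{G}^{(n)}}$ converge pointwise to $G \mapsto \cY_{G}$. Since $\mbM_{L}$ is a (Polish) metric space, the pointwise limit of a sequence of Borel-measurable maps with values in $\mbM_{L}$ is itself Borel measurable; applying this yields the measurability of $G \mapsto \cY_{G}$, as required.

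The main point to emphasize is that the genuine analytic difficulty has already been discharged in the two cited propositions, in particular the trace approximation of Proposition \ref{4. prop: convergence of Y_G^r to Y_G}, which controls the exit times and the simultaneous convergence of the traced measures and the traced local times. What remains here is a soft measure-theoretic assembly, and the only item requiring a moment's care is that the pointwise-limit theorem used in the final step genuinely relies on the target being a metric space, which is satisfied because $\mbM_{L}$ is Polish.
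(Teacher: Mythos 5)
Your proposal is correct and follows exactly the paper's argument: the paper's proof of this corollary is precisely the one-line combination of Proposition \ref{4. prop: measurability of Y_G^r} (joint measurability, hence measurability of each section $G \mapsto \mathcal{Y}_{\tilde{G}^{(r)}}$) with Proposition \ref{4. prop: convergence of Y_G^r to Y_G} (pointwise convergence to $\cY_{G}$ as $r \to \infty$), using that a pointwise limit of Borel maps into a metric space is Borel. You have simply spelled out the standard measure-theoretic details that the paper leaves implicit.
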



\section{Proof of Theorem~\ref{1. thm: deterministic, main result}} \label{sec: proof of main result 1}

In this section, we prove Theorem~\ref{1. thm: deterministic, main result}.
The first part of the assertion of Theorem~\ref{1. thm: deterministic, main result} follows 
from \cite[Proposition 5.1]{Noda_pre_Convergence} 
and Theorem~\ref{5. thm: recurrence is preserved by the non-explosion condition} below.
Recall from Section~\ref{sec: the local GHV} the space $\mathbb{D}$ consisting of (equivalence classes of)
rooted boundedly-compact metric spaces 
equipped with the local Gromov--Hausdorff topology.

\begin{thm} \label{5. thm: recurrence is preserved by the non-explosion condition}
  For each $n \geq 1$,
  let $(F_{n}, R_{n}, \rho_{n})$
  be an element of $\mathbb{D}$ such that $R_{n}$ is a resistance metric.
  Suppose that $(F_{n}, R_{n}, \rho_{n})$ converges to
  some $(F, R, \rho) \in \mathbb{D}$
  in the local Gromov--Hausdorff topology.
  Then $R$ is a resistance metric
  and it holds that,
  for all but countably many $r > 0$, 
  \begin{equation} \label{5. thm eq: preserving recurrence, effective resistance comparison}
  R(\rho, B_{R}(\rho, r)^{c})
  \geq 
  \limsup_{n \to \infty}
  R_{n}(\rho_{n}, B_{R_{n}}(\rho_{n}, r)^{c}).
  \end{equation}
  In particular,
  if it holds that 
  \begin{equation}
  \lim_{r \to \infty}
  \limsup_{n \to \infty}
  R_{n}(\rho_{n}, B_{R_{n}}(\rho_{n}, r)^{c})
  = 
  \infty,
  \end{equation}
  then the resistance form associated with $(F, R)$ is recurrent.
\end{thm}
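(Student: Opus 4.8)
The plan is to prove the three assertions in order: first that $R$ is a resistance metric, then the comparison \eqref{5. thm eq: preserving recurrence, effective resistance comparison}, from which the ``in particular'' part follows at once via Definition \ref{3. dfn: recurrent resistance forms}. Throughout I would fix, using the characterization of local Gromov-Hausdorff convergence (the measure-free analogue of Theorem \ref{2. thm: characterization of convergence in the local. GHV}), a common rooted boundedly-compact space $(M, d^{M}, \rho_{M})$ together with root-and-distance-preserving maps $\phi_{n} \colon F_{n} \to M$ and $\phi \colon F \to M$ such that $\phi_{n}(F_{n}) \to \phi(F)$ in the local Hausdorff topology; I then suppress $\phi_{n}, \phi$ and regard all the spaces as subsets of $M$, so that approximants of points converge genuinely in $M$.

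For the first assertion, by Definition \ref{3. dfn: resistance metrics} it suffices to realize $R|_{V \times V}$ by an electrical network for every finite $V = \{y_{1}, \dots, y_{k}\} \subseteq F$ of distinct points. Using the local Hausdorff convergence I would choose $y_{i}^{(n)} \in F_{n}$ with $y_{i}^{(n)} \to y_{i}$, so that $R_{n}(y_{i}^{(n)}, y_{j}^{(n)}) \to R(y_{i}, y_{j})$; since the limits are strictly positive the points $y_{i}^{(n)}$ are eventually distinct, and $R_{n}$ restricted to them is a resistance metric on a $k$-point set, realized by conductances $c_{n}$. The elementary bound $c_{n}(y_{i}^{(n)}, y_{j}^{(n)}) \le R_{n}(y_{i}^{(n)}, y_{j}^{(n)})^{-1}$ (a single edge is one conducting path, and the rest of the network in parallel only lowers the resistance), combined with $R_{n}(y_{i}^{(n)}, y_{j}^{(n)}) \to R(y_{i}, y_{j}) > 0$, bounds the conductances uniformly. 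Extracting a convergent subsequence $c_{n} \to c_{\infty}$, I would observe that $c_{\infty}$ must keep every pair connected, since otherwise the corresponding effective resistance would diverge, contradicting $R_{n}(y_{i}^{(n)}, y_{j}^{(n)}) \to R(y_{i}, y_{j}) < \infty$; continuity of effective resistance on connected configurations (Lemma \ref{4. lem: continuity of resistance metric and conductance}) then shows $c_{\infty}$ realizes $R|_{V \times V}$. Hence $R$ is a resistance metric and, by Theorem \ref{3. thm: one-to-one correspondence of fomrs and metrics}, carries a resistance form $(\cE, \cF)$.

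For the inequality, fix $r > 0$ and pass to a subsequence along which $R_{n}(\rho_{n}, B_{R_{n}}(\rho_{n}, r)^{c})$ converges to $L \coloneqq \limsup_{n} R_{n}(\rho_{n}, B_{R_{n}}(\rho_{n}, r)^{c})$. I may assume $L < \infty$: the case $L = \infty$ forces $F$ to be bounded, because any $y \in F$ with $R(\rho, y) > r$ would give $R_{n}(\rho_{n}, B_{R_{n}}(\rho_{n}, r)^{c}) \le R_{n}(\rho_{n}, y_{n}) \to R(\rho, y) < \infty$; then $B_{R}(\rho, r')^{c} = \emptyset$ for large $r'$ and the left-hand side equals $R(\rho, \emptyset) = \infty$ by the convention in Definition \ref{3. dfn: effective resistance between sets}. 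For finite $L$, let $g_{n} \in \cF_{n}$ be the equilibrium potential for $R_{n}(\rho_{n}, B_{R_{n}}(\rho_{n}, r)^{c})$, so that $g_{n}(\rho_{n}) = 1$, $g_{n} \equiv 0$ on $B_{R_{n}}(\rho_{n}, r)^{c}$, $0 \le g_{n} \le 1$ (by the Markov property \ref{3. dfn item: Markov property of resistance forms}), and $\cE_{n}(g_{n}, g_{n}) = R_{n}(\rho_{n}, B_{R_{n}}(\rho_{n}, r)^{c})^{-1} \to L^{-1}$. The basic inequality \eqref{3. eq: the basic inequality for resistance forms} makes $(g_{n})_{n}$ uniformly equicontinuous for the converging metrics, so a diagonal extraction over a countable dense subset of $F$ yields a continuous $f \colon F \to [0,1]$ with $g_{n}(x^{(n)}) \to f(x)$ whenever $x^{(n)} \to x$. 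Then $f(\rho) = 1$ and $f(y) = 0$ whenever $R(\rho, y) > r$, so $f$ is admissible for $R(\rho, B_{R}(\rho, r')^{c})$ for every $r' > r$.

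The crux, and the step I expect to be the main obstacle, is to show $f \in \cF$ with $\cE(f, f) \le L^{-1}$, i.e.\ lower-semicontinuity of the Dirichlet energy under this convergence. For this I would use the representation $\cE(u, u) = \sup_{V} \cE|_{V}(u|_{V}, u|_{V})$ over finite $V \subseteq F$, which also characterizes membership in $\cF$. Given finite $V = \{z_{1}, \dots, z_{k}\} \subseteq F$ with approximants $z_{i}^{(n)} \in F_{n}$, the trace conductances on $V_{n} \coloneqq \{z_{i}^{(n)}\}$ converge to those on $V$ by Lemma \ref{4. lem: continuity of resistance metric and conductance} applied to the finite traces (whose resistance metrics are $R_{n}|_{V_{n} \times V_{n}} \to R|_{V \times V}$), and $g_{n}|_{V_{n}} \to f|_{V}$; since the trace energy is an explicit quadratic form in these data, $\cE|_{V_{n}}(g_{n}|_{V_{n}}, g_{n}|_{V_{n}}) \to \cE|_{V}(f|_{V}, f|_{V})$, while $\cE|_{V_{n}}(g_{n}|_{V_{n}}, g_{n}|_{V_{n}}) \le \cE_{n}(g_{n}, g_{n})$. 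Hence $\cE|_{V}(f|_{V}, f|_{V}) \le \liminf_{n} \cE_{n}(g_{n}, g_{n}) = L^{-1}$ for every finite $V$, giving $f \in \cF$ and $\cE(f, f) \le L^{-1}$. Consequently $R(\rho, B_{R}(\rho, r')^{c}) \ge \cE(f, f)^{-1} \ge L$ for all $r' > r$; letting $r' \to \infty$ and then $r \to \infty$ yields \eqref{5. thm eq: preserving recurrence, effective resistance comparison}, and the final claim follows immediately from Definition \ref{3. dfn: recurrent resistance forms}.
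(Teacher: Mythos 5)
Your proposal is correct and follows essentially the same route as the paper's proof: embed everything in a common space, verify the resistance-metric property on finite subsets via the conductance--resistance correspondence (Lemma \ref{4. lem: continuity of resistance metric and conductance}), take the equilibrium potentials $g_{n}$, extract a limit $f$ by the equicontinuity coming from \eqref{3. eq: the basic inequality for resistance forms}, and establish $f \in \cF$ with $\cE(f,f) \le L^{-1}$ through convergence of finite-trace energies together with Kigami's supremum characterization of $\cF$. The only (harmless) deviations are that you treat the degenerate case $L = \infty$ explicitly and conclude admissibility of $f$ for all $r' > r$ where the paper simply uses $2r$.
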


\begin{proof}
  By \cite[Theorem~4.9]{Noda_pre_Metrization},
  we may assume that 
  $(F_{n}, R_{n})$ and $(F,R)$
  are isometrically embedded into some rooted boundedly-compact metric space $(M, d^{M}, \rho_{M})$
  in such a way that $\rho_{n} = \rho = \rho_{M}$ as elements in $M$ and
  $F_{n} \to F$ in the Fell topology in $M$.
  Fix a finite subset $S = \{x_{i}\}_{i = 1}^{N} \subseteq F$.
  Using the convergence $F_{n} \to F$,
  we can find $\{y_{i}^{(n)}\}_{i=1}^{N} \subseteq F_{n}$
  such that $y_{i}^{(n)} \to x_{i}$ in $M$ for each $i$.
  It is then from \cite[Lemma 2.8]{Cao_23_Convergence} that 
  $R|_{S \times S}$ is a resistance metric on $S$.
  Hence $R$ is a resistance metric.

  Write $(\cE_{n}, \cF_{n})$ and $(\cE, \cF)$ 
  for the resistance forms associated with $(F_{n}, R_{n})$ and $(F, R)$,
  respectively.
  Let $r > 0$ be such that 
  \begin{equation}  \label{5. pr eq: 1, recurrence is preserved by the non-explosion condition}
    \cl(D_R(\rho, r)^c) = B_R(\rho, r)^c.
  \end{equation}
  (Since $(F, R)$ is boundedly compact, 
  all but countably many $r$ satisfy the above.)
  If the right-hand side of \eqref{5. thm eq: preserving recurrence, effective resistance comparison} is equal to $0$,
  then \eqref{5. thm eq: preserving recurrence, effective resistance comparison} is obvious.
  We consider the other case:
  \begin{equation}
    C_r \coloneqq \limsup_{n \to \infty} R_{n}(\rho_{n}, B_{R_{n}}(\rho_{n}, r)^{c}) > 0.
  \end{equation}
  Choose a subsequence $(n_{k})_{k \geq 1}$ satisfying
  \begin{equation}
    \lim_{k \to \infty} R_{n_{k}}(\rho_{n_{k}}, B_{R_{n_{k}}}(\rho_{n_{k}}, r)^{c}) = C_r.
  \end{equation}
  Let $f_{n_k}$ be a unique function in $\cF_{n_k}$ 
  such that $f_{n_k}(\rho_{n_{k}}) = 1, \, f_{n_k}|_{B_{R_{n_{k}}}(\rho_{n_{k}}, r)^{c}} \equiv 0$
  and $\cE_{n_{k}}(f_{n_k}, f_{n_k})^{-1} =  R_{n_{k}}(\rho_{n_{k}}, B_{R_{n_{k}}}(\rho_{n_{k}}, r)^{c})$
  (see \cite[Section 4]{Kigami_12_Resistance}).
  Note that $0 \leq f_{n_k} \leq 1$.
  Using \eqref{3. eq: the basic inequality for resistance forms},
  we obtain that,
  for all $r_{0}, \delta >0$,
  \begin{equation}
    \limsup_{k \to \infty}
    \sup_{\substack{x, y \in F_{n_k}^{(r_{0})},\\ R_{n_{k}}(x, y) < \delta}}
    |f_{n}(x) - f_{n}(y)|
    \leq 
    \limsup_{k \to \infty}
    \sup_{\substack{x, y \in F_{n_k}^{(r_{0})},\\ R_{n_{k}}(x, y) < \delta}}
    \sqrt{\cE_{n_{k}}(f_{n_k}, f_{n_k}) R_{n_{k}}(x, y)}
    \leq 
    \sqrt{C_r^{-1} \delta}.
  \end{equation}
  Hence,
  by \cite[Theorem~3.31]{Noda_pre_Metrization}, 
  there exist a further subsequence $(n_{k(l)})_{l \geq 1}$ and a function $f \in C(F, \mathbb{R})$ such that 
  \begin{equation}  \label{5. eq: preserving recurrence, convergence of green functions}
    \lim_{\delta \to 0}
    \limsup_{l \to \infty}
    \sup_{\substack{x \in F_{n_{k(l)}}^{(r_{0})}, y \in F^{(r_{0})},\\ d^{M}(x, y) < \delta}}
    |f_{n_{k(l)}}(x) - f(y)|
    =
    0,
    \quad 
    \forall r_{0} >0.
  \end{equation}
  To simplify index, from now on we will assume that
  the above convergence holds for the original sequence $(f_n)_{n \geq 1}$.
  This immediately yields that $f(\rho) = 1$.
  Moreover, it holds that $f|_{B_{R}(\rho, r)^{c}} \equiv 0$.
  To check this,
  fix $x \in D_R(\rho, r)^c$.
  From the convergence of $F_n$ to $F$,
  we can choose elements $x_n \in F_n$, $n \geq 1$, so that $x_n \to x$ in $M$.
  Since $R(\rho, x) > r$,
  we have that $R_n(\rho_n, x_n) > r$ 
  for all sufficiently large $n$.
  Recalling that $f_n|_{B_{R_n}(\rho_n, r)^c} \equiv 0$,
  we obtain that $f(x) = 0$.
  This proves that $f|_{D_R(\rho, r)^c} \equiv 0$.
  The continuity of $f$ and \eqref{5. pr eq: 1, recurrence is preserved by the non-explosion condition}
  then yield that $f|_{B_{R}(\rho, r)^{c}} \equiv 0$.
  We now let $F^{*} = \{x_{i}\}_{i=1}^{\infty}$ be a countable dense subset of $F$.
  We then define $\{c^{(N)}(x_{i}, x_{j}) \mid 1 \leq i, j \leq N\}$
  to be the conductances on $F^{(N)} \coloneqq \{x_{i}\}_{i=1}^{N}$
  such that the associated effective resistance coincides with $R|_{F^{(N)} \times F^{(N)}}$.
  For each $N$, 
  let $F_{n}^{(N)} \coloneqq \{x_{n, i}^{(N)} \mid 1 \leq i \leq N\}$ be a subset of $F_{n}$
  such that $x_{n, i}^{(N)} \to x_{i}$ as $n \to \infty$ in $M$ for each $i$.
  Define $\{c_{n}^{(N)}(x_{n, i}^{(N)}, x_{n, j}^{(N)}) \mid 1 \leq i, j \leq N \}$
  be the conductances on $F_{n}^{(N)}$ such that 
  the associated effective resistance coincides with $R_{n}|_{F_{n}^{(N)} \times F_{n}^{(N)}}$.
  By Lemma~\ref{4. lem: continuity of resistance metric and conductance},
  we have that $c_{n}^{(N)}(x_{n, i}^{(N)}, x_{n, j}^{(N)}) \to c^{(N)}(x_{i}, x_{j})$,
  and by \eqref{5. eq: preserving recurrence, convergence of green functions},
  we have that $f_n(x_{n, i}^{(N)}) \to f(x_{i})$ as $n \to \infty$.
  This yields that 
  \begin{align}
    \cE_n|_{F_n^{(N)}} (f_n|_{F_n^{(N)}}, f_n|_{F_n^{(N)}})
    &=
    \frac{1}{2}
    \sum_{1 \leq i, j \leq N}
    c_n^{(N)}(x_{n, i}^{(N)}, x_{n, j}^{(N)})
    (f_n(x_{n_, i}^{(l)}) - f_n(x_{n, j}^{(l)}))^{2} \\
    &\xrightarrow{l \to \infty} 
    \frac{1}{2}
    \sum_{1 \leq i, j \leq N}
    c^{(N)}(x_{i}, x_{j})
    (f(x_{i}) - f(x_{j}))^{2},
  \end{align}
  where recall the trace of resistance forms from Theorem~\ref{3. thm: trace and resistance metric space}.
  Therefore,
  we deduce that 
  \begin{align}
    \limsup_{N \to \infty} 
    \frac{1}{2}
    \sum_{1 \leq i, j \leq N}
    c^{(l)}(x_{i}, x_{j})
    (f(x_{i}) - f(x_{j}))^{2}
    &=
    \limsup_{N \to \infty}
    \lim_{n \to \infty}
    \cE_n|_{F_n}^{(N)} (f_n|_{F_n^{(N)}}, f_n|_{F_n^{(N)}})\\
    &\leq 
    \lim_{l \to \infty}
    \cE_n(f_n, f_n)\\
    &=
    C_{r}^{-1}
  \end{align}
  This implies that $f \in \cF$ and $\cE(f, f) \leq C_{r}^{-1}$ (see \cite[Theorem 3.13]{Kigami_12_Resistance}).
  Recalling that $f(\rho) = 1$ and $f|_{B_{R}(\rho, r)^{c}} \equiv 0$,
  we obtain that $R(\rho, B_{R}(\rho, r)^{c}) \geq C_{r}$,
  which completes the proof.
\end{proof}

The most important ingredient for showing convergence of local times is a quantitative estimate of 
the equicontinuity of local times, 
which was proved by Croydon in \cite{Croydon_15_Moduli}.
For an electrical network $G$,
set 
\begin{equation} \label{eq: diameter and total mass}
  r(G) \coloneqq \max_{x,y \in V_{G}} R_{G}(x,y), 
  \quad
  m(G) \coloneqq \mu_{G}(V_{G}).
\end{equation}

\begin{lem} [{\cite[Theorem 1.1]{Croydon_15_Moduli}}]
\label{5. lem: weak version of equicontinuity estimate by Croydon}
  Let $G$ be a rooted electrical network with a finite vertex set.
  Fix $T > 0$.
  There exist a constant $c_{1}(T)$ depending only on $T$ and a universal constant $c_{2} > 0$
  such that, 
  for any $\lambda > 0$,
  \begin{equation}
    \max_{x, y \in V_{G}}
    P^{G}_{\rho_{G}}
    \left(
      \max_{0 \leq t \leq T m(G) r(G)} 
      r(G)^{-1}
      \left|
        \ell_{G}(x, t) - \ell_{G}(y, t)
      \right|
      \geq 
      \lambda 
      \sqrt{ r(G)^{-1} R_{G} (x, y)}
    \right)
    \leq 
    c_{1}(T) e^{-c_{2} \lambda}.
  \end{equation}
\end{lem}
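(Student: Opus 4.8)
The plan is to recognize this statement as a direct reformulation of \cite[Theorem 1.1]{Croydon_15_Moduli}, so the work lies in matching conventions rather than in a fresh argument. First I would recall that $\ell_{G}(x,t)$ is nothing but the normalized occupation of the discrete-time chain: since $Y_{G}(s) = Y_{G}(\lfloor s \rfloor)$, for integer $n$ we have $\ell_{G}(x, n) = c_{G}(x)^{-1} \sum_{k=0}^{n-1} 1_{\{x\}}(Y_{G}(k))$, with linear interpolation in $t$ in between. This is precisely the local time of a random walk on a finite network, normalized by the invariant measure $\mu_{G}(x) = c_{G}(x)$, which is the object for which Croydon establishes his resistance-metric modulus-of-continuity estimate.

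Second, I would verify that the three normalizing quantities in the statement — the time horizon $T m(G) r(G)$, the spatial scaling $r(G)^{-1}$ applied to the increment $\ell_{G}(x, t) - \ell_{G}(y, t)$, and the scaling $\sqrt{r(G)^{-1} R_{G}(x,y)}$ on the deviation parameter — are exactly the scale-free combinations appearing in the cited theorem. Here $r(G) = \max_{x,y} R_{G}(x,y)$ is the resistance diameter and $m(G) = \mu_{G}(V_{G})$ the total mass, so that $m(G) r(G)$ is the natural commute-type time scale on which the walk traverses the whole network; dividing the local-time increment by $r(G)$ and the threshold by $\sqrt{r(G)^{-1} R_{G}(x,y)}$ renders both sides invariant under rescaling of $R_{G}$ and of $\mu_{G}$. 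Once this dictionary is fixed, the asserted bound, with a universal constant $c_{2}$ and a $T$-dependent constant $c_{1}(T)$, is the statement of \cite[Theorem 1.1]{Croydon_15_Moduli} applied with root $\rho_{G}$ and maximized over the pair $(x,y) \in V_{G} \times V_{G}$.

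The only genuinely delicate point — and the one I would check most carefully — is whether Croydon's local time is parametrized by the discrete chain $Y_{G}$ directly or by the associated continuous-time constant-speed chain $X_{G}$. By Theorem \ref{4. thm: the Hunt process associated with electrical network} the holding times of $X_{G}$ are mean-one exponentials and its jump chain is $Y_{G}$, via the coupling \eqref{4. eq: coupling of X_G and Y_G}, so the discrete occupation counts defining $\ell_{G}$ and the time-changed occupation defining $L_{G}$ in \eqref{4. eq: local time formula for electrical networks} differ only through these holding times. If the cited estimate is stated for $L_{G}$, I would transfer it to $\ell_{G}$ using the coupling together with a concentration estimate for the sum of the independent mean-one holding times accumulated over the $O(m(G) r(G))$ steps in the relevant window; if it is already stated for the discrete local time, no such transfer is needed and the lemma is immediate. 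In either case there is no new probabilistic input, and the proof reduces to quoting \cite[Theorem 1.1]{Croydon_15_Moduli}.
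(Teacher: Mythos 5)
Your proposal is correct and matches the paper's treatment: the paper offers no proof of this lemma at all, simply quoting \cite[Theorem 1.1]{Croydon_15_Moduli}, and your convention-matching (the normalizations by $r(G)$, $m(G)$, and the identification of $\ell_{G}$ with the discrete-time occupation density normalized by $\mu_{G}(x)=c_{G}(x)$) is exactly the implicit content of that citation. Your hedge about discrete- versus continuous-time parametrization resolves in the easy direction — Croydon's theorem is stated for the discrete-time walk, so no holding-time transfer is needed.
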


Following the proof of \cite[Theorem~4.19]{Noda_pre_Convergence},
we obtain a quantitative estimate of equicontinuity of the local time of a discrete-time Markov chain on an electrical network.

\begin{thm} \label{5. thm: uniform continuity estimate of discrete local times}
  Fix $\alpha \in (0,1/2)$ and $T>0$.
  There exist a constant $c_{3}(\alpha) \in (0, \infty)$ depending only on $\alpha$ 
  and a constant $c_{4}(T) > 0$ depending only on $T$ such that,
  for any rooted electrical network $G$ with finite vertex set and $N \in \mathbb{N}$,
  \begin{align}
    &P_{\rho_{G}}^G\!
    \left(
      \sup_{\substack{ x,y \in V_{G} \\ r(G)^{-1} R_{G}(x,y) < 2^{-N+1}}}
      \sup_{0 \leq t \leq T r(G) m(G)}
      r(G)^{-1}
      | \ell_{G}(x, t)- \ell_{G}(y, t)|
      >
      c_{3}(\alpha) \, 2^{-(\frac{1}{2}-\alpha)N}
    \right)\\
    \leq
    &c_{4}(T)
    \sum_{k \geq N} 
    (k+1)^{2} 
    N_{r(G)^{-1} R_{G}}(V_{G},2^{-k})^{2} 
    \exp \left(-2^{\alpha(k-3)}\right).
    \end{align}
\end{thm}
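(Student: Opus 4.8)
The plan is to run a Kolmogorov--Chentsov-type chaining argument on the rescaled network, feeding in Lemma \ref{5. lem: weak version of equicontinuity estimate by Croydon} as the single-pair estimate. First I would pass to the normalized resistance $\hat{R} \coloneqq r(G)^{-1} R_G$, which has diameter at most $1$, and introduce the increment functional
\[
  D(x,y) \coloneqq \sup_{0 \le t \le T r(G) m(G)} r(G)^{-1} | \ell_G(x,t) - \ell_G(y,t) |, \qquad x, y \in V_G.
\]
Since $D$ is a supremum over $t$ of $|\ell_G(\cdot,t) - \ell_G(\cdot,t)|$, the triangle inequality $D(x,z) \le D(x,y) + D(y,z)$ holds, so $D$ is a (random) pseudometric, and Lemma \ref{5. lem: weak version of equicontinuity estimate by Croydon} reads $P_{\rho_G}(D(x,y) \ge \lambda \sqrt{\hat{R}(x,y)}) \le c_1(T) e^{-c_2 \lambda}$ for every pair and every $\lambda > 0$.

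Next I would set up dyadic nets. For $k \ge 0$ let $A_k$ be a minimal $2^{-k}$-covering of $(V_G, \hat{R})$, so $|A_k| = N_{\hat{R}}(V_G, 2^{-k})$ (the $\hat{R}$-diameter bound $\le 1$ makes $A_0$ a single point), and let $\pi_k \colon V_G \to A_k$ map each point to a nearest net point, so $\hat{R}(x, \pi_k(x)) \le 2^{-k}$. Because $V_G$ is finite (as in Lemma \ref{5. lem: weak version of equicontinuity estimate by Croydon}), the nets stabilize, $A_k = V_G$ for large $k$, and the telescoping identity gives the finite sum
\[
  D(x, \pi_N(x)) \le \sum_{k > N} D(\pi_k(x), \pi_{k-1}(x)),
\]
with no convergence subtlety. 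Each summand is a \emph{chaining edge} with $\hat{R}(\pi_k(x), \pi_{k-1}(x)) \le 3 \cdot 2^{-k} \le 2^{-k+2}$ whose endpoints lie in $A_k \times A_{k-1}$, so the number of distinct level-$k$ edges is at most $N_{\hat{R}}(V_G, 2^{-k})^2$. For a pair with $\hat{R}(x,y) < 2^{-N+1}$ I would then combine the splitting $D(x,y) \le D(x, \pi_N(x)) + D(\pi_N(x), \pi_N(y)) + D(\pi_N(y), y)$ with the analogous control of the finitely many close pairs of $A_N$ making up the middle term.

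The core is the choice of thresholds. Writing $B_k$ for the maximum of $D$ over the level-$k$ edges and taking $\lambda_k \asymp 2^{\alpha k}$ (concretely $\lambda_k = c_2^{-1} 2^{\alpha(k-3)}$), the union bound over edges gives, using $\{B_k > 2\lambda_k 2^{-k/2}\} \subseteq \{ D \ge \lambda_k \sqrt{\hat{R}} \text{ on some edge}\}$ together with $\sqrt{\hat{R}} \le 2\cdot 2^{-k/2}$ on edges,
\[
  P_{\rho_G}(B_k > 2 \lambda_k 2^{-k/2}) \le N_{\hat{R}}(V_G, 2^{-k})^2 \, c_1(T) \, e^{-c_2 \lambda_k} = c_1(T)\, N_{\hat{R}}(V_G, 2^{-k})^2 \exp(-2^{\alpha(k-3)}).
\]
On the complementary event the telescoped bound yields $\sup_{\hat{R}(x,y) < 2^{-N+1}} D(x,y) \le C \sum_{k \ge N} \lambda_k 2^{-k/2} \asymp \sum_{k \ge N} 2^{-(1/2-\alpha)k}$, a convergent geometric series precisely because $\alpha < 1/2$, summing to at most $c_3(\alpha) 2^{-(1/2-\alpha)N}$. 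A union bound over $k \ge N$ then produces the stated right-hand side; the polynomial factor $(k+1)^2$ appears from the union over scales together with the summable weights used to distribute the deviation budget $c_3(\alpha) 2^{-(1/2-\alpha)N}$ across the levels (absorbing the $x$- and $y$-chains and the level-$N$ middle term simultaneously).

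The main obstacle is the bookkeeping that reconciles the two opposing demands on $\lambda_k$: it must grow fast enough ($\asymp 2^{\alpha k}$) that the per-scale failure probabilities carry the stretched-exponential weight $\exp(-2^{\alpha(k-3)})$ required on the right, yet slowly enough that $\sum_{k > N} \lambda_k 2^{-k/2}$ stays geometric and collapses to the sharp threshold $2^{-(1/2-\alpha)N}$ on the left. Controlling the level-$N$ pairs $D(\pi_N(x), \pi_N(y))$ consistently with the edge counts is the remaining technical point; otherwise the argument follows the scheme of \cite[Theorem 4.17]{Noda_pre_Convergence}.
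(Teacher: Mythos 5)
Your proposal is correct and is essentially the argument the paper intends: the paper proves this result by ``following the proof of \cite[Theorem 4.17]{Noda_pre_Convergence}'', which is exactly the dyadic chaining you describe, with Lemma \ref{5. lem: weak version of equicontinuity estimate by Croydon} as the single-pair tail input, net cardinalities $N_{r(G)^{-1}R_{G}}(V_{G},2^{-k})^{2}$ counting the level-$k$ edges, and thresholds $\lambda_{k}\asymp 2^{\alpha k}$ balancing the stretched-exponential failure probability against the geometric sum $\sum_{k\geq N}2^{-(\frac{1}{2}-\alpha)k}$. The bound you obtain is in fact slightly sharper (your version does not need the $(k+1)^{2}$ factor, which is harmless slack), so it is dominated by the stated right-hand side.
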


We will also use the following result regarding the upper bound of local times,
which is established in the proof of \cite[Theorem~1.1]{Croydon_15_Moduli}.

\begin{lem} [{Proof of \cite[Theorem 1.1]{Croydon_15_Moduli}}]
  \label{5. lem: weak version of upper bound estimate by Croydon}
  Let $G$ be a rooted electrical network with a finite vertex set.
  Fix $T > 0$.
  There exist constants $c_{1}(T)$ and $c_{2}$
  such that, 
  for any $\lambda > 0$,
  \begin{equation}
    \max_{x \in V_{G}}
    P^{G}_{\rho_{G}}
    \bigl(
      r(G)^{-1}
      \ell_{G}(x, Tm(G)r(G)) 
      \geq 
      \lambda 
    \bigr)
    \leq 
    c_{1}(T) e^{-c_{2} \lambda}.
  \end{equation}
  The constants $c_{1}$ and $c_{2}$ are the same 
  as the constants of Lemma~\ref{5. lem: weak version of equicontinuity estimate by Croydon}.
\end{lem}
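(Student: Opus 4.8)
The plan is to obtain this single-point bound by specialising to the diagonal the same computation that produces the modulus estimate of Lemma~\ref{5. lem: weak version of equicontinuity estimate by Croydon}; the proof of \cite[Theorem 1.1]{Croydon_15_Moduli} passes through precisely such a diagonal exponential estimate, which is why the constants $c_{1}(T)$ and $c_{2}$ can be taken to be the same. Throughout I would fix a point $x \in V_{G}$, write $t \coloneqq T m(G) r(G)$, and work under $P^{G}_{\rho_{G}}$ with $X_{G}$ the constant-speed random walk (Theorem~\ref{4. thm: the Hunt process associated with electrical network}) and $\ell_{G}$ its local time (cf.\ \eqref{4. eq: local time formula for electrical networks}).

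First, I would pass from the deterministic time $t$ to an inverse local time at the root. Setting $\tau(u) \coloneqq \inf\{s > 0 : \ell_{G}(\rho_{G}, s) > u\}$, the monotonicity of $s \mapsto \ell_{G}(x, s)$ together with the inclusion $\{\ell_{G}(\rho_{G}, t) < u\} \subseteq \{t \leq \tau(u)\}$ gives, for any $u > 0$,
\begin{equation}
  P^{G}_{\rho_{G}}\bigl( \ell_{G}(x, t) \geq \lambda r(G) \bigr)
  \leq
  P^{G}_{\rho_{G}}\bigl( \ell_{G}(\rho_{G}, t) \geq u \bigr)
  +
  P^{G}_{\rho_{G}}\bigl( \ell_{G}(x, \tau(u)) \geq \lambda r(G) \bigr),
\end{equation}
and I would then take $u \coloneqq \lambda r(G)/2$.

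Second, the last term is controlled by Gaussian computations. By the generalized second Ray--Knight theorem the field $(\ell_{G}(x, \tau(u)))_{x \in V_{G}}$ is equal in law to $(\tfrac{1}{2}(\eta_{x} + \sqrt{2u})^{2} - \tfrac{1}{2}\tilde{\eta}_{x}^{2})_{x \in V_{G}}$, where $\eta$ and $\tilde{\eta}$ are independent centred Gaussian fields whose common covariance is the Green function of $X_{G}$ killed on hitting $\rho_{G}$. Since that Green function has diagonal $\mathrm{Var}(\eta_{x}) = R_{G}(\rho_{G}, x) \leq r(G)$, an exponential Chebyshev bound applied to the Gaussian variable $\eta_{x}$ yields $P^{G}_{\rho_{G}}(\ell_{G}(x, \tau(u)) \geq 2u) \leq c_{1} e^{-c_{2} u / r(G)}$, and with $u = \lambda r(G)/2$ this is of the required form $c_{1} e^{-c_{2} \lambda/2}$.

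It remains to bound the first term, which is exactly the single-point estimate at the base point itself: $P^{G}_{\rho_{G}}(\ell_{G}(\rho_{G}, t) \geq u) = P^{G}_{\rho_{G}}(\tau(u) \leq t)$. Here $\tau$ is the inverse local time at $\rho_{G}$, hence a subordinator under $P^{G}_{\rho_{G}}$, so I would bound its lower tail by the Laplace-transform inequality $P^{G}_{\rho_{G}}(\tau(u) \leq t) \leq e^{s t}\, E^{G}_{\rho_{G}}[e^{-s \tau(u)}] = e^{s t - u \psi(s)}$ and optimise over $s > 0$, where $\psi$ is the Laplace exponent of $\tau$. The main obstacle is precisely this step: to conclude one needs a lower bound on $\psi$, uniform over all finite electrical networks and expressed through $c_{G}(\rho_{G})$, $m(G)$ and the resistance, so that $s t - u \psi(s)$ can be driven below $-c_{2} \lambda$ for $u = \lambda r(G)/2$ and $t = T m(G) r(G)$. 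This resistance-uniform control of the inverse-local-time subordinator at the root --- equivalently, showing that accumulating local time of order $\lambda r(G)$ at $\rho_{G}$ is very unlikely before time $T m(G) r(G)$ --- is the delicate, network-independent estimate carried out inside the proof of \cite[Theorem 1.1]{Croydon_15_Moduli}, and it is what permits both this lemma and Lemma~\ref{5. lem: weak version of equicontinuity estimate by Croydon} to be stated with the same constants.
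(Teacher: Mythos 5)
The paper does not actually prove this lemma: it is imported wholesale from the proof of \cite[Theorem 1.1]{Croydon_15_Moduli}, so the relevant comparison is with Croydon's argument. Your first two steps reproduce that argument faithfully: the decomposition $P^{G}_{\rho_{G}}(\ell_{G}(x,t)\geq \lambda r(G))\leq P^{G}_{\rho_{G}}(\tau(u)\leq t)+P^{G}_{\rho_{G}}(\ell_{G}(x,\tau(u))\geq \lambda r(G))$ with $u=\lambda r(G)/2$, and the Gaussian control of the second term via the generalized second Ray--Knight theorem together with $\mathrm{Var}(\eta_{x})=R_{G}(\rho_{G},x)\leq r(G)$, are exactly the moves made there. (For a single-point \emph{upper} bound you do not even need the independent copy $\tilde{\eta}$: adding $\tfrac12\eta_{x}^{2}\geq 0$ to the left-hand side of the isomorphism and reading off the tail of $\tfrac12(\eta_{x}+\sqrt{2u})^{2}$ suffices.) A minor slip in the setup: the lemma concerns the local time $\ell_{G}$ of the \emph{discrete-time} chain $Y_{G}$, not the local time of $X_{G}$ from \eqref{4. eq: local time formula for electrical networks}; Croydon works directly with the discrete-time walk, for which the isomorphism theorem is equally available, so no continuous-to-discrete transfer (which would disturb the constants) is needed.

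The genuine gap is the step you flag yourself, the bound on $P^{G}_{\rho_{G}}(\tau(u)\leq Tm(G)r(G))$, and the route you propose for it does not close: a network-uniform lower bound on the Laplace exponent $\psi$ of the inverse local time at $\rho_{G}$ is not available in this generality (excursion lengths from $\rho_{G}$ can be heavy-tailed, and the hypotheses only control $E^{G}_{\rho_{G}}[\tau(u)]=u\,m(G)$, which gives a polynomial rather than exponential tail). The way Croydon closes it is by applying the isomorphism theorem a \emph{second} time, to $\tau(u)$ itself: the occupation density formula gives $\tau(u)=\int_{V_{G}}\ell_{G}(y,\tau(u))\,\mu_{G}(dy)$, and under the Ray--Knight identity the quadratic terms $\tfrac12\int\eta_{y}^{2}\,\mu_{G}(dy)$ on the two sides cancel in distribution, leaving $\tau(u)$ distributed as $u\,m(G)+\sqrt{2u}\int\eta_{y}\,\mu_{G}(dy)$ up to an error controlled by two identically distributed quadratic functionals. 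The Gaussian $\int\eta_{y}\,\mu_{G}(dy)$ has standard deviation at most $m(G)\sqrt{r(G)}$, so with $u=\lambda r(G)/2$ and $t=Tm(G)r(G)$ the event $\{\tau(u)\leq t\}$ forces a downward Gaussian deviation of order $(\lambda/2-T)/\sqrt{\lambda}$ standard deviations, giving a tail bounded by $\exp(-(\lambda/2-T)^{2}/(4\lambda))$, which is of the required form $c_{1}(T)e^{-c_{2}\lambda}$ for $\lambda\geq 4T$, the remaining range being absorbed into $c_{1}(T)$. In short: the missing estimate is not a subordinator estimate but one more application of the same Gaussian isomorphism, which is also why the constants can be taken identical to those of Lemma \ref{5. lem: weak version of equicontinuity estimate by Croydon}.
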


Using the metric entropy introduced in Definition~\ref{1. dfn: metric entropy}
and the continuity estimate established in Theorem~\ref{5. thm: uniform continuity estimate of discrete local times},
we extend the estimate in Lemma~\ref{5. lem: weak version of upper bound estimate by Croydon}
to a uniform estimate, as follows.

\begin{prop}  \label{5. prop: uniform bound estimate of discrete local times}
  Fix $\alpha \in (0,1/2)$ and $T>0$.
  There exist constants $c_{5}(\alpha) > 0$ depending only on $\alpha$ 
  and $c_{6}(T) > 0$ depending only on $T$ such that,
  for any rooted electrical network $G$ with finite vertex set and $N \in \mathbb{N}$,
  \begin{align}
    &P^G_{\rho_{G}}
    \left(
      \sup_{x \in V_{G}}
      r(G)^{-1}
      \ell_{G}(x, Tm(G)r(G))
      >
      2^{-c_{5}(\alpha)N}
    \right)\\
    \leq
    &c_{6}(T)
    \sum_{k \geq N-1} 
    (k+1)^{2} 
    N_{r(G)^{-1} R_{G}}(V_{G},2^{-k})^{2} 
    \exp \left(-2^{\alpha(k-3)}\right)
    \end{align}
\end{prop}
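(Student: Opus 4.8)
The plan is to mirror the proof of Theorem~\ref{5. thm: uniform continuity estimate of discrete local times}, inserting one extra ingredient that controls the local time at a base point via Lemma~\ref{5. lem: weak version of upper bound estimate by Croydon}. Write $r = r(G)$, $m = m(G)$ and $T_{\ast} \coloneqq Tm(G)r(G)$. First I would fix a minimal $2^{-(N-1)}$-covering $A$ of $(V_G, r^{-1}R_G)$, so that $|A| = N_{r^{-1}R_G}(V_G, 2^{-(N-1)})$, and for each $x \in V_G$ select $u_x \in A$ with $r^{-1}R_G(x, u_x) \leq 2^{-(N-1)}$. The triangle inequality then gives the decomposition
\begin{equation}
  \sup_{x \in V_G} r^{-1}\ell_G(x, T_{\ast})
  \leq
  \max_{u \in A} r^{-1}\ell_G(u, T_{\ast})
  +
  \sup_{x \in V_G} r^{-1}\bigl| \ell_G(x, T_{\ast}) - \ell_G(u_x, T_{\ast}) \bigr|,
\end{equation}
which separates the target quantity into a base-point term and an oscillation term that I would estimate separately.

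For the oscillation term, since $r^{-1}R_G(x, u_x) \leq 2^{-(N-1)} < 2^{-(N-1)+1}$, Theorem~\ref{5. thm: uniform continuity estimate of discrete local times} applied with parameter $N-1$ bounds it by $c_3(\alpha)\,2^{-(\frac{1}{2} - \alpha)(N-1)}$ outside an event of probability at most $c_4(T)\sum_{k \geq N-1}(k+1)^2 N_{r^{-1}R_G}(V_G, 2^{-k})^2 \exp(-2^{\alpha(k-3)})$, which is already exactly the series appearing on the right-hand side of the proposition.

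For the base-point term, Lemma~\ref{5. lem: weak version of upper bound estimate by Croydon} gives $P^G_{\rho_G}(r^{-1}\ell_G(u, T_{\ast}) \geq \lambda) \leq c_1(T)e^{-c_2\lambda}$ for each fixed $u$, so a union bound over $A$ yields
\begin{equation}
  P^G_{\rho_G}\Bigl( \max_{u \in A} r^{-1}\ell_G(u, T_{\ast}) \geq \lambda \Bigr)
  \leq
  c_1(T)\, N_{r^{-1}R_G}(V_G, 2^{-(N-1)})\, e^{-c_2\lambda}.
\end{equation}
I would take $\lambda$ of order $2^{c_5(\alpha)N}$, large enough that $e^{-c_2\lambda} \lesssim \exp(-2^{\alpha(N-4)})$; since $N_{r^{-1}R_G}(V_G, 2^{-(N-1)}) \leq N_{r^{-1}R_G}(V_G, 2^{-(N-1)})^2$, this contribution is then dominated by the $k = N-1$ summand of the same series. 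On the complement of the two failure events one has $\sup_{x} r^{-1}\ell_G(x, T_{\ast}) \leq \lambda + c_3(\alpha)2^{-(\frac{1}{2} - \alpha)(N-1)}$, which for this choice of $\lambda$ matches the threshold in the statement; adding the two failure probabilities and enlarging $c_4(T)$ to a constant $c_6(T)$ absorbing the base-point term then gives the claimed inequality.

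The genuinely new content is only the insertion of the base-point estimate, so the hard part will be the chaining bookkeeping that is inherited from the proof of Theorem~\ref{5. thm: uniform continuity estimate of discrete local times}: one must verify that the per-scale union bound over the $\approx N_{r^{-1}R_G}(V_G, 2^{-k})^2$ relevant increment pairs, together with the per-scale budget $\lambda_k \asymp 2^{\alpha(k-3)}$ fed into the two-point estimate of Lemma~\ref{5. lem: weak version of equicontinuity estimate by Croydon}, reproduces precisely the factor $(k+1)^2 N_{r^{-1}R_G}(V_G, 2^{-k})^2 \exp(-2^{\alpha(k-3)})$ after summing over scales. The only delicate new point is calibrating $c_5(\alpha)$ so that the net cardinality produced by the union bound is swallowed by the squared metric entropy already present in the series.
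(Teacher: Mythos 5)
Your proposal is correct and follows essentially the same route as the paper's proof: a minimal $2^{-N+1}$-covering, the two-point oscillation control from Theorem \ref{5. thm: uniform continuity estimate of discrete local times}, and a union bound over the covering points using Lemma \ref{5. lem: weak version of upper bound estimate by Croydon}, with the base-point contribution absorbed into the $k=N-1$ summand and $c_{5}(\alpha)$ calibrated so that $\lambda_{N}\leq 2^{c_{5}(\alpha)N}$ (note the exponent in the stated threshold should indeed be read as $+c_{5}(\alpha)N$, consistent with your choice of $\lambda$). The only cosmetic difference is that you invoke the equicontinuity theorem at level $N-1$ rather than $N$, which just re-indexes the same series.
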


\begin{proof}
  Set $\lambda_{N} \coloneqq c_{2}^{-1}2^{\alpha (N-4)} + c_{3}(\alpha) 2^{-(\frac{1}{2} - \alpha)N}$.
  Assume that 
  \begin{equation}
    \sup_{\substack{ x,y \in V_{G} \\ r(G)^{-1} R_{G}(x,y) < 2^{-N+1}}}
    \sup_{0 \leq t \leq T r(G) m(G)}
    r(G)^{-1}
    | \ell_{G}(x, t)- \ell_{G}(y, t)|
    \leq
    c_{3}(\alpha) \, 2^{-(\frac{1}{2}-\alpha)N}.
  \end{equation}
  Let $\{x_{i}\}_{i=1}^{K}$ be a minimal $2^{-N+1}$-covering of the metric space $(V_{G}, r(G)^{-1}R_{G})$.
  Note that by definition $K = N_{r(G)^{-1}R_{G}}(V_{G}, 2^{-N+1})$.
  If we have that, for all $i$,
  \begin{equation}
    r(G)^{-1} \ell_{G}(x_{i}, T m(G) r(G)) < c_{2}^{-1} 2^{\alpha (N-4)}, 
  \end{equation}
  then it follows that, for all $x \in V_{G}$,
  \begin{equation}
    r(G)^{-1} \ell_{G}(x, Tm(G)r(G)) < \lambda_{N}.
  \end{equation}
  Hence, by Theorem~\ref{5. thm: uniform continuity estimate of discrete local times}
  and Lemma~\ref{5. lem: weak version of upper bound estimate by Croydon}
  together with a union bound applied across the points $\{x_{i}\}_{i=1}^{K}$,
  we deduce that 
  \begin{align}
    &P^G_{\rho_{G}}
    \left(
      \sup_{x \in V_{G}}
      r(G)^{-1}
      \ell_{G}(x, Tm(G)r(G))
      >
      \lambda_{N}
    \right)\\
    \leq
    &c_{4}(T)
    \sum_{k \geq N} 
    (k+1)^{2} 
    N_{r(G)^{-1} R_{G}}(V_{G},2^{-k})^{2} 
    \exp \left(-2^{\alpha(k-3)}\right) \\
    &\quad 
    +
    c_{1}(T) 
    N_{r(G)^{-1}R_{G}}(V_{G}, 2^{-N+1}) 
    \exp(-2^{\alpha (N -4)})\\
    \leq 
    & 
    \left(c_{4}(T) \vee c_{1}(T) \right)
    \sum_{k \geq N-1} 
    (k+1)^{2} 
    N_{r(G)^{-1} R_{G}}(V_{G},2^{-k})^{2} 
    \exp \left(-2^{\alpha(k-3)}\right).
  \end{align}
  Choosing $c_{5}(\alpha)$ large enough so that $\lambda_{N} \leq 2^{c_{5}(\alpha) N}$ for all $N$,
  we obtain the desired result.
\end{proof}

Now, we are ready to start proving Theorem~\ref{1. thm: deterministic, main result}.
First, we provide a result regarding the exit times of the Markov chains,
which roughly asserts that, up to each fixed time, the chains are contained in a common large ball 
(with high probability).
This is an analogue of \cite[Lemma~5.3]{Noda_pre_Convergence} in the discrete setting.

\begin{lem} \label{5. lem: precompactness for exit times}
  Under Assumption~\ref{1. assum: deterministic version}\ref{1. assum item: deterministic, convergence of spaces} 
  and \ref{1. assum item: deterministic, the non-explosion condition},
  it holds that 
  \begin{equation}  \label{5. lem eq: precompactness for discrete exit times}
    \lim_{r \to \infty}
    \limsup_{n \to \infty} 
    P_{\rho_{n}}^{G_{n}}\!
    \left(
      T_{B_{\hat{R}_{n}}(\hat{\rho}_{n}, r)^{c}} \leq a_{n}b_{n}t
    \right)
    = 0, 
    \quad 
    \forall t > 0,
  \end{equation}
  which is equivalent to 
  \begin{equation}  \label{5. lem eq: precompactness for continuous exit times}
    \lim_{r \to \infty}
    \limsup_{n \to \infty} 
    P_{\rho_{n}}^{G_{n}}\!
    \left(
      \sigma_{B_{\hat{R}_{n}}(\hat{\rho}_{n}, r)^{c}} \leq a_{n}b_{n}t
    \right)
    = 0, 
    \quad 
    \forall t > 0,
  \end{equation}
  where we recall from \eqref{3. eq: dfn of hitting time} and \eqref{4. eq: first hitting and return time of Y} 
  that, given an electrical network $G$, 
  $\sigma_{B}$ denotes the hitting time of $B$ by the associated constant speed random walk $X_{G}$
  and $T_{B}$ denotes the first hitting time of $B$ by the associated discrete-time Markov chain $Y_{G}$.
\end{lem}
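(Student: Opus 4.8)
The plan is to prove the first (discrete-time) display directly from the discrete exit-time bound of Lemma \ref{4. lem: exit time estimate for Y} after inserting the correct scaling of parameters, and then to deduce the stated equivalence from the elementary coupling between the discrete-time chain $Y_{G_{n}}$ and the constant-speed walk $X_{G_{n}}$ recorded in \eqref{4. eq: coupling of X_G and Y_G}.

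For the first display I would apply Lemma \ref{4. lem: exit time estimate for Y} to the genuine (unscaled) network $G_{n}$, with radius $a_{n}r$ in the metric $R_{n}$, time horizon $a_{n}b_{n}t$, and inner radius $\delta=a_{n}\delta'$ for a fixed constant $\delta'>0$. Writing $\hat{\mathcal{R}}_{n}\coloneqq\hat{R}_{n}(\hat{\rho}_{n},B_{\hat{R}_{n}}(\hat{\rho}_{n},r)^{c})$ and using $B_{R_{n}}(\rho_{n},a_{n}s)=B_{\hat{R}_{n}}(\hat{\rho}_{n},s)$ together with the scalings $R_{n}=a_{n}\hat{R}_{n}$ and $\mu_{n}=b_{n}\hat{\mu}_{n}$, the factors $a_{n},b_{n}$ cancel and the bound reduces to
\[
  P_{\rho_{n}}^{G_{n}}\bigl(T_{B_{\hat{R}_{n}}(\hat{\rho}_{n},r)^{c}}\leq a_{n}b_{n}t\bigr)
  \leq
  \frac{1}{\lambda}
  +\frac{4\delta'}{\hat{\mathcal{R}}_{n}}
  +\frac{4t\lambda}{\hat{\mu}_{n}(B_{\hat{R}_{n}}(\hat{\rho}_{n},\delta'))(\hat{\mathcal{R}}_{n}-\delta')},
\]
valid whenever $\delta'<\hat{\mathcal{R}}_{n}$. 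Assumption \ref{1. assum: deterministic version}\ref{1. assum item: deterministic, convergence of spaces} gives, for a suitable continuity radius $\delta'$, that $\hat{\mu}_{n}(B_{\hat{R}_{n}}(\hat{\rho}_{n},\delta'))\to\mu(B_{R}(\rho,\delta'))=:m_{0}$, which is strictly positive since $G\in\check{\mathbb{F}}$ carries a full-support measure (this membership being the first part of the theorem, already established). The non-explosion condition \ref{1. assum item: deterministic, the non-explosion condition} gives $\liminf_{n}\hat{\mathcal{R}}_{n}=:\rho_{r}\to\infty$ as $r\to\infty$, so in particular $\hat{\mathcal{R}}_{n}>\delta'$ for all large $n$ once $r$ is large. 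Taking $\limsup_{n}$ and then $\lim_{r}$ of the displayed bound annihilates the last two terms and leaves $1/\lambda$; since $\lambda>0$ is arbitrary, the first display follows.

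For the equivalence I would use that the holding times $(S_{k})_{k\geq 1}$ of $X_{G_{n}}$ are i.i.d.\ $\mathrm{Exp}(1)$ — because $\mu_{G_{n}}(\{x\})=c_{G_{n}}(x)$ forces every rate to equal one in Theorem \ref{4. thm: the Hunt process associated with electrical network} — and are independent of the jump chain $Y_{G_{n}}$, so that $\sigma_{B^{c}}=\sum_{k=1}^{T_{B^{c}}}S_{k}$ with $B=B_{\hat{R}_{n}}(\hat{\rho}_{n},r)$ and $T_{B^{c}}\geq 1$ (since $\rho_{n}$ lies in the open ball). For the direction ``second $\Rightarrow$ first'', a conditional Markov inequality gives $P(\sigma>\lambda T)\leq\lambda^{-1}$, whence $P(T\leq a_{n}b_{n}t)\leq\lambda^{-1}+P(\sigma\leq a_{n}b_{n}(\lambda t))$; applying the second display at time $\lambda t$ and letting $\lambda\to\infty$ yields the first. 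For the converse I would split on $\{T\leq K_{n}\}$ with $K_{n}\coloneqq\lceil 2a_{n}b_{n}t\rceil$, using $\sigma\geq\sum_{k=1}^{K_{n}}S_{k}$ on $\{T>K_{n}\}$ to obtain
\[
  P(\sigma\leq a_{n}b_{n}t)
  \leq
  P(T\leq K_{n})
  +P\Bigl(\textstyle\sum_{k=1}^{K_{n}}S_{k}\leq a_{n}b_{n}t\Bigr),
\]
where the second term equals $P(\mathrm{Gamma}(K_{n},1)\leq K_{n}/2)$ and decays exponentially in $K_{n}\to\infty$ by a Chernoff bound, while $P(T\leq K_{n})\leq P(T\leq a_{n}b_{n}(3t))$ for large $n$ is controlled by the first display at time $3t$.

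The main obstacle I anticipate is the reverse direction of the equivalence: one must separate the anomalously-small-holding-times event from the hitting time, which is exactly what the independence of $(S_{k})$ from $Y_{G_{n}}$ and the Gamma concentration bound accomplish. The forward direction and the direct application of Lemma \ref{4. lem: exit time estimate for Y} are then routine once the scaling bookkeeping is carried out.
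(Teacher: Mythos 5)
Your proposal is correct and follows essentially the same route as the paper: the discrete display comes from applying Lemma \ref{4. lem: exit time estimate for Y} with the scaling bookkeeping you describe, the direction from the continuous to the discrete statement is exactly the Markov-inequality bound \eqref{4. eq: exit time for Y, bound X by Y}, and the converse direction uses the same splitting $P(\sigma\leq a_nb_nt)\leq P(\sum_{k=1}^{\lfloor 2a_nb_nt\rfloor}S_k\leq a_nb_nt)+P(T\leq 2a_nb_nt)$. The only (immaterial) difference is that you control the Gamma tail by a Chernoff bound where the paper invokes the weak law of large numbers.
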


\begin{proof}
  By Lemma~\ref{4. lem: exit time estimate for Y},
  we obtain \eqref{5. lem eq: precompactness for discrete exit times}.
  Write $(S_n^{(k)})_{k \geq 1}$ for the holding times of $X_{G_{n}}$. 
  Note that $(S_n^{(k)})_{k \geq 1}$ is a sequence of i.i.d.\ random variables 
  from the exponential distribution with mean $1$.
  Set $C_{n}(r) \coloneqq B_{\hat{R}_{n}}(\hat{\rho}_{n}, r)^{c}$.
  It is then the case that 
  $\sigma_{C_{n}(r)} = \sum_{k=1}^{T_{C_{n}(r)}} S_{n}^{(k)}$.
  Suppose that \eqref{5. lem eq: precompactness for discrete exit times} holds.
  Since we have that 
  \begin{equation}
    P_{\rho_{n}}^{G_{n}}\!
    \left( 
      \sigma_{C_{n}(r)} \leq a_{n}b_{n}t
    \right)
    \leq 
    P_{\rho_{n}}^{G_{n}}\!
    \left(
      a_{n}^{-1} b_{n}^{-1} \sum_{k=1}^{\lfloor 2a_{n}b_{n}t \rfloor} S_{n}^{(k)} \leq t
    \right)
    +
    P_{\rho_{n}}^{G_{n}}\!
    \left(
      T_{C_{n}(r)} \leq 2a_{n}b_{n}t
    \right),
  \end{equation}
  the weak law of large numbers and \eqref{5. lem eq: precompactness for discrete exit times} 
  yield \eqref{5. lem eq: precompactness for continuous exit times}.
  Using \eqref{4. eq: exit time for Y, bound X by Y},
  one can check that \eqref{5. lem eq: precompactness for continuous exit times} implies 
  \eqref{5. lem eq: precompactness for discrete exit times}.
\end{proof}

In \cite[Proposition~5.4]{Noda_pre_Convergence},
it is proven by the author that 
convergence of (scaled) electrical networks and the exit time estimate as in Lemma~\ref{4. lem: exit time estimate for Y}
imply the convergence of (scaled) discrete-time Markov chains.
Using this and the time-change relation given in \ref{4. eq: coupling of X_G and Y_G},
we establish convergence of (scaled) discrete-time Markov chains, as follows.

\begin{prop}  \label{5. prop: convergence of processes}
  If Assumption~\ref{1. assum: deterministic version}\ref{1. assum item: deterministic, convergence of spaces} 
  and \eqref{5. lem eq: precompactness for discrete exit times} are satisfied,
  then it holds that 
  \begin{equation}
    \left(
      \hat{V}_{n}, \hat{R}_{n}, \hat{\rho}_{n}, \hat{\mu}_{n}, P_{\rho_{n}}^{G_{n}}(\hat{Y}_{n} \in \cdot)
    \right)
    \to 
    \left(
      F, R, \rho, \mu, P_{\rho}^{G}(X_{G} \in \cdot)
    \right)
  \end{equation}
  as elements in $\mbM$ (recall this space from Section~\ref{sec: the space M_L}).
\end{prop}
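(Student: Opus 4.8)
The plan is to deduce the claim from the continuous-time scaling limit together with a time-change argument. I would first set $\hat{X}_{n}(t) \coloneqq X_{G_{n}}(a_{n}b_{n}t)$, the continuous speed random walk on $G_{n}$ run at speed $a_{n}b_{n}$. Since the exit-time condition \eqref{5. lem eq: precompactness for discrete exit times} is, by Lemma \ref{5. lem: precompactness for exit times}, equivalent to its continuous-time counterpart \eqref{5. lem eq: precompactness for continuous exit times}, I may invoke the process part of the continuous-time scaling limit established in \cite{Croydon_18_Scaling} (see also \cite{Noda_pre_Convergence}) to conclude, under Assumption \ref{1. assum: deterministic version}\ref{1. assum item: deterministic, convergence of spaces}, that $(\hat{V}_{n}, \hat{R}_{n}, \hat{\rho}_{n}, \hat{\mu}_{n}, P_{\rho_{n}}^{G_{n}}(\hat{X}_{n} \in \cdot)) \to (F, R, \rho, \mu, P_{\rho}^{G}(X_{G} \in \cdot))$ in $\mbM$. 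By the characterization of convergence in $\mbM$ (Theorem \ref{2. thm: space M, convergence}), I can then fix a rooted boundedly-compact space $(M, d^{M}, \rho_{M})$ and root-and-distance-preserving maps $f_{n}\colon \hat{V}_{n} \to M$, $f\colon F \to M$ such that $f_{n}(\hat{V}_{n}) \to f(F)$ in the local Hausdorff topology, $\hat{\mu}_{n} \circ f_{n}^{-1} \to \mu \circ f^{-1}$ vaguely, and $\tau_{f_{n}}^{J_{1}}(\hat{X}_{n}) \to \tau_{f}^{J_{1}}(X_{G})$ weakly in $D(\mbRp, M)$. As the first two convergences already supply the spatial and measure parts of the claim under the \emph{same} embeddings, it remains only to replace $\hat{X}_{n}$ by $\hat{Y}_{n}$ in the last one, and for this it suffices, by the converging-together lemma (\cite[Theorem 3.1]{Billingsley_99_Convergence}), to show that $d_{J_{1}}(\tau_{f_{n}}^{J_{1}}(\hat{Y}_{n}), \tau_{f_{n}}^{J_{1}}(\hat{X}_{n})) \to 0$ in probability.

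Next I would realize $\hat{Y}_{n}$ as an explicit time-change of $\hat{X}_{n}$ on the common probability space provided by the coupling \eqref{4. eq: coupling of X_G and Y_G}. Writing the cumulative jump times as $J_{n}^{(k)} = \sum_{j=1}^{k} S_{j}^{(n)}$, the holding times $(S_{j}^{(n)})_{j \geq 1}$ are i.i.d.\ exponential of mean $1$, since by Theorem \ref{4. thm: the Hunt process associated with electrical network} the total jump rate at every vertex is $c_{G_{n}}(x)/\mu_{G_{n}}(\{x\}) = 1$. Thus $\hat{Y}_{n}(t) = Y_{G_{n}}(\lfloor a_{n}b_{n}t \rfloor) = X_{G_{n}}(J_{n}^{(\lfloor a_{n}b_{n}t\rfloor)})$. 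I would then let $\gamma_{n}\colon \mbRp \to \mbRp$ be the increasing piecewise-linear homeomorphism with $\gamma_{n}(k/(a_{n}b_{n})) = J_{n}^{(k)}/(a_{n}b_{n})$ for each $k \geq 0$. Because $X_{G_{n}}$ is constant and equal to $Y_{G_{n}}(k)$ on $[J_{n}^{(k)}, J_{n}^{(k+1)})$, a direct check gives the exact identity $\hat{Y}_{n} = \hat{X}_{n} \circ \gamma_{n}$, whence $\tau_{f_{n}}^{J_{1}}(\hat{Y}_{n}) = \tau_{f_{n}}^{J_{1}}(\hat{X}_{n}) \circ \gamma_{n}$.

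The final step is a functional law of large numbers for $\gamma_{n}$. Since $J_{n}^{(k)} - k = \sum_{j=1}^{k}(S_{j}^{(n)} - 1)$ is a mean-zero random walk with increment variance $1$, Kolmogorov's maximal inequality gives, for each $T > 0$ and $\varepsilon > 0$,
\[
  P_{\rho_{n}}^{G_{n}}\Bigl( \max_{0 \leq k \leq \lceil a_{n}b_{n}T \rceil} |J_{n}^{(k)} - k| > \varepsilon a_{n}b_{n} \Bigr)
  \leq
  \frac{\lceil a_{n}b_{n}T \rceil}{\varepsilon^{2}(a_{n}b_{n})^{2}}
  \xrightarrow{n \to \infty} 0,
\]
using $a_{n}b_{n} \to \infty$. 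By monotonicity and the interpolation defining $\gamma_{n}$, this yields $\sup_{0 \leq t \leq T}|\gamma_{n}(t) - t| \to 0$ in probability for every $T$, i.e.\ $\gamma_{n} \to \idty$ uniformly on compacts in probability. As the $f_{n}$ are distance-preserving, the spatial parts of $\tau_{f_{n}}^{J_{1}}(\hat{Y}_{n})$ and $\tau_{f_{n}}^{J_{1}}(\hat{X}_{n})$ coincide after reparametrization by $\gamma_{n}$, so $d_{J_{1}}(\tau_{f_{n}}^{J_{1}}(\hat{Y}_{n}), \tau_{f_{n}}^{J_{1}}(\hat{X}_{n}))$ is controlled by the time distortion of $\gamma_{n}$ on compacts and tends to $0$ in probability. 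Combined with Theorem \ref{2. thm: space M, convergence}, this would complete the proof.

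The step I expect to be the main obstacle is the careful handling of the $J_{1}$-topology on the non-compact path space $D(\mbRp, M)$: one must verify that $\hat{X}_{n} \circ \gamma_{n}$ is genuinely $J_{1}$-close to $\hat{X}_{n}$ when $\gamma_{n} \to \idty$ only uniformly on compacts rather than globally, and that the converging-together lemma applies in this separable but infinite-horizon setting. The remaining ingredients—the equivalence of the two exit-time conditions, the constant unit holding rate, and the maximal-inequality law of large numbers—I expect to be routine.
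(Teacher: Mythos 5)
Your proposal is correct and follows essentially the same route as the paper: obtain the continuous-time limit $\hat{X}_{n} \to X_{G}$ in $\mbM$ from the non-explosion condition, write $\hat{Y}_{n}$ as $\hat{X}_{n}$ composed with the time change built from the jump times via the coupling \eqref{4. eq: coupling of X_G and Y_G}, and show that time change converges to the identity uniformly on compacts by a law of large numbers. The only differences are cosmetic — the paper passes to almost-sure convergence via the Skorohod representation theorem and cites the continuity of composition in $D(\mbRp, M)$, and uses a step-function time change with a strong LLN for triangular arrays, whereas you use a piecewise-linear homeomorphism, Kolmogorov's maximal inequality, and the converging-together lemma — and your worry about the non-compact horizon is handled exactly as you suspect, since the $J_{1}$ metric on $D(\mbRp, M)$ is built from restrictions to compact time intervals.
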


 \begin{proof}
    Set $\hat{X}_{n}(t) \coloneqq X_{G_{n}}(a_{n} b_{n} t)$.
    Using Theorem~\ref{4. thm: the Hunt process associated with electrical network},
    one can check that $\hat{X}_{n}$ is the process associated with $(\hat{V}_{n}, \hat{R}_{n}, \hat{\mu}_{n})$.
    By \cite[Proposition 5.4]{Noda_pre_Convergence} and Lemma~\ref{5. lem: precompactness for exit times},
    we have that 
    \begin{equation}
    \left(
      \hat{V}_{n}, \hat{R}_{n}, \hat{\rho}_{n}, \hat{\mu}_{n}, P_{\rho_{n}}^{G_{n}}(\hat{X}_{n} \in \cdot)
    \right)
    \to 
    \left(
      F, R, \rho, \mu, P_{\rho}^{G}(X_{G} \in \cdot)
    \right)
  \end{equation}
  as elements in $\mbM$.
  From Theorem~\ref{2. thm: space M, convergence},
  we may assume that
  $(\hat{V}_{n}, \hat{R}_{n}, \hat{\rho}_{n})$ and $(F, R, \rho)$ are embedded 
  into a rooted boundedly-compact metric space $(M, d^{M}, \rho_{M})$
  in such a way that
  $\hat{\rho}_{n} = \rho = \rho_{M}$ as elements of $M$,
  $\hat{V}_{n} \to F$ in the Fell topology in $M$,
  $\hat{\mu}_{n} \to \mu$ vaguely as measures on $M$
  and $P_{\rho_{n}}^{G_{n}}(\hat{X}_{n} \in \cdot) \to P_{\rho}^{G}(X_{G} \in \cdot)$
  as probability measures on $D(\RNp, M)$.
  Using the Skorohod representation theorem,
  we may further assume that $\hat{X}_{n}$ started at $\rho_{n}$ and $X_{G}$ started at $\rho$
  are coupled in such a way that 
  $\hat{X}_{n}$ converges to $X_{G}$ almost surely 
  in $D(\RNp, M)$.
  We denote the underlying probability measure by $P$.
    
  Let $(S_{n}^{(k)})_{k \geq 1}$ be the sequence of the holding times of $X_{G_{n}}$
  and let $(J_{n}^{(k)})_{k \geq 0}$ be the sequence of the jump times of $X_{G_{n}}$
  with $J_{n}^{(0)} \coloneqq 0$.
  Note that $J_{n}^{(k)} = \sum_{i=1}^{k} S_{n}^{(k)}$.
  Recall from \eqref{4. eq: coupling of X_G and Y_G} that we have that 
  $\hat{Y}_{n}(t) = X_{G_{n}}(J_{n}^{( \lfloor a_{n}b_{n}t \rfloor )})$.
  Define an increasing cadlag function $\lambda:\RNp \to \RNp$ by setting
  \begin{equation}
    \lambda_{n}(t)
    \coloneqq 
    \frac{J_{n}^{\lfloor a_{n} b_{n} t \rfloor}}{a_{n} b_{n}}.
  \end{equation}
  It is then the case that 
  \begin{equation}  \label{5. eq: process convergence, relation of Yhat and Xhat with time-change}
    \hat{Y}_{n}(t)
    =
    X_{G_{n}}(J_{n}^{ \lfloor a_{n} b_{n} t \rfloor})
    =
    X_{G_{n}} (a_{n} b_{n} \lambda_{n}(t))
    =
    \hat{X}_{n} \circ \lambda_{n} (t).
  \end{equation}
  Since $(S_{n}^{(k)})_{k \geq 1}$ is i.i.d.\ of the exponential distribution with mean $1$,
  using the strong law of large numbers for triangular arrays (cf.\ \cite{Taylor_Hu_87_Strong}),
  we deduce that,
  for each $t>0$,
  \begin{equation}
    \sup_{0 \leq s \leq t}
    | \lambda_{n}(s) - s |
    \xrightarrow{\mathrm{a.s.}}
    0.
  \end{equation}
  In particular,
  $\lambda_{n} \to \mathrm{id}_{\RNp}$ in $D(\RNp, \RNp)$.
  Combining this with $\hat{X}_{n} \to X_{G}$ in $D(\RNp, M)$ 
  and \eqref{5. eq: process convergence, relation of Yhat and Xhat with time-change},
  we obtain $\hat{Y}_{n} \to X_{G}$ almost surely in $D(\RNp, M)$
  (cf.\ \cite[Theorem 13.2.2]{Whitt_02_Stochastic}).
  By Theorem~\ref{2. thm: space M, convergence}, we obtain the desired result.
\end{proof}

To establish convergence of (scaled) local times,
we approximate electrical networks by their traces introduced in Section~\ref{sec: trace of electrical networks}.
Let $(G_{n})_{n \geq 1}$ be the rooted electrical networks,
and $(a_{n})_{n \geq 1}, (b_{n})_{n \geq 1}$ be the scaling factors,
appearing in Assumption~\ref{1. assum: deterministic version}.
We introduce notation for scaled traces of electrical networks.
For each $r > 0$, we let $\tilde{G}_n^{(a_nr)}$ be the trace of $G_n$ onto $B_{R_n}(\rho_n, a_n r)$.
Write 
\begin{equation}
  \tilde{V}_{n}^{(r)} \coloneqq V_{\tilde{G}_{n}^{(a_{n}r)}}, 
  \quad 
  \tilde{R}_{n}^{(r)} \coloneqq a_{n}^{-1} R_{\tilde{G}_{n}^{(a_{n}r)}},
  \quad 
  \tilde{\rho}_{n}^{(r)} \coloneqq \rho_{n},
  \quad
  \tilde{\mu}_{n}^{(r)} \coloneqq b_{n}^{-1} \mu_{\tilde{G}_{n}^{(a_{n}r)}},
\end{equation}
and 
\begin{equation}
  \tilde{\ell}_{n}^{(r)} (x,t)
  \coloneqq
  a_{n}^{-1} \ell_{\tilde{G}_{n}^{(a_{n}r)}} (a_{n} b_{n}t),
\end{equation}
where we recall the local time $\ell_{\tilde{G}_n^{(a_n r)}}$ from \eqref{eq: discrete local time}.
Recall the scaled space $(\hat{V}_{n}, \hat{R}_{n}, \hat{\rho}_{n}, \hat{\mu}_{n})$ from \eqref{1. eq: dfn of scaled spaces}
and the restriction operator $\cdot^{(r)}$ from \eqref{1. eq: dfn of restriction operator}.
We note that 
\begin{equation}
  \tilde{V}_{n}^{(r)} = \hat{V}_{n}^{(r)},
  \quad 
  \tilde{R}_{n}^{(r)} = \hat{R}_{n}^{(r)}, 
  \quad 
  \tilde{\rho}_{n}^{(r)} = \hat{\rho}_{n}^{(r)},
\end{equation}
but $\tilde{\mu}_{n}^{(r)} \neq \hat{\mu}_{n}^{(r)}$ in general.
Below, we prove some technical results.

\begin{lem} \label{5. lem: uniform non-explosion condition}
  If Assumption~\ref{1. assum: deterministic version}\ref{1. assum item: deterministic, convergence of spaces} 
  and \ref{1. assum item: deterministic, the non-explosion condition} are satisfied,
  then it holds that 
  \begin{gather}
    \lim_{r \to \infty}
    \inf_{n \geq 1}
    \inf_{x \in \hat{V}_{n}^{(r_{0})}}
    \hat{R}_{n}(x, B_{\hat{R}_{n}}(x, r)^{c})
    =
    \infty, 
    \quad 
    \forall r_{0} > 0, 
    \label{5. lem eq: uniform non-explosion condition}\\
    \lim_{r \to \infty}
    \sup_{n \geq 1}
    \sup_{x \in \hat{V}_{n}^{(r_{0})}}
    P_{x}^{G_{n}}
    \left(
      T_{B_{\hat{R}_{n}}(x, r)^{c}} \leq a_{n}b_{n}T
    \right)
    =0,
    \quad 
    \forall 
    r_{0}, T >0,
    \label{5. lem eq: uniform non-explosion condition, exit time}\\
    \lim_{r \to \infty} 
    \inf_{n \geq 1}
    \tilde{\mu}_{n}^{(r)}(\tilde{V}_{n}^{(r)})
    > 0.
    \label{5. lem eq: uniform non-explosion condition, measures}
  \end{gather}
\end{lem}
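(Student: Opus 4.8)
The plan is to establish the three displays in the order \eqref{5. lem eq: uniform non-explosion condition}, \eqref{5. lem eq: uniform non-explosion condition, exit time}, \eqref{5. lem eq: uniform non-explosion condition, measures}, since each rests on the previous one together with an auxiliary uniform volume bound for balls centred at the root. For \eqref{5. lem eq: uniform non-explosion condition} the key is a re-centering estimate for effective resistance. Fix $r_0>0$, $x\in\hat V_n^{(r_0)}$ and set $A:=B_{\hat R_n}(\hat\rho_n,r-r_0)^c$. Since $B_{\hat R_n}(\hat\rho_n,r-r_0)\subseteq B_{\hat R_n}(x,r)$ by the triangle inequality, we have $B_{\hat R_n}(x,r)^c\subseteq A$, and monotonicity of effective resistance in its (grounded) target set gives $\hat R_n(x,B_{\hat R_n}(x,r)^c)\ge\hat R_n(x,A)$. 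Let $g$ be the equilibrium potential for $\hat R_n(\hat\rho_n,A)$, i.e.\ $g(\hat\rho_n)=1$, $g|_A=0$ and its energy equals $\hat R_n(\hat\rho_n,A)^{-1}$. Applying the basic inequality \eqref{3. eq: the basic inequality for resistance forms} to the resistance form associated with $\hat R_n$ yields $|g(x)-1|^2\le r_0/\hat R_n(\hat\rho_n,A)$, so $g(x)\ge 1/2$ whenever $\hat R_n(\hat\rho_n,A)\ge 4r_0$; testing with $g/g(x)$ then gives $\hat R_n(x,A)\ge g(x)^2\hat R_n(\hat\rho_n,A)\ge\tfrac14\hat R_n(\hat\rho_n,A)$. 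It therefore suffices to prove $\lim_{r\to\infty}\inf_{n\ge1}\hat R_n(\hat\rho_n,B_{\hat R_n}(\hat\rho_n,r)^c)=\infty$. Writing $a_n(r)$ for this quantity, each $a_n(\cdot)$ is non-decreasing and tends to $\infty$ by recurrence of $\hat V_n\in\check{\mathbb F}\subseteq\mathbb F$ (see \eqref{1. eq: recurrence of R in the space F}), while Assumption \ref{1. assum: deterministic version}\ref{1. assum item: deterministic, the non-explosion condition} controls $\liminf_n a_n(r)$; splitting off the finitely many small $n$ (handled individually) from the cofinite tail (handled by the $\liminf$) gives the claim.

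Next I would record the auxiliary bound $V_1:=\inf_{n\ge1}\hat\mu_n(B_{\hat R_n}(\hat\rho_n,1))>0$. Embedding the spaces into a common $M$ as in Theorem \ref{2. thm: characterization of convergence in the local. GHV} and using the vague convergence $\hat\mu_n\circ f_n^{-1}\to\mu\circ f^{-1}$, the portmanteau inequality on the relatively compact open ball of radius $1$ about the root gives $\liminf_n\hat\mu_n(B_{\hat R_n}(\hat\rho_n,1))\ge\mu(B_R(\rho,1))$, which is strictly positive because $\mu$ has full support ($G\in\mathbb F$). Combined with the positivity of $\hat\mu_n(B_{\hat R_n}(\hat\rho_n,1))$ for the finitely many exceptional $n$ (each $\hat\mu_n$ has full support), this yields $V_1>0$.

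For the exit-time estimate \eqref{5. lem eq: uniform non-explosion condition, exit time} I would re-root Lemma \ref{4. lem: exit time estimate for Y} at $x$ and apply it to $G_n$ with ball radius $a_nr$, time $a_nb_nT$ and inner radius $a_n\delta_0$, fixing $\delta_0:=r_0+1$. The scaling factors $a_n,b_n$ cancel, leaving, with $A_n(x,r):=\hat R_n(x,B_{\hat R_n}(x,r)^c)$,
\[
  P_x^{G_n}\!\left(T_{B_{\hat R_n}(x,r)^c}\le a_nb_nT\right)
  \le
  \frac{1}{\lambda}
  +\frac{4\delta_0}{A_n(x,r)}
  +\frac{4T\lambda}{\hat\mu_n(B_{\hat R_n}(x,\delta_0))\,(A_n(x,r)-\delta_0)}.
\]
Since $\delta_0=r_0+1$ and $\hat R_n(x,\hat\rho_n)\le r_0$, we have $B_{\hat R_n}(\hat\rho_n,1)\subseteq B_{\hat R_n}(x,\delta_0)$, so $\hat\mu_n(B_{\hat R_n}(x,\delta_0))\ge V_1$; moreover $\inf_{n,x}A_n(x,r)\to\infty$ by \eqref{5. lem eq: uniform non-explosion condition}. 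Taking $\sup_{n,x}$ and letting $r\to\infty$ (with $\lambda$ fixed) annihilates the last two terms, and finally letting $\lambda\to\infty$ gives \eqref{5. lem eq: uniform non-explosion condition, exit time}.

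Finally, for \eqref{5. lem eq: uniform non-explosion condition, measures} I restrict the total trace mass to the root ball: for $r>1$, $\tilde\mu_n^{(r)}(\tilde V_n^{(r)})\ge\sum_{x\in B_{\hat R_n}(\hat\rho_n,1)}\tilde\mu_n^{(r)}(\{x\})$. By Corollary \ref{4. cor: difference for trace measure at one point}, with $B=B_{R_n}(\rho_n,a_nr)$, one has $\tilde\mu_n^{(r)}(\{x\})=\hat\mu_n(\{x\})\bigl(1-P_x^{G_n}(Y_{G_n}(T_B^+)=x)\bigr)$, whence $\tilde\mu_n^{(r)}(\tilde V_n^{(r)})\ge\hat\mu_n(B_{\hat R_n}(\hat\rho_n,1))\,(1-p_n(r))$ where $p_n(r):=\sup_{x\in B_{\hat R_n}(\hat\rho_n,1)}P_x^{G_n}(Y_{G_n}(T_B^+)=x)$. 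As a return to $B$ at $x$ forces the first step to leave $B_{\hat R_n}(x,r-1)$, we get $p_n(r)\le\sup_{x\in\hat V_n^{(1)}}P_x^{G_n}(T_{B_{\hat R_n}(x,r-1)^c}\le 1)$, which tends to $0$ uniformly in $n$ as $r\to\infty$ by \eqref{5. lem eq: uniform non-explosion condition, exit time} (bounding the single discrete step by $a_nb_nT$ for the cofinite range of $n$, and invoking recurrence of each network for the finitely many remaining $n$); together with $V_1>0$ this yields $\liminf_{r\to\infty}\inf_n\tilde\mu_n^{(r)}(\tilde V_n^{(r)})\ge V_1>0$. The main obstacle throughout is the mismatch between the $\liminf_n$-type hypotheses of Assumption \ref{1. assum: deterministic version} and the genuinely uniform ($\inf_{n\ge1}$) conclusions required here: in each step one must separate the finitely many small indices, for which only the individual recurrence and full support of $\hat V_n$ are available, from the cofinite tail governed by the limit object, and then verify that the single-step discrete exit events can be compared with the continuous time scale $a_nb_nT$ uniformly; securing $V_1>0$, i.e.\ the positivity of the limit measure near the root, is the crucial quantitative input.
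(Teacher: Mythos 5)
Your proof is correct, and its overall architecture coincides with the paper's: re-center the effective resistance from $x$ to the root, upgrade the $\liminf_{n}$ of Assumption \ref{1. assum: deterministic version}\ref{1. assum item: deterministic, the non-explosion condition} to a genuine $\inf_{n\geq 1}$ by treating the finitely many small indices via the individual recurrence of each $\hat{V}_{n}\in\check{\mathbb{F}}$, feed the result into Lemma \ref{4. lem: exit time estimate for Y}, and deduce \eqref{5. lem eq: uniform non-explosion condition, measures} from Corollary \ref{4. cor: difference for trace measure at one point} together with the observation that a return to $x$ at time $T_{B}^{+}$ forces the first step to exit a large ball (the graph being simple). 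The one place where you genuinely deviate is the re-centering step: the paper invokes the shorted resistance metric $\hat{R}_{n}^{f}$ of \cite[Theorem 4.3]{Kigami_12_Resistance} and the triangle inequality in that metric, yielding the additive bound $\hat{R}_{n}(x,B_{\hat{R}_{n}}(x,r)^{c})\geq\hat{R}_{n}(\hat{\rho}_{n},B_{\hat{R}_{n}}(\hat{\rho}_{n},r/2)^{c})-r_{0}$, whereas you run a H\"older-type argument on the equilibrium potential via \eqref{3. eq: the basic inequality for resistance forms}, obtaining the multiplicative bound $\tfrac14\hat{R}_{n}(\hat{\rho}_{n},B_{\hat{R}_{n}}(\hat{\rho}_{n},r-r_{0})^{c})$ once that quantity exceeds $4r_{0}$. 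Both are equally effective here; yours avoids citing the fused-metric construction at the cost of a harmless constant and a largeness threshold. You are also more explicit than the paper about the uniform volume input $\inf_{n}\hat{\mu}_{n}(B_{\hat{R}_{n}}(\hat{\rho}_{n},1))>0$ (via portmanteau and the full support of $\mu$, which is legitimate since $G\in\check{\mathbb{F}}$ is established before this lemma), a point the paper asserts without comment in its treatment of \eqref{5. lem eq: uniform non-explosion condition, measures}; making it explicit is a genuine improvement in readability.
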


\begin{proof}
  For $x \in \hat{V}_{n}^{(r_{0})}$ and $r > 2 r_{0}$,
  we have that $B_{\hat{R}_{n}}(x, r)^{c} \subseteq B_{\hat{R}_{n}}(\hat{\rho}_{n}, r/2)^{c}$,
  which implies that 
  \begin{equation}  \label{5. eq: uniform non-explosion condition, set comparison}
    \hat{R}_{n}(x, B_{\hat{R}_{n}}(x, r)^{c}) 
    \geq 
    \hat{R}_{n}(x, B_{\hat{R}_{n}}(\hat{\rho}_{n}, r/2)^{c})
  \end{equation} 
  Write $C_{n}(r) \coloneqq B_{\hat{R}_{n}}(\hat{\rho}_{n}, r/2)^{c}$.
  Let $\hat{R}_{n}^{f}$ be the resistance metric 
  obtained by fusing $C_{n}(r)$ into a single vertex
  (see \cite[Theorem 4.3]{Kigami_12_Resistance}).
  In particular, it is a metric on $(\hat{V}_{n} \setminus C_{n}(r)) \cup \{C_{n}(r)\}$
  such that 
  $\hat{R}_{n}^{f}(y, C_{n}(r)) = \hat{R}_{n}(y, C_{n}(r))$ 
  and $\hat{R}_{n}^{f}(y, z) \leq \hat{R}_{n}(y, z)$ for $y,z \in \hat{V}_{n} \setminus C_{n}(r)$.
  Using this metric and \eqref{5. eq: uniform non-explosion condition, set comparison},
  we deduce that 
  \begin{align}
    \hat{R}_{n}(x, B_{\hat{R}_{n}}(x, r)^{c}) 
    &\geq 
    \hat{R}_{n}(x, C_{n}(r)) \\
    &\geq 
    \hat{R}_{n}^{f}(\rho_{n}, C_{n}(r))
    -
    \hat{R}_{n}^{f}(\rho_{n}, x) \\
    &\geq 
    \hat{R}_{n}(\rho_{n}, B_{\hat{R}_{n}}(\hat{\rho}_{n}, r/2)^{c})
    -
    r_{0}.
    \label{5. eq: uniform non-explosion condition, changing root}
  \end{align}
  This, combined with Assumption~\ref{1. assum: deterministic version}\ref{1. assum item: deterministic, the non-explosion condition},
  yields that 
  \begin{equation}  \label{5. eq: uniform non-explosion condition, proto resistance divergence}
    \lim_{r \to \infty}
    \liminf_{n \to \infty}
    \inf_{x \in \hat{V}_{n}^{(r_{0})}}
    \hat{R}_{n}(x, B_{\hat{R}_{n}}(x, r)^{c})
    =
    \infty.
  \end{equation}
  By \eqref{5. eq: uniform non-explosion condition, changing root},
  one can check that, for each $n$,
  it holds that 
  \begin{equation}
    \lim_{r \to \infty}
    \inf_{x \in \hat{V}_{n}^{(r_{0})}}
    \hat{R}_{n}(x, B_{\hat{R}_{n}}(x, r)^{c})
    = 
    \infty.
  \end{equation}
  Hence, it is possible to replace $\liminf_{n \to \infty}$ 
  of \eqref{5. eq: uniform non-explosion condition, proto resistance divergence} 
  by $\inf_{n \geq 1}$,
  and we obtain   
  \eqref{5. lem eq: uniform non-explosion condition}.
  By Lemma~\ref{4. lem: exit time estimate for Y}
  and \eqref{5. lem eq: uniform non-explosion condition},
  we establish \eqref{5. lem eq: uniform non-explosion condition, exit time}.
  From Theorem~\ref{4. thm: expression of trace conductance}, 
  we deduce that, for $r > 1$,
  \begin{align} 
    \tilde{\mu}_{n}^{(r)}(\tilde{V}_{n}^{(r)})
    &\geq
    b_{n}^{-1}
    \mu_{\tilde{G}_{n}^{(a_{n}r)}}(\hat{V}_{n}^{(1)})\\
    &\geq
    b_{n}^{-1}
    \mu_{n}(\hat{V}_{n}^{(1)})
    -
    \sum_{x \in \hat{V}_{n}^{(1)}} 
    b_{n}^{-1}
    \mu_{n}(\{x\}) 
    P_{x}^{G_{n}} 
    \left( 
      Y_{n}(T_{B_{\hat{R}_{n}}(\hat{\rho}_{n}, r)}) = x 
    \right) \\
    &\geq 
    \hat{\mu}_{n}^{(1)}(\hat{V}_{n}^{(1)}) 
    - 
    \hat{\mu}_{n}^{(1)}( \hat{V}_{n}^{(1)})
    \sup_{x \in \hat{V}_{n}^{(1)}}
    P_{x}^{G_{n}} 
    \left(
      T_{B_{\hat{R}_{n}}(\hat{\rho}_{n}, r)^{c}} \leq 1
    \right).
    \label{5. eq: uniform non-explosion condition, volume estimate}
  \end{align}
  Since Assumption~\ref{1. assum: deterministic version}\ref{1. assum item: deterministic, convergence of spaces}
  implies that $0 < \inf_{n \geq 1} \hat{\mu}_{n}^{(1)}(\hat{V}_{n}^{(1)}) 
  \leq \sup_{n \geq 1} \hat{\mu}_{n}^{(1)}(\hat{V}_{n}^{(1)}) < \infty$,
  we obtain \eqref{5. lem eq: uniform non-explosion condition, measures} 
  by using \eqref{5. lem eq: uniform non-explosion condition, exit time}
  and that $a_{n} \wedge b_{n} \to \infty$.
\end{proof}

Recall from \eqref{eq: total conductance at vertex} 
that $c_{\tilde{G}_{n}^{(a_{n}r)}}(x)$ denotes the total conductance at $x$ in the trace $\tilde{G}_n^{(a_n r)}$.
Below, we prove that it converges to the total conductance $c_G(x)$ in the original network as $r \to \infty$
(locally uniformly).

\begin{lem} \label{5. lem: uniform convergence of conductances}
  If Assumption~\ref{1. assum: deterministic version}\ref{1. assum item: deterministic, convergence of spaces} 
  and \ref{1. assum item: deterministic, the non-explosion condition} are satisfied,
  then it holds that 
  \begin{equation}
    \lim_{r \to \infty}
    \limsup_{n \to \infty}
    \sup_{x \in \hat{V}_{n}^{(r_{0})}}
    \left|
      \frac{c_{\tilde{G}_{n}^{(a_{n}r)}}(x)}{c_{G_{n}}(x)} - 1
    \right|
    =0,
    \quad 
    \forall 
    r_{0}>0.
  \end{equation} 
\end{lem}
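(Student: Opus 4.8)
The plan is to express the ratio $c_{\tilde{G}_{n}^{(a_{n}r)}}(x)/c_{G_{n}}(x)$ as a return probability and then bound it by a one-step exit probability controlled by the already-established uniform non-explosion estimate. Write $B_{n}(r) \coloneqq B_{\hat{R}_{n}}(\hat{\rho}_{n}, r)$, which coincides with the vertex set $B_{R_{n}}(\rho_{n}, a_{n}r)$ onto which $\tilde{G}_{n}^{(a_{n}r)}$ is the trace (recall $\hat{R}_{n} = a_{n}^{-1}R_{n}$ and $\hat{\rho}_{n}=\rho_{n}$); the trace is well-defined since $(V_{n}, R_{n}, \rho_{n}, \mu_{n}) \in \check{\mathbb{F}} \subseteq \mathbb{F}$ is recurrent. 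For $r > r_{0}$ and $x \in \hat{V}_{n}^{(r_{0})} \subseteq B_{n}(r)$, Corollary \ref{4. cor: difference for trace measure at one point} gives
\begin{equation}
  0 \leq 1 - \frac{c_{\tilde{G}_{n}^{(a_{n}r)}}(x)}{c_{G_{n}}(x)} = P_{x}^{G_{n}}\bigl( Y_{G_{n}}(T_{B_{n}(r)}^{+}) = x \bigr),
\end{equation}
so the quantity to control is precisely the probability that the first return of $Y_{G_{n}}$ to $B_{n}(r)$ occurs at the starting point $x$.

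The key observation is that, since the underlying graph is simple (no loops), $Y_{G_{n}}(1) \neq x$, and therefore the event $\{Y_{G_{n}}(T_{B_{n}(r)}^{+}) = x\}$ forces the very first step to leave $B_{n}(r)$: if $Y_{G_{n}}(1) \in B_{n}(r)$, then $T_{B_{n}(r)}^{+} = 1$ and $Y_{G_{n}}(T_{B_{n}(r)}^{+}) = Y_{G_{n}}(1) \neq x$. Hence
\begin{equation}
  P_{x}^{G_{n}}\bigl(Y_{G_{n}}(T_{B_{n}(r)}^{+}) = x\bigr) \leq P_{x}^{G_{n}}\bigl(Y_{G_{n}}(1) \in B_{n}(r)^{c}\bigr) = P_{x}^{G_{n}}\bigl(T_{B_{n}(r)^{c}} = 1\bigr) \leq P_{x}^{G_{n}}\bigl(T_{B_{n}(r)^{c}} \leq a_{n}b_{n}\bigr),
\end{equation}
where the last inequality holds for all $n$ large enough that $a_{n}b_{n} \geq 1$, which is guaranteed eventually since $a_{n}, b_{n} \to \infty$.

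It then remains to convert this exit probability for the ball centred at the root into one for a ball centred at $x$, so as to apply the uniform estimate \eqref{5. lem eq: uniform non-explosion condition, exit time} of Lemma \ref{5. lem: uniform non-explosion condition}. For $x \in \hat{V}_{n}^{(r_{0})}$ and $r > r_{0}$, the triangle inequality gives $B_{n}(r)^{c} \subseteq B_{\hat{R}_{n}}(x, r - r_{0})^{c}$, whence $T_{B_{n}(r)^{c}} \geq T_{B_{\hat{R}_{n}}(x, r-r_{0})^{c}}$ and therefore
\begin{equation}
  \sup_{x \in \hat{V}_{n}^{(r_{0})}} P_{x}^{G_{n}}\bigl(T_{B_{n}(r)^{c}} \leq a_{n}b_{n}\bigr) \leq \sup_{x \in \hat{V}_{n}^{(r_{0})}} P_{x}^{G_{n}}\bigl(T_{B_{\hat{R}_{n}}(x, r-r_{0})^{c}} \leq a_{n}b_{n}\bigr).
\end{equation}
Taking $\limsup_{n \to \infty}$ and then letting $r \to \infty$, the right-hand side tends to $0$ by \eqref{5. lem eq: uniform non-explosion condition, exit time} (with $T = 1$ and radius $r - r_{0} \to \infty$). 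Chaining the three displays yields the assertion.

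The only genuine subtlety is the observation in the second step—that simplicity of the graph collapses the return-at-$x$ event into a single jump out of $B_{n}(r)$—since this is what reduces the delicate return probability to a one-step exit probability amenable to the exit-time bound; the geometric inclusion of balls and the invocation of the uniform non-explosion estimate are then routine.
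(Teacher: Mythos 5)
Your proof is correct and follows essentially the same route as the paper's: Corollary \ref{4. cor: difference for trace measure at one point} to rewrite the conductance ratio as the probability of returning to $x$ at the first revisit of the ball, the observation that simplicity of the graph forces this event to coincide with a one-step exit (the paper absorbs this into the inequality $P_{x}^{G_{n}}(Y_{n}(T_{B}^{+})=x)\leq P_{x}^{G_{n}}(T_{B^{c}}\leq 1)$, with the same mechanism already visible in the proof of that corollary), a ball-inclusion to recentre at $x$ (the paper uses radius $r/2$ for $r>2r_{0}$ where you use $r-r_{0}$), and finally the uniform exit-time estimate \eqref{5. lem eq: uniform non-explosion condition, exit time}. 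The differences are purely cosmetic.
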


\begin{proof}
  By Corollary~\ref{4. cor: difference for trace measure at one point},
  we have that 
  \begin{equation}
    \left|
      \frac{c_{\tilde{G}_{n}^{(a_{n}r)}}(x)}{c_{G_{n}}(x)} - 1
    \right|
    =
    P_{x}^{G_{n}}
    \left(
      Y_{n}(T_{B_{\hat{R}_{n}}(\rho_{n}, r)}) = x
    \right)
    \leq 
    P_{x}^{G_{n}}
    \left(
      T_{B_{\hat{R}_{n}}(\rho_{n}, r)^{c}} 
      \leq 
      1
    \right).
  \end{equation}
  For all $r > 2r_{0}$ and $x \in \hat{V}_{n}^{(r_{0})}$,
  it holds that $B_{\hat{R}_{n}}(x, r/2) \subseteq B_{\hat{R}_{n}}(\rho_{n}, r)$.
  Hence, we deduce that, for all $r > 2r_{0}$,
  \begin{equation}  \label{5. eq: conductance convergence, bound by exit times}
    \sup_{x \in \hat{V}_{n}^{(r_{0})}}
    \left|
      \frac{c_{\tilde{G}_{n}^{(a_{n}r)}}(x)}{c_{G_{n}}(x)} - 1
    \right|
    \leq 
    \sup_{x \in \hat{V}_{n}^{(r_{0})}}
    P_{x}^{G_{n}}
    \left(
      T_{B_{\hat{R}_{n}}(x, r/2)^{c}} \leq 1
    \right).
  \end{equation} 
  Now, Lemma~\ref{5. lem eq: uniform non-explosion condition, exit time} immediately yields the desired result.
\end{proof}

Since the traces $\tilde{G}_n^{(a_n r)}$ have finite vertex sets,
we can apply Theorem~\ref{5. thm: uniform continuity estimate of discrete local times} 
and Proposition~\ref{5. prop: uniform bound estimate of discrete local times} 
to obtain the following.
  
\begin{lem} \label{5. lem: precpt of trace local times at same times}
  Under Assumption~\ref{1. assum: deterministic version},
  for all $\varepsilon, T>0$,
  \begin{gather}
    \lim_{r \to \infty}
    \limsup_{\delta \to 0}
    \limsup_{n \to \infty}
    P_{\tilde{\rho}_{n}^{(r)}}^{\tilde{G}_{n}^{(a_{n}r)}}
    \left(
      \sup_{ \stackrel{x, y \in \tilde{V}_{n}^{(r)}}{\tilde{R}_{n}^{(r)}(x,y) < \delta}} 
      \sup_{0 \leq t \leq T}
      \left|
        \tilde{\ell}_{n}^{(r)} (x, t) - \tilde{\ell}_{n}^{(r)} (y, t)
      \right|
      >
      \varepsilon
    \right)
    =0,
    \label{5. lem eq: precpt of trace local times at same times, equicontinuity}\\
    \lim_{r \to \infty}
    \limsup_{M \to \infty}
    \limsup_{n \to \infty}
    P_{\tilde{\rho}_{n}^{(r)}}^{\tilde{G}_{n}^{(a_{n}r)}}
    \left(
      \sup_{x \in \tilde{V}_{n}^{(r)}}
      \tilde{\ell}_{n}^{(r)} (x, T) 
      >
      M
    \right)
    =0.
    \label{5. lem eq: precpt of trace local times at same times, bound}
  \end{gather}
\end{lem}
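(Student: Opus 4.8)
The plan is to derive both displays from the two quantitative estimates already established, namely Theorem \ref{5. thm: uniform continuity estimate of discrete local times} (for \eqref{5. lem eq: precpt of trace local times at same times, equicontinuity}) and Proposition \ref{5. prop: uniform bound estimate of discrete local times} (for \eqref{5. lem eq: precpt of trace local times at same times, bound}), applied to the finite trace network $\tilde{G}_n^{(a_n r)}$, together with the metric-entropy condition of Assumption \ref{1. assum: deterministic version}\ref{1. assum item: deterministic, the metric-entropy condition}. Write $\bar{r}_n \coloneqq a_n^{-1} r(\tilde{G}_n^{(a_n r)})$ for the scaled resistance diameter of the trace and recall that $\tilde{R}_n^{(r)} = a_n^{-1} R_{\tilde{G}_n^{(a_n r)}}$, that $\tilde{V}_n^{(r)} = \hat{V}_n^{(r)}$, and that $m(\tilde{G}_n^{(a_n r)}) = b_n \tilde{\mu}_n^{(r)}(\tilde{V}_n^{(r)})$. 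The book-keeping rests on the identities
\[
  \tilde{\ell}_n^{(r)}(x,t) = \bar{r}_n\, r(\tilde{G}_n^{(a_n r)})^{-1} \ell_{\tilde{G}_n^{(a_n r)}}(x, a_n b_n t), \qquad \tilde{R}_n^{(r)}(x,y) = \bar{r}_n\, r(\tilde{G}_n^{(a_n r)})^{-1} R_{\tilde{G}_n^{(a_n r)}}(x,y),
\]
which say that, up to the single scalar $\bar{r}_n$, the events in the two displays are exactly the events controlled by the cited estimates after accounting for the normalisation by $r(\tilde{G}_n^{(a_n r)})$.

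Two a priori bounds on $\bar{r}_n$ are needed to keep the constants harmless. First, every $x,y \in \tilde{V}_n^{(r)} = B_{\hat{R}_n}(\hat{\rho}_n, r)$ satisfy $\hat{R}_n(x,y) \le 2r$, so $\bar{r}_n \le 2r$. Second, by Assumption \ref{1. assum: deterministic version}\ref{1. assum item: deterministic, convergence of spaces} and Theorem \ref{5. thm: recurrence is preserved by the non-explosion condition}, the limit $(F,R)$ carries a recurrent resistance form and is therefore non-degenerate, so $\mathrm{diam}_R(B_R(\rho,r)) > 0$ for all large $r$; since $\hat{V}_n^{(r)} \to \mathrm{cl}(B_R(\rho,r))$ in the Gromov--Hausdorff topology for all but countably many $r$ (Theorem \ref{2. thm: characterization of convergence in the local. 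GHV}), the diameters converge and hence $\bar{r}_n \ge \underline{r}(r) > 0$ for all large $n$. Combined with \eqref{5. lem eq: uniform non-explosion condition, measures} of Lemma \ref{5. lem: uniform non-explosion condition}, which gives $\inf_n \tilde{\mu}_n^{(r)}(\tilde{V}_n^{(r)}) > 0$ for large $r$, it follows that $T_n \coloneqq T / \big( \bar{r}_n \tilde{\mu}_n^{(r)}(\tilde{V}_n^{(r)}) \big)$ stays bounded in $n$ for each fixed large $r$; for this $T_n$ one has $T_n\, r(\tilde{G}_n^{(a_n r)}) m(\tilde{G}_n^{(a_n r)}) \ge a_n b_n T$, so the time horizon $[0, a_n b_n T]$ is covered (using monotonicity of $t \mapsto \ell_{\tilde{G}_n^{(a_n r)}}(x,t)$ for the second display), and the prefactors $c_4(T_n)$ and $c_6(T_n)$ remain bounded.

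With these bounds I would fix $\beta \in (\alpha_r, 1/2)$ and apply Theorem \ref{5. thm: uniform continuity estimate of discrete local times} (resp.\ Proposition \ref{5. prop: uniform bound estimate of discrete local times}) to $\tilde{G}_n^{(a_n r)}$ with this $\beta$ and with $N = N(\delta,n)$ (resp.\ $N(M,n)$) taken as the largest integer for which the event in \eqref{5. lem eq: precpt of trace local times at same times, equicontinuity} (resp.\ \eqref{5. lem eq: precpt of trace local times at same times, bound}) is contained in the event appearing in the cited estimate. Because $\bar{r}_n \in [\underline{r}(r), 2r]$, such an $N$ tends to $\infty$ as $\delta \to 0$ (resp.\ $M \to \infty$) uniformly over large $n$, and then the threshold comparison $c_3(\beta)\,2^{-(\frac12-\beta)N} \le \varepsilon/\bar{r}_n$ (resp.\ $2^{c_5(\beta)N} \le M/\bar{r}_n$) holds for all small $\delta$ (resp.\ large $M$). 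This bounds each probability by a bounded constant times $\sum_{k \ge N} (k+1)^2\, N_{r(\tilde{G}_n^{(a_n r)})^{-1} R_{\tilde{G}_n^{(a_n r)}}}(\tilde{V}_n^{(r)}, 2^{-k})^2 \exp(-2^{\beta(k-3)})$.

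It remains to show that this sum is negligible in the iterated limit, which is where the metric-entropy condition enters. Using $r(\tilde{G}_n^{(a_n r)})^{-1} R_{\tilde{G}_n^{(a_n r)}} = \bar{r}_n^{-1} \hat{R}_n^{(r)}$, the covering numbers become $N_{\hat{R}_n^{(r)}}(\hat{V}_n^{(r)}, \bar{r}_n 2^{-k})$, and since $\bar{r}_n \ge \underline{r}(r)$ there is an integer $q = q(r)$ with $\bar{r}_n 2^{-k} \ge 2^{-(k+q)}$, so monotonicity gives $N_{\hat{R}_n^{(r)}}(\hat{V}_n^{(r)}, \bar{r}_n 2^{-k}) \le N_{\hat{R}_n^{(r)}}(\hat{V}_n^{(r)}, 2^{-(k+q)})$. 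Substituting $j = k+q$ turns the tail into $\sum_{j \ge N+q} (j-q+1)^2 N_{\hat{R}_n^{(r)}}(\hat{V}_n^{(r)}, 2^{-j})^2 \exp(-2^{\beta(j-q-3)})$, and because $\beta > \alpha_r$ one has $(j-q+1)^2 \exp(-2^{\beta(j-q-3)}) \le \exp(-2^{\alpha_r j})$ for all large $j$, so the tail is eventually dominated by $\sum_{j \ge N+q} N_{\hat{R}_n^{(r)}}(\hat{V}_n^{(r)}, 2^{-j})^2 \exp(-2^{\alpha_r j})$. Taking $\limsup_{n \to \infty}$ and then $\delta \to 0$ (resp.\ $M \to \infty$), whence $N+q \to \infty$, Assumption \ref{1. assum: deterministic version}\ref{1. assum item: deterministic, the metric-entropy condition} forces the expression to $0$, and letting $r \to \infty$ finishes both claims. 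The main obstacle is the uniform-in-$n$ lower bound $\bar{r}_n \ge \underline{r}(r) > 0$: without it $T_n$ (hence $c_4(T_n), c_6(T_n)$) could blow up and the reindexing of covering numbers would break down; it is exactly here that non-degeneracy of the limit (through Theorem \ref{5. thm: recurrence is preserved by the non-explosion condition}) and the Gromov--Hausdorff convergence of the restricted spaces are indispensable, as is the strict inequality $\beta > \alpha_r$, which is what absorbs the polynomial factor and the rescaled exponent produced by the normalisation.
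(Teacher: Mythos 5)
Your proposal is correct and follows essentially the same route as the paper: both establish the two-sided comparability of $r(\tilde{G}_{n}^{(a_{n}r)})/a_{n}$ and $m(\tilde{G}_{n}^{(a_{n}r)})/b_{n}$ (via the Gromov--Hausdorff--vague convergence of the restricted spaces and the uniform measure lower bound \eqref{5. lem eq: uniform non-explosion condition, measures}), then feed Theorem \ref{5. thm: uniform continuity estimate of discrete local times} and Proposition \ref{5. prop: uniform bound estimate of discrete local times} into the metric-entropy condition. Your treatment is in fact slightly more careful than the paper's, in that you make explicit the reindexing of the covering numbers at radius $\bar{r}_{n}2^{-k}$ and the need to take $\beta>\alpha_{r}$ to absorb the polynomial factor and shifted exponent, which the paper leaves implicit.
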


\begin{proof}
  Under Assumption~\ref{1. assum: deterministic version}\ref{1. assum item: deterministic, convergence of spaces},
  it is an immediate consequence of the convergence in the local Gromov--Hausdorff-vague topology
  that, for all but countably many $r>0$,
  \begin{gather}  \label{5. eq: precompact for local times, convergence of metrics and measures}
    \max_{x, y \in \hat{V}_{n}^{(r)}}
    \hat{R}_{n}^{(r)}(x, y)
    \to 
    \max_{x, y \in F^{(r)}} R^{(r)}(x,y),
    \quad
    \hat{\mu}_{n}^{(r)}(\hat{V}_{n}^{(r)}) \to \mu^{(r)}(F^{(r)}).
  \end{gather}
  This, combined with \eqref{5. lem eq: uniform non-explosion condition, measures},
  yields that, for all sufficiently large $r$,
  there exists a positive constant $L = L(r)$ such that 
  \begin{equation}  \label{5. eq: precompact for local times, same order}
    \begin{split}
    L^{-1}
    < 
    \inf_{n \geq 1} \frac{r(\tilde{G}_{n}^{(a_{n}r)})}{a_{n}}
    \leq 
    \sup_{n \geq 1} \frac{r(\tilde{G}_{n}^{(a_{n}r)})}{a_{n}}
    < 
    L, \\
    \quad 
    L^{-1}
    < 
    \inf_{n \geq 1} \frac{m(\tilde{G}_{n}^{(a_{n}r)})}{b_{n}}
    \leq 
    \sup_{n \geq 1} \frac{m(\tilde{G}_{n}^{(a_{n}r)})}{b_{n}} 
    < 
    L,
    \end{split}
  \end{equation}
  where we recall the notation $r(\cdot)$ and $m(\cdot)$ from \eqref{eq: diameter and total mass}.
  We fix such a radius $r$, and write $m_{n} \coloneqq m (\tilde{G}_{n}^{(a_{n}r)})$ and $r_{n} \coloneqq r (\tilde{G}_{n}^{(a_{n}r)})$.
  By applying Theorem~\ref{5. thm: uniform continuity estimate of discrete local times} 
  to the electrical networks $\tilde{G}_{n}^{(a_{n}r)}$,
  we obtain that
  \begin{align}
    &P_{\tilde{\rho}_{n}^{(r)}}^{\tilde{G}_{n}^{(a_{n}r)}}
    \left(
      \sup_{ \stackrel{x, y \in \tilde{V}_{n}^{(r)}}{\tilde{R}_{n}^{(r)}(x,y) < \delta}} 
      \sup_{0 \leq t \leq T}
      \left|
        \tilde{\ell}_{n}^{(r)} (x, t) - \tilde{\ell}_{n}^{(r)} (y, t)
      \right|
      >
      \varepsilon
    \right) \\
    \leq 
    &P_{\tilde{\rho}_{n}^{(r)}}^{\tilde{G}_{n}^{(a_{n}r)}}
    \left(
      \sup_{ \stackrel{x, y \in \hat{V}_{n}^{(r)}}{R_{n}(x,y) < L r_{n} \delta}} 
      \sup_{0 \leq t \leq L^{2}T m_{n} r_{n} }
      L\,r_{n}^{-1}
      \left|
        \ell_{\tilde{G}_{n}^{(a_{n}r)}} (x, t) - \ell_{\tilde{G}_{n}^{(a_{n}r)}} (y, t)
      \right|
      >
      \varepsilon
    \right) \\
    \leq 
    &c_{4}(L^{2}T)
    \sum_{k \geq N(L, \delta)} 
    (k+1)^{2} 
    N_{\hat{R}_{n}^{(r)}}(\hat{V}_{n}^{(r)}, L^{-1} 2^{-k})^{2} 
    \exp \left(-2^{\alpha(k-3)}\right)
    +
    1_{[\varepsilon, \infty)} (c_{3}(\alpha) 2^{-(\frac{1}{2} - \alpha) N}),
  \end{align}
  where we set $N(L, \delta)$ to be the maximum $N$ satisfying $2^{-N+1} > L \delta$.
  Then,
  Assumption~\ref{1. assum: deterministic version}\ref{1. assum item: deterministic, the metric-entropy condition}
  yields \eqref{5. lem eq: precpt of trace local times at same times, equicontinuity}.
  Similarly,
  using Proposition~\ref{5. prop: uniform bound estimate of discrete local times},
  we obtain \eqref{5. lem eq: precpt of trace local times at same times, bound}.
\end{proof}

In the following two lemmas,
we transfer the results in Lemma~\ref{5. lem: precpt of trace local times at same times}
to the local times of the original Markov chains (rather than the traces).

\begin{lem} \label{5. lem: uniform bound of local times}
  Under Assumption~\ref{1. assum: deterministic version},
  it holds that 
  \begin{equation}
    \lim_{M \to \infty}
    \limsup_{n \to \infty}
    P_{\rho_{n}}^{G_{n}}
    \left(
      \sup_{x \in \hat{V}_{n}^{(r)}}
      \hat{\ell}_{n} (x, T) 
      >
      M
    \right)
    =0,
    \quad 
    \forall r>0.
  \end{equation}
\end{lem}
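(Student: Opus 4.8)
The plan is to bound the local time of $\hat{Y}_{n}$ on the ball $\hat{V}_{n}^{(r)}$ by that of the traced process on a strictly larger ball $\hat{V}_{n}^{(r')}$, for which the uniform bound \eqref{5. lem eq: precpt of trace local times at same times, bound} of Lemma \ref{5. lem: precpt of trace local times at same times} is already in hand, at the cost only of the probability that the walk escapes the larger ball. So fix $r, T > 0$ and an auxiliary radius $r' > r$, and write $C_{n}(r') \coloneqq B_{\hat{R}_{n}}(\hat{\rho}_{n}, r')^{c}$. First I would split according to whether the discrete-time chain exits $B_{\hat{R}_{n}}(\hat{\rho}_{n}, r')$ before time $a_{n} b_{n} T$:
\begin{equation}
  P_{\rho_{n}}^{G_{n}}\left( \sup_{x \in \hat{V}_{n}^{(r)}} \hat{\ell}_{n}(x, T) > M \right)
  \leq
  P_{\rho_{n}}^{G_{n}}\left( T_{C_{n}(r')} \leq a_{n} b_{n} T \right)
  +
  P_{\rho_{n}}^{G_{n}}\left( \sup_{x \in \hat{V}_{n}^{(r)}} \hat{\ell}_{n}(x, T) > M,\ T_{C_{n}(r')} > a_{n} b_{n} T \right).
\end{equation}

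On the event $\{T_{C_{n}(r')} > a_{n} b_{n} T\}$ the chain $Y_{n}$ stays inside $B_{R_{n}}(\rho_{n}, a_{n} r') = V_{\tilde{G}_{n}^{(a_{n} r')}}$; since the chain has no self-loops, its trace onto this ball (Definition \ref{4. dfn: trace}), which I abbreviate $\tr_{r'} Y_{n}$, then coincides with $Y_{n}$, i.e. $(\tr_{r'} Y_{n})(s) = Y_{n}(s)$ for all $0 \leq s \leq a_{n} b_{n} T$, exactly as in the proof of Proposition \ref{4. prop: convergence of Y_G^r to Y_G}. Hence the two occupation integrals agree, and writing $\tr_{r'} \ell_{n}$ for the local time of $\tr_{r'} Y_{n}$ normalised by the trace conductances, the definitions of the respective local times give, for every $x \in V_{\tilde{G}_{n}^{(a_{n} r')}}$,
\begin{equation}
  c_{G_{n}}(x)\, \ell_{n}(x, a_{n} b_{n} T)
  =
  c_{\tilde{G}_{n}^{(a_{n} r')}}(x)\, (\tr_{r'} \ell_{n})(x, a_{n} b_{n} T).
\end{equation}
Because Corollary \ref{4. cor: difference for trace measure at one point} yields $c_{\tilde{G}_{n}^{(a_{n} r')}}(x) \leq c_{G_{n}}(x)$, the right direction of the bound is automatic: on this event $\hat{\ell}_{n}(x, T) = a_{n}^{-1} \ell_{n}(x, a_{n} b_{n} T) \leq a_{n}^{-1} (\tr_{r'} \ell_{n})(x, a_{n} b_{n} T)$ for all such $x$. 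Since $\hat{V}_{n}^{(r)} \subseteq \hat{V}_{n}^{(r')} = \tilde{V}_{n}^{(r')}$, the second probability above is therefore at most $P_{\rho_{n}}^{G_{n}}\bigl( \sup_{x \in \tilde{V}_{n}^{(r')}} a_{n}^{-1} (\tr_{r'} \ell_{n})(x, a_{n} b_{n} T) > M \bigr)$.

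Finally, by the coupling of Theorem \ref{4. thm: coupling by trace of Y_G} the scaled traced local time $a_{n}^{-1} (\tr_{r'} \ell_{n})(\cdot, a_{n} b_{n}\,\cdot)$ has, under $P_{\rho_{n}}^{G_{n}}$, the same law as $\tilde{\ell}_{n}^{(r')}$ under $P_{\tilde{\rho}_{n}^{(r')}}^{\tilde{G}_{n}^{(a_{n} r')}}$, so this last quantity is precisely the probability appearing in \eqref{5. lem eq: precpt of trace local times at same times, bound}. Combining the three displays and taking $\limsup_{n \to \infty}$ and then $\limsup_{M \to \infty}$ (the first term being free of $M$), the left-hand side is bounded, for every $r' > r$, by $\limsup_{n \to \infty} P_{\rho_{n}}^{G_{n}}( T_{C_{n}(r')} \leq a_{n} b_{n} T)$ plus $\limsup_{M \to \infty} \limsup_{n \to \infty} P_{\tilde{\rho}_{n}^{(r')}}^{\tilde{G}_{n}^{(a_{n} r')}}( \sup_{x \in \tilde{V}_{n}^{(r')}} \tilde{\ell}_{n}^{(r')}(x, T) > M)$. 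As the left-hand side does not involve $r'$, I would let $r' \to \infty$: the first term vanishes by \eqref{5. lem eq: precompactness for discrete exit times} of Lemma \ref{5. lem: precompactness for exit times} and the second by \eqref{5. lem eq: precpt of trace local times at same times, bound} of Lemma \ref{5. lem: precpt of trace local times at same times}, which is the assertion. The step demanding the most care is the pathwise comparison on the non-exit event and the bookkeeping of the two normalising conductances; once $c_{\tilde{G}} \leq c_{G}$ fixes the direction of the bound, the remaining inputs are exactly the exit-time estimate and the trace-local-time bound already established.
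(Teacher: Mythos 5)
Your proposal is correct and follows essentially the same route as the paper: condition on the exit event for a larger ball $B_{\hat{R}_{n}}(\hat{\rho}_{n},r')$, identify $\hat{\ell}_{n}$ with the traced local time there via the coupling of Theorem \ref{4. thm: coupling by trace of Y_G}, and then invoke Lemma \ref{5. lem: precompactness for exit times} and \eqref{5. lem eq: precpt of trace local times at same times, bound}. The only (harmless) difference is that you use the one-sided bound $c_{\tilde{G}_{n}^{(a_{n}r')}}(x)\leq c_{G_{n}}(x)$ to get $\hat{\ell}_{n}\leq a_{n}^{-1}\tr_{r'}\ell_{n}$ directly, whereas the paper keeps the conductance ratio and controls it with Lemma \ref{5. lem: uniform convergence of conductances}; your version is slightly leaner and equally valid.
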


\begin{proof}
  Fix $r' > r$ arbitrarily.
  By Theorem~\ref{4. thm: coupling by trace of Y_G},
  we may assume that $Y_{\tilde{G}_{n}^{(a_{n}r')}}$ is defined on the same probability space
  as $Y_{G_{n}}$ by setting $Y_{\tilde{G}_{n}^{(a_{n}r')}} \coloneqq \tr_{B_{\hat{R}_{n}}(\hat{\rho}_{n}, r')} Y_{n}$.
  Since we have that 
  $Y_{n}(s) = \tr_{B_{\hat{R}_{n}}(\hat{\rho}_{n}, r')} Y_{n}(s)$
  for all $0 \leq s \leq a_{n}b_{n}T$
  on the event $\{T_{B_{\hat{R}_{n}}(\hat{\rho}_{n}, r')^{c}} > a_{n}b_{n}T\}$,
  we deduce that 
  \begin{align}
    \hat{\ell}_{n}(x, t) 
    &=
    \frac{1}{a_{n}c_{G_{n}}(x)} 
    \int_{0}^{a_{n}b_{n}t} 
    1_{\{x\}}(Y_{n}(s))\,
    ds\\
    &=
    \frac{1}{a_{n}c_{G_{n}}(x)} 
    \int_{0}^{a_{n}b_{n}t} 
    1_{\{x\}}( \tr_{B_{\hat{R}_{n}}(\hat{\rho}_{n}, r')} Y_{n}(s))\,
    ds\\
    &=
    \frac{1}{a_{n}c_{G_{n}}(x)} 
    \int_{0}^{a_{n}b_{n}t} 
    1_{\{x\}}( Y_{\tilde{G}_{n}^{(a_{n}r')}}(s))\,
    ds\\
    &=
    \frac{c_{\tilde{G}_{n}^{(a_{n}r')}}(x)}{c_{G_{n}}(x)} 
    \tilde{\ell}_{n}^{(r')}(x, t).
  \end{align}
  This yields that 
  \begin{align}
    &P_{\rho_{n}}^{G_{n}}
    \left(
      \sup_{x \in \hat{V}_{n}^{(r)}}
      \hat{\ell}_{n} (x, T) 
      >
      M
    \right) \\
    \leq 
    &P_{\rho_{n}}^{G_{n}}
    \left(
      T_{B_{\hat{R}_{n}}(\hat{\rho}_{n}, r')^{c}} \leq a_{n}b_{n}T
    \right)
    +
    P_{\rho_{n}}^{G_{n}}
    \left(
      \sup_{x \in \hat{V}_{n}^{(r)}}
      \frac{c_{\tilde{G}_{n}^{(a_{n}r')}}(x)}{c_{G_{n}}(x)} 
      \tilde{\ell}_{n}^{(r')} (x, T) 
      >
      M
    \right) \\
    \leq 
    &P_{\rho_{n}}^{G_{n}}
    \left(
      T_{B_{\hat{R}_{n}}(\hat{\rho}_{n}, r')^{c}} \leq a_{n}b_{n}T
    \right)
    +
    P_{\rho_{n}}^{G_{n}}
    \left(
      2
      \sup_{x \in \hat{V}_{n}^{(r)}}
      \tilde{\ell}_{n}^{(r')} (x, T) 
      >
      M
    \right) \\
    &\quad 
    +
    1_{[2, \infty)} 
    \left(
      \sup_{x \in \hat{V}_{n}^{(r)}}
      \frac{c_{\tilde{G}_{n}^{(a_{n}r')}}(x)}{c_{G_{n}}(x)} 
    \right).
    \label{5. eq: uniform bound of local times, decomposition}
  \end{align}
  Therefore,
  from Lemma~\ref{5. lem: precompactness for exit times},
  \ref{5. lem: uniform convergence of conductances} 
  and \ref{5. lem: precpt of trace local times at same times},
  we obtain the desired result.
\end{proof}

\begin{lem} \label{5. lem: precpt of local times at same times}
  Under Assumption~\ref{1. assum: deterministic version},
  it holds that 
  \begin{equation}
    \lim_{\delta \to 0}
    \limsup_{n \to \infty}
    P_{\rho_{n}}^{G_{n}}
    \left(
      \sup_{ \stackrel{x, y \in \hat{V}_{n}^{(r)}}{\hat{R}_{n}(x,y) < \delta}} 
      \sup_{0 \leq t \leq T}
      \left|
        \hat{\ell}_{n}(x, t) - \hat{\ell}_{n} (y, t)
      \right|
      >
      \varepsilon
    \right)
    =0,
    \quad 
    \forall 
    r, \varepsilon, T >0.
  \end{equation}
\end{lem}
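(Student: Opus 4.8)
The plan is to reduce the equicontinuity of the full scaled local time $\hat{\ell}_n$ to that of the trace local time $\tilde{\ell}_n^{(r')}$ on a ball of radius $r'>r$, following the pattern of the proof of Lemma \ref{5. lem: uniform bound of local times}. Fix $\varepsilon, r, T>0$. For $r'>r$ set $E_n^{r'} \coloneqq \{T_{B_{\hat{R}_n}(\hat{\rho}_n, r')^c} > a_n b_n T\}$ and write $\beta_n(x) \coloneqq c_{\tilde{G}_n^{(a_n r')}}(x)/c_{G_n}(x)$. By Theorem \ref{4. thm: coupling by trace of Y_G} I couple $Y_{\tilde{G}_n^{(a_n r')}}$ to $Y_{G_n}$ as $\tr_{B_{\hat{R}_n}(\hat{\rho}_n, r')} Y_{G_n}$, and on $E_n^{r'}$ the identity $\hat{\ell}_n(x,t) = \beta_n(x)\,\tilde{\ell}_n^{(r')}(x,t)$ holds for all $x \in \hat{V}_n^{(r)}$, $0\le t\le T$, as in that proof. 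Since $\beta_n \le 1$ by Corollary \ref{4. cor: difference for trace measure at one point}, for $x,y \in \hat{V}_n^{(r)}$ with $\hat{R}_n(x,y)<\delta$ I get, on $E_n^{r'}$,
\[
  |\hat{\ell}_n(x,t)-\hat{\ell}_n(y,t)| \le |\tilde{\ell}_n^{(r')}(x,t)-\tilde{\ell}_n^{(r')}(y,t)| + |\beta_n(x)-\beta_n(y)|\,\tilde{\ell}_n^{(r')}(y,t).
\]
With $\eta_n(r') \coloneqq \sup_{z \in \hat{V}_n^{(r)}}|\beta_n(z)-1|$, I bound $|\beta_n(x)-\beta_n(y)| \le 2\eta_n(r')$, and since $\tilde{\ell}_n^{(r')}(y,t) = \hat{\ell}_n(y,t)/\beta_n(y)$ with $\beta_n(y)\ge 1-\eta_n(r')$ and $\hat{\ell}_n(y,\cdot)$ nondecreasing, the second term is at most $\tfrac{2\eta_n(r')}{1-\eta_n(r')}\sup_{y\in\hat{V}_n^{(r)}}\hat{\ell}_n(y,T)$. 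Note also that, since the trace resistance metric is the restriction $\hat{R}_n|_{\tilde{V}_n^{(r')}\times\tilde{V}_n^{(r')}}$, the constraint $\hat{R}_n(x,y)<\delta$ matches $\tilde{R}_n^{(r')}(x,y)<\delta$ on $\hat{V}_n^{(r)}\subseteq \tilde{V}_n^{(r')}$.

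Using this, for any $r'>r$ and $M>0$ I would split
\[
  P_{\rho_n}^{G_n}\!\Big(\sup_{\substack{x,y\in\hat{V}_n^{(r)}\\ \hat{R}_n(x,y)<\delta}}\sup_{0\le t\le T}|\hat{\ell}_n(x,t)-\hat{\ell}_n(y,t)| > \varepsilon\Big) \le P_{\rho_n}^{G_n}\big((E_n^{r'})^c\big) + \mathrm{I} + \mathrm{II} + 1_{\{2\eta_n(r') M > (1-\eta_n(r'))\varepsilon/2\}},
\]
where $\mathrm{I} = P_{\rho_n}^{G_n}(\sup_{x,y\in\tilde{V}_n^{(r')},\,\tilde{R}_n^{(r')}(x,y)<\delta}\sup_{0\le t\le T}|\tilde{\ell}_n^{(r')}(x,t)-\tilde{\ell}_n^{(r')}(y,t)|>\varepsilon/2)$ and $\mathrm{II}=P_{\rho_n}^{G_n}(\sup_{y\in\hat{V}_n^{(r)}}\hat{\ell}_n(y,T)>M)$. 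The first term vanishes after $\limsup_n$ and $r'\to\infty$ by the exit-time estimate \eqref{5. lem eq: precompactness for discrete exit times}. For $\mathrm{I}$, the coupling makes its probability equal to the corresponding one under $P_{\tilde{\rho}_n^{(r')}}^{\tilde{G}_n^{(a_n r')}}$, so $\limsup_{\delta\to 0}\limsup_n \mathrm{I}$ tends to $0$ as $r'\to\infty$ by \eqref{5. lem eq: precpt of trace local times at same times, equicontinuity}. For $\mathrm{II}$, Lemma \ref{5. lem: uniform bound of local times} gives $\limsup_M\limsup_n \mathrm{II}=0$. The deterministic indicator vanishes for large $n$ provided $\eta(r')\coloneqq\limsup_n\eta_n(r')$ is small relative to $M$, and $\eta(r')\to 0$ as $r'\to\infty$ by Lemma \ref{5. lem: uniform convergence of conductances}.

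The delicate point — and the main obstacle — is the order of limits in the last two terms: the indicator forces $\eta(r')$ to be small \emph{compared to the threshold $M$ chosen for $\mathrm{II}$}, whereas one naturally wants $M$ large. This is precisely why I recast the conductance-mismatch term in terms of the full local time $\hat{\ell}_n$ rather than $\tilde{\ell}_n^{(r')}$: the bound of Lemma \ref{5. lem: uniform bound of local times} is \emph{uniform in $r'$}, so I can fix $M$ first (making $\limsup_n \mathrm{II}$ small, independently of $r'$), and only afterwards send $r'\to\infty$, which simultaneously kills the exit-time term, makes $\limsup_{\delta\to0}\limsup_n\mathrm{I}$ small, and forces $\tfrac{2\eta(r')}{1-\eta(r')}M<\varepsilon/2$ so the indicator is eventually $0$. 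Had the mismatch term instead been controlled by $\sup_{x}\tilde{\ell}_n^{(r')}(x,T)$, whose bound depends on $r'$ through the metric entropy of $\hat{V}_n^{(r')}$, the requirements ``$M$ large'' and ``$\eta(r')$ small compared to $M$'' could not be decoupled. Taking $\limsup_{\delta\to0}\limsup_n$ in the displayed split and then letting $r'\to\infty$ (with $M$ already fixed) yields the claim.
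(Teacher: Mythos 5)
Your proof is correct and follows essentially the same route as the paper: both reduce the problem to the trace $\tilde{G}_{n}^{(a_{n}r')}$ via the coupling identity $\hat{\ell}_{n}(x,t)=\frac{c_{\tilde{G}_{n}^{(a_{n}r')}}(x)}{c_{G_{n}}(x)}\tilde{\ell}_{n}^{(r')}(x,t)$ on the non-exit event, control the conductance mismatch by Lemma \ref{5. lem: uniform convergence of conductances} together with the $r'$-uniform bound of Lemma \ref{5. lem: uniform bound of local times}, invoke the trace equicontinuity \eqref{5. lem eq: precpt of trace local times at same times, equicontinuity}, and fix $M$ before sending $r'\to\infty$. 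The only difference is cosmetic — the paper truncates the local times at level $\frac{c_{\tilde{G}_{n}^{(a_{n}r')}}(x)}{c_{G_{n}}(x)}M$ before splitting, whereas you bound the difference directly — and your observation about why the mismatch term must be controlled through $\hat{\ell}_{n}$ rather than $\tilde{\ell}_{n}^{(r')}$ correctly identifies the point that makes the order of limits work.
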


\begin{proof}
  By the same argument as the proof of Lemma~\ref{5. lem: uniform bound of local times},
  we deduce that 
  \begin{align}
    &P_{\rho_{n}}^{G_{n}}
    \left(
      \sup_{\substack{x, y \in \hat{V}_{n}^{(r)},\\ \hat{R}_{n}(x,y) < \delta}}
      \sup_{0 \leq t \leq T}
      |\hat{\ell}_{n}(x,t) - \hat{\ell}_{n}(y,t)|
      > 
      \varepsilon
    \right)\\
    \leq 
    &P_{\rho_{n}}^{G_{n}}
    \left(
      \sup_{\substack{x, y \in \hat{V}_{n}^{(r)},\\ \hat{R}_{n}(x,y) < \delta}}
      \sup_{0 \leq t \leq T}
      \left|
        \hat{\ell}_{n}(x,t) \wedge \left( \frac{c_{\tilde{G}_{n}^{(a_{n}r')}}(x)}{c_{G_{n}}(x)} M\right)  
        - 
        \hat{\ell}_{n}(y,t) \wedge \left( \frac{c_{\tilde{G}_{n}^{(a_{n}r')}}(y)}{c_{G_{n}}(y)} M\right)
      \right|
      > 
      \varepsilon
    \right) \\
    &\quad 
    +
    P_{\rho_{n}}^{G_{n}}
    \left(
      \sup_{x \in \hat{V}_{n}^{(r)}}
      \frac{c_{G_{n}}(x)}{c_{\tilde{G}_{n}^{(a_{n}r')}}(x)}
      \hat{\ell}_{n}(x, T)
      > M
    \right)\\
    \leq 
    &P_{\rho_{n}}^{G_{n}}
    \left(
      T_{B_{\hat{R}_{n}}(\hat{\rho}_{n}, r')^{c}} \leq a_{n} b_{n} T
    \right)
    +
    P_{\rho_{n}}^{G_{n}}
    \left(
      \sup_{x \in \hat{V}_{n}^{(r)}}
      \frac{c_{G_{n}}(x)}{c_{\tilde{G}_{n}^{(a_{n}r')}}(x)}
      \hat{\ell}_{n}(x, T)
      > M
    \right)\\
    &\quad 
    +
    P_{\tilde{\rho}_{n}^{(r')}}^{\tilde{G}_{n}^{(a_{n}r')}}
    \left(
      \sup_{\substack{x, y \in \hat{V}_{n}^{(r)},\\ \hat{R}_{n}(x,y) < \delta}}
      \sup_{0 \leq t \leq T}
      \left|
        \frac{c_{\tilde{G}_{n}^{(a_{n}r')}}(x)}{c_{G_{n}}(x)} 
        \bigl( \tilde{\ell}_{n}^{(r')}(x, t) \wedge M \bigr)
        -
        \frac{c_{\tilde{G}_{n}^{(a_{n}r')}}(y)}{c_{G_{n}}(y)} 
        \bigl( \tilde{\ell}_{n}^{(r')}(y, t) \wedge M \bigr)
      \right|
      > 
      \varepsilon
    \right).
  \end{align}
  We have that 
  \begin{align}
    &\sup_{\substack{x, y \in \hat{V}_{n}^{(r)},\\ \hat{R}_{n}(x,y) < \delta}}
    \sup_{0 \leq t \leq T}
    \left|
      \frac{c_{\tilde{G}_{n}^{(a_{n}r')}}(x)}{c_{G_{n}}(x)} 
      \bigl( \tilde{\ell}_{n}^{(r')}(x, t) \wedge M \bigr)
      -
      \frac{c_{\tilde{G}_{n}^{(a_{n}r')}}(y)}{c_{G_{n}}(y)} 
      \bigl( \tilde{\ell}_{n}^{(r')}(y, t) \wedge M \bigr)
    \right|\\
    \leq 
    &2 M \sup_{x \in \hat{V}_{n}^{(r)}}
    \left|
      \frac{c_{\tilde{G}_{n}^{(a_{n}r')}}(x)}{c_{G_{n}}(x)}  - 1
    \right|
    +
    \sup_{\substack{x, y \in \hat{V}_{n}^{(r)},\\ \hat{R}_{n}(x,y) < \delta}}
    \sup_{0 \leq t \leq T}
    \left|
      \tilde{\ell}_{n}^{(r')}(x, t)
      -
      \tilde{\ell}_{n}^{(r')}(y, t)
    \right|
  \end{align}
  and 
  \begin{align}
    &P_{\rho_{n}}^{G_{n}}
    \left(
      \sup_{x \in \hat{V}_{n}^{(r)}}
      \frac{c_{G_{n}}(x)}{c_{\tilde{G}_{n}^{(a_{n}r')}}(x)}
      \hat{\ell}_{n}(x, T)
      > M
    \right) \\
    \leq 
    &P_{\rho_{n}}^{G_{n}}
    \left(
      2
      \sup_{x \in \hat{V}_{n}^{(r)}}
      \hat{\ell}_{n}(x, T)
      > M
    \right) 
    +
    1_{[2, \infty)}
    \left(
      \sup_{x \in \hat{V}_{n}^{(r)}}
      \frac{c_{G_{n}}(x)}{c_{\tilde{G}_{n}^{(a_{n}r')}}(x)}
    \right).
  \end{align}
  Hence, it follows that 
  \begin{align}
    &P_{\rho_{n}}^{G_{n}}
    \left(
      \sup_{\substack{x, y \in \hat{V}_{n}^{(r)},\\ \hat{R}_{n}(x,y) < \delta}}
      \sup_{0 \leq t \leq T}
      |\hat{\ell}_{n}(x,t) - \hat{\ell}_{n}(y,t)|
      > 
      \varepsilon
    \right) \\
    \leq
    &P_{\rho_{n}}^{G_{n}}
    \left(
      T_{B_{\hat{R}_{n}}(\hat{\rho}_{n}, r')^{c}} \leq a_{n} b_{n} T
    \right)
    +
    P_{\rho_{n}}^{G_{n}}
    \left(
      2
      \sup_{x \in \hat{V}_{n}^{(r)}}
      \hat{\ell}_{n}(x, T)
      > M
    \right) \\
    &\quad
    +
    1_{[2, \infty)}
    \left(
      \sup_{x \in \hat{V}_{n}^{(r)}}
      \frac{c_{G_{n}}(x)}{c_{\tilde{G}_{n}^{(a_{n}r')}}(x)}
    \right)
    +
    1_{[\varepsilon/2, \infty)} 
    \left( 
      2 M \sup_{x \in \hat{V}_{n}^{(r)}}
      \left|
        \frac{c_{\tilde{G}_{n}^{(a_{n}r')}}(x)}{c_{G_{n}}(x)}  - 1
      \right| 
    \right)\\
    &\quad 
    +
    P_{\tilde{\rho}_{n}^{(r')}}^{\tilde{G}_{n}^{(a_{n}r')}}
    \left(
      \sup_{\substack{x, y \in \hat{V}_{n}^{(r)},\\ \hat{R}_{n}(x,y) < \delta}}
      \sup_{0 \leq t \leq T}
      \left|
        \tilde{\ell}_{n}^{(r')}(x, t)
        -
        \tilde{\ell}_{n}^{(r')}(y, t)
      \right|
      >
      \frac{\varepsilon}{2}
    \right).
    \label{5. eq: precpt at same times, decomposition}
  \end{align}
  By Lemma~\ref{5. lem: precompactness for exit times},
  \ref{5. lem: uniform convergence of conductances},
  \ref{5. lem: precpt of trace local times at same times} 
  and \ref{5. lem: uniform bound of local times},
  we obtain the desired result.
\end{proof}
  
Note that Lemma~\ref{5. lem: precpt of local times at same times}
focuses on the continuity of local times at the same times.
Our next task is to extend the result to different (but close) times.
To this end,
we approximate the local times by mollified ones defined as follows.
Set, for each $\eta > 0$,
\begin{gather}
  f_{\eta}^{\hat{V}_{n}}(x,y)
  \coloneqq
  (\eta - \hat{R}_{n}(x,y)) \vee 0,
  \quad
  x, y \in \hat{V}_{n}.
\end{gather}
We then define 
\begin{gather}
  \hat{\ell}_{n}^{\eta} (x, t)
  \coloneqq
  \frac{ \int_{0}^{t} f_{\eta}^{\hat{V}_{n}}(x, \hat{Y}_{n}(s))\, ds}
  {\int_{\hat{V}_{n}} f_{\eta} (x,y)\, \hat{\mu}_{n}(dy)},
  \quad 
  x \in \hat{V}_{n}.
\end{gather}
which approximate $\hat{\ell}_{n}$.
The convergence of the processes given in Proposition~\ref{5. prop: convergence of processes}
implies the tightness of $(\hat{\ell}_{n}^{\eta})_{n \geq 1}$ as follows.
(Moreover, one can prove that it is a convergent sequence.)

\begin{lem} \label{5. lem: continuity of delta approximation}
  If Assumption~\ref{1. assum: deterministic version} \ref{1. assum item: deterministic, convergence of spaces} 
  is satisfied,
  then it holds that
  \begin{equation}
    \lim_{\delta \to 0}
    \limsup_{n \to \infty}
    P_{\rho_{n}}^{G_{n}}
    \left(
      \sup_{\substack{x, y \in \hat{V}_{n}^{(r)}, \\ \hat{R}_{n}(x,y) < \delta}}
      \sup_{\substack{0 \leq s, t \leq T,\\ |t -s| < \delta}}
      \left|
        \hat{\ell}_{n}^{\eta}(x,t) - \hat{\ell}_{n}^{\eta}(y,s)
      \right|
      > 
      \varepsilon
    \right) 
    =
    0,
    \quad 
    \forall 
    r, \eta>0.
  \end{equation}
\end{lem}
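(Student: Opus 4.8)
The plan is to bound the modulus of continuity of $\hat\ell_n^\eta$ \emph{pathwise}, uniformly in $n$, so that the probability in the statement is in fact $0$ once $\delta$ is small. Write $\hat\ell_n^\eta(x,t)=N_n^\eta(x,t)/D_n^\eta(x)$, where $N_n^\eta(x,t)\coloneqq\int_0^t f_\eta^{\hat V_n}(x,\hat Y_n(s))\,ds$ and $D_n^\eta(x)\coloneqq\int_{\hat V_n} f_\eta^{\hat V_n}(x,y)\,\hat\mu_n(dy)$. The structural point is that $w\mapsto(\eta-w)\vee 0$ is $1$-Lipschitz and bounded by $\eta$, so for all $x,y,w\in\hat V_n$ one has $|f_\eta^{\hat V_n}(x,w)-f_\eta^{\hat V_n}(y,w)|\le\hat R_n(x,y)$ and $0\le f_\eta^{\hat V_n}(x,w)\le\eta\,1_{\{\hat R_n(x,w)<\eta\}}$. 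I would first record the two uniform geometric quantities the argument needs, namely the upper mass bound $C\coloneqq\sup_{n}\sup_{x\in\hat V_n^{(r+1)}}\hat\mu_n(B_{\hat R_n}(x,2\eta))$ and the lower denominator bound $c_0\coloneqq\inf_n\inf_{x\in\hat V_n^{(r+1)}}D_n^\eta(x)$, and then prove $C<\infty$ and $c_0>0$.

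Granting these, the estimate is elementary. For the time increment, $|\hat\ell_n^\eta(x,t)-\hat\ell_n^\eta(x,s)|\le\eta|t-s|/D_n^\eta(x)\le\eta\delta/c_0$ whenever $|t-s|<\delta$. For the spatial increment at a fixed time $s\le T$, I would decompose
$$\hat\ell_n^\eta(x,s)-\hat\ell_n^\eta(y,s)=\frac{N_n^\eta(x,s)-N_n^\eta(y,s)}{D_n^\eta(x)}+N_n^\eta(y,s)\Bigl(\frac{1}{D_n^\eta(x)}-\frac{1}{D_n^\eta(y)}\Bigr),$$
and bound $|N_n^\eta(x,s)-N_n^\eta(y,s)|\le T\hat R_n(x,y)$, $N_n^\eta(y,s)\le T\eta$, and $|D_n^\eta(x)-D_n^\eta(y)|\le\hat R_n(x,y)\,C$ (the last using the Lipschitz bound on $f_\eta^{\hat V_n}$ and that the integrand is supported in $B_{\hat R_n}(x,2\eta)$ once $\hat R_n(x,y)<\eta$). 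Hence, for $x\in\hat V_n^{(r)}$ and $\hat R_n(x,y)<\delta\le 1$, the spatial increment is at most $T\delta/c_0+T\eta C\delta/c_0^2$. Combining, the full increment over $\hat R_n(x,y)<\delta$ and $|t-s|<\delta$ is bounded by a constant depending only on $r,\eta,T$ (not on $n$ or the path) times $\delta$, which is $<\varepsilon$ for all small $\delta$; this yields the claim.

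The main obstacle is the lower bound $c_0>0$, everything else being routine. Using $D_n^\eta(x)\ge\tfrac\eta2\,\hat\mu_n(B_{\hat R_n}(x,\eta/2))$, it suffices to show $\inf_n\inf_{x\in\hat V_n^{(r+1)}}\hat\mu_n(B_{\hat R_n}(x,\eta/2))>0$. I would argue by contradiction and compactness: were this infimum $0$, there would be $n_k$ and $x_k\in\hat V_{n_k}^{(r+1)}$ with $\hat\mu_{n_k}(B_{\hat R_{n_k}}(x_k,\eta/2))\to0$; embedding the spaces into a common $M$ via Assumption \ref{1. assum: deterministic version}\ref{1. assum item: deterministic, convergence of spaces} and Theorem \ref{2. thm: characterization of convergence in the local. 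GHV}, so that $\hat V_n\to F$ in the local Hausdorff topology and $\hat\mu_n\to\mu$ vaguely, one extracts $x_k\to x\in F^{(r+1)}$, notes $B_{\hat R_{n_k}}(x_k,\eta/2)\supseteq B(x,\eta/4)\cap\hat V_{n_k}$ for large $k$, and applies the portmanteau inequality for vague convergence on the open ball $B(x,\eta/4)$ to get $\liminf_k\hat\mu_{n_k}(B_{\hat R_{n_k}}(x_k,\eta/2))\ge\mu(B_R(x,\eta/4))>0$, using that $\mu$ has full support; this is the contradiction. The upper bound $C<\infty$ is immediate from $\hat\mu_n(B_{\hat R_n}(x,2\eta))\le\hat\mu_n(\hat V_n^{(r+1+2\eta)})$ together with the uniform boundedness of masses on bounded regions supplied by the local Gromov--Hausdorff-vague convergence. (Alternatively, one could deduce the tightness from the convergence of processes in Proposition \ref{5. prop: convergence of processes} by checking that $\xi\mapsto\hat\ell^\eta$ is a continuous functional and invoking the continuous mapping theorem; its continuity reduces to the very same lower bound on the denominator.)
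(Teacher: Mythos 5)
Your proof is correct and takes essentially the same route as the paper's: a pathwise, deterministic bound on the increments of $\hat{\ell}_{n}^{\eta}$ that is linear in $\delta$, reduced to a uniform lower bound on the denominator $\int_{\hat{V}_{n}} f_{\eta}^{\hat{V}_{n}}(x,y)\,\hat{\mu}_{n}(dy)$ and a uniform upper mass bound, both supplied by Assumption \ref{1. assum: deterministic version}\ref{1. assum item: deterministic, convergence of spaces}. The only difference is that you spell out the compactness/portmanteau argument (using full support of $\mu$, which is guaranteed by the first part of Theorem \ref{1. thm: deterministic, main result}) for the denominator lower bound, which the paper asserts without detail.
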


\begin{proof}
  Assume that, for some $m$ and $M$, it holds that 
  \begin{equation}
    m
    <
    \inf_{x \in \hat{V}_{n}^{(r)}}
    \int_{\hat{V}_{n}}
    f_{\eta}^{\hat{V}_{n}}(x, y) \, \hat{\mu}_{n}(dy),
    \quad 
    \hat{\mu}_{n}(\hat{V}_{n}^{(r+1)}) 
    < 
    M
  \end{equation}
  Then, for $x,y \in \hat{V}_{n}$ with $\hat{R}_{n}(x, y) < \delta\ (< 1)$ 
  and $0 \leq s \leq t \leq T$ with $|t -s| < \eta$,
  we have that
  \begin{align}
    | \hat{\ell}_{n}^{\eta}(x, t) - \hat{\ell}_{n}^{\eta}(y,s)|
    &\leq 
    m^{-1} 
    \left|
      \int_{0}^{t} f_{\eta}^{\hat{V}_{n}}(x, \hat{Y}_{n}(u))\, du 
      - 
      \int_{0}^{s} f_{\eta}^{\hat{V}_{n}}(y, \hat{Y}_{n}(u))\, du
    \right|\\
    &\quad 
    + 
    T \eta
    \left|
      \frac{1}{\int_{\hat{V}_{n}} f_{\eta}^{\hat{V}_{n}}(x, z)\, \hat{\mu}_{n}(dz)}
      -
      \frac{1}{\int_{\hat{V}_{n}} f_{\eta}^{\hat{V}_{n}}(y, z)\, \hat{\mu}_{n}(dz)}
    \right| \\
    &\leq 
    m^{-1} (\delta T + \delta \eta)
    +
    T \delta m^{-2} \eta M \\
    &\eqqcolon
    c(\delta, m, M).
  \end{align}
  Hence, 
  we deduce that 
  \begin{align}
    &P_{\rho_{n}}^{G_{n}}
    \left(
      \sup_{\substack{x, y \in \hat{V}_{n}^{(r)}, \\ \hat{R}_{n}(x,y) < \delta}}
      \sup_{\substack{0 \leq s, t \leq T,\\ |t -s| < \delta}}
      \left|
        \hat{\ell}_{n}^{\eta}(x,t) - \hat{\ell}_{n}^{\eta}(y,s)
      \right|
      > 
      \varepsilon
    \right) \\
    \leq 
    &
    1_{(\varepsilon, \infty)} ( c(\delta, m, M) )
    +
    1_{[0, m]}
    \left(
      \inf_{x \in \hat{V}_{n}^{(r)}}
      \int_{\hat{V}_{n}}
      f_{\eta}^{\hat{V}_{n}}(x, y) \, \hat{\mu}_{n}(dy)
    \right) 
    + 
    1_{[M, \infty)}
    \left(
      \hat{\mu}_{n}(\hat{V}_{n}^{(r+1)})
    \right).
    \label{5. eq: continuity of delta approximation, decomposition}
  \end{align}
  Since the convergence of measures $\hat{\mu}_n$ in Assumption~\ref{1. assum: deterministic version}\ref{1. assum item: deterministic, convergence of spaces}
  and the fact that the limiting measure $\mu$ is of full support 
  imply that 
  \begin{gather}
    \liminf_{n \to \infty}
    \inf_{x \in \hat{V}_{n}^{(r)}}
    \int_{\hat{V}_{n}}
    f_{\eta}^{\hat{V}_{n}}(x, y) \, \hat{\mu}_{n}(dy)
    > 0, \\ 
    \limsup_{n \to \infty}
    \hat{\mu}_{n}(\hat{V}_{n}^{(r+1)})
    < \infty,
  \end{gather}
  we obtain the desired result.
\end{proof}

Using the results above, 
we show the tightness of the local times.
Note that, in Lemma~\ref{5. lem: precpt of local times} below,
we consider the equicontinuity of $(\tilde{\ell}_{n}(x, t))_{n \geq 1}$ with respect to $(x,t)$,
while in Lemma~\ref{5. lem: precpt of local times at same times} 
we consider the equicontinuity with respect to $x$ at the same time $t$.

\begin{lem} \label{5. lem: precpt of local times}
  Under Assumption~\ref{1. assum: deterministic version},
  it holds that,
  for all $\varepsilon, r, T>0$,
  \begin{equation}
    \lim_{\delta \to 0}
    \limsup_{n \to \infty}
    P_{\rho_{n}}^{G_{n}}
    \left(
      \sup_{\substack{x, y \in \hat{V}_{n}^{(r)}, \\ \hat{R}_{n}(x,y) < \delta}}
      \sup_{\substack{0 \leq s, t \leq T,\\ |t -s| < \delta}}
      \left|
        \hat{\ell}_{n}(x,t) - \hat{\ell}_{n}(y,s)
      \right|
      > 
      \varepsilon
    \right)
    =
    0.
  \end{equation}
\end{lem}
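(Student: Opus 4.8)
The plan is to interpolate between $\hat{\ell}_{n}$ and its smoothed version $\hat{\ell}_{n}^{\eta}$, using the spatial equicontinuity estimate of Lemma \ref{5. lem: precpt of local times at same times} to bound the smoothing error and the estimate of Lemma \ref{5. lem: continuity of delta approximation} to control the temporal oscillation of the smoothed object.

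First I would record the key identity underlying the whole argument. By the scaled occupation density formula \eqref{1. eq: discrete-time occupation density formula} (with $\hat{Y}_{n}$, $\hat{\mu}_{n}$ in place of $Y_{n}$, $\mu_{n}$), the numerator defining $\hat{\ell}_{n}^{\eta}$ may be rewritten as
\[
  \int_{0}^{t} f_{\eta}^{\hat{V}_{n}}(x, \hat{Y}_{n}(s))\, ds
  =
  \int_{\hat{V}_{n}} f_{\eta}^{\hat{V}_{n}}(x,y)\, \hat{\ell}_{n}(y,t)\, \hat{\mu}_{n}(dy),
\]
so that $\hat{\ell}_{n}^{\eta}(x,t)$ is a weighted average of $\{\hat{\ell}_{n}(y,t)\}_{y}$ with weights $f_{\eta}^{\hat{V}_{n}}(x, \cdot)$, which are non-negative and supported on $\{y : \hat{R}_{n}(x,y) \leq \eta\}$. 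Consequently the smoothing error is dominated by the spatial modulus of continuity of $\hat{\ell}_{n}$ at a fixed time:
\[
  \left| \hat{\ell}_{n}^{\eta}(x,t) - \hat{\ell}_{n}(x,t) \right|
  =
  \left| \frac{\int_{\hat{V}_{n}} f_{\eta}^{\hat{V}_{n}}(x,y)\bigl( \hat{\ell}_{n}(y,t) - \hat{\ell}_{n}(x,t)\bigr)\, \hat{\mu}_{n}(dy)}{\int_{\hat{V}_{n}} f_{\eta}^{\hat{V}_{n}}(x,y)\, \hat{\mu}_{n}(dy)} \right|
  \leq
  \sup_{y : \hat{R}_{n}(x,y) \leq \eta} \left| \hat{\ell}_{n}(y,t) - \hat{\ell}_{n}(x,t) \right|,
\]
where the denominator is strictly positive since $f_{\eta}^{\hat{V}_{n}}(x,x) = \eta$ and each vertex carries positive $\hat{\mu}_{n}$-mass. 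This is exactly the quantity controlled by Lemma \ref{5. lem: precpt of local times at same times}.

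Next, for $x,y \in \hat{V}_{n}^{(r)}$ with $\hat{R}_{n}(x,y) < \delta$ and $0 \leq s,t \leq T$ with $|t-s| < \delta$, I would decompose
\[
  \left| \hat{\ell}_{n}(x,t) - \hat{\ell}_{n}(y,s) \right|
  \leq
  \left| \hat{\ell}_{n}(x,t) - \hat{\ell}_{n}(y,t) \right|
  + \left| \hat{\ell}_{n}(y,t) - \hat{\ell}_{n}^{\eta}(y,t) \right|
  + \left| \hat{\ell}_{n}^{\eta}(y,t) - \hat{\ell}_{n}^{\eta}(y,s) \right|
  + \left| \hat{\ell}_{n}^{\eta}(y,s) - \hat{\ell}_{n}(y,s) \right|.
\]
The first term is a spatial increment at the fixed time $t$, handled by Lemma \ref{5. lem: precpt of local times at same times}; the third term is the temporal increment of the smoothed local time (same spatial point), handled by Lemma \ref{5. lem: continuity of delta approximation}; and the second and fourth terms are smoothing errors, which by the displayed inequality are bounded by the spatial oscillation of $\hat{\ell}_{n}$ at scale $\eta$ over $\hat{V}_{n}^{(r+1)}$ (taking $\eta < 1$, so that the $\eta$-ball around a point of $\hat{V}_{n}^{(r)}$ stays in $\hat{V}_{n}^{(r+1)}$), again controlled by Lemma \ref{5. lem: precpt of local times at same times}.

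Finally I would manage the limits in the correct order. Fixing $\varepsilon > 0$, a union bound reduces the target probability to the sum of four probabilities with threshold $\varepsilon/4$ each. For a fixed $\eta$, the first-term probability and the smoothed-temporal-term probability both vanish after $\limsup_{n}$ as $\delta \to 0$; the two smoothing-error probabilities do not depend on $\delta$ and are each bounded by
\[
  g(\eta) \coloneqq \limsup_{n \to \infty} P_{\rho_{n}}^{G_{n}}\left( \sup_{\substack{y,z \in \hat{V}_{n}^{(r+1)}, \\ \hat{R}_{n}(y,z) \leq \eta}} \sup_{0 \leq t \leq T} \left| \hat{\ell}_{n}(y,t) - \hat{\ell}_{n}(z,t) \right| > \varepsilon/4 \right).
\]
Hence $\limsup_{\delta \to 0} \limsup_{n \to \infty} P_{\rho_{n}}^{G_{n}}(\,\cdot > \varepsilon\,) \leq 2 g(\eta)$ for every $\eta > 0$, and since $g(\eta) \to 0$ as $\eta \to 0$ by Lemma \ref{5. lem: precpt of local times at same times}, the left-hand side (which does not involve $\eta$) must equal $0$. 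The main obstacle is conceptual rather than computational: recognizing the occupation-density representation that converts the smoothing error into a purely spatial modulus, together with the careful bookkeeping of the nested limits, in which $\eta$ is held fixed while $\delta \to 0$ and is only sent to zero at the very end.
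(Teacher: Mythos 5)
Your proposal is correct and follows essentially the same route as the paper: the occupation density formula identifies $\hat{\ell}_{n}^{\eta}(x,t)$ as a weighted average of $\hat{\ell}_{n}(\cdot,t)$ over the $\eta$-ball, so the smoothing error is dominated by the fixed-time spatial modulus (Lemma \ref{5. lem: precpt of local times at same times}), the smoothed object's space-time modulus is controlled by Lemma \ref{5. lem: continuity of delta approximation}, and the limits are taken with $\eta$ fixed while $\delta \to 0$ and only sent to zero at the end. The only cosmetic difference is that you split the triangle inequality into four terms (treating the raw spatial increment separately) where the paper uses a two-term decomposition, but the ingredients and logic are identical.
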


\begin{proof}
  Fix $\eta \in (0,1)$ arbitrarily.
  By the occupation density formula (see \eqref{1. eq: discrete-time occupation density formula}),
  we have that
  \begin{equation}
    \int_{0}^{t} 
    f_{\eta}^{\hat{V}_{n}} (x, \hat{Y}_{n}(s))\, ds 
    =
    \int_{\hat{V}_{n}} 
    f_{\eta}^{\hat{V}_{n}} (x, y)\,
    \hat{\ell}_{n}(y, t)\,
    \hat{\mu}_{n}(dy).
  \end{equation}
  Using this, we obtain that 
  \begin{equation}
    \sup_{x \in \hat{V}_{n}^{(r)}}
    \sup_{0 \leq t \leq T}
    \left|
      \hat{\ell}_{n}(x, t) - \hat{\ell}_{n}^{\eta}(x,t)
    \right|
    \leq 
    \sup_{\substack{x, y \in \hat{V}_{n}^{(r + 1)}, \\ \hat{R}_{n}(x, y) < \eta}}
    \sup_{0 \leq t \leq T}
    \left|
      \hat{\ell}_{n}(x, t) - \hat{\ell}_{n}(y, t)
    \right|.
  \end{equation}
  We have that
  \begin{align} 
    &P_{\rho_{n}}^{G_{n}}
    \left(
      \sup_{\substack{x, y \in \hat{V}_{n}^{(r)}, \\ \hat{R}_{n}(x,y) < \delta}}
      \sup_{\substack{0 \leq s, t \leq T,\\ |t -s| < \delta}}
      \left|
        \hat{\ell}_{n}(x,t) - \hat{\ell}_{n}(y,s)
      \right|
      > 
      \varepsilon
    \right) \\
    \leq 
    &P_{\rho_{n}}^{G_{n}}
    \left(
      \sup_{\substack{x, y \in \hat{V}_{n}^{(r)}, \\ \hat{R}_{n}(x,y) < \delta}}
      \sup_{\substack{0 \leq s, t \leq T,\\ |t -s| < \delta}}
      \left|
        \hat{\ell}_{n}^{\eta}(x,t) - \hat{\ell}_{n}^{\eta}(y,s)
      \right|
      > 
      \varepsilon/2
    \right) \\
    &\quad
    +
    P_{\rho_{n}}^{G_{n}}
    \left(
      2
      \sup_{x \in \hat{V}_{n}^{(r)}}
      \sup_{0 \leq t \leq T}
      \left|
        \hat{\ell}_{n}^{\eta}(x,t) - \hat{\ell}_{n}(x,t)
      \right|
      > 
      \varepsilon/2
    \right)\\
    \leq 
    &P_{\rho_{n}}^{G_{n}}
    \left(
      \sup_{\substack{x, y \in \hat{V}_{n}^{(r)}, \\ \hat{R}_{n}(x,y) < \delta}}
      \sup_{\substack{0 \leq s, t \leq T,\\ |t -s| < \delta}}
      \left|
        \hat{\ell}_{n}^{\eta}(x,t) - \hat{\ell}_{n}^{\eta}(y,s)
      \right|
      > 
      \varepsilon/2
    \right) \\
    &\quad 
    + 
    P_{\rho_{n}}^{G_{n}}
    \left(
      2
      \sup_{\substack{x, y \in \hat{V}_{n}^{(r + 1)}, \\ \hat{R}_{n}(x, y) < \eta}}
      \sup_{0 \leq t \leq T}
      \left|
        \hat{\ell}_{n}(x, t) - \hat{\ell}_{n}(y, t)
      \right|
      > 
      \varepsilon/2
    \right).
    \label{5. eq: precpt of local times, decomposition}
  \end{align}
  Hence,
  by \eqref{5. eq: precpt of local times, decomposition},
  Lemmas~\ref{5. lem: precpt of local times at same times} and~\ref{5. lem: continuity of delta approximation},
  we obtain the desired result.
\end{proof}

Now it is straightforward to complete the proof of Theorem~\ref{1. thm: deterministic, main result}.
Indeed, since Lemma~\ref{5. lem: precpt of local times} implies the tightness of the local times,
it remains to show the uniqueness of the limit of any convergent subsequence.
This can be done exactly along the same lines as the proof of \cite[Theorem~1.9]{Noda_pre_Convergence}.

\begin{proof} [Proof of the second part of Theorem~\ref{1. thm: deterministic, main result}]
  Since we can now follow the proof of \cite[Theorem~1.9]{Noda_pre_Convergence},
  we only give a sketch.
  We first check that $(\cY_{n})_{n \geq 1}$ satisfies all the conditions 
  \ref{2. thm item: precpt in M_L, spaces are precompact}-\ref{2. thm item: precpt in M_L, equicontinuity of local times}
  of Theorem~\ref{2. thm: precpt in M_L}.
  By Theorem~\ref{2. thm: precompactness in M} 
  and Proposition~\ref{5. prop: convergence of processes},
  we obtain 
  \ref{2. thm item: precpt in M_L, spaces are precompact},
  \ref{2. thm item: precpt in M_L, values of processes are precompact} 
  and \ref{2. thm item: precpt in M_L, equicontinuity of processes}.
  The condition \ref{2. thm item: precpt in M_L, equicontinuity of local times}
  follows from Lemma~\ref{5. lem: precpt of local times}.
  Since we have $\hat{\ell}_{n}(x,0)=0$ for all $x \in \hat{V}_{n}$,
  we obtain \ref{2. thm item: precpt in M_L, boundedness of local times}.
  Hence, the sequence $(\cY_{n})_{n \geq 1}$ is precompact in $\mbM_{L}$.
  Suppose that a subsequence $(\cY_{n_{k}})_{k \geq 1}$ converges to 
  some $\cX$ in $\mbM_{L}$.
  Then, by Assumption~\ref{1. assum: deterministic version}\ref{1. assum item: deterministic, convergence of spaces},
  we can write $\cX = (F, R, \rho, \mu, \pi)$.
  Let $(X, L)$ be a random element of $D(\RNp, F) \times \hatC(F \times \RNp, \RN)$
  whose law coincides with $\pi$.
  Proposition~\ref{5. prop: convergence of processes} implies that $X \stackrel{\mathrm{d}}{=} X_{G}$.
  Since the discrete local times satisfy the occupation density formula~\eqref{1. eq: discrete-time occupation density formula},
  we deduce that $L$ also satisfies the occupation density formula~\eqref{3. eq: the occupation density formula},
  which implies that $(X, L) \stackrel{\mathrm{d}}{=} (X_{G}, L_{G})$,
  as shown in the proof of \cite[Theorem~1.9]{Noda_pre_Convergence}.
  This completes the proof.
\end{proof}


\section{Proof of Theorem~\ref{1. thm: random, main result}}  \label{sec: proof of the main results 2}

In this section, we prove Theorem~\ref{1. thm: random, main result},
which is a version of Theorem~\ref{1. thm: deterministic, main result} for random electrical networks.
The first part of Theorem~\ref{1. thm: random, main result} is immediately obtained from the following result,
which is an analogue of Theorem~\ref{5. thm: recurrence is preserved by the non-explosion condition} 
for random resistance metric spaces.

\begin{prop} \label{6. prop: random recurrence is preserved by the non-explosion condition}
  For each $n \geq 1$,
  let $(F_{n}, R_{n}, \rho_{n})$
  be a random element of $\mathbb{D}$ with underlying complete probability measure $P_{n}$
  such that $R_{n}$ is a resistance metric with probability $1$.
  Assume that there exists a random element $(F, R, \rho)$ of $\mathbb{D}$
  with underlying complete probability measure $P$
  such that 
  $(F_{n}, R_{n}, \rho_{n})$ converges to $(F, R, \rho)$ 
  in distribution
  in the local Gromov--Hausdorff topology.
  Then $R$ is a resistance metric with probability $1$.
  Moreover,
  if it holds that 
  \begin{equation} \label{6. prop eq: random preserving recurrence, effective resistance comparison}
  \lim_{r \to \infty}
  \limsup_{n \to \infty}
  P_{n}
  \left(
    R_{n}(\rho_{n}, B_{R_{n}}(\rho_{n}, r)^{c}) > \lambda
  \right)
  =1,
  \quad
  \forall 
  \lambda > 0,
  \end{equation}
  then $(F, R)$ is recurrent with probability $1$.
\end{prop}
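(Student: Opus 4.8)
The plan is to deduce both assertions from the deterministic statement Theorem~\ref{5. thm: recurrence is preserved by the non-explosion condition} via the Skorohod representation theorem. Since $\mathbb{D}$ equipped with the local Gromov-Hausdorff topology is a Polish space and $(F_{n}, R_{n}, \rho_{n}) \xrightarrow{\mathrm{d}} (F, R, \rho)$, I can build, on some probability space $(\Omega, \mathcal{A}, \widetilde{P})$, copies $(F_{n}', R_{n}', \rho_{n}')$ and $(F', R', \rho')$ with $(F_{n}', R_{n}', \rho_{n}') \stackrel{\mathrm{d}}{=} (F_{n}, R_{n}, \rho_{n})$, $(F', R', \rho') \stackrel{\mathrm{d}}{=} (F, R, \rho)$, and $(F_{n}', R_{n}', \rho_{n}') \to (F', R', \rho')$ holding $\widetilde{P}$-almost surely in the local Gromov-Hausdorff topology. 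As ``being a resistance metric'' is a measurable property of an element of $\mathbb{D}$, each $R_{n}'$ is $\widetilde{P}$-a.s.\ a resistance metric. For $\widetilde{P}$-a.e.\ $\omega$ the hypotheses of Theorem~\ref{5. thm: recurrence is preserved by the non-explosion condition} hold pathwise, so its first assertion gives that $R'$ is a resistance metric; transferring through $(F', R', \rho') \stackrel{\mathrm{d}}{=} (F, R, \rho)$ yields that $R$ is a resistance metric with probability $1$.

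For the recurrence statement I set $Z_{n}^{(r)} := R_{n}(\rho_{n}, B_{R_{n}}(\rho_{n}, r)^{c})$ and $W_{n}^{(r)} := R_{n}'(\rho_{n}', B_{R_{n}'}(\rho_{n}', r)^{c})$, so that $W_{n}^{(r)} \stackrel{\mathrm{d}}{=} Z_{n}^{(r)}$ and both are non-decreasing in $r$ (shrinking the target set increases the effective resistance to it). By the second assertion of Theorem~\ref{5. thm: recurrence is preserved by the non-explosion condition}, applied pathwise on the Skorohod space, the recurrence of $(F', R')$ will follow once I show that, $\widetilde{P}$-almost surely,
\begin{equation}
  \lim_{r \to \infty} \limsup_{n \to \infty} W_{n}^{(r)} = \infty.
\end{equation}
The crux is to convert the in-probability hypothesis \eqref{6. prop eq: random preserving recurrence, effective resistance comparison} into this pathwise divergence. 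Applying the reverse Fatou lemma to the uniformly bounded indicators $\mathbf{1}_{\{W_{n}^{(r)} > \lambda\}}$ and using that $\{W_{n}^{(r)} > \lambda \ \text{infinitely often}\} \subseteq \{\limsup_{n} W_{n}^{(r)} \geq \lambda\}$, I obtain, for each fixed $r$ and $\lambda > 0$,
\begin{equation}
  \limsup_{n \to \infty} P_{n}\bigl(Z_{n}^{(r)} > \lambda\bigr)
  = \limsup_{n \to \infty} \widetilde{P}\bigl(W_{n}^{(r)} > \lambda\bigr)
  \leq \widetilde{P}\Bigl( \limsup_{n \to \infty} W_{n}^{(r)} \geq \lambda \Bigr).
\end{equation}

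Letting $r \to \infty$ and using that the events $\{\limsup_{n} W_{n}^{(r)} \geq \lambda\}$ increase in $r$ (by monotonicity of $W_{n}^{(r)}$ in $r$) to a subset of $\{\lim_{r}\limsup_{n} W_{n}^{(r)} \geq \lambda\}$, while the left-hand side tends to $1$ by \eqref{6. prop eq: random preserving recurrence, effective resistance comparison}, I conclude that $\widetilde{P}(\lim_{r}\limsup_{n} W_{n}^{(r)} \geq \lambda) = 1$ for every $\lambda > 0$; intersecting over $\lambda \in \mathbb{N}$ gives the desired almost-sure divergence. Since ``$(F, R)$ is recurrent'' is a measurable event and $(F', R', \rho') \stackrel{\mathrm{d}}{=} (F, R, \rho)$, the pathwise recurrence of $(F', R')$ transfers back to give that $(F, R)$ is recurrent with probability one, completing the proof. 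I expect the main obstacle to be precisely this interchange of $\limsup_{n}$ with probabilities: the hypothesis controls the $\limsup_{n}$ of a sequence of probabilities, whereas recurrence via Theorem~\ref{5. thm: recurrence is preserved by the non-explosion condition} requires the pathwise $\limsup_{n}$, and it is the reverse Fatou estimate combined with the monotonicity in $r$ that bridges the gap. A secondary, more routine point is the measurability of the functionals $Z_{n}^{(r)}$ and of the events ``$R$ is a resistance metric'' and ``$(F,R)$ is recurrent'', which underlies both the meaningfulness of the hypothesis and the transfer of almost-sure conclusions back to the original laws.
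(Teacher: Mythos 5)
Your proposal is correct and follows essentially the same route as the paper: Skorohod representation on the Polish space $\mathbb{D}$, pathwise application of Theorem \ref{5. thm: recurrence is preserved by the non-explosion condition}, and a reverse-Fatou argument (together with monotonicity in $r$ and intersection over $\lambda$) to upgrade the in-probability hypothesis to the almost-sure divergence of the effective resistance. The only cosmetic difference is that you isolate the almost-sure divergence of $\lim_{r}\limsup_{n}W_{n}^{(r)}$ as an intermediate step before invoking the deterministic theorem, whereas the paper folds everything into a single chain of inequalities.
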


\begin{proof}
  Since the space $\mathbb{D}$ is separable,
  we can use the Skorohod representation theorem 
  and assume that $(F_{n}, R_{n}, \rho_{n}) \to (F, R, \rho)$
  almost surely on some complete probability space.
  We denote the underlying probability measure by $Q$.
  It follows from Theorem~\ref{5. thm: recurrence is preserved by the non-explosion condition}
  that $R$ is a resistance metric almost surely.
  Moreover, 
  by the theorem and (reverse) Fatou's lemma,
  we deduce that 
  \begin{align}
    P 
    \left(
      \lim_{r \to \infty} 
      R(\rho, B_{R}(\rho, r)^{c}) 
      =
      \infty
    \right)
    &=
    \lim_{\lambda \to \infty}
    P 
    \left(
      \lim_{r \to \infty} 
      R(\rho, B_{R}(\rho, r)^{c}) 
      >
      \lambda
    \right) \\ 
    &\geq 
    \lim_{\lambda \to \infty}
    Q 
    \left(
      \lim_{r \to \infty} 
      \limsup_{n \to \infty}
      R_{n}(\rho_{n}, B_{R_{n}}(\rho_{n}, r)^{c})
      >
      \lambda
    \right) \\
    &\geq 
    \lim_{\lambda \to \infty}
    \limsup_{r \to \infty}
    \limsup_{n \to \infty}
    P
    \left(
      R_{n}(\rho_{n}, B_{R_{n}}(\rho_{n}, r)^{c})
      >
      \lambda
    \right) \\
    &=
    1.
  \end{align}
\end{proof}

The convergence of (scaled) discrete-time Markov chains and their local times on random electrical networks
is proven in a similar way to the deterministic setting.
Henceforth, we use the same notation introduced in Section~\ref{sec: proof of main result 1},
and verify analogues of several results in that section for random electrical networks.
The following corresponds to Proposition~\ref{5. prop: convergence of processes}.

\begin{prop}  \label{6. prop: convergence of processes}
  Under Assumption~\ref{1. assum: random version}\ref{1. assum item: random, convergence of spaces} 
  and \ref{1. assum item: random, the non-explosion condition},
  it holds that 
  \begin{equation}
    \left(
      \hat{V}_{n}, \hat{R}_{n}, \hat{\rho}_{n}, \hat{\mu}_{n}, P_{\rho_{n}}^{G_{n}}(\hat{Y}_{n} \in \cdot)
    \right)
    \xrightarrow{\mathrm{d}}
    \left(
      F, R, \rho, \mu, P_{\rho}^{G}(X_{G} \in \cdot)
    \right)
  \end{equation}
  as random elements of $\mbM$.
\end{prop}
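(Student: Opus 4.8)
The plan is to deduce the assertion from the corresponding statement for the rescaled \emph{continuous-time} walks together with a time-change argument, reducing the distributional convergence to an almost-sure statement via the Skorohod representation theorem in the Polish space $\mbM$. Throughout, recall that $\hat{Y}_{n}$ and $\hat{X}_{n}(t) \coloneqq X_{G_{n}}(a_{n}b_{n}t)$ are coupled through \eqref{4. eq: coupling of X_G and Y_G}, that by Theorem \ref{4. thm: the Hunt process associated with electrical network} the process $\hat{X}_{n}$ is the one associated with $(\hat{V}_{n}, \hat{R}_{n}, \hat{\mu}_{n})$, and that the quenched process law is a measurable function of the network (the $\mbM$-version of Corollary \ref{4. cor: Y_G is measurable}, obtained by discarding local times), so that both sides are genuine random elements of $\mbM$; the limit is well-defined because $G$ is recurrent $\mathbf{P}$-a.s.\ by Proposition \ref{6. prop: random recurrence is preserved by the non-explosion condition}.

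First I would establish the continuous-time analogue, namely
\begin{equation}
  \bigl( \hat{V}_{n}, \hat{R}_{n}, \hat{\rho}_{n}, \hat{\mu}_{n}, P_{\rho_{n}}^{G_{n}}(\hat{X}_{n} \in \cdot) \bigr)
  \xrightarrow{\mathrm{d}}
  \bigl( F, R, \rho, \mu, P_{\rho}^{G}(X_{G} \in \cdot) \bigr)
  \quad \text{in } \mbM.
\end{equation}
This is the random counterpart of \cite[Proposition 5.4]{Noda_pre_Convergence}: since Assumption \ref{1. assum: random version}\ref{1. assum item: random, convergence of spaces} and \ref{1. assum item: random, the non-explosion condition} coincide with the hypotheses used there, it is obtained by the same method, i.e.\ by verifying the $\mbM$-tightness criterion (Theorem \ref{2. thm: tightness in M}) — whose conditions on the processes follow from the random non-explosion condition via the annealed form of the exit-time estimate of Lemma \ref{5. lem: precompactness for exit times} — and then identifying subsequential limits exactly as in the deterministic case. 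This is the step where the non-explosion hypothesis is consumed, and it must be carried out at the annealed level.

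Next I would invoke the Skorohod representation theorem for $\mbM$ to realize this convergence almost surely: on a common probability space $(\Omega, Q)$ one obtains copies for which $\bigl( \hat{V}_{n}, \hat{R}_{n}, \hat{\rho}_{n}, \hat{\mu}_{n}, P_{\rho_{n}}^{G_{n}}(\hat{X}_{n} \in \cdot) \bigr)(\omega)$ converges in $\mbM$ to $\bigl( F, R, \rho, \mu, P_{\rho}^{G}(X_{G} \in \cdot) \bigr)(\omega)$ for $Q$-a.e.\ $\omega$. Fixing such an $\omega$, Theorem \ref{2. thm: space M, convergence} lets me embed the realized spaces into a common rooted boundedly-compact space $M_{\omega}$ so that the spatial data converge and $P_{\rho_{n}}^{G_{n}}(\hat{X}_{n} \in \cdot) \to P_{\rho}^{G}(X_{G} \in \cdot)$ weakly on $D(\mbRp, M_{\omega})$. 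For this fixed $\omega$ the realized objects form a \emph{deterministic} sequence of rooted electrical networks whose rescaled continuous-time walks converge in $\mbM$, so the time-change argument from the proof of Proposition \ref{5. prop: convergence of processes} applies verbatim: writing $\hat{Y}_{n} = \hat{X}_{n} \circ \lambda_{n}$ with $\lambda_{n}(t) = (a_{n}b_{n})^{-1} J_{n}^{\lfloor a_{n}b_{n}t \rfloor}$, the strong law of large numbers for the i.i.d.\ $\mathrm{Exp}(1)$ holding times gives $\lambda_{n} \to \idty_{\mbRp}$, whence $\hat{Y}_{n} \to X_{G}$ by continuity of composition (cf.\ \cite[Theorem 13.2.2]{Whitt_02_Stochastic}). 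Since this uses only the law of large numbers for the holding times and no property of the networks, it is valid for every realization; hence the realized discrete-time objects converge in $\mbM$ for $Q$-a.e.\ $\omega$, and therefore the random elements converge in distribution, which is the claim.

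The main obstacle is precisely that one cannot reduce directly to the deterministic Proposition \ref{5. prop: convergence of processes} realization by realization: the random non-explosion condition of Assumption \ref{1. assum: random version}\ref{1. assum item: random, the non-explosion condition} does \emph{not} imply that the deterministic non-explosion condition, and hence the exit-time estimate \eqref{5. lem eq: precompactness for discrete exit times}, holds for $\mathbf{P}$-almost every realization, because local Gromov-Hausdorff-vague convergence exerts no uniform control on the large-scale resistance of the prelimit spaces. This is what forces the continuous-time convergence to be proved at the annealed level — where the random non-explosion condition is exactly the right hypothesis — after which the passage to discrete time, being quenched-uniform, may be performed one realization at a time.
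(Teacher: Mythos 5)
Your proposal is correct and follows essentially the same route as the paper: the paper cites the continuous-time annealed convergence directly from \cite[Proposition 6.7]{Noda_pre_Convergence} (the random counterpart of \cite[Proposition 5.4]{Noda_pre_Convergence} that you propose to re-derive via the tightness criterion), then applies the Skorohod representation theorem and runs the time-change argument from the proof of Proposition \ref{5. prop: convergence of processes} realization by realization. Your closing observation — that only the time-change portion of the deterministic proof, and not the exit-time estimates, is applied quenched — correctly identifies why the reduction works.
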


\begin{proof}
  Set $\hat{X}_{n}(t) \coloneqq X_{G_{n}}(a_{n}b_{n}t)$,
  which is the continuos-time Markov chain associated with $(\hat{V}_{n}, \hat{R}_{n}, \hat{\mu}_{n})$.
  By \cite[Proposition 6.7]{Noda_pre_Convergence},
  we have that 
  \begin{equation}
    \left(
      \hat{V}_{n}, \hat{R}_{n}, \hat{\rho}_{n}, \hat{\mu}_{n}, P_{\rho_{n}}^{G_{n}}(\hat{X}_{n} \in \cdot)
    \right)
    \xrightarrow{\mathrm{d}}
    \left(
      F, R, \rho, \mu, P_{\rho}^{G}(X_{G} \in \cdot)
    \right)
  \end{equation}
  as random elements of $\mbM$.
  Using the Skorohod representation theorem,
  we may assume that the above convergence holds almost surely 
  on some probability space.
  Then, by the same argument as the proof of Proposition~\ref{5. prop: convergence of processes},
  we deduce that it holds that, with probability $1$,
  \begin{equation}
    \left(
      \hat{V}_{n}, \hat{R}_{n}, \hat{\rho}_{n}, \hat{\mu}_{n}, P_{\rho_{n}}^{G_{n}}(\hat{Y}_{n} \in \cdot)
    \right)
    \to
    \left(
      F, R, \rho, \mu, P_{\rho}^{G}(X_{G} \in \cdot)
    \right),
  \end{equation}
  which completes the proof.
\end{proof}

Below,
we prove a version of Lemma~\ref{5. lem: uniform non-explosion condition} for random electrical networks.

\begin{lem} \label{6. lem: uniform non-explosion condition}
  Under Assumption~\ref{1. assum: random version}\ref{1. assum item: random, convergence of spaces} 
  and \ref{1. assum item: random, the non-explosion condition},
  it holds that 
  \begin{gather}
    \lim_{r \to \infty}
    \liminf_{n \to \infty}
    \mathbf{P}_{n}\!
    \left(
      \inf_{x \in \hat{V}_{n}^{(r_{0})}}
      \hat{R}_{n}(x, B_{\hat{R}_{n}}(x, r)^{c})
      >
      \lambda
    \right)
    =1
    , 
    \quad 
    \forall r_{0}, \lambda > 0, 
    \label{6. lem eq: uniform non-explosion condition}\\
    \lim_{r \to \infty}
    \limsup_{n \to \infty}
    \mathbf{P}_{n}\!
    \left(
      \sup_{x \in \hat{V}_{n}^{(r_{0})}}
      P_{x}^{G_{n}}
      \left(
        T_{B_{\hat{R}_{n}}(x, r)^{c}} \leq a_{n}b_{n}T
      \right)
      >
      \varepsilon
    \right)
    =0,
    \quad 
    \forall 
    r_{0}, T, \varepsilon >0,
    \label{6. lem eq: uniform non-explosion condition, exit time}\\
    \lim_{r \to \infty}
    \limsup_{\varepsilon \to 0}
    \limsup_{n \to \infty}
    \mathbf{P}_{n}\!
    \left(
      \tilde{\mu}_{n}^{(r)}(\tilde{V}_{n}^{(r)}) < \varepsilon
    \right)
    =
    0.
    \label{6. lem eq: uniform non-explosion condition, lower bound measures}
  \end{gather}
\end{lem}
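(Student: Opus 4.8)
The plan is to mirror the deterministic argument of Lemma \ref{5. lem: uniform non-explosion condition}, upgrading each pathwise bound there to a statement in probability, and to obtain the three assertions in the order \eqref{6. lem eq: uniform non-explosion condition}, \eqref{6. lem eq: uniform non-explosion condition, exit time}, \eqref{6. lem eq: uniform non-explosion condition, lower bound measures}, with the first serving as the engine for the other two. For \eqref{6. lem eq: uniform non-explosion condition} I would start from the purely pathwise inequality \eqref{5. eq: uniform non-explosion condition, changing root}, which holds on the full-probability event that $\hat{R}_n$ is a resistance metric: for every $x \in \hat{V}_n^{(r_0)}$ and $r > 2r_0$ one has $\hat{R}_n(x, B_{\hat{R}_n}(x,r)^c) \geq \hat{R}_n(\hat{\rho}_n, B_{\hat{R}_n}(\hat{\rho}_n, r/2)^c) - r_0$. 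Taking the infimum over $x$ gives the event inclusion $\{\hat{R}_n(\hat{\rho}_n, B_{\hat{R}_n}(\hat{\rho}_n, r/2)^c) > \lambda + r_0\} \subseteq \{\inf_{x \in \hat{V}_n^{(r_0)}} \hat{R}_n(x, B_{\hat{R}_n}(x,r)^c) > \lambda\}$, so that Assumption \ref{1. assum: random version}\ref{1. assum item: random, the non-explosion condition}, applied with radius $r/2$ and level $\lambda + r_0$, yields \eqref{6. lem eq: uniform non-explosion condition} after passing to $\liminf_n$ and then $\lim_r$.

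For \eqref{6. lem eq: uniform non-explosion condition, exit time} I would apply Lemma \ref{4. lem: exit time estimate for Y} pathwise, treating each $x \in \hat{V}_n^{(r_0)}$ as a root. After scaling this bounds $P_x^{G_n}(T_{B_{\hat{R}_n}(x,r)^c} \leq a_n b_n T)$ by $\lambda^{-1}$ plus two terms whose denominators carry $\hat{R}_n(x, B_{\hat{R}_n}(x,r)^c)$ and the ball mass $\hat{\mu}_n(B_{\hat{R}_n}(x,\delta))$. Choosing $\delta = r_0 + 1$ forces $B_{\hat{R}_n}(x,\delta) \supseteq B_{\hat{R}_n}(\hat{\rho}_n, 1)$ for all $x \in \hat{V}_n^{(r_0)}$, so the mass term is controlled uniformly in $x$ by the single quantity $\hat{\mu}_n(B_{\hat{R}_n}(\hat{\rho}_n, 1))$. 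Fixing $\varepsilon$, taking $\lambda$ large, and then using \eqref{6. lem eq: uniform non-explosion condition} to make the resistance denominators large with high probability, reduces the claim to the measure lower bound, to which I shall refer as $(\star)$:
\[
  \lim_{\kappa \to 0} \limsup_{n \to \infty} \mathbf{P}_n\bigl( \hat{\mu}_n(B_{\hat{R}_n}(\hat{\rho}_n, 1)) \leq \kappa \bigr) = 0.
\]

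Finally, \eqref{6. lem eq: uniform non-explosion condition, lower bound measures} would follow from the pathwise volume estimate \eqref{5. eq: uniform non-explosion condition, volume estimate} (itself a consequence of Corollary \ref{4. cor: difference for trace measure at one point} and Theorem \ref{4. thm: expression of trace conductance}), which gives $\tilde{\mu}_n^{(r)}(\tilde{V}_n^{(r)}) \geq \hat{\mu}_n^{(1)}(\hat{V}_n^{(1)})\bigl(1 - \sup_{x \in \hat{V}_n^{(1)}} P_x^{G_n}(T_{B_{\hat{R}_n}(\hat{\rho}_n, r)^c} \leq 1)\bigr)$. The supremum is handled by \eqref{6. lem eq: uniform non-explosion condition, exit time} (the one-step event being contained in $\{T_{\,\cdot\,} \leq a_n b_n T\}$ for large $n$), while the prefactor $\hat{\mu}_n^{(1)}(\hat{V}_n^{(1)})$ is again controlled by $(\star)$; the extra $\limsup_{\varepsilon \to 0}$ in \eqref{6. lem eq: uniform non-explosion condition, lower bound measures} is precisely what absorbs the smallness of the prefactor.

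Everything therefore comes down to $(\star)$, and establishing it is the main obstacle. My plan for $(\star)$ is to apply the Skorohod representation theorem to the convergence in distribution of Assumption \ref{1. assum: random version}\ref{1. assum item: random, convergence of spaces}, realizing the Gromov--Hausdorff-vague convergence almost surely through root-and-distance-preserving embeddings into a common space $M$ via Theorem \ref{2. thm: characterization of convergence in the local. GHV}. Then $\hat{\mu}_n \to \mu$ vaguely on $M$ and $\hat{\rho}_n \to \rho$, so the portmanteau inequality for the open ball together with $\hat{\rho}_n \to \rho$ gives $\liminf_n \hat{\mu}_n(B_{\hat{R}_n}(\hat{\rho}_n, 1)) \geq \mu(B_R(\rho, 1))$ almost surely, whence a reverse-Fatou argument bounds the left-hand side of $(\star)$ by $\mathbf{P}(\mu(B_R(\rho,1)) = 0)$. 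The genuinely delicate point is thus the non-degeneracy of the limiting measure near the root, namely $\mu(B_R(\rho,1)) > 0$ almost surely; I expect to secure this from the non-vanishing of $\mu$ supplied by the convergence, if necessary enlarging the fixed radius $1$ to one growing with $r$ so that only $\mu \neq 0$ almost surely is required, and this reduction is the crux of the whole argument.
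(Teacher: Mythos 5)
Your proposal is correct and follows essentially the same route as the paper's proof: \eqref{6. lem eq: uniform non-explosion condition} via the recentering inequality \eqref{5. eq: uniform non-explosion condition, changing root} and Assumption \ref{1. assum: random version}\ref{1. assum item: random, the non-explosion condition}, \eqref{6. lem eq: uniform non-explosion condition, exit time} via Lemma \ref{4. lem: exit time estimate for Y}, and \eqref{6. lem eq: uniform non-explosion condition, lower bound measures} via the volume estimate \eqref{5. eq: uniform non-explosion condition, volume estimate}, with your $(\star)$ playing exactly the role of the unit-ball mass bound the paper asserts in \eqref{6. eq: uniform non-explosion condition, meaures with radius 1}. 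The ``crux'' you isolate is not a real obstacle: the paper simply reads the analogue of $(\star)$ off Assumption \ref{1. assum: random version}\ref{1. assum item: random, convergence of spaces}, which is legitimate because the first part of Theorem \ref{1. thm: random, main result} --- that $G$ is almost surely an element of $\check{\mathbb{F}}$, hence $\mu$ has full support and charges every ball around $\rho$ --- is established before this lemma is invoked, so your Skorohod/portmanteau argument closes the gap without needing to enlarge the radius.
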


\begin{proof}
  By \eqref{5. eq: uniform non-explosion condition, changing root},
  we have that, for all $r > 2r_{0}$,
  \begin{equation}
    \mathbf{P}_{n}\!
    \left(
      \inf_{x \in \hat{V}_{n}^{(r_{0})}}
      \hat{R}_{n}(x, B_{\hat{R}_{n}}(x, r)^{c})
      >
      \lambda
    \right)
    \geq 
    \mathbf{P}_{n}\!
    \left(
      \hat{R}_{n}(\hat{\rho}_{n}, B_{\hat{R}_{n}}(\hat{\rho}_{n}, r/2)^{c})
      >
      \lambda + r_{0}
    \right).
  \end{equation}
  This, combined with Assumption~\ref{1. assum: random version}\ref{1. assum item: random, the non-explosion condition},
  immediately yields \eqref{6. lem eq: uniform non-explosion condition}.
  From Lemma~\ref{4. lem: exit time estimate for Y} and \eqref{6. lem eq: uniform non-explosion condition},
  we deduce \eqref{6. lem eq: uniform non-explosion condition, exit time}.
  Using \eqref{5. eq: uniform non-explosion condition, volume estimate}, 
  we deduce that, for all $r>1$,
  \begin{align} 
    \mathbf{P}_{n}\!
    \left(
      \tilde{\mu}_{n}^{(r)}(\tilde{V}_{n}^{(r)}) < \varepsilon
    \right)
    &\leq 
    \mathbf{P}_{n}\!
    \left(
      \hat{\mu}_{n}^{(1)}(\hat{V}_{n}^{(1)})
      -
      \hat{\mu}_{n}^{(1)}(\hat{V}_{n}^{(1)})
      \sup_{x \in \hat{V}_{n}^{(1)}}
      P_{x}^{G_{n}}
      \left(
        T_{B_{\hat{R}_{n}}(\hat{\rho}_{n}, r)^{c}} \leq 1
      \right)
      < \varepsilon
    \right) \\
    &\leq 
    \mathbf{P}_{n}\!
    \left(
      \frac{1}{2}
      <
      \sup_{x \in \hat{V}_{n}^{(1)}}
      P_{x}^{G_{n}}
      \left(
        T_{B_{\hat{R}_{n}}(\hat{\rho}_{n}, r)^{c}} \leq 1
      \right)
    \right)
    +
    \mathbf{P}_{n}\!
    \left(
      \hat{\mu}_{n}^{(1)}(\hat{V}_{n}^{(1)}) < 2 \varepsilon
    \right)
    \label{6. eq: uniform non-explosion condition, measure estimate}
  \end{align}
  Since Assumption~\ref{1. assum: random version}\ref{1. assum item: random, convergence of spaces} implies that 
  \begin{equation}  \label{6. eq: uniform non-explosion condition, meaures with radius 1}
    \lim_{\varepsilon \to 0}
    \limsup_{n \to \infty}
    \mathbf{P}_{n}\!
    \left(
      \hat{\mu}_{n}^{(1)}(\hat{V}_{n}^{(1)}) < 2 \varepsilon
    \right)
    =
    0,
  \end{equation}
  we obtain \eqref{6. lem eq: uniform non-explosion condition, lower bound measures}
  by \eqref{6. lem eq: uniform non-explosion condition, exit time}, 
  \eqref{6. eq: uniform non-explosion condition, measure estimate}
  and \eqref{6. eq: uniform non-explosion condition, meaures with radius 1}.
\end{proof}

The following two lemmas correspond to Lemmas~\ref{5. lem: uniform convergence of conductances} and \ref{5. lem: precpt of trace local times at same times}.

\begin{lem} \label{6. lem: uniform convergence of conductances}
  If Assumption~\ref{1. assum: random version}\ref{1. assum item: random, convergence of spaces} 
  and \ref{1. assum item: random, the non-explosion condition} are satisfied,
  then it holds that 
  \begin{equation}
    \lim_{r \to \infty}
    \limsup_{n \to \infty}
    \mathbf{P}_{n}
    \left(
      \sup_{x \in \hat{V}_{n}^{(r_{0})}}
      \left|
        \frac{c_{\tilde{G}_{n}^{(a_{n}r)}}(x)}{c_{G_{n}}(x)} - 1
      \right|
      > 
      \varepsilon
    \right)
    =0,
    \quad 
    \forall 
    r_{0}, \varepsilon>0.
  \end{equation} 
\end{lem}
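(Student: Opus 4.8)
The plan is to mirror the deterministic argument of Lemma \ref{5. lem: uniform convergence of conductances}, replacing the uniform-in-$n$ exit-time bound by its random analogue \eqref{6. lem eq: uniform non-explosion condition, exit time} established in Lemma \ref{6. lem: uniform non-explosion condition}. First I would invoke Corollary \ref{4. cor: difference for trace measure at one point}, applied to the trace $\tilde{G}_{n}^{(a_{n}r)}$ of $G_{n}$ onto $B_{\hat{R}_{n}}(\hat{\rho}_{n}, r)$, to obtain, for each $x \in \hat{V}_{n}^{(r_{0})}$, the pointwise estimate
\[
  \left| \frac{c_{\tilde{G}_{n}^{(a_{n}r)}}(x)}{c_{G_{n}}(x)} - 1 \right|
  = P_{x}^{G_{n}}\bigl( Y_{n}(T_{B_{\hat{R}_{n}}(\hat{\rho}_{n}, r)}) = x \bigr)
  \leq P_{x}^{G_{n}}\bigl( T_{B_{\hat{R}_{n}}(\hat{\rho}_{n}, r)^{c}} \leq 1 \bigr),
\]
exactly as in the proof of Lemma \ref{5. lem: uniform convergence of conductances}.

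Next I would use the ball inclusion $B_{\hat{R}_{n}}(x, r/2) \subseteq B_{\hat{R}_{n}}(\hat{\rho}_{n}, r)$, valid for $r > 2r_{0}$ and $x \in \hat{V}_{n}^{(r_{0})}$ by the triangle inequality, to pass to a \emph{centred} exit probability, obtaining the pathwise bound
\[
  \sup_{x \in \hat{V}_{n}^{(r_{0})}} \left| \frac{c_{\tilde{G}_{n}^{(a_{n}r)}}(x)}{c_{G_{n}}(x)} - 1 \right|
  \leq \sup_{x \in \hat{V}_{n}^{(r_{0})}} P_{x}^{G_{n}}\bigl( T_{B_{\hat{R}_{n}}(x, r/2)^{c}} \leq 1 \bigr),
\]
which is precisely inequality \eqref{5. eq: conductance convergence, bound by exit times}. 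The crucial point is that this inequality holds for every realisation of $G_{n}$, so it is legitimate to take the outer probability $\mathbf{P}_{n}$ of both sides.

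The only genuinely new ingredient is converting the one-step exit probability into the form appearing in \eqref{6. lem eq: uniform non-explosion condition, exit time}. Since $a_{n}b_{n} \to \infty$, for any fixed $T > 0$ we have $a_{n}b_{n}T \geq 1$ for all sufficiently large $n$, whence $\{ T_{B_{\hat{R}_{n}}(x, r/2)^{c}} \leq 1 \} \subseteq \{ T_{B_{\hat{R}_{n}}(x, r/2)^{c}} \leq a_{n}b_{n}T \}$, and therefore, for such $n$,
\[
  \mathbf{P}_{n}\!\left( \sup_{x \in \hat{V}_{n}^{(r_{0})}} \left| \frac{c_{\tilde{G}_{n}^{(a_{n}r)}}(x)}{c_{G_{n}}(x)} - 1 \right| > \varepsilon \right)
  \leq \mathbf{P}_{n}\!\left( \sup_{x \in \hat{V}_{n}^{(r_{0})}} P_{x}^{G_{n}}\bigl( T_{B_{\hat{R}_{n}}(x, r/2)^{c}} \leq a_{n}b_{n}T \bigr) > \varepsilon \right).
\]
Taking $\limsup_{n \to \infty}$ and then $r \to \infty$ (so that $r/2 \to \infty$), the right-hand side vanishes by \eqref{6. lem eq: uniform non-explosion condition, exit time} applied with radius parameter $r/2$, which is the desired conclusion.

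I do not expect a serious obstacle: the argument is a routine probabilistic lift of the deterministic Lemma \ref{5. lem: uniform convergence of conductances}. The two points requiring a little care are that the first two displays are genuinely pathwise bounds (so that wrapping them in $\mathbf{P}_{n}(\,\cdot\,)$ is justified), and that the uniform statement $\sup_{n \geq 1}$ of the deterministic setting must here be relaxed to the ``$\limsup_{n}$ in probability'' form \eqref{6. lem eq: uniform non-explosion condition, exit time} — which is exactly what the nested limit $\lim_{r \to \infty}\limsup_{n \to \infty}$ in the conclusion is designed to absorb.
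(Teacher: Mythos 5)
Your proposal is correct and follows essentially the same route as the paper, which proves the lemma by combining the pathwise bound \eqref{5. eq: conductance convergence, bound by exit times} with the random exit-time estimate \eqref{6. lem eq: uniform non-explosion condition, exit time}. The only extra detail you supply — using $a_{n}b_{n}T \geq 1$ for large $n$ to match the time scale in \eqref{6. lem eq: uniform non-explosion condition, exit time} — is exactly the (implicit) step the paper leaves to the reader.
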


\begin{proof}
  This is an immediate consequence of \eqref{5. eq: conductance convergence, bound by exit times}
  and \eqref{6. lem eq: uniform non-explosion condition, exit time}.
\end{proof}

\begin{lem} \label{6. lem: precpt of trace local times at same times}
  Under Assumption~\ref{1. assum: deterministic version},
  for all $\varepsilon, \varepsilon_{1}, \varepsilon_{2}, T, \eta>0$,
  \begin{gather}
    \lim_{r \to \infty}
    \limsup_{\delta \to 0}
    \limsup_{n \to \infty}
    \mathbf{P}_{n}\!
    \left(
      P_{\tilde{\rho}_{n}^{(r)}}^{\tilde{G}_{n}^{(a_{n}r)}}\!
      \left(
        \sup_{ \stackrel{x, y \in \tilde{V}_{n}^{(r)}}{\tilde{R}_{n}^{(r)}(x,y) < \delta}} 
        \sup_{0 \leq t \leq T}
        \left|
          \tilde{\ell}_{n}^{(r)} (x, t) - \tilde{\ell}_{n}^{(r)} (y, t)
        \right|
        >
        \varepsilon_{1}
      \right)
      > 
      \varepsilon_{2}
    \right)
    =0,
    \label{6. lem eq: precpt of trace local times at same times, equicontinuity}\\
    \lim_{r \to \infty}
    \limsup_{M \to \infty}
    \limsup_{n \to \infty}
    \mathbf{P}_{n}\!
    \left(
      P_{\tilde{\rho}_{n}^{(r)}}^{\tilde{G}_{n}^{(a_{n}r)}}\!
      \left(
        \sup_{x \in \tilde{V}_{n}^{(r)}}
        \tilde{\ell}_{n}^{(r)} (x, T) 
        >
        M
      \right)
      >
      \varepsilon
    \right)
    =0.
    \label{6. lem eq: precpt of trace local times at same times, bound}
  \end{gather}
\end{lem}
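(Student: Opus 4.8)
The plan is to follow the proof of the deterministic Lemma~\ref{5. lem: precpt of trace local times at same times} line by line, upgrading its two deterministic inputs to probabilistic ones: the comparability \eqref{5. eq: precompact for local times, same order} of the scaled diameter and mass of the traces, and the metric-entropy control. The estimates of Theorem~\ref{5. thm: uniform continuity estimate of discrete local times} and Proposition~\ref{5. prop: uniform bound estimate of discrete local times} hold for every realization of the random electrical network $\tilde G_n^{(a_n r)}$, so conditionally on $G_n$ they may be applied verbatim to the quenched laws $P^{\tilde G_n^{(a_n r)}}_{\tilde\rho_n^{(r)}}$; only the annealed probability $\mathbf P_n$ needs new treatment.

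First I would fix $L>0$ (eventually large) and introduce the good event
\[
  E_n(r,L) := \left\{L^{-1} < \tfrac{r(\tilde G_n^{(a_n r)})}{a_n} < L\right\} \cap \left\{L^{-1} < \tfrac{m(\tilde G_n^{(a_n r)})}{b_n} < L\right\},
\]
recalling $r(\tilde G_n^{(a_n r)})/a_n = \max_{x,y\in\hat V_n^{(r)}}\hat R_n^{(r)}(x,y)$ and $m(\tilde G_n^{(a_n r)})/b_n = \tilde\mu_n^{(r)}(\tilde V_n^{(r)})$. On $E_n(r,L)$ the rescalings $r(G)^{-1}R_G = (a_n/r(G))\hat R_n^{(r)}$ and $\tilde\ell_n^{(r)}(x,t) = (r(G)/a_n)\,r(G)^{-1}\ell_{\tilde G_n^{(a_n r)}}(x,a_n b_n t)$ are comparable to identities within a factor $L$, so applying Theorem~\ref{5. thm: uniform continuity estimate of discrete local times} (resp.\ Proposition~\ref{5. prop: uniform bound estimate of discrete local times}) with time parameter $L^2 T$ gives, for all small enough $\delta$ (resp.\ large enough $M$), the pathwise bound
\[
  \mathbf 1_{E_n(r,L)}\,P^{\tilde G_n^{(a_n r)}}_{\tilde\rho_n^{(r)}}(\cdots)
  \le
  c_4(L^2T)\sum_{k\ge N(L,\delta)}(k+1)^2 N_{\hat R_n^{(r)}}(\hat V_n^{(r)},L^{-1}2^{-k})^2\exp\!\left(-2^{\alpha(k-3)}\right),
\]
with $N(L,\delta)\to\infty$ as $\delta\to0$ (resp.\ $M\to\infty$), where $\cdots$ denotes the relevant oscillation (resp.\ supremum) event. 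Choosing $\alpha\in(\alpha_r,1/2)$ and shifting the covering index, the right-hand side is dominated (exactly as in the deterministic proof) by a tail $\sum_{k\ge m}N_{\hat R_n^{(r)}}(\hat V_n^{(r)},2^{-k})^2\exp(-2^{\alpha_r k})$ with $m\to\infty$.

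With this in hand I would decompose $\mathbf P_n(P^{\tilde G}(\cdots)>\varepsilon_2)\le\mathbf P_n(E_n(r,L)^c)+\mathbf P_n(\text{tail}>\varepsilon_2)$; since $\mathbf P_n(E_n(r,L)^c)$ does not involve $\delta$ or $M$, taking $\limsup_{\delta\to0}\limsup_n$ and killing the tail via Assumption~\ref{1. assum: random version}\ref{1. assum item: random, the metric-entropy condition} gives, for \emph{every} $L$,
\[
  \limsup_{\delta\to0}\limsup_{n}\mathbf P_n\big(P^{\tilde G}(\cdots)>\varepsilon_2\big) \le \limsup_n\mathbf P_n(E_n(r,L)^c).
\]
As the left side is free of $L$ and $\mathbf P_n(E_n(r,L)^c)$ is nonincreasing in $L$, I can pass to $\lim_{L\to\infty}$ on the right and then to $\limsup_{r\to\infty}$, so it remains to check $\limsup_{r\to\infty}\lim_{L\to\infty}\limsup_n\mathbf P_n(E_n(r,L)^c)=0$. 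Bounding $E_n(r,L)^c$ by its four constituent events: the upper diameter bound is vacuous once $L>2r$; the upper mass bound follows from $\tilde\mu_n^{(r)}(\tilde V_n^{(r)})\le\hat\mu_n^{(r)}(\hat V_n^{(r)})$ and tightness of $\hat\mu_n^{(r)}(\hat V_n^{(r)})\xrightarrow{\mathrm d}\mu^{(r)}(F^{(r)})$ from Assumption~\ref{1. assum: random version}\ref{1. assum item: random, convergence of spaces}; the lower diameter bound from $\max_{x,y}\hat R_n^{(r)}(x,y)\xrightarrow{\mathrm d}\max_{x,y\in F^{(r)}}R^{(r)}(x,y)$ together with $\mathbf P(\max_{x,y\in F^{(r)}}R^{(r)}(x,y)=0)\to0$ as $r\to\infty$ (the limit space being a.s.\ non-degenerate); and the decisive lower mass bound is governed precisely by \eqref{6. lem eq: uniform non-explosion condition, lower bound measures} of Lemma~\ref{6. lem: uniform non-explosion condition} with $\varepsilon=L^{-1}$.

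The hard part will be arranging this bookkeeping of limits: the probabilistic metric-entropy condition and \eqref{6. lem eq: uniform non-explosion condition, lower bound measures} each come with a fixed nesting of $\lim_{r}$, $\lim_{\varepsilon}$ (or $\lim_m$) and $\limsup_n$, and one must place the single parameter $L$ — which simultaneously serves as the diameter/mass comparison constant and as the reciprocal of the mass threshold $\varepsilon=L^{-1}$ — in the correct position so that all three nestings are respected; the rescalings and the passage through Theorem~\ref{5. thm: uniform continuity estimate of discrete local times} are then routine. The boundedness claim \eqref{6. lem eq: precpt of trace local times at same times, bound} is proved identically, using Proposition~\ref{5. prop: uniform bound estimate of discrete local times} in place of Theorem~\ref{5. thm: uniform continuity estimate of discrete local times} and $M=L\,2^{c_5(\alpha)N}\to\infty$ as the threshold.
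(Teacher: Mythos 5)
Your proposal is correct and follows essentially the same route as the paper: the paper's proof consists of establishing exactly your good event (that $r(\tilde G_n^{(a_nr)})/a_n$ and $m(\tilde G_n^{(a_nr)})/b_n$ lie in $(L^{-1},L)$ with probability tending to $1$ in the nested limits, via \eqref{6. lem eq: uniform non-explosion condition, lower bound measures} and Assumption \ref{1. assum: random version}\ref{1. assum item: random, convergence of spaces}), and then invoking the quenched estimates and the probabilistic metric-entropy condition by repeating the argument of Lemma \ref{5. lem: precpt of trace local times at same times}. Your write-up just makes explicit the quenched/annealed decomposition and the limit bookkeeping that the paper leaves implicit.
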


\begin{proof}
  By \eqref{6. lem eq: uniform non-explosion condition, lower bound measures} and 
  Assumption~\ref{1. assum: random version}\ref{1. assum item: random, convergence of spaces},
  we have that 
  \begin{gather}
    \lim_{r \to \infty}
    \liminf_{L \to \infty}
    \liminf_{n \to \infty}
    \mathbf{P}_{n}\!
    \left(
      L^{-1} 
      < 
      \frac{r(\tilde{G}_{n}^{(a_{n}r)})}{a_{n}} 
      < 
      L
    \right) 
    =1, \\
    \lim_{r \to \infty}
    \liminf_{L \to \infty}
    \liminf_{n \to \infty}
    \mathbf{P}_{n}\!
    \left(
      L^{-1} 
      < 
      \frac{m(\tilde{G}_{n}^{(a_{n}r)})}{b_{n}} 
      < 
      L
    \right) 
    =1.
  \end{gather}
  Therefore,
  by using Assumption~\ref{1. assum: random version}\ref{1. assum item: random, the metric-entropy condition} 
  and following the proof of Lemma~\ref{5. lem: precpt of trace local times at same times},
  we obtain the desired result.
\end{proof}

From the above results,
we establish an analogue of Lemmas~\ref{5. lem: uniform bound of local times} and \ref{5. lem: precpt of local times at same times}.

\begin{lem} \label{6. lem: precpt of local times at same times}
  Under Assumption~\ref{1. assum: deterministic version},
  it holds that, for all $r, \varepsilon, \varepsilon_{1}, \varepsilon_{2}, \eta >0$,
  \begin{gather}
    \lim_{M \to \infty}
    \limsup_{n \to \infty}
    \mathbf{P}_{n}\!
    \left(
      P_{\rho_{n}}^{G_{n}}\!
      \left(
        \sup_{x \in \hat{V}_{n}^{(r)}}
        \hat{\ell}_{n} (x, T) 
        >
        M
      \right)
      >
      \varepsilon
    \right)
    =0, 
    \label{6. lem eq: precpt of local times, upper bound} \\
    \lim_{\delta \to 0}
    \limsup_{n \to \infty}
    \mathbf{P}_{n}\!
    \left(
      P_{\rho_{n}}^{G_{n}}
      \left(
        \sup_{\substack{x, y \in \hat{V}_{n}^{(r)}, \\ \hat{R}_{n}(x,y) < \delta}}
        \sup_{\substack{0 \leq t \leq T}}
        \left|
          \hat{\ell}_{n}(x,t) - \hat{\ell}_{n}(y,t)
        \right|
        > 
        \varepsilon_{1}
      \right)
      > \varepsilon_{2}
    \right)
    =
    0.
    \label{6. lem eq: precpt of local times at same times, equicontinuity}
  \end{gather}
\end{lem}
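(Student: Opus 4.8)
The plan is to mirror the proofs of the deterministic Lemmas \ref{5. lem: uniform bound of local times} and \ref{5. lem: precpt of local times at same times}, upgrading each deterministic limit statement to a statement about the annealed probability $\mathbf{P}_{n}$. The deterministic arguments already produced, for a fixed auxiliary trace radius $r' > r$ and truncation level $M$, pointwise bounds on the quenched probabilities $P^{G_{n}}_{\rho_{n}}(\cdots)$ in terms of finitely many simpler quantities; see the decompositions \eqref{5. eq: uniform bound of local times, decomposition} and \eqref{5. eq: precpt at same times, decomposition}. These same decompositions remain valid here, since they follow from the coupling of Theorem \ref{4. thm: coupling by trace of Y_G} together with purely deterministic manipulations of the local times, and do not rely on any convergence of the networks.

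For \eqref{6. lem eq: precpt of local times, upper bound} I would start from \eqref{5. eq: uniform bound of local times, decomposition}, which bounds $P^{G_{n}}_{\rho_{n}}(\sup_{x}\hat{\ell}_{n}(x,T)>M)$ by the sum of the exit-time probability $P^{G_{n}}_{\rho_{n}}(T_{B_{\hat{R}_{n}}(\hat{\rho}_{n},r')^{c}}\le a_{n}b_{n}T)$, the trace local-time bound $P^{G_{n}}_{\rho_{n}}(2\sup_{x}\tilde{\ell}_{n}^{(r')}(x,T)>M)$, and the $\sigma(G_{n})$-measurable indicator $1_{[2,\infty)}(\sup_{x} c_{\tilde{G}_{n}^{(a_{n}r')}}(x)/c_{G_{n}}(x))$. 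Applying $\mathbf{P}_{n}(\,\cdot\,>\varepsilon)$ and a union bound splitting $\varepsilon$ into three equal pieces, I would control the three resulting annealed probabilities respectively by: the root-exit estimate obtained from \eqref{6. lem eq: uniform non-explosion condition, exit time} by taking $x=\hat{\rho}_{n}\in\hat{V}_{n}^{(r_{0})}$; the trace bound \eqref{6. lem eq: precpt of trace local times at same times, bound}; and Lemma \ref{6. lem: uniform convergence of conductances}, noting that the indicator equals $1$ only on the event $\{\sup_{x} c_{\tilde{G}_{n}^{(a_{n}r')}}/c_{G_{n}}\ge 2\}$. Taking the limits in the order $\limsup_{M}\limsup_{n}$ for fixed $r'$, and then $r'\to\infty$, makes all three pieces vanish.

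The statement \eqref{6. lem eq: precpt of local times at same times, equicontinuity} is treated identically, now starting from \eqref{5. eq: precpt at same times, decomposition}, whose right-hand side consists of five terms: the exit-time probability, a uniform-bound term $P^{G_{n}}_{\rho_{n}}(2\sup_{x}\hat{\ell}_{n}(x,T)>M)$, two $\sigma(G_{n})$-measurable indicator terms built from the conductance ratios, and a trace-equicontinuity term $P^{\tilde{G}_{n}^{(a_{n}r')}}_{\tilde{\rho}_{n}^{(r')}}\bigl(\sup_{\hat{R}_{n}(x,y)<\delta}\sup_{t}|\tilde{\ell}_{n}^{(r')}(x,t)-\tilde{\ell}_{n}^{(r')}(y,t)|>\varepsilon_{1}/2\bigr)$. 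After passing to $\mathbf{P}_{n}(\,\cdot\,>\varepsilon_{2})$ and a five-way union bound, the exit term and the two indicator terms are handled by \eqref{6. lem eq: uniform non-explosion condition, exit time} and Lemma \ref{6. lem: uniform convergence of conductances}; the uniform-bound term is handled by the already-established \eqref{6. lem eq: precpt of local times, upper bound}; and the trace-equicontinuity term is handled by \eqref{6. lem eq: precpt of trace local times at same times, equicontinuity}.

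The main obstacle, as in the deterministic case, is purely organisational: each random analog is itself an iterated limit of the form $\lim_{r'\to\infty}\limsup\cdots$, so one must fix $r'$ and $M$, send $\delta\downarrow 0$ and $n\to\infty$, then let $M\to\infty$, and only finally let $r'\to\infty$, so that every term is annihilated by exactly one of the cited estimates. A minor additional point is that the indicator terms depend only on the random network and not on the walk, so their annealed contributions reduce directly to conductance-ratio probabilities controlled by Lemma \ref{6. lem: uniform convergence of conductances}.
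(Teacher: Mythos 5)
Your proposal is correct and follows essentially the same route as the paper: the paper's proof likewise takes the deterministic decompositions \eqref{5. eq: uniform bound of local times, decomposition} and \eqref{5. eq: precpt at same times, decomposition} as the starting point and controls each term via \eqref{6. lem eq: uniform non-explosion condition, exit time}, \eqref{6. lem eq: precpt of trace local times at same times, bound}, \eqref{6. lem eq: precpt of trace local times at same times, equicontinuity}, the already-established \eqref{6. lem eq: precpt of local times, upper bound}, and Lemma \ref{6. lem: uniform convergence of conductances}. Your explicit bookkeeping of the union bounds and the order of limits ($n$, then $\delta$ or $M$, then $r'$) fills in exactly the details the paper leaves implicit.
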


\begin{proof}
  It is easy to establish \eqref{6. lem eq: precpt of local times, upper bound}
  by using \eqref{5. eq: uniform bound of local times, decomposition},
  \eqref{6. lem eq: uniform non-explosion condition, exit time},
  \eqref{6. lem eq: precpt of trace local times at same times, bound}
  and 
  Lemma~\ref{6. lem: uniform convergence of conductances}.
  We then obtain \eqref{6. lem eq: precpt of local times at same times, equicontinuity} 
  from \eqref{5. eq: precpt at same times, decomposition},
  \eqref{6. lem eq: uniform non-explosion condition, exit time},
  \eqref{6. lem eq: precpt of trace local times at same times, equicontinuity},
  \eqref{6. lem eq: precpt of local times, upper bound}
  and Lemma~\ref{6. lem: uniform convergence of conductances}.
\end{proof}

The following two lemmas correspond to Lemmas~\ref{5. lem: continuity of delta approximation} and~\ref{5. lem: precpt of local times}.

\begin{lem} \label{6. lem: continuity of delta approximation}
  If Assumption~\ref{1. assum: random version} \ref{1. assum item: random, convergence of spaces} 
  is satisfied,
  then it holds that,
  for all $r, \varepsilon_{1}, \varepsilon_{2}, \eta >0$,
  \begin{equation}
    \lim_{\delta \to 0}
    \limsup_{n \to \infty}
    \mathbf{P}_{n}\!
    \left(
      P_{\rho_{n}}^{G_{n}}
      \left(
        \sup_{\substack{x, y \in \hat{V}_{n}^{(r)}, \\ \hat{R}_{n}(x,y) < \delta}}
        \sup_{\substack{0 \leq s, t \leq T,\\ |t -s| < \delta}}
        \left|
          \hat{\ell}_{n}^{\eta}(x,t) - \hat{\ell}_{n}^{\eta}(y,s)
        \right|
        > 
        \varepsilon_{1}
      \right) 
      >
      \varepsilon_{2}
    \right)
    =
    0.
  \end{equation}
\end{lem}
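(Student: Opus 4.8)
The plan is to follow the proof of the deterministic counterpart, Lemma~\ref{5. lem: continuity of delta approximation}, adding one layer of randomness on top. The crucial observation is that the decomposition~\eqref{5. eq: continuity of delta approximation, decomposition} is valid \emph{pathwise} in the random environment: for every realization of $G_{n}$ and every choice of $m, M > 0$, the quenched probability
\[
  \Phi_{n}(\delta)
  \coloneqq
  P_{\rho_{n}}^{G_{n}}
  \left(
    \sup_{\substack{x, y \in \hat{V}_{n}^{(r)}, \\ \hat{R}_{n}(x,y) < \delta}}
    \sup_{\substack{0 \leq s, t \leq T,\\ |t -s| < \delta}}
    \left|
      \hat{\ell}_{n}^{\eta}(x,t) - \hat{\ell}_{n}^{\eta}(y,s)
    \right|
    >
    \varepsilon_{1}
  \right)
\]
is bounded above by $1_{(\varepsilon_{1}, \infty)}(c(\delta, m, M))$ plus the two indicators $1_{[0,m]}(I_{n})$ and $1_{[M, \infty)}(V_{n})$, where I abbreviate $I_{n} \coloneqq \inf_{x \in \hat{V}_{n}^{(r)}} \int_{\hat{V}_{n}} f_{\eta}^{\hat{V}_{n}}(x,y)\, \hat{\mu}_{n}(dy)$ and $V_{n} \coloneqq \hat{\mu}_{n}(\hat{V}_{n}^{(r+1)})$ (here the threshold $\varepsilon$ of the deterministic proof is replaced by $\varepsilon_{1}$). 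The point is that this upper bound is a deterministic function of the chosen parameters and of the environment functionals $I_{n}$ and $V_{n}$ alone.

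Next I would fix $m, M > 0$ and, recalling that $c(\delta, m, M) \to 0$ as $\delta \to 0$ for these fixed parameters, choose $\delta_{0} = \delta_{0}(m, M, \varepsilon_{1})$ so small that the first indicator vanishes for all $\delta < \delta_{0}$. Assuming without loss of generality that $\varepsilon_{2} < 1$ (otherwise the event is empty, since $\Phi_{n} \leq 1$), the event $\{\Phi_{n}(\delta) > \varepsilon_{2}\}$ is then contained in $\{I_{n} \leq m\} \cup \{V_{n} \geq M\}$, so a union bound gives, for all $\delta < \delta_{0}$,
\[
  \mathbf{P}_{n}\bigl(\Phi_{n}(\delta) > \varepsilon_{2}\bigr)
  \leq
  \mathbf{P}_{n}(I_{n} \leq m) + \mathbf{P}_{n}(V_{n} \geq M).
\]
Since this bound no longer depends on $\delta$, taking $\limsup_{\delta \to 0}$ and then $\limsup_{n \to \infty}$ reduces the claim to the two tightness statements $\lim_{m \to 0} \limsup_{n} \mathbf{P}_{n}(I_{n} \leq m) = 0$ and $\lim_{M \to \infty} \limsup_{n} \mathbf{P}_{n}(V_{n} \geq M) = 0$.

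To establish these I would invoke the Skorohod representation theorem: since the local Gromov-Hausdorff-vague topology is Polish, Assumption~\ref{1. assum: random version}\ref{1. assum item: random, convergence of spaces} lets me realize the convergence $(\hat{V}_{n}, \hat{R}_{n}, \hat{\rho}_{n}, \hat{\mu}_{n}) \to (F, R, \rho, \mu)$ almost-surely on a common probability space (with the marginal law of each $(I_{n}, V_{n})$ preserved). On this almost-sure event the deterministic consequences of local Gromov-Hausdorff-vague convergence used in the proof of Lemma~\ref{5. lem: continuity of delta approximation} hold, namely $\liminf_{n} I_{n} > 0$ (because $\mu$ is of full support) and $\limsup_{n} V_{n} < \infty$ (because $\mu$ is Radon). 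Since $\{I_{n} \leq m\}$ occurs infinitely often only on the event $\{\liminf_{n} I_{n} \leq m\}$, the reverse Fatou lemma for sets yields $\limsup_{n} \mathbf{P}_{n}(I_{n} \leq m) \leq \mathbf{P}(\liminf_{n} I_{n} \leq m)$, which tends to $0$ as $m \to 0$ because $\liminf_{n} I_{n} > 0$ almost-surely; the same argument applied to $V_{n}$ gives $\limsup_{n} \mathbf{P}_{n}(V_{n} \geq M) \to 0$ as $M \to \infty$. Combining these completes the proof.

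The main obstacle I anticipate is the careful bookkeeping of the two nested layers of probability — the quenched law $P_{\rho_{n}}^{G_{n}}$ and the environment law $\mathbf{P}_{n}$ — and verifying that the deterministic functionals $I_{n}$ and $V_{n}$ governing the quenched bound are themselves tight under $\mathbf{P}_{n}$. The Skorohod coupling together with the reverse Fatou lemma is precisely the device that converts the almost-sure deterministic estimates into the required distributional tightness, and getting the order of the limits in $\delta$, $m$, $M$, and $n$ right is the delicate point to check.
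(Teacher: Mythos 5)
Your proposal is correct and follows essentially the same route as the paper: the paper's proof likewise combines the pathwise decomposition \eqref{5. eq: continuity of delta approximation, decomposition} with the two tightness statements for $\inf_{x \in \hat{V}_{n}^{(r)}} \int f_{\eta}^{\hat{V}_{n}}(x,y)\,\hat{\mu}_{n}(dy)$ and $\hat{\mu}_{n}(\hat{V}_{n}^{(r+1)})$ under $\mathbf{P}_{n}$, which it asserts directly from Assumption \ref{1. assum: random version}\ref{1. assum item: random, convergence of spaces}. Your Skorohod-plus-reverse-Fatou derivation of those two tightness statements simply makes explicit what the paper leaves implicit.
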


\begin{proof}
  Assumption~\ref{1. assum: random version}\ref{1. assum item: random, convergence of spaces} implies that 
  \begin{gather}
    \lim_{m \to 0}
    \liminf_{n \to \infty}
    \mathbf{P}_{n}\!
    \left(
      \inf_{x \in \hat{V}_{n}^{(r)}} 
      \int_{\hat{V}_{n}^{(r)}} 
      f_{\eta}^{\hat{V}_{n}}(x, y)\,
      \hat{\mu}_{n}(dy)
      >
      m
    \right)
    = 
    1,\\
    \lim_{M \to \infty}
    \limsup_{n \to \infty}
    \mathbf{P}_{n}\!
    \left(
      \hat{\mu}_{n}(\hat{V}_{n}^{(r+1)}) > M
    \right)
    =
    0.
  \end{gather}
  These, combined with \eqref{5. eq: continuity of delta approximation, decomposition}, yields the desired result.
\end{proof}

\begin{lem} \label{6. lem: precpt of local times}
  Under Assumption~\ref{1. assum: random version},
  it holds that,
  for all $\varepsilon_{1}, \varepsilon_{2}, r, T>0$,
  \begin{equation}
    \lim_{\delta \to 0}
    \limsup_{n \to \infty}
    \mathbf{P}_{n}\!
    \left(
      P_{\rho_{n}}^{G_{n}}
      \left(
        \sup_{\substack{x, y \in \hat{V}_{n}^{(r)}, \\ \hat{R}_{n}(x,y) < \delta}}
        \sup_{\substack{0 \leq s, t \leq T,\\ |t -s| < \delta}}
        \left|
          \hat{\ell}_{n}(x,t) - \hat{\ell}_{n}(y,s)
        \right|
        > 
        \varepsilon_{1}
      \right)
      >
      \varepsilon_{2}
    \right)
    =
    0.
  \end{equation}
\end{lem}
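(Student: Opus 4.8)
The plan is to mirror the proof of the deterministic counterpart, Lemma \ref{5. lem: precpt of local times}, now working with the annealed probabilities $\mathbf{P}_{n}\bigl(P_{\rho_{n}}^{G_{n}}(\cdots) > \varepsilon_{2}\bigr)$ in place of the quenched probabilities $P_{\rho_{n}}^{G_{n}}(\cdots)$. First I would fix $\eta \in (0,1)$ arbitrarily and reuse the pathwise decomposition established in the proof of Lemma \ref{5. lem: precpt of local times} (see \eqref{5. eq: precpt of local times, decomposition}). Since that decomposition rests only on the occupation density formula \eqref{1. eq: discrete-time occupation density formula} together with the pointwise inequality
\begin{equation}
  \sup_{x \in \hat{V}_{n}^{(r)}} \sup_{0 \leq t \leq T} |\hat{\ell}_{n}(x,t) - \hat{\ell}_{n}^{\eta}(x,t)|
  \leq
  \sup_{\substack{x,y \in \hat{V}_{n}^{(r+1)}, \\ \hat{R}_{n}(x,y) < \eta}} \sup_{0 \leq t \leq T} |\hat{\ell}_{n}(x,t) - \hat{\ell}_{n}(y,t)|,
\end{equation}
it holds for every realization of $G_{n}$. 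Consequently it bounds the inner quenched probability by a sum of two quenched probabilities: one involving the smoothed local times $\hat{\ell}_{n}^{\eta}$, and one measuring the equicontinuity of $\hat{\ell}_{n}$ at equal times over $\eta$-close pairs in $\hat{V}_{n}^{(r+1)}$.

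Next I would pass to the annealed level via the elementary bound $\mathbf{P}_{n}(A_{n} + B_{n} > \varepsilon_{2}) \leq \mathbf{P}_{n}(A_{n} > \varepsilon_{2}/2) + \mathbf{P}_{n}(B_{n} > \varepsilon_{2}/2)$, applied with $A_{n}$ and $B_{n}$ taken to be the two quenched probabilities produced by the decomposition. This splits the quantity in the statement into two annealed terms. The first term, the one involving $\hat{\ell}_{n}^{\eta}$, vanishes upon taking $\delta \to 0$ and then $\limsup_{n \to \infty}$ for each fixed $\eta$, by Lemma \ref{6. lem: continuity of delta approximation}.

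The second annealed term does not depend on $\delta$ but only on $\eta$; it is exactly of the form controlled by the equicontinuity-at-equal-times estimate \eqref{6. lem eq: precpt of local times at same times, equicontinuity} of Lemma \ref{6. lem: precpt of local times at same times}, applied with $r$ replaced by $r+1$. Hence, after taking $\delta \to 0$ and $\limsup_{n \to \infty}$, the left-hand side is bounded (uniformly in $\delta$) by this second annealed term, and then letting $\eta \to 0$ kills it. I do not expect a genuine obstacle here, since the whole architecture is inherited from the deterministic case. The only point requiring care is the bookkeeping of the iterated limits: the auxiliary parameter $\eta$ must be sent to $0$ strictly after $\delta \to 0$ and $\limsup_{n}$ have been taken, exactly as in Lemma \ref{5. lem: precpt of local times}, and the union-bound splitting must be carried out with the correct halving of constants so that Lemmas \ref{6. lem: continuity of delta approximation} and \ref{6. lem: precpt of local times at same times} can each be invoked with matching thresholds.
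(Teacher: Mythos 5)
Your proposal is correct and follows essentially the same route as the paper: the paper's proof simply cites the pathwise decomposition \eqref{5. eq: precpt of local times, decomposition} together with Lemma \ref{6. lem: precpt of local times at same times} and Lemma \ref{6. lem: continuity of delta approximation}, which is exactly the argument you lay out, including the order of limits ($\delta \to 0$, then $n \to \infty$, then $\eta \to 0$).
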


\begin{proof}
  This is an immediate consequence of \eqref{5. eq: precpt of local times, decomposition},
  Lemma~\ref{6. lem: precpt of local times at same times} 
  and Lemma~\ref{6. lem: continuity of delta approximation}.
\end{proof}

Now, it is possible to complete the proof of Theorem~\ref{1. thm: random, main result}.

\begin{proof} [{Proof of the second part of Theorem~\ref{1. thm: random, main result}}]
  Since we can now follow the proof of \cite[Theorem~1.11]{Noda_pre_Convergence},
  we only give a sketch.
  By Theorem~\ref{2. thm: tightness in M_L}, 
  \ref{6. prop: convergence of processes}
  and 
  \ref{6. lem: precpt of local times},
  we deduce that the sequence of the random elements $(\hat{\cY}_{n} )_{n \geq 1}$ of $\mbM_{L}$ is tight.
  Thus, 
  it remains to show that the limit of any weakly-convergent subsequence of 
  $(\hat{\cY}_{n})_{n \geq 1}$ is $\cX_{G}$.
  To simplify subscripts,
  we suppose that $\hat{\cY}_{n}$ converges to $\cX$ 
  as random elements of $\mbM_{L}$,
  and show that $\cX \stackrel{\mathrm{d}}{=} \cX_{G}$.
  Write $\cX = (F', R', \rho', \mu', \pi')$.
  Note that $G' \coloneqq (F', R', \rho', \mu') \stackrel{\mathrm{d}}{=} G$.
  By the Skorohod representation theorem,
  we may assume that $\cX_{G_{n}}$ converges to $\cX$ 
  almost surely on some probability space.
  By the same argument as the proof of Theorem~\ref{1. thm: deterministic, main result},
  we obtain that $\pi' = \hat{P}_{n}$, which completes the proof.
\end{proof}


\appendix


\section{Applications}
\label{A. sec: application}


\subsection{Examples in the previous paper}

In this appendix,
we explain why we can apply our main results (Theorem~\ref{1. thm: deterministic, main result} and \ref{1. thm: random, main result})
to the examples of \cite{Noda_pre_Convergence}.
For details of those examples,
we refer to \cite[Section 8]{Noda_pre_Convergence}.

We assume convergence of conductance measures for our main results 
(Assumption~\ref{1. assum: deterministic version}\ref{1. assum item: deterministic, convergence of spaces} and 
\ref{1. assum: random version}\ref{1. assum item: random, convergence of spaces}).
However, 
in \cite[Section~8]{Noda_pre_Convergence},
it is convergence of counting measures 
that is proven for critical Galton--Watson trees conditioned on size, 
uniform spanning trees
on $\ZN^d$ with $d = 2, 3$ and on high-dimensional tori, 
the critical Erd\H{o}s--R\'{e}nyi random graph,
and the configuration model.
To deduce the convergence of conductance measures for these examples from that of counting measures,
we use the following result regarding the Prohorov distance between counting measures and conductance measures on trees. 

\begin{prop}  \label{A. prop: distance between counting measures and conductance measures}
  Fix a graph tree $(V, E)$.
  Regard this as an electrical network by placing conductance $1$ on each edge.
  Equip $V$ with the associated resistance metric $d$.
  (Note that $d$ coincides with the graph metric).
  Write $\mu^{\#}$ for the counting measure and $\mu$ for the conductance measure.
  Then it holds that 
  \begin{equation}
    d_{P}(2\mu^{\#}, \mu) \leq 2,
  \end{equation}
  where $d_{P}$ denotes the Prohorov metric.
\end{prop}

\begin{proof}
  Choose a vertex $\rho \in V$ and regard the tree $(V, E)$ as a rooted tree with the root $\rho$.
  For each vertex $v$,
  we write $d_{v}$ for the degree of $v$ and $\mathsf{C}(v)$ for the set of children of $v$.
  Fix a subset $A$ of $V$
  and define $A^{(1)}$ to be the subset of $V$ consisting of $w$ such that $d(v, w) \leq 1$ for some $v \in A$.
  Noting that any non-root vertex has a unique parent,
  we deduce that 
  \begin{align}
    \mu(A) 
    &= 
    \sum_{v \in A} d_{v} \\
    &\leq
    \sum_{v \in A} 
    \left(
      1 + \sum_{w \in \mathsf{C}(v)} 1
    \right) \\
    &=
    \sum_{v \in A} 1 + \sum_{v \in A} \sum_{w \in \mathsf{C}(v)} 1 \\
    &\leq 
    \sum_{v \in A} 1 + \sum_{w \in A^{(1)}} 1 \\
    &\leq 
    2 \mu^{\#}(A^{(1)}),
  \end{align}
  and 
  \begin{align}
    2 \mu^{\#}(A) 
    &= 
    \sum_{v \in A} 2 \\
    &= 
    \sum_{v \in A \setminus \{\rho\}} 1 
    + 
    \sum_{w \in V} \sum_{v \in \mathsf{C}(w) \cap A} 1
    +
    2 \cdot 1_{\{\rho \in A\}} \\
    &\leq 
    \sum_{v \in A \setminus \{\rho\}} 1 
    + 
    \sum_{w \in A^{(1)} \setminus \{\rho\}} (d_{w} - 1)
    +
    d_{\rho} \cdot 1_{\{\rho \in A^{(1)}\}}
    +
    2 \cdot 1_{\{\rho \in A\}} \\
    &\leq 
    \mu(A^{(1)}) + 2.
  \end{align}
  Hence, the desired result follows.
\end{proof}

By Proposition~\ref{A. prop: distance between counting measures and conductance measures},
when scaled counting measures of trees converge to a measure $\mu$,
conductance measures scaled by the same scaling factors converge to $2\mu$
(assuming the relevant scaling factors diverge).
Hence, we can apply our results to the critical Galton--Watson trees and uniform spanning trees considered in 
\cite[Sections~8.2, 8.3, and~8.4]{Noda_pre_Convergence}.
In particular, the convergence results of \cite[Corollaries~8.11, 8.16, 8.21, and~8.28]{Noda_pre_Convergence}
still hold if we replace the scaled continuous-time Markov chains and their local times appearing there
with the discrete-time Markov chains and their local times.

As for the Erd\H{o}s--R\'{e}nyi random graph and the configuration model,
though they are not trees,
it is known that 
sequences of these graphs have the same asymptotic behavior (in terms of measured metric spaces) 
as certain sequences of random graphs obtained by fusing random vertices 
in random tilted trees.
Indeed, such an observation was used in \cite{Berry_Broutin_Goldschmidt_12_The_continuum,Bhamidi_Sen_20_Geometry}
to establish the convergence of Erd\H{o}s--R\'{e}nyi random graphs and configuration models equipped with the graph metrics.
See also \cite{Noda_pre_Metrization}, which proved the convergence with respect to the resistance metrics.
By Proposition~\ref{A. prop: distance between counting measures and conductance measures}
and the convergence of the counting measures of the tilted trees (see \cite[Proofs of Lemma 8.42 and Proposition 8.69]{Noda_pre_Convergence}),
one obtains the convergence of the conductance measures of the tilted trees.
Thus,
a slight modification of \cite[Proofs of Theorem 8.41 and 8.51]{Noda_pre_Convergence} 
enables us to deduce the convergence of the conductance measures 
for critical the Erd\H{o}s--R\'{e}nyi random graph and the configuration model.
Namely, the convergence results of \cite[Corollary~8.48 and Theorem~8.51]{Noda_pre_Convergence}
still hold if we replace the scaled continuous-time Markov chains and their local times appearing there
with the discrete-time Markov chains and their local times.

In \cite[Section~8.5]{Noda_pre_Convergence},
a random recursive Sierpi\'{n}ski gasket $G$ was also considered
and it was proved that 
if $G_{n}$ is the $n$-level electrical network approximating $G$,
then $G_{n}$ converges to $G$ as measured resistance metric spaces.
There,
the conductance measures were normalized so that the total mass is equal to $1$,
and 
the existence of deterministic scaling factors for the (non-normalized) conductance measures 
(i.e., the sequence $(b_{n})_{n \geq 1}$ 
satisfying Assumption~\ref{1. assum: deterministic version}\ref{A. assum item: deterministic, space convergence})
was not proved.
However, 
in \cite{Hambly_97_Brownian},
where a random recursive Sierpi\'{n}ski gasket was first constructed,
the existence of such scaling factors was implicitly proved; 
alternatively, one can deduce it
as a consequence of standard limit theorems for general branching processes 
(see \cite[Theorem~5.4]{Nerman_81_On} for example).
In \cite[Sections~2 and 3]{Hambly_97_Brownian}, 
the random recursive Sierpi\'{n}ski gasket $G$ and appropriating graphs $G_n$
are associated with a certain general branching process.
Using the Malthusian parameter $\kappa$ for this branching process
(see \cite[Equation~(1,4)]{Nerman_81_On} and \cite[Section~3]{Hambly_97_Brownian}),
the scaling factor for the conductance measure of $G_{n}$ is given by $b_{n} \coloneqq e^{(\kappa+1)n}$.
A brief heuristic explanation of $b_{n}$ is as follows.
At time $n$,
there are of order $e^{\kappa n}$ individuals alive in the branching process,
which means that $G_{n}$ consists of order $e^{\kappa n}$ triangles.
Since the order of the conductances on each triangle on $G_{n}$ is $e^{n}$,
the order of the total mass given by the conductance measure on $G_{n}$ is $e^{(\kappa+1)n}$.


\subsection{The random conductance model}

In this appendix, we apply our main results to the random conductance model on unbounded fractals.
In the study of scaling limits of random graphs, 
graph metrics are often employed. 
In our assumptions,
we consider convergence with respect to resistance metrics, 
which is harder to check in general. 
(If a graph is a tree, then the graph metric and the resistance metric coincide, 
but otherwise, 
the resistance metric is smaller than the graph metric.) 
Before going into applications, 
we first establish a general method 
for obtaining the local Gromov--Hausdorff convergence 
for a sequence of graphs approximating a non-compact fractal (Theorem~\ref{A. thm: convergence of fractals}). 
We then briefly describe an application of it to the random conductance model on the unbounded Sierpi\'{n}ski gasket.

We first clarify the setting for the main result.
For a specific example,
see Example \ref{A. exm: unbounded Sierpinski gasket}. 
Fix a set $W$ and an element $\rho \in W$,
which serves as the root.
Let $(V_{n})_{n \geq 1}$ and $(K^{(N)})_{N \geq 1}$ be increasing sequences of subsets of $W$
such that each $V_{n}$ is finite or countable,
$\rho \in V_{n}$ and $\rho \in K^{(N)}$ for each $n$ and $N$,
and $V \coloneqq \bigcup_{n \geq 1} V_{n} \subseteq \bigcup_{N \geq 1} K^{(N)}$.
The set $V_{n}$ will be the vertex set of a level $n$ graph
and $(K_{N})_{N \geq 1}$ will be an alternative for closed balls with radius $N$
in the resistance metric spaces we will consider.
Write $V_{n}^{(N)} \coloneqq V_{n} \cap K^{(N)}$.
We assume that $V_{n}^{(N)}$ is a finite set for each $n$ and $N$.
Suppose that we have an electrical network $G_{n} = (V_{n}, E_{n}, c_{n})$ with root $\rho_{n} \coloneqq \rho$.
Let $R_{n}$ and $(\cE_{n}, \cF_{n})$ 
be the associated resistance metric and resistance form, respectively. 
For $n \geq m$,
we simply write $R_{n}|_{m} \coloneqq R_{n}|_{V_{m} \times V_{m}}$,
which is the resistance metric associated with the trace $G_{n}|_{V_{m}}$
(recall the trace from Section~\ref{sec: trace of electrical networks}).
Let $\{c_{n}|_{m}(x, y) \mid x, y \in V_{m}\}$
be the conductance set of $G_{n}|_{V_{m}}$.
In particular, 
$R_{n}|_{m}$ is the resistance metric on $V_{m}$ determined by the conductance $c_{n}|_{m}$.
We then define 
\begin{equation}
  R_{n}|_{m}^{(N)}(x, y)
  \coloneqq 
  \sup\{
    \cE_{n}(u, u)^{-1} \mid u \in \cF_{n},\, u(x)=1,\, u(y)=0,\, 
    u \ \text{is constant on}\  V_{m} \setminus K^{(N)}
  \},
\end{equation}
which is the fused resistance metric on $V_{m}^{(N)} \cup \{V_{m} \setminus K^{(N)}\}$
(here, $V_{m} \setminus K^{(N)}$ is regarded as a single vertex).
See, \cite[Theorem~4.3]{Kigami_12_Resistance} for details on fused resistance metrics.
Note that we first take a trace onto $V_{m}$ and then fuse the outside of $K^{(N)}$,
which is different from fusing first and then taking a trace.
It is easy to check that 
if we define conductance $c_{n}|_{m}^{(N)}$ on $V_{m}^{(N)} \cup \{V_{m} \setminus K^{(N)}\}$
by setting 
\begin{gather}
  c_{n}|_{m}^{(N)}(x, y) 
  \coloneqq 
  c_{n}(x,y),
  \quad x, y \in V_{m}^{(N)},\\
  c_{n}|_{m}^{(N)}(x, V_{m} \setminus K^{(N)})
  \coloneqq 
  \sum_{y \in V_{m} \setminus K^{(N)}} 
  c_{n}|_{m}(x, y),
  \quad 
  x \in V_{m}^{(N)},
\end{gather}
then $R_{n}|_{m}^{(N)}$ is the resistance metric determined by the conductance $c_{n}|_{m}^{(N)}$.

Let $G|_{n} = (V_{n}, E|_{n}, c|_{n})$ be another electrical network, 
which does not necessarily coincides with $G_{n}$ introduced above,
and $R|_{n}$ be the associated resistance metric on $V_{n}$.
If the sequence $(G|_{n})_{n \geq 1}$ is compatible, 
i.e., $R|_{m} = R|_{n}|_{V_{m} \times V_{m}}$ for any $n \geq m$,
then we obtain a resistance metric $R$ on $V = \bigcup_{n \geq 1} V_{n}$
by setting $R|_{V_{m} \times V_{m}} = R|_{m}$.
We then define $(F, R)$ to be the completion of $(V, R)$.
We note that $(F, R)$ is also a resistance metric space (see \cite[Theorem 3.13]{Kigami_12_Resistance}).
Let $(\cE, \cF)$ be the resistance form on $F$ corresponding to $R$.
Set 
\begin{equation}
  R|_{m}^{(N)}(x, y)
  \coloneqq 
  \sup\{
    \cE(u, u)^{-1} \mid u \in \cF,\, u(x)=1,\, u(y)=0,\, 
    u\ \text{is constant on}\ V_{m} \setminus K^{(N)} 
  \}.
\end{equation}
Again,
we note that the conductance $c|_{m}^{(N)}$ on $V_{m}^{(N)} \cup \{V_{m} \setminus K^{(N)}\}$
given by 
\begin{gather}
  c|_{m}^{(N)}(x, y) 
  \coloneqq 
  c|_{m}(x,y),
  \quad x, y \in V_{m}^{(N)},\\
  c|_{m}^{(N)}(x, V_{m} \setminus K^{(N)})
  \coloneqq 
  \sum_{y \in V_{m} \setminus K^{(N)}} 
  c|_{m}(x, y),
  \quad 
  x \in V_{m}^{(N)}
\end{gather}
yields the resistance metric $R|_{m}^{(N)}$ on $V_{m}^{(N)} \cup \{V_{m} \setminus K^{(N)}\}$.
We then consider the following conditions.
Note that 
we define $V^{(N)} \coloneqq V \cap K^{(N)}$, similarly to $V_{n}^{(N)}$.

\begin{assum} \label{A. assum: general condition for approximation}
  \leavevmode
  \begin{enumerate} [label = \textup{(\roman*)}]
    \item \label{A. assum item: approximation, neighborhood function}
      For each $m \geq 1$, there exists a map $g_{m} \colon V \to V_{m}$ satisfying
      \begin{gather}
        g_{m}(V^{(N)}) \subseteq V_{m}^{(N)}, 
        \quad 
        g_{m}(V \setminus K^{(N)}) \subseteq V_{m} \setminus K^{(N)},\\
        \lim_{m \to \infty}
        \limsup_{n \to \infty} 
        \sup_{x \in V_{n}}
        R_{n}(x, g_{m}(x))
        =0, 
        \quad
        \lim_{m \to \infty}
        \sup_{x \in V}
        R(x, g_{m}(x)) 
        =0.
      \end{gather}
    \item \label{A. assum item: approximation, bound for V_m} 
      For each $m \geq 1$ and $N \geq 1$, 
      $\displaystyle \limsup_{n \to \infty} \sup_{x \in V_{m}^{(N)}} R_{n}(\rho_{n}, x) < \infty$.
    \item \label{A. assum item: approximation, non-explosion}
      It holds that 
      $\displaystyle \lim_{N \to \infty} \limsup_{n \to \infty} R_{n}(\rho_{n}, V_{n} \setminus K^{(N)}) = \infty$.
    \item \label{A. assum item: approximation, conductance convergence}
      For each $x, y \in V_{m}^{(N)} \cup \{V_{m} \setminus K^{(N)}\}$,
      $c_{n}|_{m}^{(N)}(x,y) \to c|_{m}^{(N)}(x, y)$ as $n \to \infty$.
  \end{enumerate}
\end{assum}

In Assumption~\ref{A. assum: general condition for approximation},
condition~\ref{A. assum item: approximation, neighborhood function} means 
that the sequence $(V_{m})_{m \geq 1}$ converges to $V$ uniformly
in a suitable sense,
condition~\ref{A. assum item: approximation, bound for V_m}, 
combined with \ref{A. assum item: approximation, neighborhood function},
implies that the diameters of $(V_{n}^{(N)})_{n \geq 1}$ are uniformly bounded for each $N$
(see Lemma~\ref{A. lem: the diameters are uniformly bounded}),
condition~\ref{A. assum item: approximation, non-explosion} is the non-explosion condition 
with respect to increasing subsets $K_N$ rather than balls,
and condition~\ref{A. assum item: approximation, conductance convergence} is equivalent to 
that the condition that the fused resistance metric $R_{n}|_{m}^{(N)}$ on $V_{m}^{(N)} \cup \{V_{m} \setminus K^{(N)}\}$ 
converges to $R|_{m}^{(N)}$
(by Lemma~\ref{4. lem: continuity of resistance metric and conductance}).
This assumption leads to the convergence of $(V_{n}, R_{n}, \rho_{n})$ to $(F, R, \rho)$
in the local Gromov--Hausdorff topology.

\begin{thm} \label{A. thm: convergence of fractals}
  Under Assumption~\ref{A. assum: general condition for approximation},
  $(F, R)$ is a boundedly-compact metric space and the resistance metric $R$ is recurrent.
  Moreover,
  it holds that 
  \begin{equation}  \label{A. thm eq: convergence of fractals}
    (V_{n}, R_{n}, \rho_{n}) \to (F, R, \rho)
  \end{equation}
  in the local Gromov--Hausdorff topology
  and the non-explosion condition is satisfied:
  \begin{equation}  \label{A. thm eq: convergence of fractals, non-explosion}
    \lim_{r \to \infty} 
    \liminf_{n \to \infty}
    R_{n}(\rho_{n}, B_{R_{n}}(\rho_{n}, r)^{c})
    = 
    \infty.
  \end{equation}
\end{thm}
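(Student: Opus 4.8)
The plan is to transfer the convergence of the fused conductances in Assumption~\ref{A. assum: general condition for approximation}\ref{A. assum item: approximation, conductance convergence} up to a Gromov--Hausdorff statement about the whole networks, using the coarse-graining maps $g_m$ to pass between $V_n$, $V_m$ and $V$, and the truncations $K^{(N)}$ to localise. The single engine behind all the metric convergence is Lemma~\ref{4. lem: continuity of resistance metric and conductance}: since each fused network lives on the fixed finite vertex set $V_m^{(N)}\cup\{V_m\setminus K^{(N)}\}$, the conductance convergence $c_n|_m^{(N)}\to c|_m^{(N)}$ is equivalent to $R_n|_m^{(N)}\to R|_m^{(N)}$ entrywise as $n\to\infty$. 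First I would record two preliminary facts. Using Assumption~\ref{A. assum: general condition for approximation}\ref{A. assum item: approximation, neighborhood function} and \ref{A. assum item: approximation, bound for V_m}, the sets $V_n^{(N)}$ are uniformly bounded: fixing $m$ so large that $\limsup_n\sup_{x\in V_n}R_n(x,g_m(x))<1$ and using $g_m(V^{(N)})\subseteq V_m^{(N)}$ together with \ref{A. assum item: approximation, bound for V_m} bounds $\limsup_n\sup_{x\in V_n^{(N)}}R_n(\rho_n,x)$ (this is Lemma~\ref{A. lem: the diameters are uniformly bounded}). The same comparison applied on the limit side, with the second display in \ref{A. assum item: approximation, neighborhood function}, shows that for each $N$ the finite set $V_m^{(N)}$ is a $\delta_m$-net of $V^{(N)}$ in $R$ with $\delta_m\to0$, so each $V^{(N)}$ is totally bounded and hence has compact closure in the complete space $(F,R)$.

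Next I would upgrade the fused convergence to genuine resistance convergence on $V_m$. Because enlarging $N$ relaxes the ``constant outside $K^{(N)}$'' constraint, $R_n|_m^{(N)}(x,y)$ increases in $N$ to $R_n|_m(x,y)=R_n(x,y)$ (the trace preserves resistances, Theorem~\ref{3. thm: trace and resistance metric space}), and likewise $R|_m^{(N)}\uparrow R|_m=R$; here one uses $V_m\subseteq\bigcup_N K^{(N)}$, so $V_m\setminus K^{(N)}$ shrinks to the empty set pointwise. Splitting $|R_n(x,y)-R(x,y)|$ through $R_n|_m^{(N)}$ and $R|_m^{(N)}$, the middle term vanishes as $n\to\infty$ for fixed $N$ and the limit-side tail vanishes as $N\to\infty$; the remaining step, and the main obstacle of the whole proof, is a \emph{uniform} fusing estimate showing $\limsup_n\bigl(R_n(x,y)-R_n|_m^{(N)}(x,y)\bigr)\to0$ as $N\to\infty$. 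This is where Assumption~\ref{A. assum: general condition for approximation}\ref{A. assum item: approximation, non-explosion} enters: it ensures that the fused set $V_n\setminus K^{(N)}$ sits, uniformly in $n$, at large effective resistance from any bounded region, so shorting it perturbs $R_n(x,y)$ negligibly. Granting this, I obtain the pointwise convergence $R_n(\rho_n,x)\to R(\rho,x)$ for every $x\in V$, and more generally entrywise convergence of $R_n|_{V_m\times V_m}$ to $R|_{V_m\times V_m}$.

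With these in hand, bounded compactness comes cheaply. For $x\in V\setminus K^{(N)}$ one has $R_n(\rho_n,x)\geq R_n(\rho_n,V_n\setminus K^{(N)})$; evaluating the pointwise limit along the subsequence realising the $\limsup$ gives $R(\rho,x)\geq\limsup_n R_n(\rho_n,V_n\setminus K^{(N)})$, which tends to $\infty$ by \ref{A. assum item: approximation, non-explosion}. Hence $V\cap D_R(\rho,r)\subseteq V^{(N(r))}$ for a suitable $N(r)$, so every closed $R$-ball is totally bounded and therefore compact. For the Gromov--Hausdorff convergence \eqref{A. thm eq: convergence of fractals} I would, for each fixed $m$, embed the spaces $(V_m,R_n|_m)$ and $(V_m,R|_m)$---which share the vertex set $V_m$ and whose metrics converge entrywise---into a common space via the identity correspondence, and then use $g_m$ as correspondences between $V_n$ and $V_m$ and between $V$ and $V_m$. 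By the two displays of \ref{A. assum item: approximation, neighborhood function} these correspondences have distortion tending to $0$ as $m\to\infty$, and a diagonal argument over $m,n\to\infty$, localised to balls via the trapping $B_{R_n}(\rho_n,r)^c\subseteq V_n\setminus K^{(N)}$ (valid once $r$ exceeds the bound from \ref{A. assum item: approximation, bound for V_m} and $n$ is large), yields convergence of all ball-restrictions and hence convergence in the local Gromov--Hausdorff topology.

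Finally, the same trapping inequality $R_n(\rho_n,B_{R_n}(\rho_n,r)^c)\geq R_n(\rho_n,V_n\setminus K^{(N)})$ combined with \ref{A. assum item: approximation, non-explosion} gives $\lim_r\limsup_n R_n(\rho_n,B_{R_n}(\rho_n,r)^c)=\infty$; feeding this and \eqref{A. thm eq: convergence of fractals} into Theorem~\ref{5. thm: recurrence is preserved by the non-explosion condition} shows that $R$ is recurrent. The one delicate point is the $\liminf$ demanded in \eqref{A. thm eq: convergence of fractals, non-explosion}, which the trapping inequality alone reduces to $\liminf_n R_n(\rho_n,V_n\setminus K^{(N)})\to\infty$ rather than the $\limsup$ supplied by \ref{A. assum item: approximation, non-explosion}; I would close this gap by using the now-established recurrence of the limit (via Lemma~\ref{3. lem: characterization of recurrence by resistance}) together with the convergence-of-harmonic-functions machinery underlying Theorem~\ref{5. thm: recurrence is preserved by the non-explosion condition}, which rules out a uniformly bounded-energy test function that is $1$ at the root and vanishes near infinity.
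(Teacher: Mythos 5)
Your overall architecture is the same as the paper's: uniform diameter bounds from Assumption \ref{A. assum: general condition for approximation}\ref{A. assum item: approximation, neighborhood function}--\ref{A. assum item: approximation, bound for V_m}, approximation of $R_{n}$ by the fused metrics $R_{n}|_{m}^{(N)}$, entrywise convergence via \ref{A. assum item: approximation, conductance convergence} and Lemma \ref{4. lem: continuity of resistance metric and conductance}, and trapping of resistance balls inside $V_{n}^{(N)}$ to localise. However, there are two genuine gaps.

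First, the step you explicitly ``grant'' --- the uniform fusing estimate $\limsup_{n}\bigl(R_{n}(x,y)-R_{n}|_{m}^{(N)}(x,y)\bigr)\to 0$ as $N\to\infty$ --- is the technical heart of the proof, and the heuristic that shorting a far-away set perturbs the resistance negligibly is precisely the thing that has to be proved. The paper does this via the quantitative bound of Corollary \ref{A. cor: error estimate of fused metric}, $|R(x,y)-R^{(B)}(x,y)|\le 2R(x,B^{c})^{-1/2}R(x,y)^{3/2}$, whose proof multiplies the optimal potential for $R(x,y)$ by the optimal potential for $R(x,B^{c})$ and controls the energy of the product with the inequality $\cE(uv,uv)^{1/2}\le\|u\|_{\infty}\cE(v,v)^{1/2}+\|v\|_{\infty}\cE(u,u)^{1/2}$ of Proposition \ref{A. prop: energy of product of functions}. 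Your monotonicity observation $R_{n}|_{m}^{(N)}\uparrow$ in $N$ gives a limit but not that the limit equals $R_{n}(x,y)$, and in any case you need uniformity in $n$; that in turn requires the uniform lower bound $\lim_{N}\liminf_{n}\inf_{x\in V_{n}^{(N_{0})}}R_{n}(x,V_{n}\setminus K^{(N)})=\infty$, obtained by the change-of-root argument of Lemma \ref{5. lem: uniform non-explosion condition}. Without supplying this estimate (or an equivalent), the central convergence $R_{n}\to R$ on $V_{m}^{(N_{0})}$ is unproved.

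Second, your route to the $\liminf$ in \eqref{A. thm eq: convergence of fractals, non-explosion} does not work as described. The machinery of Theorem \ref{5. thm: recurrence is preserved by the non-explosion condition} transfers bounded-energy test functions \emph{from} the prelimit spaces \emph{to} the limit, converting prelimit resistance lower bounds into a limit lower bound; it cannot be run backwards to exclude, for infinitely many $n$, a low-energy potential on $V_{n}$ that is $1$ at $\rho_{n}$ and vanishes outside a ball. Recurrence of the limit plus local Gromov--Hausdorff convergence does not by itself imply the prelimit non-explosion condition --- this is exactly why non-explosion is a standing hypothesis, not a consequence of convergence, throughout the paper. The correct mechanism is the one used in the paper: Assumption \ref{A. assum: general condition for approximation}\ref{A. assum item: approximation, conductance convergence} gives genuine limits $R_{n}(\rho_{n},V_{m}\setminus K^{(N)})\to R(\rho,V_{m}\setminus K^{(N)})$ for each fixed finite configuration (so there is no $\liminf$/$\limsup$ discrepancy there), and the cutoff argument from the proof of Lemma \ref{A. lem: recurrence of limiting space} --- take the optimal potential for $R_{n}(\rho_{n},V_{m}\setminus K^{(N')})$, use $g_{m}$ and \eqref{3. eq: the basic inequality for resistance forms} to see it is at most $1/2$ on all of $V_{n}\setminus K^{(N)}$, then apply \ref{3. dfn item: Markov property of resistance forms} to the rescaled truncation --- yields $\liminf_{n}R_{n}(\rho_{n},V_{n}\setminus K^{(N)})\ge\tfrac{1}{4}R(\rho,V_{m}\setminus K^{(N')})$, which is what the trapping inequality actually needs.
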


\begin{exm} \label{A. exm: unbounded Sierpinski gasket}
  Since it is better to have an example in mind before moving to the proof of Theorem~\ref{A. thm: convergence of fractals}, 
  we describe how the above-mentioned setting is applied to the unbounded Sierpi\'{n}ski gasket.
  Set $W \coloneqq \RN^{2}$ and $\rho \coloneqq (0, 0)$.
  For convergence, we count indices from $0$.
  Let $\hat{V}_{0} \coloneqq \{x_{1}, x_{2}, x_{3} \} \subseteq \RN^{2}$
  consist of the vertices of an equilateral triangle of side length $1$ with $x_{1} = \rho$.
  Define $K^{(0)} \subseteq \RN^{2}$
  to be the compact subset 
  consisting of the boundary and interior of the triangle.
  Write $\psi_{i}(x) \coloneqq (x + x_{i})/2$ for $i = 1, 2, 3$.
  We then set 
  $\hat{V}_{n} \coloneqq \bigcup_{i=1}^{3} \psi_{i}(\hat{V}_{n-1})$.
  Set $V_{0} \coloneqq \bigcup_{k = 0}^{\infty} 2^{k} \hat{V}_{k}$,
  $V_{n} \coloneqq 2^{-n} V_{0}$ for $n \geq 1$
  and 
  $K^{(N)} \coloneqq 2^{N} K^{(0)}$ for $N \geq 1$.
  The edge set $E_{n} = E|_{n}$ on $V_{n}$ is the set of pairs of elements of $V_{n}$
  at a Euclidean distance $2^{-n}$ apart.
  Set $c|_{n}(x,y) \coloneqq (5/3)^{n}$ if $\{x,y\} \in E_{n}$.
  Then, we obtain a sequence of compatible electrical networks $G|_{n} = (V_{n}, E|_{n}, c|_{n})$,
  see Figure \ref{A. figure: the unbounded SG graphs}.
  The resulting resistance metric space $(F, R)$ is the unbounded Sierpi\'{n}ski gasket.
  If we set $G_{n} = G|_{n}$,
  then it is easy to check that Assumption~\ref{A. assum: general condition for approximation} is satisfied
  for this example.
  (For each $x \in V$, by choosing appropriately a vertex of a triangle on $V_{m}$
  that contains $x$ inside or on the boundary,
  one can construct the map $g_{m}$ 
  in Assumption~\ref{A. assum: general condition for approximation}\ref{A. assum item: approximation, neighborhood function},
  see Figure \ref{A. figure: the unbounded SG graphs}.)
\end{exm}

\begin{figure}[t]
  \centering
  \includegraphics[scale=0.78]{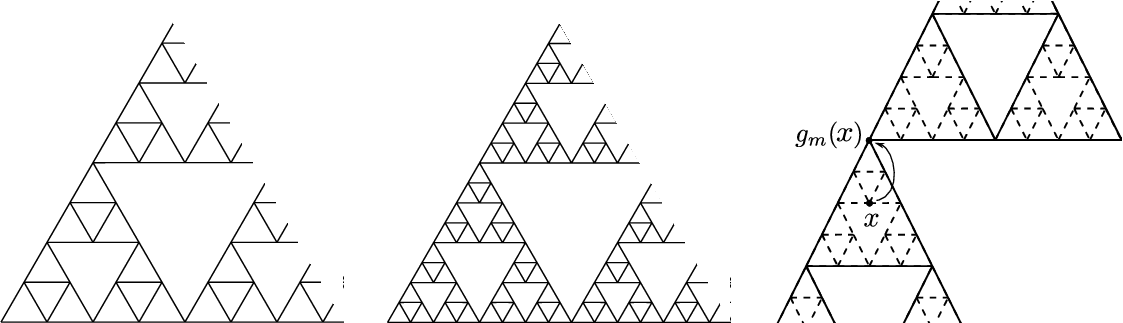}
  \caption{From Left to right,
  the unbounded Sierpi\'{n}ski gasket graphs $G_{0}$, $G_{1}$ and the associated map $g_{m}$.}
  \label{A. figure: the unbounded SG graphs}
\end{figure}

Towards proving Theorem~\ref{A. thm: convergence of fractals},
we start with proving the recurrence of $R$
and that $(F, R)$ is boundedly compact.
Henceforth, we suppose that Assumption~\ref{A. assum: general condition for approximation} is satisfied.

\begin{lem} \label{A. lem: the diameters are uniformly bounded}
  For each $N \geq 1$,
  it holds that 
  \begin{equation}  \label{A. lem eq: diameter bound}
    \limsup_{n \to \infty} \sup_{x \in V_{n}^{(N)}} R_{n}(\rho_{n}, x) < \infty,
    \quad 
    \sup_{x \in V^{(N)}} R(\rho, x) < \infty.
  \end{equation}
\end{lem}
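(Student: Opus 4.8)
The plan is to prove both bounds by the same two-step comparison: approximate an arbitrary point of $V_n^{(N)}$ (resp.\ $V^{(N)}$) by its image under $g_m$, which lies in the \emph{finite} set $V_m^{(N)}$, and then control the resistance from $\rho$ over this finite set. The key point is that the approximation error is uniform in the point and vanishes as $m \to \infty$ by Assumption~\ref{A. assum: general condition for approximation}\ref{A. assum item: approximation, neighborhood function}, so after fixing a single sufficiently large $m$ the remaining quantity is a supremum over a finite set, which is finite by Assumption~\ref{A. assum: general condition for approximation}\ref{A. assum item: approximation, bound for V_m} (for the $R_n$-bound) or trivially (for the $R$-bound).

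First I would record the elementary inclusion $V_n^{(N)} = V_n \cap K^{(N)} \subseteq V \cap K^{(N)} = V^{(N)}$, which follows from $V_n \subseteq V$; hence for every $x \in V_n^{(N)}$ we have $g_m(x) \in g_m(V^{(N)}) \subseteq V_m^{(N)}$. The triangle inequality for $R_n$ then gives, for each fixed $m$,
\begin{equation}
  \sup_{x \in V_n^{(N)}} R_n(\rho_n, x)
  \leq
  \sup_{y \in V_m^{(N)}} R_n(\rho_n, y)
  +
  \sup_{x \in V_n} R_n(x, g_m(x)),
\end{equation}
where for the first term I use $\{ g_m(x) : x \in V_n^{(N)} \} \subseteq V_m^{(N)}$ and for the second that $V_n^{(N)} \subseteq V_n$. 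Taking $\limsup_{n \to \infty}$ and using subadditivity of $\limsup$, I would then invoke Assumption~\ref{A. assum: general condition for approximation}\ref{A. assum item: approximation, neighborhood function} to choose $m$ so large that $\limsup_{n \to \infty} \sup_{x \in V_n} R_n(x, g_m(x)) \leq 1$; with this $m$ fixed, the second term is bounded by $1$ and the first term is finite by Assumption~\ref{A. assum: general condition for approximation}\ref{A. assum item: approximation, bound for V_m}, which yields the first bound in \eqref{A. lem eq: diameter bound}.

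For the second bound I would argue identically in the limit space. For $x \in V^{(N)}$ we again have $g_m(x) \in V_m^{(N)}$, so the triangle inequality for $R$ gives
\begin{equation}
  \sup_{x \in V^{(N)}} R(\rho, x)
  \leq
  \sup_{y \in V_m^{(N)}} R(\rho, y)
  +
  \sup_{x \in V} R(x, g_m(x)).
\end{equation}
Choosing $m$ large enough that $\sup_{x \in V} R(x, g_m(x)) \leq 1$ (possible by Assumption~\ref{A. assum: general condition for approximation}\ref{A. assum item: approximation, neighborhood function}) bounds the second term, while the first term is the maximum of finitely many finite values, since $V_m^{(N)}$ is a finite set and $R$ is a genuine metric on $V$; this gives $\sup_{x \in V^{(N)}} R(\rho, x) < \infty$.

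There is essentially no serious obstacle here: the statement is a soft consequence of the uniform approximation property together with the finiteness of each $V_m^{(N)}$. The only point requiring a little care is the order of quantifiers — for the first bound one must fix $m$ \emph{after} taking $\limsup_{n}$ rather than before, so that the vanishing-error statement of Assumption~\ref{A. assum: general condition for approximation}\ref{A. assum item: approximation, neighborhood function} can be applied; interchanging the two orders would leave an unquantified dependence on $m$ in the estimate.
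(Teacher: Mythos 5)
Your proof is correct and follows essentially the same route as the paper: the same triangle-inequality decomposition through $g_m(x) \in V_m^{(N)}$, with the error term controlled uniformly via Assumption \ref{A. assum: general condition for approximation}\ref{A. assum item: approximation, neighborhood function} and the finite-set term via Assumption \ref{A. assum: general condition for approximation}\ref{A. assum item: approximation, bound for V_m} (resp.\ finiteness of $V_m^{(N)}$ for the limit space). Your explicit remark about fixing $m$ only after taking $\limsup_n$ is exactly the point the paper leaves implicit.
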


\begin{proof}
  The triangle inequality yields that 
  \begin{equation}
    \sup_{x \in V_{n}^{(N)}}R_{n}(\rho_{n}, x) 
    \leq 
    \sup_{y \in V_{m}^{(N)}}
    R_{n}(\rho_{n}, y) 
    + 
    \sup_{x \in V_{n}^{(N)}}
    R_{n}(g_{m}(x), x).
  \end{equation} 
  From Assumption~\ref{A. assum: general condition for approximation}\ref{A. assum item: approximation, neighborhood function}
  and \ref{A. assum item: approximation, bound for V_m},
  we obtain the first inequality of \eqref{A. lem eq: diameter bound}.
  Since $V_{m}^{(N)}$ is assumed to be a finite set,
  we have that 
  $\sup_{x \in V_{m}^{(N)}} R(\rho, x) < \infty$.
  Therefore,
  following a similar argument,
  we obtain the second inequality of \eqref{A. lem eq: diameter bound}.
\end{proof}

\begin{lem} \label{A. lem: recurrence of limiting space}
  The resistance metric $R$ is recurrent.
\end{lem}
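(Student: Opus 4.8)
The plan is to verify the criterion of Definition \ref{3. dfn: recurrent resistance forms}, i.e.\ that $\lim_{r\to\infty} R(\rho, B_{R}(\rho,r)^{c}) = \infty$, where $R(\cdot,\cdot)$ denotes effective resistance between sets (Definition \ref{3. dfn: effective resistance between sets}). The strategy is two-step: first transfer the non-explosion bound of Assumption \ref{A. assum: general condition for approximation}\ref{A. assum item: approximation, non-explosion} from the approximating networks to the limiting form $(\cE,\cF)$ through the fused conductances, obtaining $\lim_{N\to\infty} R(\rho, V\setminus K^{(N)}) = \infty$; then convert effective resistance to the sets $V\setminus K^{(N)}$ into effective resistance to complements of balls, using the diameter bound of Lemma \ref{A. lem: the diameters are uniformly bounded} and continuity of functions in $\cF$.

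First I would record two elementary facts. Reading off the definitions of $R_{n}|_{m}^{(N)}$ and of effective resistance between sets, and identifying the fused vertex with $V_{m}\setminus K^{(N)}$, one has for $n\ge m$
\[
  R_{n}|_{m}^{(N)}(\rho, V_{m}\setminus K^{(N)}) = R_{n}(\rho, V_{m}\setminus K^{(N)}), \qquad R|_{m}^{(N)}(\rho, V_{m}\setminus K^{(N)}) = R(\rho, V_{m}\setminus K^{(N)}).
\]
Moreover, since $V_{m}\setminus K^{(N)}\subseteq V_{n}\setminus K^{(N)}$, imposing more vanishing constraints increases the energy infimum, so $R_{n}(\rho, V_{n}\setminus K^{(N)})\le R_{n}(\rho, V_{m}\setminus K^{(N)})$. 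By Assumption \ref{A. assum: general condition for approximation}\ref{A. assum item: approximation, conductance convergence} the conductances $c_{n}|_{m}^{(N)}$ converge to $c|_{m}^{(N)}$ on the \emph{fixed} finite vertex set $V_{m}^{(N)}\cup\{V_{m}\setminus K^{(N)}\}$, so Lemma \ref{4. lem: continuity of resistance metric and conductance} gives $R_{n}|_{m}^{(N)}(\rho, V_{m}\setminus K^{(N)})\to R|_{m}^{(N)}(\rho, V_{m}\setminus K^{(N)})$ as $n\to\infty$. Combining the three displays yields, for every fixed $m$ and $N$,
\[
  \limsup_{n\to\infty} R_{n}(\rho, V_{n}\setminus K^{(N)}) \le R(\rho, V_{m}\setminus K^{(N)}).
\]

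Next I would let $m\to\infty$. Since $V_{m}\setminus K^{(N)}\uparrow V\setminus K^{(N)}$, I claim $R(\rho, V_{m}\setminus K^{(N)})\to R(\rho, V\setminus K^{(N)})$; only ``$\le$'' is nontrivial. Writing $C_{m}$ for the corresponding energy infima (so $C_{m}\uparrow$ and $C_{m}\le C_{\infty}$), I would take near-minimizers $u_{m}\in\cF$ with $u_{m}(\rho)=1$, $u_{m}|_{V_{m}\setminus K^{(N)}}=0$ and $\sup_{m}\cE(u_{m},u_{m})<\infty$, and apply the Banach--Saks theorem in the Hilbert space $(\cF,\cE^{(1)})$ (Lemma \ref{3. lem: (F, E^1) is Hilbert}), exactly as in the proof of Proposition \ref{3. prop: find a convergent Cesaro mean sequence}, to obtain a subsequence whose Cesàro means converge in $\cE^{(1)}$ --- hence pointwise by \eqref{3. eq: the basic inequality for resistance forms} --- to some $u\in\cF$ with $u(\rho)=1$ and $\cE(u,u)\le\lim_{m}C_{m}$. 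Because every $x\in V\setminus K^{(N)}$ lies in $V_{m}\setminus K^{(N)}$ for all large $m$, the Cesàro means vanish at $x$ in the limit, so $u|_{V\setminus K^{(N)}}=0$; thus $u$ is admissible for $C_{\infty}$ and $C_{\infty}\le\cE(u,u)\le\lim_{m}C_{m}$, proving the claim. This lower-semicontinuity of effective resistance under increasing sets is the main obstacle. Passing $m\to\infty$ then $N\to\infty$ and invoking Assumption \ref{A. assum: general condition for approximation}\ref{A. assum item: approximation, non-explosion} gives $\lim_{N\to\infty} R(\rho, V\setminus K^{(N)}) = \infty$.

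Finally I would convert this into the ball statement. By Lemma \ref{A. lem: the diameters are uniformly bounded}, $D_{N}:=\sup_{x\in V^{(N)}} R(\rho,x)<\infty$. Fix $L>0$ and choose $N$ with $R(\rho, V\setminus K^{(N)})\ge L$. For $r>D_{N}$ and any $y\in B_{R}(\rho,r)^{c}$, density of $V$ in $F$ furnishes points of $V$ converging to $y$ whose resistance to $\rho$ eventually exceeds $D_{N}$, hence lies outside $V^{(N)}$, i.e.\ in $V\setminus K^{(N)}$; therefore $B_{R}(\rho,r)^{c}\subseteq \cl(V\setminus K^{(N)})$. Since every $u\in\cF$ is continuous, the constraint $u|_{V\setminus K^{(N)}}=0$ forces $u|_{B_{R}(\rho,r)^{c}}=0$, so the admissible class defining $R(\rho, V\setminus K^{(N)})$ is contained in that defining $R(\rho, B_{R}(\rho,r)^{c})$; taking infima of $\cE(\cdot,\cdot)$ over the larger class gives a smaller value, whence $R(\rho, B_{R}(\rho,r)^{c})\ge R(\rho, V\setminus K^{(N)})\ge L$. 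As $L$ was arbitrary, $\lim_{r\to\infty} R(\rho, B_{R}(\rho,r)^{c})=\infty$, which is the asserted recurrence.
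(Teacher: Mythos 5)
Your proposal is correct, and the opening reduction (identifying $R_{n}|_{m}^{(N)}(\rho,V_{m}\setminus K^{(N)})$ with $R_{n}(\rho,V_{m}\setminus K^{(N)})$, invoking Lemma \ref{4. lem: continuity of resistance metric and conductance} and the monotonicity $R_{n}(\rho_{n},V_{n}\setminus K^{(N)})\le R_{n}(\rho_{n},V_{m}\setminus K^{(N)})$ to get $\lim_{N}\lim_{m}R(\rho,V_{m}\setminus K^{(N)})=\infty$) and the endgame (using Lemma \ref{A. lem: the diameters are uniformly bounded} to deduce $B_{R}(\rho,r)^{c}\subseteq\cl(V\setminus K^{(N)})$ and comparing admissible classes) coincide with the paper's. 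Where you genuinely diverge is the middle step. The paper never proves that $R(\rho,V_{m}\setminus K^{(N)})\to R(\rho,V\setminus K^{(N)})$; instead it takes the exact minimizer $u$ for $R(\rho,V_{m}\setminus K^{(N)})$ and uses Assumption \ref{A. assum: general condition for approximation}\ref{A. assum item: approximation, neighborhood function} --- choosing $m$ so large that $\sup_{x}R(x,g_{m}(x))<M/4$ --- together with \eqref{3. eq: the basic inequality for resistance forms} to show $|u|\le 1/2$ on all of $V\setminus K^{(N)}$; the truncation $v=2\{(u-2^{-1})\vee 0\}$ then vanishes on $V\setminus K^{(N)}$ with $\cE(v,v)\le 4\cE(u,u)$ by \ref{3. dfn item: Markov property of resistance forms}, yielding $R(\rho,\cl(V\setminus K^{(N)}))\ge 4^{-1}R(\rho,V_{m}\setminus K^{(N)})$. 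You instead establish the exact lower semicontinuity of effective resistance along the increasing sets $V_{m}\setminus K^{(N)}\uparrow V\setminus K^{(N)}$ via Banach--Saks in $(\cF,\cE^{(1)})$, in the spirit of Proposition \ref{3. prop: find a convergent Cesaro mean sequence}. Your route is cleaner in that it loses no constant and isolates a reusable general fact about resistance forms that does not rely on the approximation maps $g_{m}$ at this stage; the paper's route is more elementary (no weak-compactness argument) at the price of a factor $4$, which is harmless since only divergence is needed. Both arguments are complete; the only points worth making explicit in yours are that $\lim_{m}C_{m}$ is finite for the relevant $N$ (so that near-minimizers with uniformly bounded energy exist --- this follows since $\lim_{m}R(\rho,V_{m}\setminus K^{(N)})$ has already been shown to be large, hence the energies are small) and that $\cE(u_{m},u_{m})\to\lim_{m}C_{m}$ so that the Ces\`{a}ro means indeed satisfy $\cE(u,u)\le\lim_{m}C_{m}$ rather than merely $\sup_{m}\cE(u_{m},u_{m})$.
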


\begin{proof}
  Assumption~\ref{A. assum: general condition for approximation}\ref{A. assum item: approximation, conductance convergence} implies that 
  $R_{n}(\rho_{n}, V_{m} \setminus K^{(N)}) \to R(\rho, V_{m} \setminus K^{(N)})$ as $n \to \infty$.
  If $n \geq m$,
  then $R_{n}(\rho_{n}, V_{n} \setminus K^{(N)}) \leq R_{n}(\rho_{n}, V_{m} \setminus K^{(N)})$.
  Thus, we obtain that 
  \begin{equation}
    \lim_{m \to \infty} R(\rho, V_{m} \setminus K^{(N)})
    =
    \lim_{m \to \infty} \lim_{n \to \infty} R_{n}(\rho_{n}, V_{m} \setminus K^{(N)})
    \geq 
    \liminf_{n \to \infty} R_{n}(\rho_{n}, V_{n} \setminus K^{(N)}).
  \end{equation}
  This, combined with Assumption~\ref{A. assum: general condition for approximation}\ref{A. assum item: approximation, non-explosion},
  yields that 
  \begin{equation}  \label{A. eq: recurrence, quasi explosion}
    \lim_{N \to \infty} 
    \lim_{m \to \infty}
    R(\rho, V_{m} \setminus K^{(N)}) 
    = 
    \infty.
  \end{equation}
  Fix $M > 0$.
  By \eqref{A. eq: recurrence, quasi explosion} 
  and Assumption~\ref{A. assum: general condition for approximation}\ref{A. assum item: approximation, neighborhood function},
  there exist $N \geq 1$ and $m \geq 1$ such that 
  $R(\rho, V_{m} \setminus K^{(N)}) > M$ 
  and $\sup_{x \in V} R(x, g_{m}(x)) < M/4$.
  Let $u \in \cF$ be the unique function satisfying 
  $u(\rho) = 0$, $u|_{V_{m} \setminus K^{(N)}} \equiv 0$ 
  and $R(\rho, V_{m} \setminus K^{(N)}) = \cE(u, u)^{-1}$.
  For $x \in V \setminus K^{(N)}$,
  since we have that $g_{m}(x) \in V_{m} \setminus K^{(N)}$,
  it follows that $u(g_{m}(x)) = 0$.
  Hence,
  \eqref{3. eq: the basic inequality for resistance forms} yields that 
  \begin{equation}
    |u(x)| 
    = 
    |u(x) - u(g_{m}(x))|
    \leq 
    \sqrt{\cE(u, u) R(x, g_{m}(x))}
    \leq 
    1/2
  \end{equation}
  for all $x \in V \setminus K^{(N)}$.
  Define $v \coloneqq 2 \{(u - 2^{-1}) \vee 0 \}$.
  Then,
  $v \in \cF$,
  $v(\rho) = 1$ 
  and $v|_{V \setminus K^{(N)}} \equiv 0$.
  Moreover,
  \ref{3. dfn item: Markov property of resistance forms} yields that 
  $\cE(v, v) \leq 4 \cE(u, u)$.
  By the continuity of $v$,
  we have that $v|_{\cl(V \setminus K^{(N)})} \equiv 0$,
  where we recall that, for a set $A \subseteq F$,
  $\cl(A)$ denotes the closure of a set $A$ in $(F, R)$.
  Hence, it follows that 
  \begin{equation}
    R(\rho, \cl(V \setminus K^{(N)}))
    \geq 
    4^{-1} R(\rho, V_{m} \setminus K^{(N)})
    > 
    4^{-1} M.
  \end{equation}
  By Lemma~\ref{A. lem: the diameters are uniformly bounded},
  there exists $r>0$ such that $\sup_{x \in V^{(N)}} R(\rho, x) < r/2$.
  It is then the case that $B_{R}(\rho, r)^{c} \subseteq \cl(V \setminus K^{(N)})$.
  Thus,
  we deduce that 
  \begin{equation}
    R(\rho, B_{R}(\rho, r)^{c}) 
    > 4^{-1} M,
  \end{equation}
  which completes the proof.
\end{proof}

\begin{lem}
  The resistance metric space $(F, R)$ is boundedly compact.
\end{lem}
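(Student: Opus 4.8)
The plan is to show that the closed ball $D_{R}(\rho, r)$ is compact for every $r > 0$; this suffices, since for any $x \in F$ and $r > 0$ one has $D_{R}(x, r) \subseteq D_{R}(\rho, r + R(\rho, x))$, so $D_{R}(x,r)$ is a closed subset of a compact set and hence compact. As $(F, R)$ is complete (being a completion), it is enough to exhibit, for each $r$, a compact subset of $F$ containing $D_{R}(\rho, r)$.

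The candidate is $\cl(V^{(N)})$ for a suitable $N$. First I would verify that $\cl(V^{(N)})$ is compact for every $N$. Since $(F, R)$ is complete, $\cl(V^{(N)})$ is complete, so it suffices to prove that $V^{(N)}$ is totally bounded. This is immediate from Assumption \ref{A. assum: general condition for approximation}\ref{A. assum item: approximation, neighborhood function}: given $\varepsilon > 0$, choose $m$ with $\sup_{x \in V} R(x, g_{m}(x)) < \varepsilon$; because $g_{m}(V^{(N)}) \subseteq V_{m}^{(N)}$ and $V_{m}^{(N)}$ is finite, the finite set $V_{m}^{(N)}$ is an $\varepsilon$-net of $V^{(N)}$ in $(F, R)$.

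Next I would locate $D_{R}(\rho, r)$ inside some $\cl(V^{(N)})$. The key input is that $R(\rho, \cl(V \setminus K^{(N)})) \to \infty$ as $N \to \infty$, which is contained in the proof of Lemma \ref{A. lem: recurrence of limiting space}: one combines \eqref{A. eq: recurrence, quasi explosion} with the truncation $v \coloneqq 2\{(u - 2^{-1}) \vee 0\}$ and \ref{3. dfn item: Markov property of resistance forms}, and notes that the quantity is non-decreasing in $N$ since $\cl(V \setminus K^{(N)})$ is decreasing. Given $r$, pick $N$ with $R(\rho, \cl(V\setminus K^{(N)})) > r$. Since $x \in A$ forces $R(\rho, x) \geq R(\rho, A)$ (in Definition \ref{3. dfn: effective resistance between sets} the constraint $f|_{A} = 0$ is more restrictive than merely $f(x) = 0$), every point of $\cl(V\setminus K^{(N)})$ lies at $R$-distance strictly greater than $r$ from $\rho$, so $D_{R}(\rho, r) \cap \cl(V\setminus K^{(N)}) = \emptyset$. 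On the other hand, using $\cl(A \cup B) = \cl(A) \cup \cl(B)$ and $V = V^{(N)} \cup (V \setminus K^{(N)})$, we have $F = \cl(V) = \cl(V^{(N)}) \cup \cl(V\setminus K^{(N)})$, whence $D_{R}(\rho, r) \subseteq \cl(V^{(N)})$.

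Finally, $D_{R}(\rho, r)$ is closed in $F$ and contained in the compact set $\cl(V^{(N)})$, so it is compact, and the reduction in the first paragraph then gives bounded compactness of $(F, R)$. The main subtlety is the third step: one must handle the closures carefully and combine the set-monotonicity comparison $R(\rho, x) \geq R(\rho, A)$ for $x \in A$ with the divergence of the resistance to $\cl(V\setminus K^{(N)})$ extracted from the recurrence argument, whereas the total boundedness of $V^{(N)}$ is a direct consequence of the uniform approximation property of the maps $g_{m}$.
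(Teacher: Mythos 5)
Your proposal is correct and follows essentially the same route as the paper: choose $N$ with $R(\rho,\cl(V\setminus K^{(N)}))>r$ (extracted from the proof of Lemma \ref{A. lem: recurrence of limiting space}), deduce $D_{R}(\rho,r)\subseteq \cl(V^{(N)})$ from $F=\cl(V^{(N)})\cup\cl(V\setminus K^{(N)})$, and establish total boundedness via the maps $g_{m}$ and the finiteness of $V_{m}^{(N)}$. The only differences are cosmetic (you make explicit the reduction to balls centered at $\rho$ and the monotonicity $R(\rho,A)\leq R(\rho,x)$ for $x\in A$, which the paper uses implicitly).
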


\begin{proof}
  Fix $r>0$.
  By Lemma~\ref{A. lem: recurrence of limiting space},
  we can find $N \geq 1$ 
  such that 
  $R(\rho, \cl(V \setminus K^{(N)})) > r$,
  which implies that $D_{R}(\rho, r) \subseteq F \setminus \cl(V \setminus K^{(N)})$.
  For $x \in F \setminus \cl(V \setminus K^{(N)})$,
  let $(x_{n})_{n \geq 1}$ be a sequence in $V$ converging to $x$
  (recall that $\cl(V) = F$).
  If there exists a subsequence $(x_{n_{k}})_{k \geq 1}$ such that $x_{n_{k}} \in V \setminus K^{(N)}$,
  then $x \in \cl(V \setminus K^{(N)})$,
  which is a contradiction.
  Hence, 
  it follows that 
  $F \setminus \cl(V \setminus K^{(N)}) \subseteq \cl(V^{(N)})$
  and $D_{R}(\rho, r) \subseteq \cl(V^{(N)})$.
  Fix $\varepsilon > 0$.
  By Assumption~\ref{A. assum: general condition for approximation}\ref{A. assum item: approximation, neighborhood function},
  there exists $m \geq 1$ such that 
  $\sup_{x \in V} R(x, g_{m}(x)) < \varepsilon$.
  Since $g_{m}(V^{(N)}) \subseteq V_{m}^{(N)}$,
  we deduce that 
  \begin{equation}
    D_{R}(\rho, r)
    \subseteq
    V^{(N)}
    \subseteq 
    \bigcup_{x \in V_{m}^{(N)}} D_{R}(x, \varepsilon).
  \end{equation}
  Recalling that $V_{m}^{(N)}$ is assumed to be a finite set,
  we obtain that $D_{R}(\rho, r)$ is totally bounded,
  which implies that $D_{R}(\rho, r)$ is compact.
\end{proof}

The key to obtaining the desired convergence is 
that the non-explosion condition ensures that the restricted metric $R_{n}|_{m}$ 
is approximated by the fused metric $R_{n}|_{m}^{(N)}$ (see Lemma~\ref{A. lem: approximation by fused metrics}).
To show this,
we use the following basic property of resistance forms
proven in \cite{Kajino_18_Introduction},
which is a modification of \cite[Lemma 6.5]{Kigami_12_Resistance}.

\begin{prop}  [{\cite[Corollary 2.39]{Kajino_18_Introduction}}] \label{A. prop: energy of product of functions}
  Let $(\cE, \cF)$ be a resistance form on $F$.
  Then, 
  for any bounded functions $u, v \in \cF$,
  $u \cdot v \in \cF$ and 
  \begin{equation}
    \cE(u \cdot v, u \cdot v)^{\frac{1}{2}}
    \leq 
    \|u\|_{\infty} \cE(v, v)^{\frac{1}{2}}
    +
    \|v\|_{\infty} \cE(u, u)^{\frac{1}{2}}.
  \end{equation}
\end{prop}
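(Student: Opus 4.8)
The plan is to reduce the inequality to finite electrical networks, where it becomes an elementary consequence of the triangle inequality in a weighted $\ell^2$ space, and then to recover the general resistance form by the finite-trace approximation already invoked in the proof of Theorem \ref{5. thm: recurrence is preserved by the non-explosion condition}. Concretely, recall that for any non-empty finite $V \subseteq F$ the trace $(\cE|_{V}, \cF|_{V})$ is the form of an electrical network on $V$, so there are conductances $c(x,y) \geq 0$ with
\begin{equation}
  \cE|_{V}(f, f) = \frac{1}{2} \sum_{x, y \in V} c(x,y)(f(x) - f(y))^{2}, \quad f \in \mbR^{V},
\end{equation}
and that, for every $w \in \cF$, one has $\cE|_{V}(w|_{V}, w|_{V}) \leq \cE(w, w)$ together with $\cE(w, w) = \sup_{V} \cE|_{V}(w|_{V}, w|_{V})$, the supremum running over finite subsets; moreover, a function on $F$ whose finite-trace energies are uniformly bounded belongs to $\cF$ (this is the content of \cite[Theorem 3.13]{Kigami_12_Resistance}, used exactly in this direction in the proof of Theorem \ref{5. thm: recurrence is preserved by the non-explosion condition}).

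For the finite case, I would start from the symmetric identity
\begin{equation}
  u(x)v(x) - u(y)v(y) = \frac{u(x) + u(y)}{2}(v(x) - v(y)) + \frac{v(x) + v(y)}{2}(u(x) - u(y)),
\end{equation}
which, using $|u(x) + u(y)|/2 \leq \|u\|_{\infty}$ and $|v(x) + v(y)|/2 \leq \|v\|_{\infty}$, gives the pointwise bound
\begin{equation}
  |(uv)(x) - (uv)(y)| \leq \|u\|_{\infty}\, |v(x) - v(y)| + \|v\|_{\infty}\, |u(x) - u(y)|.
\end{equation}
Viewing $\cE|_{V}(\cdot, \cdot)^{1/2}$ as the norm of the difference operator $f \mapsto (f(x) - f(y))_{x,y}$ in the weighted $\ell^{2}$ space on $V \times V$ with weights $c(x,y)/2$, monotonicity of this norm under the above pointwise estimate, followed by Minkowski's inequality, yields
\begin{equation}
  \cE|_{V}(uv, uv)^{1/2} \leq \|u\|_{\infty}\, \cE|_{V}(v, v)^{1/2} + \|v\|_{\infty}\, \cE|_{V}(u, u)^{1/2}.
\end{equation}

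Finally, I would bound the right-hand side using $\cE|_{V}(v,v) \leq \cE(v,v)$ and $\cE|_{V}(u,u) \leq \cE(u,u)$, obtaining a bound on $\cE|_{V}(uv, uv)^{1/2}$ that is uniform in $V$, namely $\|u\|_{\infty}\cE(v,v)^{1/2} + \|v\|_{\infty}\cE(u,u)^{1/2}$, which is finite since $u$ and $v$ are bounded and of finite energy. This uniform bound on the finite-trace energies of $uv$ forces $uv \in \cF$ with $\cE(uv, uv) = \sup_{V} \cE|_{V}(uv, uv)$, and taking the supremum over $V$ then delivers the claimed inequality. The main obstacle is purely bookkeeping: I must apply the finite-trace characterization in the correct direction and confirm that $uv$ is a priori a well-defined real-valued function on $F$ — both immediate here, since $u, v \in \cF \subseteq C(F, \mbR)$ are bounded. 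In particular no completeness or regularity of $(\cE, \cF)$ is needed, so the argument applies to the general resistance form of the statement.
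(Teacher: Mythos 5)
The paper offers no proof of this proposition---it is quoted directly from Kajino's lecture notes---so there is no internal argument to compare against; judged on its own, your proof is correct and complete. The reduction to finite traces, the symmetrization identity $u(x)v(x)-u(y)v(y)=\tfrac{u(x)+u(y)}{2}(v(x)-v(y))+\tfrac{v(x)+v(y)}{2}(u(x)-u(y))$, and Minkowski's inequality in the weighted $\ell^{2}$ space on $V\times V$ give exactly the claimed bound on each finite network, and the two facts you invoke about traces---that $\cE|_{V}(w|_{V},w|_{V})\leq\cE(w,w)$ for $w\in\cF$, and that a function with uniformly bounded finite-trace energies lies in $\cF$ with $\cE(w,w)=\sup_{V}\cE|_{V}(w|_{V},w|_{V})$---are precisely the content of Kigami's construction of resistance forms from compatible families, used in the same direction in the paper's proof of Theorem \ref{5. thm: recurrence is preserved by the non-explosion condition}. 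The one point worth making explicit is that for a general, possibly non-separable $F$ you should run the approximation over the net of all finite subsets ordered by inclusion rather than over an increasing sequence with dense union: for $V\subseteq V'$ the harmonic extensions satisfy $\cE(h_{V'},h_{V'})=\cE(h_{V},h_{V})+\cE(h_{V'}-h_{V},h_{V'}-h_{V})$, so they form an $\cE$-Cauchy net whose limit is identified pointwise via \eqref{3. eq: the basic inequality for resistance forms}, and the rest of your argument goes through verbatim. This is also essentially how the cited result is obtained in the literature; the cruder bound $\cE(uv,uv)\leq 2\|u\|_{\infty}^{2}\cE(v,v)+2\|v\|_{\infty}^{2}\cE(u,u)$ of \cite[Lemma 6.5]{Kigami_12_Resistance} follows from the same identity with $(a+b)^{2}\leq 2a^{2}+2b^{2}$ in place of Minkowski.
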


Using the above result,
we obtain a quantitative estimate on the difference between a resistance metric and a fused one.

\begin{cor} \label{A. cor: error estimate of fused metric}
  Let $(\cE, \cF)$ be a resistance form on $F$.
  We equip $F$ with the topology induced from the corresponding resistance metric.
  Fix a non-empty open set $B \subseteq F$ 
  and define, for $x, y \in B$,
  \begin{equation}
    R^{(B)}(x,y) 
    \coloneqq 
    \sup\{
      \cE(u, u )^{-1} \mid u(x)=1,\, u(y)=0,\, u\ \text{is constant on}\ B^{c}
    \},
  \end{equation}
  i.e., $R^{(B)}$ is the resistance metric obtained by fusing $B$ into a single point.
  It then holds that, for all $x, y \in B$,
  \begin{equation}
    |R(x,y) - R^{(B)}(x,y)|
    \leq 
    2 R(x, B^{c})^{-\frac{1}{2}} R(x,y)^{\frac{3}{2}}.
  \end{equation}
\end{cor}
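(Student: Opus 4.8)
The plan is to establish the two-sided estimate by first observing that fusing $B^{c}$ to a single vertex (equivalently, enlarging the class of competitors in the variational problem) can only decrease the effective resistance, so that $R^{(B)}(x,y) \le R(x,y)$ and it suffices to bound the nonnegative quantity $R(x,y) - R^{(B)}(x,y)$ from above. Throughout I would write $a \coloneqq R(x,y)^{-1}$ and $b \coloneqq R(x,B^{c})^{-1}$, and take $h,\phi \in \cF$ to be the energy minimizers realizing these resistances, i.e. $h(x)=1$, $h(y)=0$, $\cE(h,h)=a$, and $\phi(x)=1$, $\phi|_{B^{c}}=0$, $\cE(\phi,\phi)=b$; by the Markov property \ref{3. dfn item: Markov property of resistance forms} one may assume $0 \le h \le 1$ and $0 \le \phi \le 1$.

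Next I would introduce the test function $u \coloneqq h\phi$, which by Proposition \ref{A. prop: energy of product of functions} lies in $\cF$ and satisfies $u(x)=1$, $u(y)=0$ and $u \equiv 0$ on $B^{c}$; hence $u$ is admissible in the problem defining $R^{(B)}(x,y)$, giving $R^{(B)}(x,y)^{-1} \le \cE(u,u)$. The crucial simplification is an orthogonality (Euler--Lagrange) relation for $h$: since $w \coloneqq h(1-\phi) = h - u$ vanishes at both $x$ and $y$, minimality of $h$ forces $\cE(h,w)=0$, whence $\cE(u,u) = \cE(h,h) + \cE(w,w) = a + \cE(w,w)$. This reduces the entire problem to a sharp estimate of the correction energy $\cE(w,w) = \cE(h(1-\phi),h(1-\phi))$.

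The main obstacle is obtaining a sufficiently precise bound on this correction term, and here I would invoke the energy-measure (carré-du-champ) calculus underlying Proposition \ref{A. prop: energy of product of functions}. Writing $\psi \coloneqq 1-\phi$ and expanding $\cE(h\psi,h\psi)$ into the three contributions coming from $d\Gamma(\psi)$, $d\Gamma(h,\psi)$ and $d\Gamma(h)$, one bounds the first by $\cE(\psi,\psi)=\cE(\phi,\phi)=b$ using $0\le h\le 1$, then uses $\psi^{2}\le\psi$ together with the identity $\int \psi\, d\Gamma(h) = -\int h\, d\Gamma(h,\psi)$ (which is precisely $\cE(h,h\psi)=0$ rewritten via the Leibniz rule) to combine the remaining two terms into $\int h(2\psi-1)\, d\Gamma(h,\psi)$, and finally bounds this by $\sqrt{ab}$ through the Cauchy--Schwarz inequality for energy measures, since $|h(2\psi-1)|\le 1$. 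This yields $\cE(w,w)\le \sqrt{ab}+b$, and hence $R^{(B)}(x,y)^{-1}\le a+\sqrt{ab}+b$.

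To conclude, I would apply the elementary inequality $R(x,y)-R^{(B)}(x,y) \le \tfrac{1}{a} - \tfrac{1}{a+\sqrt{ab}+b} \le \frac{\sqrt{ab}+b}{a^{2}}$ and split into cases: when $b\le a$ one has $b/a^{2}\le \sqrt{b}\,a^{-3/2}$, so the right-hand side is at most $2\sqrt{b}\,a^{-3/2} = 2R(x,B^{c})^{-1/2}R(x,y)^{3/2}$; when $b>a$ the target already exceeds $R(x,y)\ge R(x,y)-R^{(B)}(x,y)$, so the bound holds trivially. I expect the delicate point to be the correction-energy estimate, since a direct application of the product bound of Proposition \ref{A. prop: energy of product of functions} to $h\phi$ (without the orthogonality relation and the energy-measure cancellation) only gives $\cE(w,w)\le 2\sqrt{ab}+b$ and hence the weaker constant $3$; exploiting the cancellation in the cross term is exactly what secures the stated constant $2$.
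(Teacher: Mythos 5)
Your overall architecture (reduce to an upper bound on $\cE(u,u)$ for the product test function $u = h\phi$, then elementary algebra) matches the paper, and your orthogonality reduction $\cE(u,u) = a + \cE(w,w)$ with $w = h(1-\phi)$ is correct, as is your concluding case analysis. However, the step that carries all the weight — the bound $\cE(w,w) \le \sqrt{ab} + b$ via the energy-measure calculus — is a genuine gap. The Leibniz rule $d\Gamma(fg,h) = f\,d\Gamma(g,h) + g\,d\Gamma(f,h)$ and the rewriting of $\cE(h,h\psi)=0$ as $\int \psi\, d\Gamma(h) = -\int h\, d\Gamma(h,\psi)$ are valid only for strongly local forms, whereas this corollary must hold for arbitrary resistance forms: in the paper it is applied (via Lemma \ref{A. lem: approximation by fused metrics}) to the discrete forms $(\cE_{n},\cF_{n})$ of electrical networks, which are non-local. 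On a finite network a modified computation with symmetrized coefficients $\tfrac{f(p)+f(q)}{2}$ in place of $f$ can be pushed through, but you have not done this, and extending it to a general resistance form by exhausting with finite traces is itself delicate because the orthogonality identity $\cE(h,h\psi)=0$ does not pass to the trace forms evaluated at restrictions.

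Moreover, the motivation you give for this machinery is mistaken: the direct application of Proposition \ref{A. prop: energy of product of functions} already yields the constant $2$. The paper simply takes $\cE(u\cdot v,u\cdot v) \le \bigl(\sqrt{a}+\sqrt{b}\bigr)^{2}$, hence $R^{(B)}(x,y) \ge \bigl(\sqrt{a}+\sqrt{b}\bigr)^{-2}$, and then keeps the full denominator in the algebra:
\begin{equation}
  \frac{1}{a} - \frac{1}{(\sqrt{a}+\sqrt{b})^{2}}
  =
  \frac{\sqrt{b}\,(2\sqrt{a}+\sqrt{b})}{a(\sqrt{a}+\sqrt{b})^{2}}
  \leq
  \frac{2\sqrt{b}}{a(\sqrt{a}+\sqrt{b})}
  \leq
  2\sqrt{b}\, a^{-3/2},
\end{equation}
which is exactly $2R(x,B^{c})^{-1/2}R(x,y)^{3/2}$. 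Your claim that this route only gives the constant $3$ comes from prematurely bounding $(\sqrt{a}+\sqrt{b})^{2}$ below by $a$; avoiding that crude step makes the entire orthogonality and energy-measure apparatus unnecessary.
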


\begin{proof}
  Let $u, v \in \cF$ be the functions such that 
  \begin{gather}
    u(x) = 1,\quad u(y) = 0,\quad R(x,y) = \cE(u, u)^{-1},\\
    v(x) = 1, \quad v|_{B^{c}} \equiv 0,\quad R(x, B^{c}) = \cE(v, v)^{-1}.
  \end{gather}
  Noting that $\|u\|_{\infty} \vee \|v\|_{\infty} \leq 1$,
  we obtain from Proposition~\ref{A. prop: energy of product of functions} that 
  \begin{equation}
    \cE(u \cdot v, u \cdot v) 
    \leq 
    \left(
      \cE(u, u)^{\frac{1}{2}} + \cE(v, v)^{\frac{1}{2}}
    \right)^{2}
    =
    \left(
      R(x, y)^{-\frac{1}{2}} + R(x, B^{c})^{-\frac{1}{2}}
    \right)^{2}.
  \end{equation}
  Since $(u \cdot v)(x) = 1$, $(u \cdot v) (y) = 0$ and $(u \cdot v) |_{B^{c}} \equiv 0$, 
  we deduce that 
  \begin{equation}
    R^{(B)}(x, y) 
    \geq 
    \left(
      R(x, y)^{-\frac{1}{2}} + R(x, B^{c})^{-\frac{1}{2}}
    \right)^{2}
  \end{equation}
  This yields that 
  \begin{equation}
    R(x, y) - R^{(B)}(x,y)
    \leq 
    \frac{2 R(x, B^{c})^{-\frac{1}{2}} R(x,y)}
      {R(x,y)^{-\frac{1}{2}} + R(x, B^{c})^{-\frac{1}{2}}}
    \leq 
    2 R(x, B^{c})^{-\frac{1}{2}} R(x,y)^{\frac{3}{2}}.
  \end{equation}
  From the above inequality and the fact that $R^{(B)}(x,y) \leq R(x,y)$,
  we obtain the desired result.
\end{proof}

We deduce from Corollary~\ref{A. cor: error estimate of fused metric} that 
the fused resistance metric $R_n|_m^{(N)}$ approximates the original resistance metric,
as follows.

\begin{lem} \label{A. lem: approximation by fused metrics}
  For each $m, N_{0} \geq 1$,
  it holds that 
  \begin{gather}
    \lim_{N \to \infty}
    \limsup_{n \to \infty}
    \sup_{x, y \in V_{m}^{(N_{0})}} 
    \left|
      R_{n}|_{m}^{(N)}(x,y) - R_{n}(x,y)
    \right|
    =0, \\
    \lim_{N \to \infty}
    \sup_{x, y \in V_{m}^{(N_{0})}}
    \left|
      R|_{m}^{(N)}(x,y) - R(x,y)
    \right|
    =0.
  \end{gather} 
\end{lem}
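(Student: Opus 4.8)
The plan is to realize both $R_{n}|_{m}^{(N)}$ and $R|_{m}^{(N)}$ as shorted (fused) resistance metrics and to apply Corollary \ref{A. cor: error estimate of fused metric} directly. For fixed $m,N$ write $B_{n} \coloneqq V_{n} \setminus (V_{m} \setminus K^{(N)})$, a non-empty open subset of the discrete space $V_{n}$ containing $\rho_{n}$. By definition, $R_{n}|_{m}^{(N)}(x,y)$ is exactly the fused metric $R_{n}^{(B_{n})}(x,y)$ associated with $(\cE_{n}, \cF_{n})$ and $B_{n}$, whose complement is $V_{m} \setminus K^{(N)}$. Hence Corollary \ref{A. cor: error estimate of fused metric} gives, for all $x,y \in V_{m}^{(N_{0})} \subseteq B_{n}$,
\[
  \left| R_{n}(x,y) - R_{n}|_{m}^{(N)}(x,y) \right|
  \leq
  2\, R_{n}(x, V_{m} \setminus K^{(N)})^{-1/2}\, R_{n}(x,y)^{3/2}.
\]
For the limit space I would argue identically with $(\cE, \cF)$ on $F$, taking $B \coloneqq F \setminus \cl(V_{m} \setminus K^{(N)})$ and using that continuity of functions in $\cF$ makes the constraint ``constant on $V_{m} \setminus K^{(N)}$'' equivalent to ``constant on $\cl(V_{m} \setminus K^{(N)})$'', so that $R^{(B)} = R|_{m}^{(N)}$ and $R(x, \cl(V_{m} \setminus K^{(N)})) = R(x, V_{m} \setminus K^{(N)})$; alternatively one may pass to the trace form on the discrete space $V_{m}$ via Theorem \ref{3. thm: trace and resistance metric space} (available since $R$ is recurrent by Lemma \ref{A. lem: recurrence of limiting space}). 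This yields the analogous bound with $R_{n}$ replaced by $R$.

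Next I would control the two factors. The factor $R_{n}(x,y)^{3/2}$ is uniformly bounded: for $x,y \in V_{m}^{(N_{0})} \subseteq V_{n}^{(N_{0})}$ the triangle inequality gives $R_{n}(x,y) \leq 2 \sup_{z \in V_{n}^{(N_{0})}} R_{n}(\rho_{n}, z)$, and $\limsup_{n} \sup_{z \in V_{n}^{(N_{0})}} R_{n}(\rho_{n}, z) < \infty$ by Lemma \ref{A. lem: the diameters are uniformly bounded} (and likewise $\sup_{x,y \in V_{m}^{(N_{0})}} R(x,y) < \infty$). Thus everything reduces to the divergence of the boundary resistance, namely
\[
  \lim_{N \to \infty} \liminf_{n \to \infty} \inf_{x \in V_{m}^{(N_{0})}} R_{n}(x, V_{m} \setminus K^{(N)}) = \infty
  \quad\text{and}\quad
  \lim_{N \to \infty} \inf_{x \in V_{m}^{(N_{0})}} R(x, V_{m} \setminus K^{(N)}) = \infty .
\]
To obtain these I would first change the root inside the shorted network exactly as in \eqref{5. eq: uniform non-explosion condition, changing root}, so that for $x \in V_{m}^{(N_{0})}$ one has $R_{n}(x, V_{m} \setminus K^{(N)}) \geq R_{n}(\rho_{n}, V_{m} \setminus K^{(N)}) - \sup_{z \in V_{m}^{(N_{0})}} R_{n}(\rho_{n}, z)$, and similarly for $R$. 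Since $R_{n}(\rho_{n}, V_{m} \setminus K^{(N)})$ is determined by the finite fused conductance network $c_{n}|_{m}^{(N)}$, Assumption \ref{A. assum: general condition for approximation}\ref{A. assum item: approximation, conductance convergence} together with Lemma \ref{4. lem: continuity of resistance metric and conductance} gives $\lim_{n} R_{n}(\rho_{n}, V_{m} \setminus K^{(N)}) = R(\rho, V_{m} \setminus K^{(N)})$ for fixed $m,N$; this collapses the troublesome $\liminf_{n}$ to a genuine limit, which is what lets me avoid needing a $\liminf$-type non-explosion hypothesis. Combined with the bound on $\sup_{z} R_{n}(\rho_{n}, z)$ from Lemma \ref{A. lem: the diameters are uniformly bounded}, both displayed divergences follow once I know that, for each fixed $m$, $\lim_{N \to \infty} R(\rho, V_{m} \setminus K^{(N)}) = \infty$.

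The main obstacle — and the one genuinely new point — is precisely this last fact, because $V_{m}$ is a fixed (possibly unbounded) level while the non-explosion hypothesis \ref{A. assum item: approximation, non-explosion} controls $R_{n}(\rho_{n}, V_{n} \setminus K^{(N)})$ only through a $\limsup_{n}$. I would resolve it by monotonicity: since $V_{m} \setminus K^{(N)} \subseteq V_{m'} \setminus K^{(N)}$ for every $m' \geq m$, enlarging the target set decreases effective resistance, so $R(\rho, V_{m} \setminus K^{(N)}) \geq R(\rho, V_{m'} \setminus K^{(N)})$ for all $m' \geq m$, whence $R(\rho, V_{m} \setminus K^{(N)}) \geq \lim_{m' \to \infty} R(\rho, V_{m'} \setminus K^{(N)})$. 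The quasi-explosion \eqref{A. eq: recurrence, quasi explosion} established in the proof of Lemma \ref{A. lem: recurrence of limiting space} states exactly that $\lim_{N \to \infty} \lim_{m' \to \infty} R(\rho, V_{m'} \setminus K^{(N)}) = \infty$, so letting $N \to \infty$ in the previous inequality gives $\lim_{N \to \infty} R(\rho, V_{m} \setminus K^{(N)}) = \infty$. Feeding this through the estimates above and sending $N \to \infty$ forces both error bounds to $0$, completing the proof.
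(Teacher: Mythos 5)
Your argument is correct, and it shares the paper's overall skeleton: reduce the lemma to Corollary \ref{A. cor: error estimate of fused metric} combined with the uniform diameter bound of Lemma \ref{A. lem: the diameters are uniformly bounded} and a divergence statement for the resistance from $V_{m}^{(N_{0})}$ to the fused set. Where you genuinely diverge is in how that divergence is obtained. The paper proves the stronger statement $\lim_{N}\liminf_{n}\inf_{x \in V_{n}^{(N_{0})}} R_{n}(x, V_{n}\setminus K^{(N)})=\infty$ (its \eqref{A. eq: approx by fused, uniform non-explosion}) by rerunning the root-changing argument of Lemma \ref{5. lem: uniform non-explosion condition} against the non-explosion hypothesis, Assumption \ref{A. assum: general condition for approximation}\ref{A. assum item: approximation, non-explosion}; since $V_{m}\setminus K^{(N)} \subseteq V_{n}\setminus K^{(N)}$, this dominates the quantity actually required by Corollary \ref{A. cor: error estimate of fused metric}. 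You instead target exactly the set $V_{m}\setminus K^{(N)}$ that appears in the definition of $R_{n}|_{m}^{(N)}$, use Assumption \ref{A. assum: general condition for approximation}\ref{A. assum item: approximation, conductance convergence} together with Lemma \ref{4. lem: continuity of resistance metric and conductance} to upgrade $R_{n}(\rho_{n}, V_{m}\setminus K^{(N)})$ to a genuine limit $R(\rho, V_{m}\setminus K^{(N)})$, and then send $N\to\infty$ via monotonicity in $m$ and the quasi-explosion \eqref{A. eq: recurrence, quasi explosion}. This variant is legitimate and in fact slightly more robust: the paper's route implicitly needs the $\limsup_{n}$ in hypothesis \ref{A. assum item: approximation, non-explosion} to behave like a $\liminf_{n}$ when deriving \eqref{A. eq: approx by fused, uniform non-explosion}, whereas your argument only ever passes through actual limits, so no such reading is needed. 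Your handling of the limit space (replacing $V_{m}\setminus K^{(N)}$ by its closure using continuity of functions in $\cF$, valid for all sufficiently large $N$ so that $V_{m}^{(N_{0})}$ stays away from that closure) is also fine.
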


\begin{proof}
  By Lemma~\ref{A. lem: the diameters are uniformly bounded},
  we can follow the argument in the proof of Lemma~\ref{5. lem: uniform non-explosion condition} 
  to obtain,
  for each $N_{0} \geq 1$,
  \begin{equation}  \label{A. eq: approx by fused, uniform non-explosion}
    \lim_{N \to \infty} 
    \liminf_{n \to \infty}
    \inf_{x \in V_{n}^{(N_{0})}}
    R_{n}(x, V_{n} \setminus K^{(N)})
    = \infty, 
    \quad 
    \lim_{N \to \infty}
    \inf_{x \in V^{(N_{0})}}
    R(x, \cl(V \setminus K^{(N)}))
    = \infty.
  \end{equation}
  By \eqref{A. eq: approx by fused, uniform non-explosion},
  Lemma~\ref{A. lem: the diameters are uniformly bounded} 
  and Corollary~\ref{A. cor: error estimate of fused metric},
  we deduce
  the desired result.
\end{proof}

The above approximation enables us to obtain the convergence of resistance metrics.

\begin{prop}  \label{A. prop: uniform convergence of R_n to R}
  For each $N_{0} \geq 1$,
  it holds that 
  \begin{equation}
    \lim_{n \to \infty}
    \sup_{x, y \in V_{n}^{(N_{0})}}
    |R_{n}(x,y) - R(x,y)|
    = 0.
  \end{equation}
\end{prop}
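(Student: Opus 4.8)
The plan is to transport the comparison from the moving vertex sets $V_n^{(N_0)}$ onto the \emph{fixed} finite sets $V_m^{(N_0)}$ using the approximation maps $g_m$, and then to control resistances on those fixed sets through the fused metrics $R_n|_m^{(N)}$ and $R|_m^{(N)}$, where Assumption \ref{A. assum: general condition for approximation}\ref{A. assum item: approximation, conductance convergence} and Lemma \ref{A. lem: approximation by fused metrics} can be applied. The essential point is that the fused-metric tools only "see" the fixed sets $V_m$, so an arbitrary pair $x,y \in V_n^{(N_0)}$ must first be replaced by $g_m(x), g_m(y) \in V_m^{(N_0)}$.

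First I would fix $N_0 \geq 1$ and, for $x,y \in V_n^{(N_0)}$ and any $m \leq n$, use the triangle inequality for both $R_n$ and $R$:
\begin{align*}
  |R_n(x,y) - R(x,y)|
  &\leq |R_n(x,y) - R_n(g_m(x), g_m(y))| \\
  &\quad + |R_n(g_m(x), g_m(y)) - R(g_m(x), g_m(y))| \\
  &\quad + |R(g_m(x), g_m(y)) - R(x,y)|.
\end{align*}
The first and third summands are bounded by $2\sup_{z \in V_n} R_n(z, g_m(z))$ and $2\sup_{z \in V} R(z, g_m(z))$, respectively. Since $x,y \in V^{(N_0)}$, Assumption \ref{A. assum: general condition for approximation}\ref{A. assum item: approximation, neighborhood function} gives $g_m(x), g_m(y) \in V_m^{(N_0)}$, so taking the supremum yields
\begin{equation*}
  \sup_{x,y \in V_n^{(N_0)}} |R_n(x,y) - R(x,y)|
  \leq 2 \sup_{z \in V_n} R_n(z, g_m(z))
  + \sup_{x,y \in V_m^{(N_0)}} |R_n(x,y) - R(x,y)|
  + 2 \sup_{z \in V} R(z, g_m(z)).
\end{equation*}

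Next I would treat the middle supremum on the fixed finite set $V_m^{(N_0)}$. Recalling that $R_n(x,y) = R_n|_m(x,y)$ and $R(x,y) = R|_m(x,y)$ for $x,y \in V_m$, I would insert the fused metrics at level $N$:
\begin{align*}
  |R_n(x,y) - R(x,y)|
  &\leq \bigl| R_n(x,y) - R_n|_m^{(N)}(x,y) \bigr| \\
  &\quad + \bigl| R_n|_m^{(N)}(x,y) - R|_m^{(N)}(x,y) \bigr| \\
  &\quad + \bigl| R|_m^{(N)}(x,y) - R(x,y) \bigr|.
\end{align*}
The middle term vanishes as $n \to \infty$ for each fixed $m, N$: by Assumption \ref{A. assum: general condition for approximation}\ref{A. assum item: approximation, conductance convergence} the conductances $c_n|_m^{(N)}$ converge to $c|_m^{(N)}$, so by Lemma \ref{4. lem: continuity of resistance metric and conductance} the resistances $R_n|_m^{(N)}$ converge to $R|_m^{(N)}$ pointwise on the finite vertex set $V_m^{(N)} \cup \{V_m \setminus K^{(N)}\}$, hence uniformly over the finite subset $V_m^{(N_0)}$. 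The two outer terms are exactly the quantities controlled by Lemma \ref{A. lem: approximation by fused metrics}, whose two parts give $\lim_{N \to \infty}\limsup_{n \to \infty} \sup_{V_m^{(N_0)}} |R_n - R_n|_m^{(N)}| = 0$ and $\lim_{N \to \infty} \sup_{V_m^{(N_0)}} |R|_m^{(N)} - R| = 0$.

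Finally I would assemble these estimates in the nested order $n \to \infty$, then $N \to \infty$, then $m \to \infty$. Taking $\limsup_{n \to \infty}$ with $m, N$ fixed eliminates the conductance-convergence term and leaves a bound by $2\limsup_n \sup_z R_n(z, g_m(z))$, $\limsup_n \sup_{V_m^{(N_0)}}|R_n - R_n|_m^{(N)}|$, $\sup_{V_m^{(N_0)}}|R|_m^{(N)} - R|$, and $2\sup_z R(z, g_m(z))$; letting $N \to \infty$ removes the two fused-metric terms via Lemma \ref{A. lem: approximation by fused metrics}, and letting $m \to \infty$ removes the remaining two via Assumption \ref{A. assum: general condition for approximation}\ref{A. assum item: approximation, neighborhood function}, giving $\limsup_{n\to\infty} \sup_{V_n^{(N_0)}} |R_n - R| = 0$. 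The hard part will not be any single estimate but the correct sequencing: the $g_m$-transport must precede the restriction to $V_m^{(N_0)}$, and the three limits must be taken in exactly this order so that each error term is annihilated by the hypothesis designed for it.
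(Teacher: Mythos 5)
Your proposal is correct and follows essentially the same route as the paper's proof: first transport the comparison onto the fixed finite sets $V_m^{(N_0)}$ via the maps $g_m$ and Assumption \ref{A. assum: general condition for approximation}\ref{A. assum item: approximation, neighborhood function}, then decompose through the fused metrics $R_n|_m^{(N)}$ and $R|_m^{(N)}$ using Lemma \ref{A. lem: approximation by fused metrics} and Assumption \ref{A. assum: general condition for approximation}\ref{A. assum item: approximation, conductance convergence} together with Lemma \ref{4. lem: continuity of resistance metric and conductance}. The order of limits you describe ($n$, then $N$, then $m$) matches the paper's argument.
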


\begin{proof}
  By the triangle inequality,
  we deduce that
  \begin{align}
    \sup_{x, y \in V_{n}^{(N_{0})}}
    |R_{n}(x,y) - R(x,y)|
    &\leq 
    2 \sup_{z \in V_{n}^{(N_{0})}} R_{n}(z, g_{m}(z))
    +
    2 \sup_{z \in V^{(N_{0})}} R(z, g_{m}(z)) \\
    &\quad
    +
    \sup_{x, y \in V_{n}^{(N_{0})}} 
    |R_{n}(g_{m}(x), g_{m}(y)) - R(g_{m}(x), g_{m}(y))|.
  \end{align}
  Since we have Assumption~\ref{A. assum: general condition for approximation}\ref{A. assum item: approximation, neighborhood function},
  it is enough to show that 
  \begin{equation}
    \lim_{n \to \infty}
    \sup_{x, y \in V_{m}^{(N_{0})}}
    |R_{n}(x, y) - R(x, y)|
    =0,
    \quad 
    \forall m \geq 1.
  \end{equation} 
  Again, the triangle inequality yields that,
  for each $x, y \in V_{m}^{(N_{0})}$,
  \begin{align}
    |R_{n}(x, y) - R(x,y)|
    &\leq 
    |R_{n}(x, y) - R_{n}|_{m}^{(N)}(x, y)|
    + 
    |R(x, y) - R|_{m}^{(N)}(x, y)| \\
    &\quad 
    +
    |R_{n}|_{m}^{(N)}(x, y) - R|_{m}^{(N)}(x, y)|.
  \end{align}
  Since $V_{m}^{(N_{0})}$ is a finite set,
  we obtain the desired result
  by using Assumption~\ref{A. assum: general condition for approximation}\ref{A. assum item: approximation, neighborhood function} 
  and Lemma~\ref{A. lem: approximation by fused metrics}.
\end{proof}

The above convergence yields convergence of restricted spaces in the pointed Gromov--Hausdorff topology.
This topology is a version of the pointed Gromov--Hausdorff--Prohorov topology 
induced by a metric defined by dropping measures from $d_{\mathrm{GHP}}$ in \eqref{2. eq: dfn of GHP metric};
see \cite[Section~4.1]{Khezeli_20_Metrization} for examples.

\begin{prop}  \label{A. prop: convergence of compact fractals}
  For each $N_{0} \geq 1$,
  it holds that 
  \begin{equation}
    (V_{n}^{(N_{0})}, R_{n}, \rho_{n})
    \to 
    (\cl(V^{(N_{0})}), R, \rho)
  \end{equation}
  in the pointed Gromov--Hausdorff topology,
  where the metrics $R_{n}$ and $R$ are restricted to $V_{n}^{(N_{0})}$ and $\cl(V^{(N_{0})})$,
  respectively.
\end{prop}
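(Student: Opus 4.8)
The plan is to exhibit a pointed correspondence between $(V_{n}^{(N_{0})}, R_{n}, \rho_{n})$ and $(\cl(V^{(N_{0})}), R, \rho)$ whose distortion tends to zero; by the standard correspondence bound for the pointed Gromov-Hausdorff distance, this yields the claim. The two sources of error are (a) the discrepancy between $R_{n}$ and $R$ on $V_{n}^{(N_{0})}$, controlled by Proposition \ref{A. prop: uniform convergence of R_n to R}, and (b) the failure of $V_{n}^{(N_{0})}$ to fill out $\cl(V^{(N_{0})})$, controlled by a Hausdorff-density estimate.

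First I would record the Hausdorff-density estimate. Since $(V_{n})_{n \geq 1}$ is increasing with $\bigcup_{n} V_{n} = V$, we have $V_{n}^{(N_{0})} = V_{n} \cap K^{(N_{0})} \uparrow V^{(N_{0})} = V \cap K^{(N_{0})}$, and $V^{(N_{0})}$ is dense in its closure $\cl(V^{(N_{0})})$, which is compact by the preceding lemma. Fixing $\varepsilon > 0$, I would cover $\cl(V^{(N_{0})})$ by finitely many $R$-balls of radius $\varepsilon/2$ centered at points of $V^{(N_{0})}$; as each center lies in some $V_{n}^{(N_{0})}$ and the sets increase, for all large $n$ every center lies in $V_{n}^{(N_{0})}$, whence
\begin{equation}
  \eta_{n} \coloneqq \sup_{z \in \cl(V^{(N_{0})})} \inf_{w \in V_{n}^{(N_{0})}} R(w, z) \longrightarrow 0.
\end{equation}

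With this in hand I would define the correspondence $\mathcal{C}_{n} \subseteq V_{n}^{(N_{0})} \times \cl(V^{(N_{0})})$ by declaring $(x, x) \in \mathcal{C}_{n}$ for every $x \in V_{n}^{(N_{0})}$ (note $V_{n}^{(N_{0})} \subseteq \cl(V^{(N_{0})})$), together with $(\pi_{n}(z), z) \in \mathcal{C}_{n}$ for every $z \in \cl(V^{(N_{0})})$, where $\pi_{n}(z)$ is an $R$-nearest point of $V_{n}^{(N_{0})}$ to $z$. This is a genuine correspondence, and since $\rho_{n} = \rho \in V_{n}^{(N_{0})}$ we have $(\rho_{n}, \rho) \in \mathcal{C}_{n}$. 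For $(x, z), (x', z') \in \mathcal{C}_{n}$, the triangle inequality gives
\begin{equation}
  |R_{n}(x, x') - R(z, z')|
  \leq
  \sup_{u, v \in V_{n}^{(N_{0})}} |R_{n}(u, v) - R(u, v)|
  + R(x, z) + R(x', z'),
\end{equation}
and by construction $R(x, z), R(x', z') \leq \eta_{n}$. Hence the distortion of $\mathcal{C}_{n}$ is at most $\sup_{u,v \in V_{n}^{(N_{0})}} |R_{n}(u,v) - R(u,v)| + 2\eta_{n}$, which tends to $0$ by Proposition \ref{A. prop: uniform convergence of R_n to R} and the display above. Since the pointed Gromov-Hausdorff distance is bounded by half the distortion of a root-preserving correspondence, it converges to $0$, giving the desired convergence.

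The only genuinely substantive step is the Hausdorff-density estimate for $\eta_{n}$, whose real input is the compactness of $\cl(V^{(N_{0})})$ together with the density of the increasing union $\bigcup_{n} V_{n}^{(N_{0})}$; the remaining manipulations are routine. The one point to stay careful about is that $R_{n}$ and $R$ are distinct metrics on $V_{n}^{(N_{0})}$, so one cannot simply regard everything as subsets of $(F, R)$ — this is precisely why the argument is phrased through a correspondence with explicitly estimated distortion rather than through a common isometric embedding.
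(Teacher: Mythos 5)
Your proof is correct, but it takes a genuinely different route from the paper's. The paper first uses the maps $g_{m}$ from Assumption \ref{A. assum: general condition for approximation}\ref{A. assum item: approximation, neighborhood function} to get precompactness of $\{(V_{n}^{(N_{0})}, R_{n}, \rho_{n})\}_{n \geq 1}$ in the pointed Gromov--Hausdorff topology, then identifies every subsequential limit $(S, d^{S}, \rho_{S})$ with $(\cl(V^{(N_{0})}), R, \rho)$ by embedding everything into a common compact space, extracting (via a diagonal argument) root-and-distance-preserving maps $f: S \to \cl(V^{(N_{0})})$ and $\tilde{f}: \cl(V^{(N_{0})}) \to S$ from approximating points, and invoking the Burago--Burago--Ivanov criterion that a distance-preserving self-map situation of this kind forces $f$ to be a surjective isometry. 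You instead build one explicit root-preserving correspondence and bound its distortion by $\sup_{u,v \in V_{n}^{(N_{0})}}|R_{n}(u,v) - R(u,v)| + 2\eta_{n}$, where the first term is handled by Proposition \ref{A. prop: uniform convergence of R_n to R} (as in the paper) and the Hausdorff-density term $\eta_{n}$ is handled by the increasing-union structure $V_{n}^{(N_{0})} \uparrow V^{(N_{0})}$ together with compactness of $\cl(V^{(N_{0})})$. Your argument is more direct: it avoids the subsequence extraction, the diagonal argument, and the two-sided isometry step, and it does not use the maps $g_{m}$ at this stage at all (they enter only through the black-boxed Proposition \ref{A. prop: uniform convergence of R_n to R}). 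What the paper's approach buys is uniformity of method with the rest of Section \ref{sec: proof of main result 1} and robustness in situations where one only has precompactness rather than an explicit increasing exhaustion; what yours buys is a quantitative rate, namely an explicit upper bound on the pointed Gromov--Hausdorff distance in terms of $\sup|R_{n}-R|$ and $\eta_{n}$. All the small points you need are available in the paper: $V_{n}^{(N)}$ is assumed finite (so the nearest-point projection $\pi_{n}$ exists), $\rho \in V_{n} \cap K^{(N)}$ for all $n, N$ (so the correspondence is root-preserving), and $\cl(V^{(N_{0})})$ is compact by the lemma preceding the statement.
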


\begin{proof}
  Assumption~\ref{A. assum: general condition for approximation}
  \ref{A. assum item: approximation, neighborhood function} and~\ref{A. assum item: approximation, bound for V_m},
  combined with \cite[Theorem~2.6]{Abraham_Delmas_Hoscheit_13_A_note},
  imply that $\{(V_{n}^{(N_{0})}, R_{n}, \rho_{n}) \mid n \geq 1\}$ is precompact in the pointed Gromov Hausdorff topology.
  So it suffices to show that the limit of any convergent subsequence is $(V^{(N_{0})}, R, \rho)$.
  To simplify the subscripts,
  we suppose that $(V_{n}^{(N_{0})}, R_{n}, \rho_{n})$ converges to 
  a rooted compact metric space $(S, d^{S}, \rho_{S})$,
  and we will prove that there exists a root-preserving isometry 
  from $(S, d^{S}, \rho_{S})$ to $(\cl(V^{(N_{0})}), R, \rho)$.
  We may assume that $(V_{n}^{(N_{0})}, R_{n})$ and $(S, d^{S})$
  are isometrically embedded into a common rooted compact metric space $(M, d^{M}, \rho_{M})$
  in such a way that 
  $V_{n}^{(N_{0})} \to S$ in the Hausdorff topology in $M$
  and $\rho_{n} = \rho_{S} = \rho_{M}$
  as elements in $M$.
  Let $S_{0}$ be a countable dense subset of $S$
  containing $\rho_{S}$.
  For $x \in S_{0}$,
  we can find $h_{n}(x) \in V_{n}^{(N_{0})}$ such that $h_{n}(x) \to x$ in $M$.
  Since $h_{n}(x) \in V^{(N_{0})}$ and $(\cl(V^{(N_{0})}), R)$ is compact,
  there exist a subsequence $(n_{k}^{x})_{k \geq 1}$
  and an element $f(x) \in \cl(V^{(N_{0})})$ 
  such that $h_{n_{k}}(x) \to f(x)$ in $(\cl(V^{(N_{0})}), R)$.
  By a diagonal argument,
  we may assume the subsequence $(n_{k}^{x})_{k \geq 1}$ is independent of $x$
  and write $(n_{k})_{k \geq 1}$ for it. 
  Note that we may further assume that $f(\rho_{S}) = \rho$.
  For $x, y \in S_{0}$,
  we have that 
  \begin{align}
    |d^{S}(x,y) - R(f(x), f(y))|
    &\leq 
    d^{M}(x, h_{n_{k}}(x)) + d^{M}(y, g_{n_{k}}(y)) \\
    &\quad
    +
    |R_{n_{k}}(h_{n_{k}}(x), h_{n_{k}}(y)) - R(h_{n_{k}}(x), h_{n_{k}}(y))| \\
    &\quad
    +
    |R(h_{n_{k}}(x), h_{n_{k}}(y)) - R(f(x), f(y))|.
  \end{align}
  This, combined with Proposition~\ref{A. prop: uniform convergence of R_n to R},
  implies that $f \colon S_{0} \to \cl(V^{(N_{0})})$ is root-and-distance-preserving.
  By a standard argument,
  the domain of $f$ is extended to $S$ so that $f \colon S \to \cl(V^{(N_{0})})$ is still root-and-distance-preserving.
  For $x \in V^{(N_{0})}$,
  we have that $\tilde{h}_{n}(x) \coloneqq x \in V_{n}^{(N_{0})}$ for all sufficiently large $n$.
  By the convergence $V_{n}^{(N_{0})} \to S$ in $M$,
  there exist a subsequence $(\tilde{n}_{k}^{x})_{k \geq 1}$ and $\tilde{f}(x) \in S$
  such that $\tilde{h}_{\tilde{n}_{k}^{x}}(x) \to \tilde{f}(x)$ in $M$.
  By the same argument as before,
  we may assume that $(\tilde{n}_{k}^{x})_{k \geq 1}$ is independent of $x$
  and write $(\tilde{n}_{k})_{k \geq 1}$ for it.
  For $x, y \in V^{(N_{0})}$,
  we have that 
  \begin{equation}
    |R(x, y) - d^{S}(\tilde{f}(x), \tilde{f}(y))|
    \leq 
    |R(x,y) - R_{\tilde{n}_{k}}(x,y)|
    + 
    |d^{M}(\tilde{h}_{\tilde{n}_{k}}(x), \tilde{h}_{\tilde{n}_{k}}(y)) - d^{M}(\tilde{f}(x), \tilde{f}(y))|.
  \end{equation}
  Again,
  Proposition~\ref{A. prop: uniform convergence of R_n to R} yields 
  that $\tilde{f} \colon V^{(N_{0})} \to S$ is distance-preserving,
  and it is extended to a distance-preserving map $\tilde{f} \colon \cl(V^{(N_{0})}) \to S$.
  By \cite[Theorem~1.6.14]{Burago_Burago_Ivanov_01_A_course},
  we deduce that $f \colon S \to \cl(V^{(N_{0})})$ is a root-preserving isometry,
  which completes the proof.
\end{proof}

Finally, we prove Theorem~\ref{A. thm: convergence of fractals}.

\begin{proof} [Proof of Theorem~\ref{A. thm: convergence of fractals}]
  For $r>0$,
  by Assumption~\ref{A. assum: general condition for approximation}\ref{A. assum item: approximation, non-explosion} 
  and the proof of Lemma~\ref{A. lem: recurrence of limiting space},
  we can find $N \geq 1$ 
  such that 
  \begin{equation}
    \liminf_{n \to \infty} R_{n}(\rho_{n}, V_{n} \setminus K^{(N)}) > r,
    \quad 
    R(\rho, \cl(V \setminus K^{(N)})) > r.
  \end{equation}
  These imply that 
  $D_{R_{n}}(\rho_{n}, r) \subseteq V_{n}^{(N)}$ for all sufficiently large $n$ 
  and $D_{R}(\rho, r) \subseteq \cl(V^{(N)})$.
  From Proposition~\ref{A. prop: convergence of compact fractals},
  we deduce that,
  for all but countably many $r>0$,
  \begin{equation}
    (D_{R_{n}}(\rho_{n}, r), R_{n}, \rho_{n})
    \to 
    (D_{R}(\rho, r), R, \rho)
  \end{equation}
  in the pointed Gromov--Hausdorff topology,
  where the metrics are restricted appropriately.
  Hence, we obtain the convergence \eqref{A. thm eq: convergence of fractals}.
  For $N \geq 1$,
  by Lemma~\ref{A. lem: the diameters are uniformly bounded},
  we can find $r>0$ such that 
  \begin{equation}
    \sup_{x \in V_{n}^{(N)}} R_{n}(\rho_{n}, x) < r
  \end{equation}
  for all sufficiently large $n$,
  which implies that $B_{R_{n}}(\rho_{n}, r)^{c} \subseteq V_{n} \setminus K^{(N)}$.
  This, combined with Assumption~\ref{A. assum: general condition for approximation}\ref{A. assum item: approximation, non-explosion},
  implies the non-explosion condition \eqref{A. thm eq: convergence of fractals, non-explosion}.
\end{proof}

Now, we apply Theorem~\ref{A. thm: convergence of fractals} to the random conductance model on the unbounded Sierpi\'{n}ski gasket.
Recall the setting of Example \ref{A. exm: unbounded Sierpinski gasket}.
Here,
we put random conductances on $(V_{n}, E_{n})$ as follows.
Let $c_{n} = \{c_{n}(e) \mid e \in E_{n}\}$ be a collection of i.i.d.\ strictly-positive random variables 
built on a probability space $(\Omega, \mathcal{G}, \mathbf{P})$.
We assume that there exist constants $C_{1}, C_{2} > 0$ 
satisfying $C_{1} < c_{n}(e) < C_{2}$ with probability $1$ for all $e \in E_{n}$ and $n$.
We regard $c_{n}$ as a random conductance set on $V_{n}$ 
to obtain a random electrical network $G_{n} = (V_{n}, E_{n}, c_{n})$ with root $\rho_{n} \coloneqq (0,0)$. 
Set $a_{n} = (5/3)^{n}$.
The homogenization result of \cite{Kumagai_04_Homogenization} implies that 
there exists a sequence of deterministic, compatible electrical networks $G|_{n} = (V_{n}, E|_{n}, c|_{n})$
such that 
$a_{n} c_{n}|_{m}^{(N)} \to c|_{m}^{(N)}$ 
in $L^{1}(\Omega, \mathbf{P})$,
which corresponds to 
Assumption~\ref{A. assum: general condition for approximation}\ref{A. assum item: approximation, conductance convergence}.
Since the random conductances are uniformly bounded,
the other conditions of Assumption~\ref{A. assum: general condition for approximation} are satisfied 
with probability $1$. 
Therefore, 
we obtain the convergence \eqref{A. thm eq: convergence of fractals} in probability.
Let $K_{0}$ be the Sierpi\'{n}ski gasket,
i.e.,
the unique non-empty compact subset of $\RN^{2}$
satisfying $K_{0} = \bigcup_{i=1}^{3} \psi_{i}(K_{0})$.
We then define $K_{\infty} \coloneqq \bigcup_{n \geq 0} 2^{n} K_{0}$
equipped with the induced topology from $\RN^{2}$.
We note that there exists a homeomorphism $H \colon K_{\infty} \to F$
such that $H(x) = x$ for all $x \in V$ 
and hence we may identify $F$ with $K_{\infty}$.
We write $\mu_{K}$ for a self-similar measure on $K$
corresponding to the $\log_2 3$ Hausdorff measure in the Euclidean metric,
which is unique up to a constant multiple.
We then define $\mu$
to be a measure on the unbounded Sierpi\'{n}ski gasket $F$
characterized by $\mu|_{K} = \mu_{K}$
and $3 \mu = \mu \circ \psi_{1}^{-1}$.
As mentioned in \cite[Remark 6.19]{Croydon_Hambly_Kumagai_17_Time-changes},
there exists a deterministic constant $c_{0} > 0$
such that  
the random measures $b_{n}^{-1} \mu_{G_{n}}$,
where we set $b_{n} = c_{0}3^{n}$,
converge to $\mu$ on $F$
in probability with respect to the vague topology 
for Radon measures on $\RN^{2}$.
(NB.\ In \cite[Remark 6.19]{Croydon_Hambly_Kumagai_17_Time-changes},
the compact Sierpi\'{n}ski gasket $K_{0}$ is considered,
but it is easy to extend the result to the unbounded Sierpi\'{n}ski gasket.)
As a consequence,
it holds that 
\begin{equation}
  (V_{n}, a_{n}^{-1}R_{n}, \rho_{n}, b_{n}^{-1}\mu_{G_{n}})
  \xrightarrow{\mathrm{d}}
  (F, R, \rho, \mu)
\end{equation}
in the local Gromov--Hausdorff-vague topology.
The assumption of the lower bound on the random conductances
and \cite[Proposition~7.2]{Noda_pre_Convergence} imply that 
the metric-entropy condition is satisfied.
Theorem~\ref{1. thm: random, main result} yields 
the convergence of the random walks and local times on $G_{n}$.
The same result holds for random conductances with finite expectation
without assuming boundedness from above,
and for homogenization on fractals in a more general setting,
see \cite{Croydon_21_The_random} and \cite[Section~6]{Croydon_Hambly_Kumagai_17_Time-changes}.


\section{Convergence of traces}
\label{A. sec: convergence of traces}

Under Assumption~\ref{1. assum: deterministic version}\ref{1. assum item: deterministic, convergence of spaces},
it is an immediate consequence of the convergence with respect to the local Gromov--Hausdorff-vague topology 
that, for all but countably many $r>0$,
\begin{equation}  \label{A. eq: convergence of restrictions}
  (\hat{V}_{n}^{(r)}, \hat{R}_{n}^{(r)}, \hat{\rho}_{n}^{(r)}, \hat{\mu}_{n}^{(r)})
  \to 
  (F^{(r)}, R^{(r)}, \rho^{(r)}, \mu^{(r)})
\end{equation}
in the Gromov--Hausdorff--Prohorov topology,
where we recall the restriction operator $\cdot^{(r)}$ from \eqref{1. eq: dfn of restriction operator}.
However,
this does not necessarily imply the convergence of traces 
$(\tilde{V}_{n}^{(r)}, \tilde{R}_{n}^{(r)}, \tilde{\rho}_{n}^{(r)}, \tilde{\mu}_{n}^{(r)})$ 
because $\hat{\mu}_{n}^{(r)} \neq \tilde{\mu}_{n}^{(r)}$ in general
(recall the notation for traces from Section~\ref{sec: proof of main result 1}).
As implied in Corollary~\ref{4. cor: difference for trace measure at one point},
there is a difference between  $\hat{\mu}_{n}^{(r)}$ and $\tilde{\mu}_{n}^{(r)}$
due to the effect of jumps of the Markov chain to the outside of the trace.
In this appendix, we provide a sufficient condition for the convergence of the traces
(Theorems~\ref{A. thm: convergence of deterministic traces} and \ref{A. thm: convergence of random traces} below).

Assume that we are in the setting of Theorem~\ref{1. thm: deterministic, main result}.
We define 
the total scaled conductance on edges crossing over a set $A$ (with respect to the scaled metric $\hat{R}_{n}$)
by 
\begin{equation}
  \mathsf{CC}_{n}(A)
  \coloneqq 
  \sum_{\substack{x  \in A
          \\ y \notin A}}
  b_{n}^{-1} \mu_{G_{n}}(x,y).
\end{equation}
We now suppose that, for each $n$, we have a compact subset $K_{n}$ of $(\hat{V}_{n}, \hat{R}_{n})$
containing $\rho_{n}$.
We write 
\begin{equation}
  \hat{R}_{n}^{(K_{n})} \coloneqq \hat{R}_{n}|_{K_{n} \times K_{n}},
  \quad
  \hat{\rho}_{n}^{(K_{n})} \coloneqq \hat{\rho}_{n},
  \quad 
  \hat{\mu}_{n}^{(K_{n})}(\cdot)
  \coloneqq
  \hat{\mu}_{n}(\cdot \cap K_{n}).
\end{equation}
Similarly,
given a compact subset $K$ of $(F, R)$,
we define restrictions $R^{(K)},\, \rho^{(K)}$ and $\mu^{(K)}$.
We consider the following conditions:
convergence of restricted spaces and decay of $\mathsf{CC}_n(K_n)$.

\begin{assum}  \leavevmode
  \label{A. assum: deterministic, subsets condition}
  \begin{enumerate} [label = \textup{(\roman*)}]
    \item \label{A. assum item: deterministic, space convergence}
      It holds that 
      \begin{equation}
        (K_{n}, \hat{R}_{n}^{(K_{n})}, \hat{\rho}_{n}^{(K_{n})}, \hat{\mu}_{n}^{(K_{n})})
        \to 
        (K, R^{(K)}, \rho^{(K)}, \mu^{(K)})
      \end{equation}
      in the Gromov--Hausdorff--Prohorov topology
      for some compact subset $K$ of $(F, R)$.
    \item \label{A. assum item: deterministic, CC convergence}
      It holds that $\lim_{n \to \infty} \mathsf{CC}_{n}(K_{n}) = 0$.
  \end{enumerate}
\end{assum}

Recall from Section~\ref{sec: trace of electrical networks} that 
$G_{n}|_{K_{n}}$ denotes the trace of the electrical network $G_{n}$ onto $K_{n}$.
We equip $G_{n}|_{K_{n}}$ with the root $\rho_{n}$.
We then write 
\begin{equation}
  \tilde{R}_{n}^{(K_{n})} \coloneqq a_{n}^{-1} R_{G_{n}|_{K_{n}}},
  \quad
  \tilde{\rho}_{n}^{(K_{n})} \coloneqq \rho_{G_{n}|_{K_{n}}},
  \quad 
  \tilde{\mu}_{n}^{(K_{n})} \coloneqq b_{n}^{-1} \mu_{G_{n}|_{K_{n}}}.
\end{equation}
As before,
we note that 
$\tilde{R}_{n}^{(K_{n})} = \hat{R}_{n}^{(K_{n})}$ and 
$\tilde{\rho}^{(K_{n})} = \hat{\rho}_{n}^{(K_{n})}$,
but $\tilde{\mu}_{n}^{(K_{n})} \neq \hat{\mu}_{n}^{(K_{n})}$ in general.

\begin{thm} \label{A. thm: convergence of deterministic traces}
  Under Assumption~\ref{A. assum: deterministic, subsets condition},
  it holds that 
  \begin{equation}
    (K_{n}, \tilde{R}_{n}^{(K_{n})}, \tilde{\rho}_{n}^{(K_{n})}, \tilde{\mu}_{n}^{(K_{n})})
    \to 
    (K, R^{(K)}, \rho^{(K)}, \mu^{(K)})
  \end{equation}
  in the Gromov--Hausdorff--Prohorov topology.
\end{thm}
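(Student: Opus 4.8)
The plan is to exploit the fact, already recorded above, that the metric and root components of the two tuples coincide, namely $\tilde{R}_{n}^{(K_{n})} = \hat{R}_{n}^{(K_{n})}$ and $\tilde{\rho}_{n}^{(K_{n})} = \hat{\rho}_{n}^{(K_{n})}$, so that only the measures differ. It therefore suffices to control the discrepancy between $\tilde{\mu}_{n}^{(K_{n})}$ and $\hat{\mu}_{n}^{(K_{n})}$ and to combine this with Assumption \ref{A. assum: deterministic, subsets condition}\ref{A. assum item: deterministic, space convergence} via the triangle inequality for $d_{GHP}$.

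First I would note that since $(\hat{V}_{n}, \hat{R}_{n})$ carries the discrete topology, the compact set $K_{n}$ is finite, so all sums below are finite. Embedding both $(K_{n}, \tilde{R}_{n}^{(K_{n})}, \tilde{\rho}_{n}^{(K_{n})}, \tilde{\mu}_{n}^{(K_{n})})$ and $(K_{n}, \hat{R}_{n}^{(K_{n})}, \hat{\rho}_{n}^{(K_{n})}, \hat{\mu}_{n}^{(K_{n})})$ into the common space $(K_{n}, \hat{R}_{n}^{(K_{n})})$ via the identity map, the root and Hausdorff terms in the defining infimum \eqref{2. eq: dfn of GHP metric} vanish, and I obtain
\[
  d_{GHP}\bigl( (K_{n}, \tilde{R}_{n}^{(K_{n})}, \tilde{\rho}_{n}^{(K_{n})}, \tilde{\mu}_{n}^{(K_{n})}), (K_{n}, \hat{R}_{n}^{(K_{n})}, \hat{\rho}_{n}^{(K_{n})}, \hat{\mu}_{n}^{(K_{n})}) \bigr)
  \leq
  d_{P}(\tilde{\mu}_{n}^{(K_{n})}, \hat{\mu}_{n}^{(K_{n})}).
\]

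Next I would bound this Prohorov distance using Corollary \ref{4. cor: difference for trace measure at one point}. Since $\hat{\mu}_{n}^{(K_{n})}(\{x\}) = b_{n}^{-1} c_{G_{n}}(x)$ and $\tilde{\mu}_{n}^{(K_{n})}(\{x\}) = b_{n}^{-1} c_{G_{n}|_{K_{n}}}(x)$, the corollary applied with $B = K_{n}$ gives, for each $x \in K_{n}$,
\[
  0
  \leq
  \hat{\mu}_{n}^{(K_{n})}(\{x\}) - \tilde{\mu}_{n}^{(K_{n})}(\{x\})
  \leq
  b_{n}^{-1} \sum_{y \notin K_{n}} c_{G_{n}}(x, y).
\]
Summing over $x$ in an arbitrary $A \subseteq K_{n}$ and using non-negativity of the summands, this yields
\[
  0
  \leq
  \hat{\mu}_{n}^{(K_{n})}(A) - \tilde{\mu}_{n}^{(K_{n})}(A)
  \leq
  \sum_{x \in K_{n}} b_{n}^{-1} \sum_{y \notin K_{n}} c_{G_{n}}(x, y)
  =
  \mathsf{CC}_{n}(K_{n}).
\]
Hence $|\hat{\mu}_{n}^{(K_{n})}(A) - \tilde{\mu}_{n}^{(K_{n})}(A)| \leq \mathsf{CC}_{n}(K_{n})$ for every Borel set $A$, and taking $\varepsilon = \mathsf{CC}_{n}(K_{n})$ in the definition of the Prohorov metric (together with $A \subseteq A^{\varepsilon}$) gives $d_{P}(\tilde{\mu}_{n}^{(K_{n})}, \hat{\mu}_{n}^{(K_{n})}) \leq \mathsf{CC}_{n}(K_{n})$.

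Finally, I would invoke the triangle inequality for $d_{GHP}$: the distance from the trace tuple to $(K, R^{(K)}, \rho^{(K)}, \mu^{(K)})$ is at most $\mathsf{CC}_{n}(K_{n})$ plus the distance from $(K_{n}, \hat{R}_{n}^{(K_{n})}, \hat{\rho}_{n}^{(K_{n})}, \hat{\mu}_{n}^{(K_{n})})$ to $(K, R^{(K)}, \rho^{(K)}, \mu^{(K)})$. The former tends to $0$ by Assumption \ref{A. assum: deterministic, subsets condition}\ref{A. assum item: deterministic, CC convergence} and the latter by Assumption \ref{A. assum: deterministic, subsets condition}\ref{A. assum item: deterministic, space convergence}, giving the claim. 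The argument is elementary once Corollary \ref{4. cor: difference for trace measure at one point} is available; the only point demanding care is verifying that the pointwise measure comparison telescopes into precisely the crossing-conductance quantity $\mathsf{CC}_{n}(K_{n})$ and that this uniform total-mass control translates into the stated Prohorov bound.
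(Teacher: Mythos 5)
Your proposal is correct and follows essentially the same route as the paper's proof: both reduce the problem to controlling $d_{P}(\hat{\mu}_{n}^{(K_{n})}, \tilde{\mu}_{n}^{(K_{n})})$ (since the metric and root components coincide), and both use Corollary \ref{4. cor: difference for trace measure at one point} summed over $A \subseteq K_{n}$ to obtain the bound $d_{P}(\hat{\mu}_{n}^{(K_{n})}, \tilde{\mu}_{n}^{(K_{n})}) \leq \mathsf{CC}_{n}(K_{n})$, which vanishes by assumption. Your additional remarks on finiteness of $K_{n}$ and the explicit common embedding are fine but not needed beyond what the paper records.
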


\begin{proof}
  Fix a subset $A \subseteq K_{n}$.
  By Corollary~\ref{4. cor: difference for trace measure at one point},
  we deduce that 
  \begin{align}
    \hat{\mu}_{n}^{(K_{n})}(A)
    &\leq 
    \sum_{x \in A}
    b_{n}^{-1} 
    c_{G_{n}|_{K_{n}}}(x)
    +
    \sum_{x \in A}
    \sum_{y \notin K_{n}}
    b_{n}^{-1} c_{G_{n}}(x, y)  \\
    &\leq 
    \tilde{\mu}_{n}^{(K_{n})}(A) 
    + 
    \mathsf{CC}_{n}(K_{n}),
  \end{align}
  which implies that 
  \begin{equation}  \label{A. eq: deterministic, difference between restriction and trace}
    \dP^{K_{n}}( \hat{\mu}_{n}^{(K_{n})}, \tilde{\mu}_{n}^{(K_{ n})}) 
    \leq 
    \mathsf{CC}_{n}(K_{n}),
  \end{equation}
  where $\dP^{K_{n}}$ denotes the Prohorov metric on $\cMfin(K_{n})$.
  By Assumption~\ref{A. assum: deterministic, subsets condition}\ref{A. assum item: deterministic, space convergence},
  we can embed all the metric spaces $(K_n, \tilde{R}_n^{(K_n)})$ and $(K, R^{(K)})$
  isometrically into a common rooted compact metric space $(M, d^M, \rho_M)$
  in such a way that 
  $\tilde{\rho}_n^{(K_n)} = \rho^{(K)} = \rho_M$ as elements of $M$,
  and $\tilde{\mu}_n^{(K_n)} \to \mu^{(K)}$ weakly as measures on $M$.
  From Assumption~\ref{A. assum: deterministic, subsets condition}\ref{A. assum item: deterministic, CC convergence}
  and \eqref{A. eq: deterministic, difference between restriction and trace},
  it follows that $\tilde{\mu}_n^{(K_n)} \to \mu^{{(K)}}$ weakly.
  Since we have that $\hat{R}_{n}^{(K_{n})} = \tilde{R}_{n}^{(K_{n})}$ 
  and $\hat{\rho}_{n}^{(K_{n})} = \tilde{\rho}_{n}^{(K_{n})}$ (by definition),
  we obtain the desired result.
\end{proof}

We next consider random electrical networks.
We proceed in the setting of Theorem~\ref{1. thm: random, main result}
and
suppose that $(K_{n}, \hat{R}_{n}^{(K_{n})}, \hat{\rho}_{n}^{(K_{n})}, \hat{\mu}_{n}^{(K_{n})})$
is a random element of $\mathbb{F}_{c}$,
where $K_{n}$ is a subset of the random set $\hat{V}_{n}$
and the space $\mathbb{F}_c$ is recalled from Definition~\ref{1. dfn: the space F}.
The following is a version of Assumption~\ref{A. assum: deterministic, subsets condition} for random electrical networks.

\begin{assum}  \leavevmode
  \label{A. assum: random, subsets condition}
  \begin{enumerate} [label = \textup{(\roman*)}]
    \item \label{A. assum item: random, space convergence}
      It holds that 
      \begin{equation}
        (K_{n}, \hat{R}_{n}^{(K_{n})}, \hat{\rho}_{n}^{(K_{n})}, \hat{\mu}_{n}^{(K_{n})})
        \xrightarrow{\mathrm{d}}
        (K, R^{(K)}, \rho^{(K)}, \mu^{(K)})
      \end{equation}
      in the Gromov--Hausdorff--Prohorov topology
      for some random element $(K, R^{(K)}, \rho^{(K)}, \mu^{(K)})$
      of $\mathbb{F}_{c}$,
      where $K$ is a compact subset of $(F, R)$.
    \item \label{A. assum item: random, CC convergence}
      It holds that $\mathsf{CC}_{n}(K_{n}) \xrightarrow{\mathrm{p}} 0$.
  \end{enumerate}
\end{assum}

\begin{thm} \label{A. thm: convergence of random traces}
  Under Assumption~\ref{A. assum: random, subsets condition},
  it holds that 
  \begin{equation}
    (K_{n}, \tilde{R}_{n}^{(K_{n})}, \tilde{\rho}_{n}^{(K_{n})}, \tilde{\mu}_{n}^{(K_{n})})
    \xrightarrow{\mathrm{d}} 
    (K, R^{(K)}, \rho^{(K)}, \mu^{(K)})
  \end{equation}
  in the Gromov--Hausdorff--Prohorov topology.
\end{thm}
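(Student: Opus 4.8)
The plan is to reduce Theorem \ref{A. thm: convergence of random traces} to its deterministic counterpart, Theorem \ref{A. thm: convergence of deterministic traces}, by means of a converging-together (Slutsky-type) argument. The point to exploit is that the only nontrivial estimate in the deterministic proof is a \emph{pathwise} inequality, valid for each fixed realization of the random network, so that randomness enters only through the two hypotheses of Assumption \ref{A. assum: random, subsets condition}.

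First I would record the pathwise estimate. For every realization, $G_n$ has recurrent associated resistance form (since $(V_n, R_n, \rho_n, \mu_n) \in \check{\mathbb{F}} \subseteq \mathbb{F}$), so the trace machinery of Section \ref{sec: trace of electrical networks} applies. Corollary \ref{4. cor: difference for trace measure at one point} gives both $\tilde{\mu}_n^{(K_n)}(A) \leq \hat{\mu}_n^{(K_n)}(A)$ (from $c_{\tilde{G}}(x) \leq c_G(x)$) and $\hat{\mu}_n^{(K_n)}(A) \leq \tilde{\mu}_n^{(K_n)}(A) + \mathsf{CC}_n(K_n)$ for every $A \subseteq K_n$, whence $\dP^{K_n}(\hat{\mu}_n^{(K_n)}, \tilde{\mu}_n^{(K_n)}) \leq \mathsf{CC}_n(K_n)$, exactly as in \eqref{A. eq: deterministic, difference between restriction and trace}. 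Since the underlying metric spaces coincide, $\tilde{R}_n^{(K_n)} = \hat{R}_n^{(K_n)}$ and $\tilde{\rho}_n^{(K_n)} = \hat{\rho}_n^{(K_n)}$, taking the identity maps as the embeddings into $M = K_n$ in \eqref{2. eq: dfn of GHP metric} yields, almost surely,
\begin{equation}
  d_{GHP}\left(
    (K_n, \tilde{R}_n^{(K_n)}, \tilde{\rho}_n^{(K_n)}, \tilde{\mu}_n^{(K_n)}),\,
    (K_n, \hat{R}_n^{(K_n)}, \hat{\rho}_n^{(K_n)}, \hat{\mu}_n^{(K_n)})
  \right)
  \leq
  \mathsf{CC}_n(K_n).
\end{equation}

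Next I would invoke the two hypotheses. By Assumption \ref{A. assum: random, subsets condition}\ref{A. assum item: random, CC convergence}, $\mathsf{CC}_n(K_n) \xrightarrow{\mathrm{p}} 0$, so the left-hand side above tends to $0$ in probability; that is, the trace tuples and the restriction tuples are asymptotically equivalent in the Polish space $(\mathbb{F}_c, d_{GHP})$. By Assumption \ref{A. assum: random, subsets condition}\ref{A. assum item: random, space convergence}, the restriction tuples converge in distribution to $(K, R^{(K)}, \rho^{(K)}, \mu^{(K)})$. I would then conclude with the converging-together lemma \cite[Theorem 3.1]{Billingsley_99_Convergence}: for random elements of a separable metric space, $X_n \xrightarrow{\mathrm{d}} X$ together with $d_{GHP}(X_n, Y_n) \xrightarrow{\mathrm{p}} 0$ gives $Y_n \xrightarrow{\mathrm{d}} X$. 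Taking $X_n$ to be the restriction tuple and $Y_n$ the trace tuple produces the asserted convergence.

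The principal obstacle is not analytic but measure-theoretic bookkeeping: to apply the converging-together lemma one must know that the trace tuples $(K_n, \tilde{R}_n^{(K_n)}, \tilde{\rho}_n^{(K_n)}, \tilde{\mu}_n^{(K_n)})$ are genuine random elements of $\mathbb{F}_c$. This follows from the measurability results of Section \ref{sec: measurability}, since the trace conductances (hence $\tilde{\mu}_n^{(K_n)}$) depend measurably on the network via Theorem \ref{4. thm: expression of trace conductance} and Lemma \ref{4. lem: continuity of resistance metric and conductance}, in the same spirit as Proposition \ref{4. prop: measurability of Y_G^r} and Corollary \ref{4. cor: Y_G is measurable}. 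Once this is in place, the remainder of the argument is a direct transcription of the deterministic case, every estimate there being pathwise.
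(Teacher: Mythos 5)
Your proposal is correct and follows essentially the same route as the paper: the pathwise bound $\dP^{K_{n}}(\hat{\mu}_{n}^{(K_{n})}, \tilde{\mu}_{n}^{(K_{n})}) \leq \mathsf{CC}_{n}(K_{n})$ from the deterministic proof, combined with Assumption \ref{A. assum: random, subsets condition}\ref{A. assum item: random, CC convergence}, gives convergence in probability of the Gromov-Hausdorff-Prohorov distance between the trace and restriction tuples, and the conclusion then follows from the converging-together lemma together with Assumption \ref{A. assum: random, subsets condition}\ref{A. assum item: random, space convergence}. Your additional care about measurability of the trace tuples is a reasonable point that the paper leaves implicit.
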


\begin{proof}
  From \eqref{A. eq: deterministic, difference between restriction and trace} 
  and Assumption~\ref{A. assum: random, subsets condition}\ref{A. assum item: random, CC convergence},
  it follows that $\dP^{K_{n}}( \hat{\mu}_{n}^{(K_{n})}, \tilde{\mu}_{n}^{(K_{ n})}) \xrightarrow{\mathrm{p}} 0$.
  Similarly to the proof of Theorem~\ref{A. thm: convergence of deterministic traces},
  we obtain the desired result.
\end{proof}

\begin{rem} \label{A. rem: proc and local times converge for traces}
  Assumption~\ref{A. assum: deterministic, subsets condition} implies that 
  if one takes $r$ sufficiently large,
  then
  $K_{n} \subseteq \hat{V}_{n}^{(r)}$ holds for all $n$.
  It then follows that the sequence of which the convergence is considered in Theorem~\ref{A. thm: convergence of deterministic traces}
  satisfies the non-explosion condition.
  Hence,
  if the sequence satisfies the metric-entropy condition,
  then we obtain the convergence of the associated processes and local times.
  The same is true for Theorem~\ref{A. thm: convergence of random traces}.
\end{rem}

\begin{exm}
  Recall the random conductance model on the unbounded Sierpi\'{n}ski gasket 
  we considered at the end of Appendix \ref{A. sec: application}.
  Fix $N \geq 1$. 
  Since $\mu \left( \partial (\cl(V^{(N)})) \right) = 0$,
  where $\partial \cdot$ denotes the boundary with respect to $(F, R)$, 
  it is elementary to check that 
  $b_{n}^{-1} \mu_{G_{n}}|_{V_{n}^{(N)}}$ converges to $\mu|_{\cl(V^{(N)})}$ 
  in probability
  with respect to the weak topology for finite measures on $\RN^{2}$.
  This, combined with Proposition~\ref{A. prop: convergence of compact fractals},
  yields that 
  \begin{equation}
    (V_{n}^{(N)}, a_{n}^{-1} R_{n}, \rho_{n}, b_{n}^{-1} \tilde{\mu}_{n}) 
    \xrightarrow{\mathrm{d}}
    (\cl(V^{(N)}), R, \rho, \mu|_{\cl(V^{(N)})})
  \end{equation}
  in the Gromov--Hausdorff--Prohorov topology,
  where metrics are restricted appropriately
  and we set $\tilde{\mu}_{n} \coloneqq \mu_{G_{n}|_{V_{n}^{(N)}}}$,
  i.e.,
  the measure associated with the trace electrical network $G_{n}|_{V_{n}^{(N)}}$.
  Moreover,
  the boundedness of the random conductances 
  and the fact that the cardinality of 
  $\{x \in V_{n} \mid \exists y \notin V_{n} \setminus V_{n}^{(N)} \text{such that}\ \{x, y\} \in E_{n} \}$
  is $4$
  imply Assumption~\ref{A. assum: random, subsets condition}\ref{A. assum item: random, CC convergence}.
  Hence, we obtain the convergence of traces $G_{n}|_{V_{n}^{(N)}}$.
  As we mentioned in Remark \ref{A. rem: proc and local times converge for traces},
  we also obtain the convergence of the random walks and local times on $G_{n}|_{V_{n}^{(N)}}$.
\end{exm}

\section*{Acknowledgement}
I would like to thank my supervisor Dr David Croydon for his support and fruitful discussions. 
This work was supported by 
JSPS KAKENHI Grant Number JP 24KJ1447
and 
the Research Institute for Mathematical Sciences, 
an International Joint Usage/Research Center located in Kyoto University.

\bibliographystyle{amsplain}
\bibliography{ref_discrete_time}

\end{document}